\title[]{Lagrangian  Flows, Maslov index zero and Special Lagrangians}
\keywords{bounded first variation, Calabi-Yau,  lagrangian current, lagrangian homology, lagrangian flow, lagrangian varifold, Maslov index, special lagrangian}
\subjclass[2010]{53C44, 53D12, 49Q15 (primary); 35S10 (secondary)}
\author{Andrew A.~Cooper}
\address{Department of Mathematics\\
Box 8205, North Carolina State University\\ 
Raleigh, NC 27695}
\author{Jon Wolfson}
\address{Department of Mathematics \\ 
                 Michigan State University \\
                 East Lansing, MI 48824}
\date{\today}
\newcommand{\intprod}{\mbox{\rule{5pt}{.6pt}\rule{.6pt}{6pt}}~}
\newtheorem{thm}{Theorem}[section]
\newtheorem{lem}[thm]{Lemma}
\newtheorem{cor}[thm]{Corollary}
\newtheorem{prop}[thm]{Proposition}
\newtheorem{conj}[thm]{Conjecture}
\theoremstyle{defn}
\newtheorem{rem}[thm]{Remark}
\newtheorem{ex}[thm]{Example}
\newtheorem{defn}[thm]{Definition}
\numberwithin{equation}{section}
\renewcommand{\a}{\alpha}
\renewcommand{\b}{\beta}
\renewcommand{\d}{\delta}
\newcommand{\D}{\Delta}
\newcommand{\e}{\varepsilon}
\newcommand{\g}{\gamma}
\renewcommand{\l}{\lambda}
\renewcommand{\L}{\Lambda}
\newcommand{\n}{\nabla}
\newcommand{\p}{\partial}
\newcommand{\var}{\varphi}
\newcommand{\s}{\sigma}
\renewcommand{\th}{\theta}
\renewcommand{\t}{\tau}
\renewcommand{\o}{\omega}
\newcommand{\tr}{\operatorname{tr}}
\renewcommand{\div}{\operatorname{div}}
\newcommand{\Id}{\operatorname{Id}}
\newcommand{\supp}{\operatorname{supp}}
\newcommand{\II}{\operatorname{II}}
\newcommand{\cal}{\mathcal}
\def\Pb{\ifmmode{\Bbb P}\else{$\Bbb P$}\fi}
\def\N{\ifmmode{\Bbb N}\else{$\Bbb N$}\fi}
\def\Z {\ifmmode{\Bbb Z }\else{$\Bbb  Z$}\fi}
\def\C{\ifmmode{\Bbb C}\else{$\Bbb C$}\fi}
\def\R{\ifmmode{\Bbb R}\else{$\Bbb R$}\fi}
\newcommand{\Rmnum}[1]{\expandafter\@slowromancap\romannumeral #1@}
\def\dist{\operatorname{dist}}
\def\dist{\operatorname{dist}}
\def\supp{\operatorname{supp}}
\def\Ric{\operatorname{Ric}}
\def\div{\operatorname{div}}
\def\proj{\operatorname{proj}}
\begin{document}

\begin{abstract}

	We introduce a notion of vanishing Maslov index for lagrangian varifolds and lagrangian integral cycles in a Calabi-Yau manifold. We construct mass-decreasing flows of lagrangian varifolds and lagrangian cycles which satisfy this condition. The flow of cycles converges, at infinite time, to a sum of special lagrangian cycles.
	
	We use the flow of cycles to obtain the fact that special lagrangian cycles generate the part of the lagrangian homology which lies in the image of the Hurewicz homomorphism. We also establish a weak version of a conjecture of Thomas-Yau regarding lagrangian mean curvature flow.

\end{abstract}

\maketitle

\setcounter{secnumdepth}{2}

\setcounter{section}{-1}

\section{Introduction}

In a K\"ahler manifold which is Calabi-Yau or, more generally, K\"ahler-Einstein, the mean curvature vector of a lagrangian submanifold is an infinitesimal symplectic motion and therefore preserves the lagrangian constraint. This implies that the problem of minimizing volume among lagrangian submanifolds in a  K\"ahler-Einstein manifold is formally possible. It also implies that mean curvature flow in a  K\"ahler-Einstein manifold preserves the lagrangian constraint for as long as the flow exists. This property inspired Thomas and Yau to conjecture \cite{ty}  that a stable embedded  lagrangian submanifold in a Calabi-Yau manifold with vanishing Maslov class will,  under classical mean curvature flow,  converge to an embedded lagrangian submanifold with mean curvature zero. Such a limiting submanifold would then be a special lagrangian submanifold calibrated by a parallel section of the canonical line bundle, hence minimizing. The precise statement of Thomas-Yau's conjecture allows for degeneration resulting from connect sums and other simple singularities, but the spirit of the conjecture is that classical mean curvature flow leads to special lagrangian submanifolds.  A. Neves \cite{nzm} has shown that  there are embedded  lagrangian submanifolds in a Calabi-Yau manifolds with vanishing Maslov class that,  under classical mean curvature flow, develop singularities in finite time. These examples give counterexamples to the Thomas-Yau conjecture if the assumption of stability is removed.

On the other hand, mean curvature of any codimension for singular submanifolds (rectifiable varifolds) has been defined and studied by Brakke \cite{b}. It is natural to try to extend Brakke's work to the lagrangian setting. However this is not possible. By work of the second author \cite{w1} there are lagrangian surfaces in a Calabi-Yau surface with non-zero mean curvature which minimize area among lagrangian representing a homology class. Under Brakke's flow, indeed under any generalized mean curvature flow, the area of such a surface will decrease. But since the surface minimizes among lagrangians, the flow cannot then preserve the lagrangian constraint. Taken together these two observations seem to imply that neither classical nor weak mean curvature flow is useful in the construction of special lagrangians. \\

In this paper we introduce and study a {\em lagrangian varifold flow}. This is a flow of certain lagrangian rectifiable varifolds in euclidean space, or, more generally, in a Calabi-Yau manifold, which we call {\em Maslov index zero rectifiable varifolds}. We give the precise definition below, but note here that an embedded lagrangian submanifold with zero Maslov index in the usual sense satisfies our  definition and an embedded lagrangian submanifold with non-vanishing Maslov class in the usual sense does not.

Our first result concerns a lagrangian rectifiable varifold in euclidean space $\R^{2n}$.

\begin{thm}
	Let $V$ be a compactly-supported Maslov index zero varifold with $H\in L^2(V)$ in $\R^{2n}$. There is a one-parameter family $V(t)$, $t>0$, of zero-Maslov varifolds with $H\in L^2(V)$ with:
		\begin{enumerate}
			\item $V=V(0)$.
			\item $\lVert V(t)\rVert$ is nonincreasing and lower-semicontinuous in $t$.
			\item On spacetime regions where $V(t)$ is a $C^2$-submanifold, $V(t)$ moves by lagrangian mean curvature flow.
		\end{enumerate}
\end{thm}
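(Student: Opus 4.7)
The plan is to construct the flow by a time-discrete minimizing movements scheme in the spirit of Almgren--De~Giorgi, adapted to the class of Maslov-index zero lagrangian varifolds. Fix a step size $h>0$, set $V_0^h=V$, and inductively define $V_k^h$ as a minimizer of an energy of the form
\[
\mathcal{E}_h(W;V_{k-1}^h) \;=\; \lVert W\rVert \;+\; \tfrac{1}{2h}\, \mathbf{d}(W,V_{k-1}^h)^2
\]
over compactly supported Maslov-index zero lagrangian rectifiable varifolds $W$ with $H\in L^2(W)$, where $\mathbf{d}$ is a distance on the space of such varifolds compatible with $L^2$-displacement. The piecewise-constant interpolants $V^h(t):=V_{\lfloor t/h\rfloor}^h$ are my candidate approximate flows, and the desired $V(t)$ will be obtained by sending $h\to 0$.

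The existence of a minimizer at each step relies on three ingredients. First, mass is lower semicontinuous under weak varifold convergence; second, the comparison $\mathcal{E}_h(V_{k-1}^h;V_{k-1}^h)=\lVert V_{k-1}^h\rVert$ provides a uniform mass bound, which together with the $L^2$ bound on $H$ gives compactness via Allard's compactness theorem; and third, the Maslov-zero condition must be closed under the relevant weak limit. The latter will be established by reformulating zero Maslov index as the existence of a globally defined lagrangian angle function $\th\in L^2$ on the varifold (as in the characterization presumably given in the preceding section), and passing to the limit on $\th$ simultaneously with the varifolds.

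Next I would extract a subsequential limit $V(t)$ as $h\to 0$. The discrete scheme makes $k\mapsto \lVert V_k^h\rVert$ nonincreasing by construction, which passes to $t\mapsto \lVert V(t)\rVert$ in the limit; lower semicontinuity is immediate from lower semicontinuity of mass. The Maslov-zero property is inherited from the closure argument above. To extract the lagrangian mean curvature flow on smooth regions, I would compute the first-variation (Euler--Lagrange) equation of $\mathcal{E}_h$: on a spacetime region where $V_{k-1}^h$ is $C^2$, variations supported there give, to leading order in $h$,
\[
\tfrac{1}{h}\bigl(V_k^h-V_{k-1}^h\bigr) \;=\; H_{V_{k-1}^h} + o(1),
\]
and since the Maslov class vanishes one has $H=J\n\th$, so the induced motion stays lagrangian. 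Sending $h\to 0$ on a smooth region yields classical lagrangian mean curvature flow there.

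I expect the main obstacle to be establishing that the minimization class is actually closed under the weak convergence used for compactness. Rectifiable lagrangian varifolds can lose the lagrangian property in the limit if tangent planes degenerate, and the globally defined lagrangian angle can develop jumps which violate the Maslov-zero condition unless care is taken. Overcoming this will require combining the $L^2$-control on $H$ (which via Allard-type estimates controls the tangent planes) with an $L^2$-bound on $\n\th$ (which, because $H=J\n\th$, is available along the flow) to get enough compactness on the angles themselves to rule out jump formation in the limit. A secondary difficulty is upgrading the weak Euler--Lagrange equation to the pointwise mean curvature flow equation on $C^2$-regions, for which one needs short-time regularity of the discrete scheme uniform in $h$.
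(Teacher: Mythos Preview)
Your proposal takes a genuinely different route from the paper. The paper does not use a minimizing movements scheme at all. Instead it constructs the flow via explicit Hamiltonian diffeomorphisms: for each $\e>0$ it mollifies the lagrangian angle $\b$ (a function on $V$) to obtain a smooth ambient function $\b_\e$, uses $\b_\e$ as a Hamiltonian potential to generate a Hamiltonian flow on $\R^{2n}$, and iterates this with a dyadic time-step Euler scheme. The $\e$-approximate flows $V^\e(t)$ are thus pushforwards of $V$ by honest Hamiltonian diffeomorphisms, so they are automatically lagrangian and Maslov-index zero. The hard work lies in obtaining uniform-in-$\e$ estimates --- a sup bound on $\b^\e$ via a pseudodifferential Garding-inequality/vanishing-viscosity argument, and a volume-decrease estimate using the formula $B=-JH+\b H$ for the weak derivative of $\b$ --- so that one can extract a limit as $\e\to 0$.

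Your approach has a real gap at its foundation: the distance $\mathbf{d}$ is never specified. For hypersurfaces one has the Almgren--Taylor--Wang construction, but for higher-codimension varifolds there is no standard $L^2$-type metric whose gradient flow is mean curvature flow. Absent such a $\mathbf{d}$, there is nothing to minimize, and the formal Euler--Lagrange computation $(V_k^h-V_{k-1}^h)/h=H+o(1)$ is purely heuristic (varifolds do not subtract). This is not a detail that can be filled in routinely.

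There is also a structural point. The paper emphasizes that Brakke flow does \emph{not} preserve the lagrangian constraint: there exist lagrangian area-minimizers which are not stationary, so any unconstrained mass-decreasing flow must leave the lagrangian class. You avoid this tautologically by restricting the minimization to Maslov-index-zero varifolds, but then the Euler--Lagrange equation yields only a constrained gradient, and you must argue separately that on smooth regions the constraint is inactive. You note the mechanism (via $H=J\n\th$), but this shows precisely that the substantive content is the availability of the Hamiltonian deformation $JD\b$. The paper exploits this directly: rather than minimize mass and hope the minimizer moves by $H$, it flows by $JD\b_\e$ and proves this decreases mass. Your scheme reverses the logic, and the reversal introduces exactly the undefined object $\mathbf{d}$.

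Finally, your closure argument would require bounds the paper works hard for: a uniform $L^\infty$ bound on $\b$ along the flow and integrated-in-time $L^2$ control on $H$. In a mass-minimizing scheme there is no evident mechanism producing an $L^\infty$ bound on $\b$, since the functional $\mathcal{E}_h$ does not see $\b$ at all; without such a bound the compactness theorem for Maslov-index-zero varifolds (which needs $\lVert\b_j\rVert_{L^\infty(V_j)}$ uniformly bounded) is unavailable.
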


 We next consider a lagrangian homology class $\a \neq 0$ in a closed Calabi-Yau manifold.
 
 \begin{thm}
	Let $I$ be a rectifiable lagrangian cycle representing $\a$ whose associated varifold is a Maslov index zero has $H\in L^2(V)$. There is a one-parameter family $I(t)$ of rectifiable cycles with:
		\begin{enumerate}
			\item $I=I(0)$.
			\item $M(I(t))$ is nonincreasing and lower-semicontinuous in $t$.
		\end{enumerate}
	Moreover, $I(t)$ converges sequentially, as $t\rightarrow\infty$, to $I(\infty)$, which can be written as a sum of finitely many special lagrangian cycles (possibly with different phases).
\end{thm}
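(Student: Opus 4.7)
The plan is to reduce Theorem 0.2 to Theorem 0.1 together with a compactness-plus-identification argument for the long-time limit. First I would adapt the varifold flow of Theorem 0.1 to the compact Calabi-Yau ambient. Working near each point of $\supp I$ in a Darboux chart, the ambient Calabi-Yau structure is a small perturbation of the flat one, so one obtains local flows that can be patched via the lower semicontinuity of mass and the uniform $L^2$ bound on mean curvature. The cycle condition $\partial I(t)=0$ is preserved because the Maslov-zero varifold flow of Theorem 0.1 is essentially a phase-gradient-type perturbation of the underlying varifold and so introduces no boundary.

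Given such a flow, $M(I(t))$ is nonincreasing and bounded below by the cohomological lower bound $|\int_\alpha \Real\Omega|$, so $M(I(t))\to m_\infty$. Federer--Fleming compactness applied in the compact Calabi-Yau yields a subsequence $t_k\to\infty$ and an integral cycle $I(\infty)$ representing $\alpha$, with $I(t_k)\to I(\infty)$ both flatly and as varifolds. The mass-decay estimate underlying Theorem 0.1 integrates to
\[
\int_0^\infty\!\int |H|^2\,d\|V(t)\|\,dt \;\leq\; M(I)-m_\infty \;<\;\infty,
\]
so along a further subsequence $\int|H|^2\,d\|V(t_k)\|\to 0$; varifold lower semicontinuity then gives $H=0$ weakly for $I(\infty)$.

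Because Maslov zero passes to the varifold limit, $I(\infty)$ admits a well-defined phase $\theta:\supp I(\infty)\to\R$. The lagrangian identity $H=J\grad\theta$, valid on the rectifiable part, forces $\theta$ to be locally constant. Decomposing $I(\infty)$ by the values taken by $\theta$ yields integral cycles $I_j(\infty)$ of constant phase $\theta_j$, each calibrated by $\Real(e^{-i\theta_j}\Omega)$ and hence special lagrangian. Finiteness of the decomposition follows from the uniform lower density bound for integral varifolds, which caps the number of distinct phase values at $M(I)/\delta$ for a universal $\delta>0$.

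The main obstacle is controlling the phase through the weak limit: showing that Maslov-zero is a closed condition under the natural varifold/flat convergence, and that $\int|H|^2\to 0$ together with Maslov-zero yields a measurable phase on $I(\infty)$ whose distributional gradient vanishes. Once this analytic step is in hand, the decomposition of $I(\infty)$ into a finite sum of special lagrangian cycles is essentially formal.
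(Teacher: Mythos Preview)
Your outline has the right overall architecture---construct a flow, get an integrated $L^2(H)$ bound, extract a stationary limit, decompose by phase---but two of the steps you treat as routine are in fact the substance of the paper.

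\textbf{The flow cannot be built by patching Darboux charts.} The flow in Theorem~0.1 is not a local object that can be glued. It is produced by mollifying the lagrangian angle $\beta$ \emph{with respect to the varifold measure} to obtain a globally defined smooth Hamiltonian $\beta_\e$, flowing by the ambient Hamiltonian diffeomorphisms generated by $JD\beta_\e$, and then passing to the limit $\e\to 0$. On a closed Calabi--Yau manifold the paper simply replaces $\phi_\e(x-y)$ by a bump in Riemannian distance; the construction remains global. There is no mechanism for making locally-defined varifold flows agree on overlaps, and ``lower semicontinuity of mass'' does not supply one. Relatedly, the reason $\partial I(t)=0$ is preserved is not that the flow is a ``phase-gradient-type perturbation,'' but that each approximate flow $I^\e(t)=\Psi^\e(t)_\#I$ is the pushforward of a cycle by an honest Hamiltonian diffeomorphism, and the limit is taken in the Federer--Fleming compactness class of integral cycles.

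\textbf{Decomposing $I(\infty)$ by the values of $\theta$ is not ``essentially formal.''} You correctly identify that Maslov-zero must survive the limit and that $H=0$ forces the tangential gradient of $\beta$ to vanish; the paper proves the first via a compactness theorem for Maslov-zero varifolds and the second via its Harvey--Lawson formula $B=-JH+\beta H$. But $\beta$ is only an $L^\infty(V)$ function on a rectifiable set, so ``decompose by its values'' has no a priori meaning as an operation on integral currents: the slices $I\llcorner\{\beta=\theta_j\}$ need not be cycles. The paper resolves this by mollifying to an ambient Lipschitz function $\beta_\eta$, using the coarea formula together with $\nabla\beta_\eta\to 0$ in $L^1(V)$ to show that for almost every level the slice boundary has vanishing $\mathcal H^{n-1}$-measure, and extracting the special lagrangian pieces as limits of $I\llcorner\beta_\eta^{-1}(\theta_1-\e,\theta_1+\e)$. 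Finiteness then comes from Allard regularity plus monotonicity, which gives each piece mass at least $\omega_n R^n$ in a fixed ball---your ``uniform lower density bound'' gestures at this but does not supply the lower bound for the \emph{pieces}, only for the whole.

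A smaller point: your mass lower bound $\lvert\int_\alpha\Real\Omega\rvert$ can vanish for nonzero $\alpha$ (the phases of the eventual special lagrangian summands may cancel). The correct lower bound is the homological mass infimum $m(\alpha)=\inf\{M(T):[T]=\alpha\}>0$.
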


 As a consequence of our results, we have:

 \begin{cor}
 \label{cor:Thomas-Yau}
 Let $N$ be a closed Calabi-Yau manifold. 
If an integral lagrangian homology class $\a \in H_n(N; \Z)$ can be represented by an imbedded lagrangian submanifold with zero Maslov index then $\a = \a_1 + \dots + \a_k$ where each $\a_i \in H_n(N; \Z)$ is a lagrangian homology class that can be represented by a special lagrangian current. The phases of the calibrating $n$-forms may be different for each $i=1, \dots, k$. 
 \end{cor}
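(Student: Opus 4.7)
The plan is to apply Theorem 2 directly to the given embedded representative of $\a$. Let $L\subset N$ be an embedded lagrangian submanifold representing $\a$ with zero Maslov index in the classical sense, and let $I=\llbracket L\rrbracket$ denote the associated integer multiplicity rectifiable cycle. The introductory remarks assert that an embedded lagrangian submanifold with classically vanishing Maslov class satisfies the paper's definition of a Maslov index zero varifold, so the varifold $V$ associated to $I$ is Maslov index zero. Since $L$ is a compact $C^\infty$ embedded submanifold, its mean curvature vector is continuous and bounded, so $H\in L^2(V)$ trivially. Thus the hypotheses of Theorem 2 are satisfied.

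Applying Theorem 2 yields a one-parameter family $I(t)$ of rectifiable lagrangian cycles with $I(0)=I$, nonincreasing and lower-semicontinuous mass, sequentially converging to a limit $I(\infty)=J_1+\dots+J_k$ where each $J_i$ is a special lagrangian cycle, possibly with distinct phases. Setting $\a_i:=[J_i]$, the corollary reduces to verifying that $\a=\a_1+\dots+\a_k$ in $H_n(N;\Z)$ and that each $\a_i$ is an integral class.

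To conclude, I would show that the homology class is preserved along the flow and in the limit. Each cycle $I(t)$ is produced by a mass-decreasing deformation of $I(0)$, and on the smooth locus the deformation is classical lagrangian mean curvature flow, which sweeps out a cobordism; the generalized flow of Theorem 2 is built precisely so that these cobordisms patch across the singular times, so $[I(t)]=\a$ for each $t>0$. At the infinite-time limit, the uniform mass bound $M(I(t))\le M(I(0))$ on a closed manifold combined with the Federer--Fleming compactness theorem upgrades the sequential varifold convergence to flat convergence of integer cycles; flat convergence of integral cycles preserves the integer homology class. Hence $[I(\infty)]=\a$ in $H_n(N;\Z)$, and the decomposition $I(\infty)=\sum J_i$ gives $\a=\sum\a_i$ with each $\a_i\in H_n(N;\Z)$ represented by the special lagrangian current $J_i$.

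The main obstacle is precisely the last paragraph: controlling the homology class through the possibly singular flow and across the infinite-time limit. Routine mass-monotonicity is not enough; one needs that the flow of Theorem 2 is constructed through spacetime cobordisms (so that $[I(t)]$ is independent of $t$), and one needs sufficient regularity of the convergence $I(t)\to I(\infty)$ to infer equality of integer homology classes. Once these measure-theoretic facts are in hand, the corollary follows immediately.
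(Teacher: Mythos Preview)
Your overall strategy is exactly the paper's: verify that an embedded lagrangian with vanishing Maslov class gives a Maslov-index-zero cycle with $H\in L^2$ (Propositions~\ref{prop:embeddedZM}/\ref{prop:immersedZM}), then invoke Theorem~2 and read off the decomposition. You are also right that the only substantive point left is that the integer homology class is preserved along the flow and into the limit.

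Where your argument diverges from the paper is in the mechanism for homology preservation. You propose spacetime cobordisms ``patched across singular times''; this is speculative and not how the construction actually works. The paper's argument is simpler and more robust: the flow $I(t)$ is not built directly as a weak evolution but as a limit of the $\e$-approximate flows $I^\e(t)=\Psi^\e(t)_\# I$, where each $\Psi^\e(t)$ is an honest hamiltonian \emph{diffeomorphism} of $N$ (Section~\ref{sec:epsilonflows}). Thus $[I^\e(t)]=[I]$ for every $\e$ and $t$ trivially, with no need to control singularities. The limit $I(t)$ is then obtained by weak convergence of integral cycles (Theorem~\ref{thm:current-compactness}), and weak convergence of cycles on a closed manifold preserves the homology class (pair with closed forms). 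The same reasoning applies to the sequential limit $I(\infty)$. So the cobordism picture, while morally present, is not needed and is replaced by the two-step ``diffeomorphism $+$ weak limit'' argument; your Federer--Fleming remark for the $t\to\infty$ step is correct and is exactly what is used.
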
 
 
 The special lagrangian varifolds are volume minimizing in their homology class and therefore by the work of Almgren \cite{al} have the regularity of volume minimizers, namely, they are regular except on a set of Hausdorff codimension two. We say they are {\it special lagrangian varieties}.
 
 Let $N$ be a closed Calabi-Yau manifold of complex dimension $n$. A class in $H_n(N, \Z)$ is called {\it a lagrangian homology class} if it can be represented by a simplex consisting of simplices with all $n$-simplices lagrangian. Denote the subspace of lagrangian homology classes by $LH_n(N, \Z)$. Let ${\cal S} \subset LH_n(N, \Z)$ be the subspace of the lagrangian homology that is generated by the special lagrangian cycles. 
 
 \begin{thm}
Let $N$ be a closed Calabi-Yau manifold of complex dimension $n$. 
If an integral lagrangian homology class $\a \in H_n(N; \Z)$ can be represented by the image of a smooth map $f: M \to N$, where $M$ is a simply connected closed $n$-manifold then $\a \in {\cal S}$. In particular, the image of the Hurewicz homomorphism $\pi_n(N) \to H_n(N, \Z)$ in the lagrangian homology lies in ${\cal S}$.
 \end{thm}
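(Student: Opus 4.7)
The plan is to reduce the statement to Corollary~\ref{cor:Thomas-Yau} by producing, from the given $f:M\to N$, a rectifiable integral lagrangian cycle representing $\a$ whose associated varifold is Maslov index zero and has $H\in L^2$, and then invoking the flow theorem for lagrangian cycles.

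\emph{Step 1: lagrangian representative.} Since $\a\in LH_n(N;\Z)$, after smoothing a lagrangian simplicial representative of $\a$, or by applying Gromov's $h$-principle for lagrangian immersions from a simply connected $n$-manifold into a Calabi-Yau $2n$-manifold, we may replace $f$ by a smooth lagrangian immersion homotopic to it. Let $I := f_*[M]$, a rectifiable integral lagrangian cycle with $[I]=\a$, and let $V$ denote the associated integral varifold. Because $f$ is a smooth immersion and $M$ is compact, $V$ has smooth support and $H\in L^2(V)$ follows immediately.

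\emph{Step 2: vanishing Maslov condition.} Pulling back the lagrangian angle $\th$ along $f$ produces a smooth map $f^*\th: M\to \R/\pi\Z$ whose differential represents the Maslov class of the immersion in $H^1(M;\Z)$. Since $\pi_1(M)=0$ we have $H^1(M;\Z)=0$, so $f^*\th$ lifts to an $\R$-valued smooth function on $M$. Pushing this lift forward by $f$ equips each sheet of $V$ with a globally single-valued $\R$-valued lagrangian angle, which is precisely the Maslov index zero condition for $V$.

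\emph{Step 3: run the flow and conclude.} The flow theorem for lagrangian cycles applied to $I$ produces a family $I(t)$, all cycles in the class $\a$, converging sequentially to $I(\infty)=\sum_{i=1}^k J_i$, a finite sum of special lagrangian cycles of possibly different phases. Each $[J_i]\in{\cal S}$ by definition of ${\cal S}$, so $\a=\sum_{i=1}^k [J_i]\in{\cal S}$. The Hurewicz image statement is the special case $M=S^n$ with $n\geq 2$, which is simply connected (in complex dimension $n=1$ the lagrangian condition coincides with being holomorphic and the statement is immediate).

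The main obstacle is Step 1: ensuring that $f$ can be taken to be a lagrangian immersion. Simple connectedness of $M$ is used twice in an essential way — once to kill the formal obstruction to being lagrangian in the relevant $h$-principle, and once to kill the Maslov class $[df^*\th]\in H^1(M;\Z)$ of the resulting immersion. Without simple connectedness neither a lagrangian representative nor the Maslov-zero hypothesis of the flow theorem can in general be secured, so these two uses of $\pi_1(M)=0$ are the linchpin of the argument.
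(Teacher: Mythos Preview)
Your approach is essentially the paper's: invoke the Gromov--Lees $h$-principle to homotope $f$ to a lagrangian immersion, use $\pi_1(M)=0$ to kill the Maslov class, appeal to Proposition~\ref{prop:immersedZM} to verify the Maslov-index-zero and $H\in L^2$ conditions on the associated varifold, and then run the cycle flow (Theorem~\ref{thm:maintopological}).

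One correction to your closing paragraph: simple connectedness is \emph{not} what enables the $h$-principle. The relevant input to Gromov--Lees is $f^*[\omega]=0$, which comes from the hypothesis that $\alpha$ is a lagrangian homology class (the paper states this explicitly); the residual formal bundle obstruction is not handled by $\pi_1(M)=0$ either, and both you and the paper simply assert it away. So $\pi_1(M)=0$ is used exactly once, in Step~2, to force $H^1(M;\Z)=0$ and hence vanishing Maslov class. Relatedly, your first alternative in Step~1 (smoothing a lagrangian simplicial representative of $\alpha$) does not produce an immersion of the given simply connected $M$, so it cannot feed into Step~2; only the $h$-principle route actually works here.
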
 
 
  \begin{cor}
Let $N$ be a simply connected closed Calabi-Yau manifold of complex dimension $n$. Then for $n = 2, 3, 4, 5, 6$ all lagrangian homology classes lie in  ${\cal S}$.
   \end{cor}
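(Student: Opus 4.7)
By the preceding theorem it suffices to show that every class $\alpha \in LH_n(N, \Z)$ admits some smooth representative $f : M \to N$ with $M$ a simply connected closed oriented $n$-manifold. I would split into two cases.

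For $n = 2$, since $N$ is simply connected the Hurewicz homomorphism gives $\pi_2(N) \cong H_2(N; \Z)$, so $\alpha$ is represented by a smooth map $S^2 \to N$ and the previous theorem applies immediately.

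For $n \in \{3, 4, 5, 6\}$, I would combine Thom's representability theorem with a $\pi_1$-surgery argument. Thom proved that for $n \le 6$ every integer class in $H_n(N; \Z)$ is realized as $f_*[M]$ for some smooth closed oriented $n$-manifold $M$ and smooth map $f : M \to N$; pick such $(M, f)$ for $\alpha$. Since $M$ is compact, $\pi_1(M)$ is finitely generated; represent a chosen generator by a smoothly embedded circle $\gamma \subset M$, whose normal bundle is trivial because $M$ is orientable and $n \ge 3$. Because $\pi_1(N) = 0$, the loop $f \circ \gamma$ bounds a disk in $N$, and the nullhomotopy lets one extend $f$ across the surgery handle $D^2 \times D^{n-1}$ attached to $M \times [0,1]$. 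The resulting map $F : W \to N$ on the surgery trace has $\partial W = M \sqcup (-M')$, so the bordism relation yields $f'_*[M'] = f_*[M]$ in $H_n(N; \Z)$; by Seifert--van Kampen, $\pi_1(M')$ is $\pi_1(M)$ with the normal closure of $[\gamma]$ killed. Iterating over a finite generating set of $\pi_1(M)$ produces a simply connected representative of $\alpha$, whence the preceding theorem gives $\alpha \in \cal S$.

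The crux --- and the source of the dimension bound --- lies in two independent obstructions: Thom's theorem fails for $n \ge 7$ because of torsion classes not representable by smooth oriented manifolds, and the circle surgery just described needs $n \ge 3$ both for generic embeddings and for framing. Together they pin the range of applicability of the method to exactly $n \in \{2, 3, 4, 5, 6\}$, which is the content of the corollary.
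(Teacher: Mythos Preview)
Your proposal is correct and follows essentially the same approach as the paper: the $n=2$ case via Hurewicz, and for $3\leq n\leq 6$ Thom's representability theorem combined with $\pi_1$-surgery on the domain manifold (using simple connectivity of $N$ to fill the loops) to produce a simply connected representative, after which the preceding theorem applies. The paper phrases the surgery step slightly differently---building an explicit space by attaching a $2$-handle and using a deformation retract rather than invoking the surgery trace and bordism---but the content is the same.
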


\bigskip

A central idea is the definition of a lagrangian integer-rectifiable varifold with Maslov index zero. We do not define the Maslov class on such a varifold, only the more restricted notion of Maslov index zero. We exploit the geometry of the ambient Calabi-Yau manifold to make the definition. On any symplectic $n$-manifold $N$ consider the fiber bundle whose fiber at $x \in N$ consists of the Grassmann bundle $LGr(n)$ of oriented lagrangian $n$planes at $x$. On a Calabi-Yau manifold there is a  complex $(n,0)$-form $\sigma$ that is a parallel section of the canonical line bundle. Evaluating this form on a lagrangian $n$-plane defines a map:
$$
\ell: LGr(n) \to S^1
$$
that we call the $S^1$-valued lagrangian angle. It determines a map:
$$
\b: LGr(n) \to \R
$$
where $\b$ is defined mod $\pi$.  We call this the lagrangian angle. A rectifiable $n$-varifold $V$ has an approximate tangent $n$-plane almost everywhere and therefore $\b$ is well defined mod $\pi$ almost everywhere on $V$. 

If $V$ is a $C^1$ embedded lagrangian submanifold then $\b$ is well defined mod $2\pi$ everywhere, and the classical notion of Maslov index zero is equivalent to the statement that $\b$ has a continuous lift to a $\R$-valued function. On a varifold, we cannot require that $\b$ has a continuous lift to a scalar-valued function. We will require that $\b$ has a lift to a scalar valued function in $L^\infty(V)$. Abusing notation we will denote this lift $\b$ as well. This condition is not itself adequate, since on a $C^1$ embedded lagrangian submanifold selecting such a lift is always possible by allowing $\b$ to have jump discontinuities across codimension one sets.

To eliminate the possibility of such jump discontinuities we introduce a notion of  the weak derivative and require that the weak derivative $B$ of $\b \in L^\infty(V)$ lies in $L^2(V)$ with values $TN$. In sum, a lagrangian rectifiable varifold has Maslov index zero if the $S^1$ lagrangian angle admits a lift to a scalar function $\b \in L^\infty(V)$ and $\b$ has a weak derivative in $L^2(V)$.

If $V$ is a lagrangian rectifiable varifold with Maslov index zero then on $V$ we have a function $\b$.  We would like to use $\b$ as a hamiltonian potential function to generate a flow; however, $\b$ is merely a function on $V$. For each $\e>0$, we smooth $\b$ to $\b_\e$, a smooth function on $N$ given by mollification with respect to $V$. Using $\b_\e$ a hamiltonian potential to define a hamiltonian flow on $N$ and flowing for time $\D t$ yields a new lagrangian rectifiable varifold with Maslov index zero. Iterating and letting $\D t \to 0$ we construct a one-parameter family of lagrangian varifolds for each $\e > 0$. Then we let $\e \to 0$ to construct the ``hamiltonian flow'' referred to above. This ``hamiltonian flow'' consists of integral lagrangian rectifiable varifolds with Maslov index zero. The  mass of the varifolds is non-increasing and decreasing if the lagrangian angle is not a constant.

\bigskip

We are indebted to a number of excellent works on varifolds. We mention the work of Brakke \cite{b} on generalized mean curvature flow, the work of Allard \cite{a} on varifolds, the recent work of Menne \cite{m2} on rectifiable varifolds with locally bounded first variation and finally the book of Simon \cite{s} on the whole subject of geometric measure theory.

\section{Background and Motivation}

Let $(N, \o)$ be a K\"ahler manifold of complex dimension $n$ and let $\imath: L \to N$ be a real $n$-dimension imbedded (or immersed) submanifold. We say $L$ is {\it lagrangian} if $\imath^* \o = 0$. Let $H$ be the mean curvature vector field along $L$ and $\Ric$ denote the Ricci two-form of $M$. A simple geometric computation yields
$$
d(H \intprod \; \o) = \Ric. 
$$
In particular if the K\"ahler metric is K\"ahler-Einstein so that $\Ric = R \o$ then  $\imath^*d(H \intprod \; \o) =0$. From Cartan's formula it follows that 
$$
{\cal L}_H \o =0.
$$
Thus, $H$ is an infinitesimal symplectic motion. It then follows that:

\begin{thm}
If $(N, \o)$ is a  K\"ahler-Einstein manifold and $\imath: L \to N$ is an immersed lagrangian submanifold then classical mean curvature flow exists for some time interval $[0, T)$ and for all $t \in [0, T)$ the submanifold $L_t$ is lagrangian.
\end{thm}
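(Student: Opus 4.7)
The plan is to combine standard short-time existence for mean curvature flow with a direct computation of the evolution of the pulled-back symplectic form.

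First, short-time existence of a smooth family $F : L \times [0,T) \to N$ with $F(\cdot,0) = \imath$ and $\partial_t F = H_t$ (where $H_t$ is the mean curvature vector of $L_t := F_t(L)$) follows from the standard theory of mean curvature flow for smooth immersions, e.g.\ via the DeTurck trick, which converts the degenerate parabolic MCF equation into a strictly parabolic system.

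Second, set $\o_t := F_t^* \o$, a $2$-form on $L$ that vanishes at $t=0$. For any smooth extension $\tilde H_t$ of $H_t$ to a tubular neighborhood of $L_t$ in $N$, the pullback formula for time-dependent maps gives
\[
\frac{d}{dt}\o_t = F_t^*(\mathcal{L}_{\tilde H_t}\o).
\]
The right-hand side is independent of the extension: if $W$ vanishes along $L_t$, Cartan's formula $\mathcal{L}_W\o = d(W\intprod\o) + W\intprod d\o$ pulls back to zero on $L$, since $W|_{L_t}=0$ kills the second term, while $W\intprod\o$ vanishes along $L_t$ and so $F_t^* d(W\intprod\o) = d F_t^*(W\intprod\o) = 0$. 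Because $d\o=0$, we have $\mathcal{L}_{\tilde H_t}\o = d(\tilde H_t\intprod\o)$, and invoking the identity $d(H_t\intprod\o)=\Ric$ already derived in the excerpt together with the K\"ahler--Einstein condition $\Ric = R\o$ with $R$ constant yields
\[
\frac{d}{dt}\o_t = R\,\o_t
\]
as a pointwise linear ODE for $\o_t \in \Lambda^2 T^*L$.

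Finally, since $\o_0 = \imath^*\o = 0$ by hypothesis, uniqueness for this linear ODE forces $\o_t\equiv 0$ on $[0,T)$, so each $L_t$ is lagrangian. The only subtle step is the extension-independence of the pullback evolution formula when $H_t$ is a priori defined only along the moving submanifold; with that verified as above, the remainder of the proof is a direct consequence of the algebraic identity already established in the excerpt and of standard short-time existence for classical mean curvature flow.
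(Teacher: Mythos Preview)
Your argument has a circularity. The identity $d(H\intprod\o)=\Ric$ (more precisely, $d\,\imath^*(H\intprod\o)=\imath^*\Ric$) that you quote from the excerpt is established there only for a \emph{lagrangian} immersion; for a general half-dimensional immersion there are extra terms involving the second fundamental form and the pulled-back form $\o_t$ itself. So when you write $\frac{d}{dt}\o_t = F_t^*d(\tilde H_t\intprod\o) = R\,\o_t$, the second equality is only valid at those $t$ for which $L_t$ is already lagrangian. What you have actually shown is that $\o_t\equiv 0$ is \emph{consistent} with the evolution, not that it is forced by it. To close the argument you would need the evolution of $\o_t$ computed without the lagrangian hypothesis; this is genuinely a parabolic PDE for $\o_t$ (or an inequality for $|\o_t|^2$), not a pointwise linear ODE, and one then invokes a maximum-principle or energy argument (this is Smoczyk's theorem).

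For comparison: the paper does not give a detailed proof of this theorem at all. It records the infinitesimal statement $\mathcal{L}_H\o=0$ (valid when $L$ is lagrangian) and simply asserts that the theorem follows, treating it as a known background result. Your attempt to flesh this out via $\frac{d}{dt}\o_t = F_t^*\mathcal{L}_{\tilde H_t}\o$ and Cartan's formula is the right starting point, and your extension-independence remark is correct; the missing piece is the honest computation of $d\,F_t^*(H_t\intprod\o)$ for non-lagrangian $L_t$, which produces the Laplacian of $\o_t$ plus lower-order terms and turns the problem into a parabolic uniqueness question rather than an ODE.
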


Classical mean curvature flow cannot be defined when the flow develops a singularity and so, in general, classical mean curvature flows exist only finite intervals. Brakke \cite{b} was able to define a ``generalized mean curvature flow'' for a class of singular submanifolds known as rectifiable varifolds. Rectifiable varifolds have a tangent plane almost everywhere but are otherwise highly singular. To define a lagrangian rectifiable varifold we require the tangent plane to be a lagrangian plane. Then it is reasonable to expect that the Brakke flow preserves the lagrangian condition if the ambient manifold if K\"ahler-Einstein. However, this is false due to a result of the second author.

\begin{thm}
In general the Brakke flow does not preserve the lagrangian constraint.
\end{thm}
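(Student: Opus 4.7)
The strategy is essentially the one sketched in the paragraph immediately preceding the theorem: use Wolfson's example \cite{w1} to produce a concrete lagrangian varifold on which any generalized mean curvature flow must both decrease mass and exit the lagrangian class.

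My plan is to argue by contradiction. Suppose for the sake of contradiction that Brakke flow, started from any lagrangian rectifiable varifold in a K\"ahler--Einstein (in particular Calabi--Yau) ambient manifold, remained lagrangian for all positive times at which it exists. Take $N$ to be a Calabi--Yau surface and let $L \subset N$ be a smooth embedded lagrangian surface, produced by \cite{w1}, with the following two properties:
\begin{enumerate}
\item $L$ minimizes area among all integral lagrangian cycles in its homology class $[L] \in H_2(N;\Z)$;
\item the mean curvature vector $H$ of $L$ is not identically zero.
\end{enumerate}
View $L$ as an integer multiplicity rectifiable $2$-varifold $V_0$ and let $\{V_t\}_{t \in [0,T)}$ denote a Brakke flow starting at $V_0$.

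Next I would invoke the two standard features of Brakke flow. First, since $V_0 = L$ is a $C^2$ submanifold with $H \not\equiv 0$, the Brakke inequality
\[
\frac{d}{dt} \lVert V_t \rVert(\phi) \le \int \bigl(-\phi |H|^2 + H \cdot \nabla \phi\bigr)\,d\lVert V_t\rVert
\]
applied with test functions $\phi$ concentrated where $|H|>0$ implies that the total mass $M(V_t)$ is strictly less than $M(V_0)$ for small $t>0$. Second, for a short time the flow remains a small $C^1$-perturbation of $L$ (by short time existence for the smooth mean curvature flow and by Brakke's regularity theorem applied near $L$), so the underlying integral current $[V_t]$ is homologous to $[L]$.

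Under our standing assumption that Brakke flow preserves the lagrangian constraint, each $V_t$ would then be a lagrangian integral cycle representing the class $[L]$, with $M(V_t) < M(V_0)$ for some small $t > 0$. This directly contradicts the minimality property (1) of $L$ among lagrangian integral cycles in its homology class. Hence the lagrangian constraint cannot be preserved, proving the theorem.

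The only nontrivial ingredient is really Wolfson's existence result \cite{w1}; everything else is a standard use of Brakke's monotonicity plus short time homological continuity. The main obstacle, if one wanted to remove the reliance on \cite{w1}, would be constructing an explicit example of a lagrangian area minimizer (in its lagrangian homology class) with nonvanishing mean curvature — which is exactly the content of \cite{w1} and is far from obvious since most natural minimizers (e.g.\ special lagrangians) have $H\equiv 0$.
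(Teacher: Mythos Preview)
There is a genuine gap. You claim that \cite{w1} produces a \emph{smooth embedded} lagrangian surface $L$ which simultaneously minimizes area among lagrangian cycles in its class and has $H\not\equiv 0$. No such smooth surface can exist: by Theorem~1.1 of this paper, classical mean curvature flow starting from a smooth lagrangian in a Calabi--Yau manifold preserves the lagrangian condition for short time, decreases area (since $H\not\equiv 0$), and stays in the same homology class. This would already contradict property~(1), so properties (1), (2), and smoothness are mutually inconsistent. What \cite{w1} actually provides --- and what the paper uses --- is a \emph{singular} lagrangian varifold: the Schoen--Wolfson minimizer in a carefully chosen class on a K3 surface, which is smooth except at finitely many isolated SW cone singularities, and which has non-vanishing \emph{generalized} mean curvature precisely because of those singularities.

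This correction also invalidates your second step. Your appeal to ``short time existence for the smooth mean curvature flow'' and the statement that the Brakke flow ``remains a small $C^1$-perturbation of $L$'' require smooth initial data; for the singular Wolfson minimizer these do not apply. The paper's argument instead uses directly that Brakke flow on a rectifiable varifold with nonzero generalized $H$ strictly decreases mass, together with the fact that the flow preserves the homology class. The overall logical skeleton --- mass decrease plus lagrangian minimality forces the flow out of the lagrangian class --- is the same as yours, but it must be run on the singular minimizer, and the ``stays in the same homology class'' step cannot be justified via smooth short-time regularity.
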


To explain this result we must find a lagrangian rectifiable varifold in a  K\"ahler-Einstein manifold for which the Brakke flow exists but does not preserve the lagrangian constraint. We consider a lagrangian homology class represented by an immersed two-sphere in a K3 surface. Use the minimization procedure of \cite{sw}
to construct a lagrangian two-sphere that minimizes area in its homology class. By careful choice of the lagrangian homology class it can be shown that the class cannot be represented by a holomorphic curve for any complex structure compatible with the metric. In particular the lagrangian minimizer is not a holomorphic curve for any compatible complex structure. Applying the regularity theory of \cite{sw}, the minimizer is regular (smooth) except at finitely many points which are (i) branch points, or (ii) SW singularities. The SW singularities are modeled by a countable family of non-trivial lagrangian cones that can be written explicitly. Around the vertex of each cone there is non-zero Maslov index. If there are no SW singularities then the minimizer is a classical (branched) minimal surface that is lagrangian. For a compatible complex structure such a surface is a holomorphic curve. But this has been ruled out and therefore there must be SW singularities and the minimizer cannot be a classical minimal surface. In particular the lagrangian minimizer is a lagrangian rectificable varifold with non-vanishing generalized mean curvature. The Brakke flow decreases the area of the lagrangian minimizer and preserves the homology class. But since the lagrangian minimizer minimizes area among lagrangians the Brakke flow cannot preserve the lagrangian constraint.

The SW singularities lie at the core of these examples. 
We observe that the link of any SW singularity has non-zero Maslov index. Because our definition of Maslov index zero precludes even local Maslov index, the present paper is a demonstration that it is the non-zero {\em local} Maslov index which obstructs the existence of any weak lagrangian mass-decreasing flow.

\bigskip

\section{Lagrangian geometry}

Let $(N, \o)$ be a K\"ahler $n$-manifold. At each point $x \in N$ a real dimension $n$ subspace $P \in T_xN$ is called {\it lagrangian} if $\omega_{|_P} = 0$. At each point $x \in N$ we consider the Grassmann manifold of lagrangian $n$-planes $L \subset T_xN$. Denote this manifold $LGr(x)$ and note that $LGr(x) $ can be identified with the homogeneous space $U(n)/O(n)$. These manifolds are the fibers of a bundle over $N$ that we call the lagrangian Grassmann bundle and denote $LGr$. (We can instead consider the Grassmann manifold of oriented lagrangian $n$-planes $L \subset T_xN$. In this case the relevant Grassmann manifold is identified with the homogeneous space $U(n)/SO(n)$.)

Suppose next that $(N, \o)$ has a Calabi-Yau metric. This implies that the canonical line bundle is geometrically trivial, in particular, that it has a non-zero parallel section. Such a section is a nowhere vanishing closed $(n,0)$ form that we denote $\ell$. Restricting $\ell$ to a lagrangian $n$-plane determines a unit complex number and therefore $\ell$ defines a smooth map:
\begin{equation}
\label{equ:def-lagr-angle}
\ell: LGr\to S^1
\end{equation}
We call this map the lagrangian angle. We will often write $\ell(x, L) = e^{i\b(x, L)}$, where the function $\b$ is well-defined mod $\pi$. (In the oriented case, $\b$ is well-defined mod $2\pi$.) Along a lagrangian submanifold $\Sigma$, tangent plane $T_x \Sigma$ is lagrangian and we can write $\ell(x)=\ell(T_x\Sigma)= e^{i\b(x)}$. On $\Sigma$ the function $\b$ is well-defined mod $\pi$. (In the oriented case, $\b$ is well-defined mod $2\pi$). In both cases the tangential gradient of $\b$ along $\Sigma$, $\n \b$, is well defined.

\subsection{The Maslov index and the Maslov class}
If $L:\Sigma\to N$ is a lagrangian immersion, we can consider the assignment $p\mapsto \beta(i_*T_p\Sigma)$, which gives a mod-$2\pi$ smooth map $\Sigma\to {\mathbb R}$ (which we continue to denote by $\beta$). It is immediate that $d\beta$ is a well-defined closed one-form so $[d\beta]\in H^1(\Sigma;{\mathbb R})$, called the {\em Maslov class} of the immersion $L$.

A simple computation (see e.g. \cite{hl}) shows that, if $H$ is the mean curvature of $L$, then
$$
d \b = h = i^*(H \intprod  \; \o).
$$
The {\em Maslov index} around a one-cycle $\a$ in $\Sigma$ is given by
$$
\mbox{Mas}(\a) = \frac{1}{2\pi}\int_\a d\b.
$$ 
which computes the winding of $\sigma(T_x\Sigma)$ as $x$ traverses $\a$. In particular, $[h]=[d\beta]$ lies in the $2\pi{\mathbb Z}$ lattice of $H^1(\Sigma;{\mathbb R})$.

\begin{thm}
\label{thm:Maslov-zero} 
If $\mbox{Mas}(\a) = 0$ for all one-cycles $\a$ then the lagrangian angle $\b$ admits a smooth lift to a smooth scalar valued function; conversely if the lagrangian angle $\b$ admits a smooth lift to a smooth scalar valued function then $\mbox{Mas}(\a) = 0$ for all one-cycles $\a$.
\end{thm}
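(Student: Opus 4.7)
The plan is to reduce both implications to a standard monodromy argument for the closed one-form $h = d\b$. The crucial observation, recorded just before the theorem, is that although $\b$ is only defined modulo $2\pi$, the one-form $d\b = h = i^*(H \intprod \o)$ is a globally defined smooth closed one-form on $\Sigma$. Consequently, asking for a scalar-valued lift of $\b$ is, up to a constant on each connected component, the same as asking for a primitive of $h$, and the existence of a global primitive is controlled exactly by the periods of $h$ on one-cycles.

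For the converse direction, which is the easier one, I would suppose $\b$ lifts to a smooth function $\tilde\b:\Sigma\to\R$. Near any point, $\tilde\b$ differs from any local smooth real lift of $\b$ by a locally constant integer multiple of $2\pi$, so $d\tilde\b = h$ holds globally. For any smooth one-cycle $\a$, Stokes' theorem then gives $\int_\a h = \int_\a d\tilde\b = 0$, whence $\mbox{Mas}(\a) = \frac{1}{2\pi}\int_\a d\b = 0$.

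For the forward direction, assume $\mbox{Mas}(\a) = 0$ for every smooth one-cycle $\a$ in $\Sigma$. Then every de Rham period $\int_\a h$ vanishes, so by de Rham's theorem $h$ is exact. Choose a smooth primitive $f:\Sigma\to\R$ with $df = h$. On each connected component of $\Sigma$, fix a basepoint $p_0$ and a local smooth real lift $\b_{\rm loc}$ of $\b$ in a neighborhood of $p_0$; since $f$ and $\b_{\rm loc}$ are both local primitives of the same closed one-form, they differ by a constant near $p_0$. Replacing $f$ by $f - f(p_0) + \b_{\rm loc}(p_0)$ on that component (one such normalization per component), we may arrange $f = \b_{\rm loc}$ near $p_0$, and in particular $e^{if} = \ell$ there. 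Because all periods of $h$ vanish, the identity $e^{if} = \ell$ extends by analytic continuation along paths from $p_0$ in a path-independent way to all of $\Sigma$, so the globally smooth function $\tilde\b := f$ is the desired lift.

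The only delicate point is the component-by-component patching in the forward direction, but once the Maslov hypothesis is translated into exactness of $h$ via de Rham's theorem, this reduces to the standard covering-space argument that an $S^1$-valued smooth map admits an $\R$-valued smooth lift precisely when its induced homomorphism on $\pi_1$ is trivial. I do not anticipate a genuine obstacle.
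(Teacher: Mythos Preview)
Your argument is correct and is exactly the standard de Rham/monodromy proof one would expect. The paper itself does not supply a proof of this theorem; it is stated as a background fact and immediately followed by the next subsection. So there is no ``paper's proof'' to compare against, and your write-up fills in precisely the expected reasoning: $h=d\b$ is a well-defined closed one-form, the Maslov hypothesis is the vanishing of all its periods, and de Rham's theorem then gives a global primitive which can be normalized component-by-component to be a genuine lift of $\b$; the converse is the fundamental theorem of calculus for line integrals of an exact form over a cycle.

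One cosmetic remark: in the converse direction you invoke ``Stokes' theorem'' for $\int_\a d\tilde\b=0$. Since $\a$ is a cycle rather than a boundary, what you are really using is that the integral of an exact one-form over any closed $1$-chain vanishes (equivalently, pull back to $S^1$ and apply Stokes there). This is of course standard, but phrasing it that way avoids any possible confusion.
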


\subsection{ Variation of the lagrangian angle}\label{sec:variationofbeta}
Our approach below will be to define a flow using the lagrangian angle $\beta$. As this function is central to our construction, we need to understand how $\beta$ behaves under smooth deformations which preserve the lagrangian condition. We will use the following convenient way to compute the lagrangian angle.

\begin{defn}
	Given a lagrangian $n$-plane $S$ at $x\in N$ with oriented orthonormal basis $\{e_1,\ldots, e_n\}$,the associated orienting form is $\xi=e_1\wedge\cdots\wedge e_n$. The associated complexified orienting form is $\xi_{\C}^S=\bigwedge_k\left(e_k-iJe_k\right)$
\end{defn}

It is elementary that the lagrangian condition implies $\xi_\C^S$ has the properties: 
	\begin{align}
		J\xi_\C^S&=i\xi_\C^S\label{eqn.n0form}\\
		\lvert\xi_\C^S\rvert&=1
	\end{align}
Note that the space of $n$-vectors on $S\otimes\C$ satisfying (\ref{eqn.n0form}) is of complex dimension 1. The $n$ vector $\tilde\ell$ dual to the parallel section $\ell(x)$, when restricted to $S\otimes \C$, also satisfies (\ref{eqn.n0form}), so it must be a unit complex multiple of $\xi_\C^S$. In fact\begin{equation}
			\label{eqn.betadef}\tilde{\ell}(x)|_S=e^{-i\beta(S)}\xi_\C^S
		\end{equation}

\begin{thm}
	Let $X$ be an infinitesimal symplectic motion. Then the lagrangian angle $\beta$ of the plane $S$ has first variation
		$$\delta_X\beta(S)=-\tr_S DJX$$
	where $D$ is the Levi-Civita connection and $J$ is the almost-complex structure coming from the Calabi-Yau manifold $(N,\omega)$.
\end{thm}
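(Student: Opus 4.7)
The plan is to realize $\b(S_t)$ as the argument of the complex-valued function $t\mapsto\ell(V(t))$ and differentiate, exploiting that $\ell$ is parallel and of type $(n,0)$.

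Let $\phi_t$ denote the flow of $X$. Because $\mathcal{L}_X\o=0$, $\phi_t$ preserves $\o$, so $S_t:=(\phi_t)_*S$ is a smooth family of lagrangian planes with $S_0=S$. Push the orthonormal basis $\{e_1,\ldots,e_n\}$ of $S$ forward to $v_k(t):=(\phi_t)_*e_k$, a basis of $S_t$, and form $V(t):=v_1(t)\wedge\cdots\wedge v_n(t)$. Since $\Lambda^nS_t$ is one-dimensional, $V(t)=\J(t)\,\xi_t$ for an oriented orthonormal $\xi_t$ of $S_t$, with $\J(0)=1$, so $\J(t)>0$ for $t$ near $0$. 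The definition of $\b$ gives $\ell(\xi_t)=e^{i\b(S_t)}$, hence
$$\ell(V(t))=\J(t)\,e^{i\b(S_t)} \quad\text{and}\quad \delta_X\b(S) = \Im\!\left(e^{-i\b(S)}\left.\tfrac{d}{dt}\right|_{t=0}\ell(V(t))\right).$$
The pullback identity $\ell(V(t))|_{\phi_t(x)}=(\phi_t^*\ell)|_x(e_1,\ldots,e_n)$ gives $\left.\tfrac{d}{dt}\right|_{t=0}\ell(V(t))=(\mathcal{L}_X\ell)(e_1,\ldots,e_n)$. Since $\ell$ is parallel and $D$ is torsion-free, the standard Lie-derivative expansion collapses to
$$(\mathcal{L}_X\ell)(e_1,\ldots,e_n) = \sum_{i=1}^n \ell(e_1,\ldots,D_{e_i}X,\ldots,e_n).$$

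To evaluate each summand, expand $D_{e_i}X=\sum_j(a_{ij}e_j+b_{ij}Je_j)$ in the orthonormal basis $\{e_j,Je_j\}$ of $T_xN$, with $a_{ij}=\langle D_{e_i}X,e_j\rangle$ and $b_{ij}=\langle D_{e_i}X,Je_j\rangle$. Writing $(v)^{1,0}:=\tfrac12(v-iJv)$, a direct calculation yields $(D_{e_i}X)^{1,0}=\sum_j(a_{ij}+ib_{ij})(e_j)^{1,0}$. Because $\ell$ is of type $(n,0)$, only the $(1,0)$-parts of its arguments contribute, so
$$\ell(e_1,\ldots,D_{e_i}X,\ldots,e_n) = \sum_j(a_{ij}+ib_{ij})\,\ell\!\left(e_1^{1,0},\ldots,e_j^{1,0},\ldots,e_n^{1,0}\right),$$
with $e_j^{1,0}$ in the $i$-th slot; by antisymmetry only $j=i$ survives, giving $(a_{ii}+ib_{ii})\ell(e_1,\ldots,e_n) = (a_{ii}+ib_{ii})e^{i\b(S)}$.

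Summing over $i$ and taking the imaginary part yields $\delta_X\b(S)=\sum_i b_{ii}=\sum_i\langle D_{e_i}X,Je_i\rangle$. Invoking the K\"ahler identity $DJ=0$ gives $\langle D_{e_i}X,Je_i\rangle=-\langle JD_{e_i}X,e_i\rangle=-\langle D_{e_i}(JX),e_i\rangle$, so $\delta_X\b(S)=-\tr_SD(JX)$, as claimed. The main technical point is the $(1,0)/(0,1)$-bookkeeping: one must verify that $\J(t)$ is real positive for $t$ near zero (so it contributes nothing to $\arg\ell(V(t))$), and that antisymmetry of the $(n,0)$-form $\ell$ isolates precisely the diagonal contributions $a_{ii}+ib_{ii}$, so that the final answer is a clean trace.
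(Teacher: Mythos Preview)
Your proof is correct. It differs from the paper's in organization and in the key tool it deploys.

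The paper splits the computation into two pieces: first it computes the variation $(\psi_t)_\sharp\xi$ of the orienting $n$-vector by brute-force expansion of the wedge product, and separately it computes the gradient ${\mathbb D}e^{-i\beta}$ of the angle function on the lagrangian Grassmannian (formula (2.11)) by pairing the dual $n$-vector $\tilde\ell$ with a basis $\theta_{ij}^S=Je_i\wedge\bigwedge_{s\neq j}e_s$ of $T_SG_{\text{Lag}}$; the two are then contracted to produce the trace formula. Your route is more direct: you differentiate the scalar $\ell(V(t))$ along the flow, use the Lie-derivative identity $(\mathcal L_X\ell)(e_1,\ldots,e_n)=\sum_i\ell(e_1,\ldots,D_{e_i}X,\ldots,e_n)$ (exploiting that $\ell$ is parallel and $D$ torsion-free), and then let the $(n,0)$-type of $\ell$ and antisymmetry kill off the off-diagonal terms in the $(1,0)/(0,1)$ expansion. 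The Jacobian factor $\J(t)$, being real positive near $t=0$, is cleanly separated as the real part. The K\"ahler identity $DJ=0$ finishes the job in both proofs. Your argument is shorter and more conceptual; the paper's has the side benefit of producing an explicit formula for ${\mathbb D}e^{-i\beta}$ on the Grassmannian, which they reuse later (Proposition~5.15).
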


\begin{proof}
Let $\psi_t$ be a one-parameter family of diffeomorphisms $N\rightarrow N$, generated by the vector field $X$ (which we assume is $C_c^1$). By assumption $\psi_t$ preserves the lagrangian condition, so that $\psi$ induces a diffeomorphism $\psi_\sharp $ on $LGr $ which covers $\psi$.

Then for any $x\in N$, there are coordinates for $N$ at $x$ in which, for small $t$, \begin{align}
		\psi_t(x)&=x+tX(x)+O(t^2)\\
		d\psi_t|_x(\tau)&=\tau + tD_\tau X|_x+O(t^2)
	\end{align}

If $S$ is a lagrangian plane with orthonormal frame $e_1,\ldots,e_n$, we compute
\begin{equation}
\begin{aligned}
	&\left(\psi_t\right)_\sharp \xi=\bigwedge_k d\psi_te_k=\bigwedge_k\left(e_k+tD_{e_k}X+O(t^2)\right)\\
	&=\bigwedge_ke_k+t\sum_{k}(-1)^{k-1}D_{e_k}X\wedge\bigwedge_{s\neq k}e_s+O(t^2)\\
	&=\bigwedge_ke_k + t\sum_{k}(-1)^{k-1}(D_{e_k}X\cdot e_p)(\ e_p\wedge\bigwedge_{s\neq k}e_s)\\ \nonumber
         & \hspace{1.2cm} + t\sum_{k}(-1)^{k-1}(D_{e_k}X\cdot Je_p)(\ Je_p\wedge\bigwedge_{s\neq k}e_s) +O(t^2)\\
	&=\bigwedge_ke_k+t\tr_SDX \bigwedge_ke_k-t\sum_{k}(-1)^{k-1}(D_{e_k}JX\cdot e_p)(Je_p\wedge\bigwedge_{s\neq k}e_s)+O(t^2)
\end{aligned}
\end{equation}

Here we have used the fact that $S$ is lagrangian to decompose $D_{e_k}X$ into normal and tangential components, and write everything in terms of $e_1,\ldots,e_n$.

To compute the variation of the lagrangian angle, we will consider the function $e^{-i\beta}:LGr \rightarrow S^1$. Then we have
\begin{equation} \label{eqn.deltabeta}\delta_Xe^{-i\beta(x,S)}=\frac{d}{dt}|_{t=0}e^{-i\beta\left(\left(\psi_t\right)_\sharp (x,S)\right)}={\mathbb D}e^{-i\beta(S)}\cdot \frac{d}{dt}|_{t=0}(\psi_t)_\sharp  (x,S)
	\end{equation}
where ${\mathbb D}$ is the Levi-Civita connection coming from the bundle metric on $LGr$ induced by the Calabi-Yau metric on $N$ and the homogeneous metric on each fiber.\bigskip

To compute ${\mathbb D}e^{-i\beta(S)}$, note that the tangent plane $T_{(x,S)}LGr $ splits as $T_x N\oplus T_SG_{\text{Lag}}$. In this splitting, $\frac{d}{dt}|_{t=0}(\psi_t)_\sharp  (x,S)=\langle X,\delta_XS\rangle$. Because $\sigma$ is parallel, so is $\tilde{\sigma}$ and we have ${\mathbb D}e^{-i\beta(S)}=\langle 0,*\rangle$.

If we choose an orthonormal frame $\{e_1,\ldots,e_n\}$ for a lagrangian plane $S$ and identify $S$ with the orienting form $\xi=e_1\wedge\cdots\wedge e_n$,  we can give a basis for $T_SG_{\text{Lag}}$:
	\begin{equation}
		\theta_{ij}^S=Je_i\wedge\bigwedge_{s\neq j}e_s
	\end{equation}
for $i\leq j$. 

To compute the $\theta_{ij}^S$ component of ${\mathbb D}e^{i\beta(S)}$, let $S_{ij}(t)$ be a path of planes with $S_{ij}(0)=S$ and $S_{ij}'(0)=\theta_{ij}^S$. Then we compute
	\begin{equation}
	\begin{aligned}
		{\mathbb D}e^{-i\beta(S)}\cdot \theta_{ij}^S=\frac{d}{dt}|_{t=0}e^{-i\beta(S_{ij}(t))}=&\frac{d}{dt}|_{t=0}\langle \tilde{\ell},\xi_{ij}(t)\rangle\\
		=&\langle \tilde{\ell}, \theta_{ij}^S\rangle\\
		=&e^{-i\beta(S)}\langle \xi_\C^S,\theta_{ij}^S\rangle\\
		=&e^{-i\beta(S)}\langle \bigwedge_k (e_k-iJe_k),Je_i\wedge\bigwedge_{s\neq j}e_s\rangle\\
		=&-ie^{-i\beta(S)}\sum_i(-1)^{i-1}\delta^{ij}
	\end{aligned}
	\end{equation}

So we have the formula
	\begin{equation}\label{eqn.grassgradbeta}
	\begin{aligned}
		{\mathbb D}e^{-i\beta(S)}=&-ie^{-i\beta(S)}\sum_i (-1)^{i-1}\delta^{ij}\theta_{ij}^S=-ie^{-i\beta(S)}\sum_j(-1)^{j-1}Je_j\wedge\bigwedge_{s\neq j}e_j
	\end{aligned}
	\end{equation}

Applying (\ref{eqn.grassgradbeta}) to (\ref{eqn.deltabeta}), we have
	\begin{align} \nonumber
		\label{eqn.deltabeta2}\delta_Xe^{-i\beta(S)}=&-ie^{-i\beta(S)}\langle \sum_j(-1)^{j-1}Je_j\wedge\bigwedge_{s\neq j}e_s,\tr_SDX \bigwedge_ke_k\\ \nonumber
&-\sum_{k}(-1)^{k-1}D_{e_k}JX\cdot e_pJe_p\wedge\bigwedge_{s\neq k}e_s\rangle\\ 
		=&ie^{-i\beta(S)}\tr_SDJX 
	\end{align}
	
Since $\beta$ is only ambiguous up to addition of a multiple of $2\pi$, $\mathbb{D}\beta$ (and therefore $\delta_X\beta$) is well-defined. (\ref{eqn.deltabeta2}) yields
	\begin{equation}
		\label{eqn.deltabeta3}\delta_X\beta(S)=-\tr_SDJX
	\end{equation}
	
To carry out this computation, we identified $S$ with an orienting form $\xi^S$. Had we chosen the opposite orientation, we would have computed $\beta(S)$ using $-\xi^S$, which would result in an angle of $\beta(S)+\pi$. The formula \eqref{eqn.deltabeta3} is not affected by the addition of this constant.

\end{proof}

\bigskip

\section{Varifolds and lagrangian varifolds}
We will be concerned mostly with one-parameter families of integer-multiplicity rectifiable varifolds. For details on the theory of rectifiable varifolds, we refer the reader to Leon Simon's book \cite{s}; we briefly recall the salient points here as formulated in \cite{s}, \cite{a}, and \cite{m2} for general dimension and codimension.

Following Simon's notation, if $M$ is a countably $n$-rectifiable subset of ${\mathbb R}^N$ with multiplicity $\theta$, we write $V={\mathbf v}(M,\theta)$. To each such $V={\mathbf v}(M,\theta)$ we may associate a Radon measure on ${\mathbb R}^{N}$,  $\lVert V\rVert$, given by
$$\lVert V\rVert={\cal H}^n \; {\mbox{\rule{.6pt}{6pt}\rule{5pt}{.6pt}}~}  \; \theta$$
In particular, if $A$ is ${\cal H}^n$-measurable, then
$$\lVert V\rVert(A)=\int_{A\cap M}\theta d{\cal H}^n$$

The rectifiable set $M$ has an approximate tangent $n$-plane with respect to $\th$ for ${\cal H}^n$ a.e.~$x$, denoted $T_x M$. We then define the approximate tangent plane to the varifold $V$ by:
$$
T_x V = T_x M,
$$
for ${\cal H}^n$ a.e.~$x$.

If $X$ is an ambient $C^1$ vector field the divergence of $X$ along $V$, written $\div_V X$, is given by
$$\div_VX(x) = \operatorname{tr}_{T_xV}DX(x)$$

The first variation formula for the rectifiable varifold $V$ with respect to the variation $X$ is:
$$
\d V(X) = \int \div_V X d\lVert V\rVert 
$$

We will also use a more general notion, following Allard \cite{a}, simply called a {\em $n$-varifold}, which is a Radon measure on the Grassmann bundle $Gr(n)$ of $n$ planes in ${\mathbb R}^N$. It is elementary that to each integer-rectifiable $n$-varifold $V$ there is an associated $n$-varifold $\tilde{V}$ given by
$$\tilde{V}(A)=\lVert V\rVert(\pi(A))$$
where $\pi:Gr(n)\rightarrow {\mathbb R}^N$ is the projection map. The first variation formula for varifolds in Allard's sense is:
$$
\d V (X) = \int_{Gr(n)} \div_S X dV(x, S),
$$
We also note that each $n$-varifold induces a Radon measure $\lVert V\rVert=\pi_*V$ on ${\mathbb R}^N$, and a probability measure $V_x$ on each fiber $Gr(n)_x$, so that
$$\int_{Gr(n)} \psi(x,S)\ dV(x,S)=\int_{\R^N} \int_{Gr(n)_x} \psi(x,S)\ dV_x(S)\ d\lVert V\rVert_x$$

We say that $V$ has {\it locally bounded first variation in U} if for each $W$ with ${\overline W} \subset U$ there is a constant $c$ such that for all $C^1$ vector fields $X$ in $N$ with $\supp \lvert X\rvert \subset W$:
$$
\lvert \d V(X) \rvert \leq c \sup_U |X|.
$$
Define the total variation measure of $V$ on $U$ by:
$$
\lVert \d V\rVert(W) = \sup_{ \{ X: \lvert X\rvert \leq 1, \supp \lvert X\rvert \subset W \}} \lvert \d V(X)\rvert
$$
for any open $W$ with ${\overline W} \subset U$. Then $\lVert \d V \rVert$ being a Radon measure on $U$ is equivalent to $V$ having {\it locally bounded first variation in U}. Integer-rectifiable varifolds with locally bounded first variation satisfy a compactness result. For the proof see \cite{s} or \cite{a}.

\begin{thm}\label{thm:compactnessvarifolds}
Suppose $\{ V_i \}$ is a sequence of integer-rectifiable varifolds in $U$ that have locally bounded first variation in $U$ and suppose
$$
\sup_{i \geq 1} ( \lVert V_i \rVert(W) + \lVert\d V_i\rVert(W))
$$
is bounded for all $W$ with ${\overline W} \subset U$. Then there is a subsequence $\{ V_{i_j} \}$ and an integer-rectifiable varifold $V$ of 
locally of bounded first variation in $U$ such that $V_{i_j} \to V$ in the sense of Radon measures on $Gr(n)$. Moreover,  for all $W$ with ${\overline W} \subset U$,
$$
\lVert \d V \rVert(W) \leq \liminf_{j \to \infty} ||\d V_{i_j} ||(W)).
$$
\end{thm}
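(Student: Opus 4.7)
\medskip

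\noindent\textbf{Proof Plan.} The plan is to extract a limit by weak-$\ast$ compactness on the Grassmann bundle, verify that first variation passes to the limit, and then invoke Allard's rectifiability theorem to recover integer-rectifiability of the limiting varifold.

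First I would view each $V_i$ as a Radon measure on $Gr(n)$ (via the identification $V_i \leftrightarrow \tilde V_i$). The hypothesis that $\sup_i \lVert V_i\rVert(W)$ is bounded for every $W$ with $\overline{W}\subset U$ provides a uniform bound on $V_i(\pi^{-1}(W))$, so the Banach--Alaoglu theorem applied in each compactly contained open set, together with a diagonal subsequence argument exhausting $U$ by an increasing sequence of such $W$, produces a subsequence $V_{i_j}$ converging to a Radon measure $V$ on $\pi^{-1}(U)\subset Gr(n)$ in the weak-$\ast$ sense. Pushing forward by $\pi$ yields $\lVert V_{i_j}\rVert\to \lVert V\rVert$ as Radon measures on $U$.

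Next I would show that the limit $V$ has locally bounded first variation and that the stated lower-semicontinuity holds. Given any $C^1$ vector field $X$ with compact support in $W\subset\overline{W}\subset U$, the function $(x,S)\mapsto \operatorname{div}_S X(x) = \operatorname{tr}_S DX(x)$ is continuous and compactly supported on $Gr(n)$, so weak-$\ast$ convergence gives $\delta V_{i_j}(X)\to \delta V(X)$. Taking $|X|\le 1$ and passing to the supremum yields
\[
\lVert \delta V\rVert(W) \;\le\; \liminf_{j\to\infty} \lVert\delta V_{i_j}\rVert(W),
\]
and the right-hand side is finite by hypothesis. This gives both the lower-semicontinuity assertion and the fact that $V$ has locally bounded first variation in $U$.

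The main obstacle is showing that the weak-$\ast$ limit $V$ is actually an integer-rectifiable varifold, rather than merely a general $n$-varifold. Here I would invoke Allard's rectifiability theorem: a varifold of locally bounded first variation whose weight measure has positive lower $n$-density $\mathcal{H}^n$-almost everywhere on its support is rectifiable. The lower density bound follows from the monotonicity formula applied to $V$, which is available precisely because $\lVert\delta V\rVert$ is a Radon measure. Integer-rectifiability then follows from Allard's integer-rectifiability (closure) theorem: because each $V_{i_j}$ is integer-rectifiable and $\lVert\delta V_{i_j}\rVert(W)$ is uniformly bounded, the density $\Theta^n(\lVert V\rVert,\cdot)$ of the limit is integer-valued $\mathcal{H}^n$-almost everywhere on the rectifiable set. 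For both of these steps I would simply cite Simon \cite{s} or Allard \cite{a}, since the arguments are standard and orthogonal to the lagrangian themes of the paper.
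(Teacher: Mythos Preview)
Your proposal is correct and in fact more detailed than what the paper does: the paper treats this theorem as a known result and simply refers the reader to Simon \cite{s} or Allard \cite{a} without giving any argument. Your sketch of weak-$\ast$ compactness plus lower semicontinuity of the first variation, followed by an appeal to Allard's rectifiability and integer-closure theorems, is the standard route recorded in those references.
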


\bigskip

Let $V$ be an integer-rectifiable varifold with locally bounded first variation. Write
\begin{equation}
\d V (X) = \int_{Gr(n)} \div_S X dV(x, S) \equiv - \int \nu \cdot X d \lVert \d V \rVert,
\end{equation}
where $\nu$ is $\lVert \d V \rVert$-measurable with $| \nu| = 1$ $\lVert \d V \rVert$ a.e. 
Using the differentiation theory of Radon measures, the function
\begin{equation}
D_{\lVert V \rVert} \lVert \d V \rVert(x) = \lim_{\rho \to 0} \frac{\lVert \d V \rVert(B_\rho(x))}{\lVert V \rVert(B_\rho(x))}
\end{equation}
exists $\lVert V \rVert$ almost everywhere and is $\lVert V \rVert$ measurable. For any Borel set $W \subset U$,
\begin{equation}
\lVert \d V \rVert(W) = \int_W D_{\lVert V \rVert} \lVert \d V \rVert d\lVert V \rVert + \lVert \d V \rVert_{sing}(W),
\end{equation}
where 
$$
\lVert \d V \rVert_{sing} = \lVert \d V \rVert \;\; \lfloor \;\; Z
$$
where $Z$ is a Borel set of $\lVert V \rVert$-measure zero. Set $H(x) = D_{\lVert V \rVert} \lVert \d V \rVert(x) \nu(x)$ and call $H(V; x)$ the {\it generalized mean curvature} of $V$. $Z$ is called the {\it generalized boundary} of $V$. See \cite{s} for more details. It follows that we can write:
\begin{eqnarray}
\d V (X) &=& \int_{Gr(n)(U)} \div_S X dV(x, S) \\
\label{equ:HandSing}
&=& -\int_U H(V; \cdot) \cdot X d\lVert V \rVert + \int_U \nu \cdot X d\lVert \d V \rVert_{sing}.
\end{eqnarray}

Continuing to assume that $V$ is an integer-rectifiable varifold with locally bounded first variation following Allard \cite{a} we impose an additional condition. Let $p > 1$ and $q$ be its H\"older conjugate. Let $\a > 0$ be a constant. 
\begin{eqnarray}
\label{equ:Allard-bound}
\d V (X) \leq \a \lVert X\rVert_{L^q(V)}  \mbox{ for all smooth vector fields} \; X \; \mbox{with compact support}.
\end{eqnarray}

The following Proposition is essentially due to Allard \cite{a}, \S8.1.

\begin{prop}
\label{prop:no-sing}
If $V$ is an integer-rectifiable varifold with locally bounded first variation satisfying (\ref{equ:Allard-bound}) for $p > 1$ then $\lVert \d V\rVert_{sing} = 0$, $H(V; \cdot) \in L^p(V)$ with $\lVert H(V; \cdot)\rVert_{L^p(V)} \leq \a$  and for any compactly-supported $C^1$ test vector field $X$,
\begin{equation}
\d V (X) =  -\int H(V; \cdot) \cdot X d\lVert V \rVert.
\end{equation}
\end{prop}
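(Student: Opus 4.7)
The plan is to view $\delta V$ as a linear functional on $C^1_c$ test vector fields, use the hypothesis (\ref{equ:Allard-bound}) to extend it by continuity to the vector-valued Lebesgue space $L^q(\|V\|;\R^N)$, and then invoke Riesz duality---valid because $p>1$ forces $q<\infty$---to obtain an $L^p$ representative $H$. Reconciling this representation with the general decomposition (\ref{equ:HandSing}) of $\delta V$ will then force the singular part to vanish.

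For the extension, replacing $X$ by $-X$ in (\ref{equ:Allard-bound}) yields $|\delta V(X)| \leq \a \lVert X\rVert_{L^q(V)}$. Standard approximation theory for Radon measures shows that $C^1_c(\R^N;\R^N)$ is dense in $L^q(\|V\|;\R^N)$: simple Borel functions are dense, Borel sets of finite $\|V\|$-measure are inner/outer regular by compacta/open sets, and characteristic functions of compacta are approximated in $L^q$ by smooth bumps via mollification. Thus $\delta V$ extends uniquely to a bounded linear functional $T$ on $L^q(\|V\|;\R^N)$ of norm at most $\a$. Since $1<q<\infty$, the scalar Riesz representation theorem for $L^q(\|V\|)$, applied component by component, yields a unique $H \in L^p(\|V\|;\R^N)$ with $\lVert H\rVert_{L^p(V)}\leq \a$ and
\begin{equation*}
T(X) = -\int H \cdot X\,d\lVert V\rVert \qquad \forall\, X \in L^q(\|V\|;\R^N),
\end{equation*}
the minus sign being a convention matching the statement. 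Restricting to test vector fields gives $\delta V(X) = -\int H \cdot X\,d\lVert V\rVert$ for all $X \in C^1_c$.

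Comparing this with (\ref{equ:HandSing}) yields, for every $X\in C^1_c$,
\begin{equation*}
\int (H(V;\cdot)-H) \cdot X\,d\lVert V\rVert \;=\; \int \nu \cdot X\,d\lVert \d V\rVert_{sing}.
\end{equation*}
The left-hand side represents a vector-valued Radon measure absolutely continuous with respect to $\|V\|$, while the right-hand side is concentrated on the Borel set $Z$ of $\|V\|$-measure zero and is therefore singular with respect to $\|V\|$. By uniqueness of the Lebesgue decomposition of a vector-valued Radon measure, both sides must vanish, giving $\lVert\d V\rVert_{sing}=0$ and $H(V;\cdot)=H$ $\|V\|$-a.e., which finishes the proof. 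The only delicate point is the density of $C^1_c$ vector fields in $L^q(\|V\|;\R^N)$; this requires $q<\infty$ and so uses the hypothesis $p>1$ in an essential way. Once this is in hand, the remainder is a direct application of $L^q$--$L^p$ duality and the uniqueness of the Lebesgue decomposition.
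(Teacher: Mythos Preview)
Your proof is correct and takes a genuinely different route from the paper's. The paper works directly with the decomposition (\ref{equ:HandSing}): it first multiplies a test field $X$ by a cutoff vanishing on a neighborhood of the singular set $Z$ to isolate the absolutely continuous part and deduce $H(V;\cdot)\in L^p$ with norm $\le\alpha$; it then builds explicit smooth approximations $\bar\nu_j$ of $\nu$ concentrated near $\supp\lVert\delta V\rVert_{sing}$, with $\lVert\bar\nu_j\rVert_{L^q(V)}\to 0$, and computes $\delta V(\bar\nu_j)$ two ways to force $\lVert\delta V\rVert_{sing}=0$.

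Your approach is cleaner functional analysis: extend $\delta V$ by density to $L^q(\lVert V\rVert)$, invoke $L^q$--$L^p$ duality to produce an $L^p$ representer, and then let the uniqueness of the Lebesgue decomposition do the work of killing the singular part and identifying the two mean-curvature candidates. This avoids the paper's hand-built test fields (and the mild sloppiness there, e.g.\ the nonsmooth cutoff $\chi_\epsilon$). The paper's argument, on the other hand, is more self-contained and shows concretely \emph{how} the singular part is detected by test fields; it also yields $H(V;\cdot)\in L^p$ for the specific Radon--Nikodym density without a separate identification step. One small remark on your write-up: the ``component by component'' Riesz gives the representation but not immediately the vector bound $\lVert H\rVert_{L^p}\le\alpha$; you recover that bound afterward by duality, testing $-\int H\cdot X\,d\lVert V\rVert$ against $X$ with $\lVert X\rVert_{L^q}\le 1$.
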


\begin{proof}
Recall (\ref{equ:HandSing}) 
$$
\lVert \d V \rVert_{sing} = \lVert \d V \rVert \;\; \lfloor \;\; Z
$$
where $Z$ is a Borel set of $\lVert V \rVert$-measure zero. Let $W_\e$ be an open set with $\lVert V\rVert(W_\e) < \e$ and  $Z \subset W_\e$. Let $\chi_\e$ be the function that is identically one on $U \setminus W_\e$ and zero on $W_\e$. Set $X_\e = \chi_\e X$. Then $|| X_\e ||_{L^q(V)} \leq \lVert X\rVert _{L^q(V)}$ and $||X_\e||_{L^q(V)} \to || X ||_{L^q(V)}$ as $\e \to 0$. We have:
$$
\int_U H(V; \cdot) \cdot X_\e d\lVert V \rVert \to \int H(V; \cdot) \cdot X d\lVert V \rVert.
$$
as  $\e \to 0$ and
$$
\int_U \nu \cdot X_\e d\lVert \d V \rVert_{sing} = 0
$$
for all $\e > 0$. Therefore, letting $\e \to 0$, we have
\begin{equation}
\label{equ:Allard-ineq}
\int_U H(V; \cdot) \cdot X d\lVert V \rVert  \leq \a \lVert X\rVert _{L^q(V)}.
\end{equation}
It follows from (\ref{equ:Allard-ineq}) that $H(V; \cdot) \in L^p(V)$ and $||H(V; \cdot)||_{L^p(V)} \leq \a$.  \\

For any $i\in{\mathbb N}$ there is $\nu_i$, a smooth vector field with compact support, so that $\lvert \nu_i\rvert\leq 1+\frac{1}{i}$ and $\lVert\delta V\rVert\left\{x\middle\vert \nu_i(x)-\nu(x)\rvert\geq \frac{1}{i}\right\}<\frac{1}{i}$. For each $j\in{\mathbb N}$, let $\chi_j$ be a smooth function so that $0\leq \chi_j\leq 1$, $\chi_j\equiv 1$ on $\supp\lVert \delta V\rVert_{sing}$, and $\int \chi_j d\lVert V\rVert<\frac{1}{j}$. Set $\overline{\nu}_j=\chi_j\nu_j$. Then we have
$$\lVert \overline{\nu}_j\rVert_{L^q(V)}\leq \left(1+\frac{1}{j}\right)\frac{1}{j^{\frac{1}{q}}}\rightarrow 0$$ so that 
$$\left\lvert\int H(V)\cdot \overline{\nu}_j d\lVert V\rVert\right\rvert \leq \alpha \left(1+\frac{1}{j}\right)\frac{1}{j^{\frac{1}{q}}}\rightarrow 0$$and
$$\delta V(\overline{\nu}_j)\leq \alpha \lVert \overline{\nu}_j\rVert_{L^q(V)}\rightarrow 0.$$
On the other hand, $$\delta V(\overline{\nu}_j)=-\int H\cdot \overline{\nu_j} d\lVert V\rVert + \int \nu\cdot \overline{\nu}_j d\lVert \delta V\rVert_{sing}\rightarrow \lVert \delta V\rVert_{sing}$$

Therefore  $\lVert \d V\rVert_{sing} = 0$. The result follows.
\end{proof}

For $V$ is an integer-rectifiable varifold with locally bounded first variation, Menne \cite{m2} defines condition $(H_p)$ by:
\begin{equation}\label{eq:Hp}
\begin{cases}
H(V; \cdot) \in L^p(V).\\
\d V (X) =  -\int_U H(V; \cdot) \cdot X\ d\lVert V\rVert  \\
\, \, \, \, \, \, \,  \mbox{ for all smooth vector fields} \; X \; \mbox{with compact support}.
\end{cases}
\end{equation}

Proposition \ref{prop:no-sing} shows for $p>1$, Allard's condition (\ref{equ:Allard-bound}) implies $(H_p)$; the converse implication is immediate. The advantage of Allard's condition is the following compactness result. We will refer to the following result as Allard's compactness theorem.

\begin{thm}\label{thm:compact-Hp-varifolds}
Suppose $\{ V_i \}$ is a sequence of integer-rectifiable varifolds in $U$ that have locally bounded first variation in $U$ and satisfy $(H_p)$ with $p > 1$ and $\lVert H(V_i)\rVert_{L^p(V_i)}\leq \alpha$ for some $\alpha$ independent of $i$. Suppose
$$
\sup_{i \geq 1} ( \lVert V_i \rVert(W) + \lVert\d V_i\rVert(W))
$$
is bounded for all $W$ with ${\overline W} \subset U$. Then there is a subsequence of $\{ V_{i_j} \}$ and an integer-rectifiable varifold $V$ of 
locally of bounded first variation in $U$ also satisfying $(H_p)$ such that $V_{i_j} \to V$ in the sense of Radon measures on $Gr(n)$. 
\end{thm}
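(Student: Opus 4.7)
The plan is to reduce to Allard's condition \eqref{equ:Allard-bound} and pass to the limit under weak-$\ast$ convergence of varifolds, then invoke Proposition \ref{prop:no-sing} on the limit.

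First I would apply Theorem \ref{thm:compactnessvarifolds} directly to the sequence $\{V_i\}$. This immediately provides a subsequence $\{V_{i_j}\}$ converging as Radon measures on $Gr(n)$ to an integer-rectifiable varifold $V$ with locally bounded first variation in $U$, and the lower-semicontinuity bound $\lVert \d V\rVert(W)\leq \liminf_{j\to\infty}\lVert\d V_{i_j}\rVert(W)$ on open $W$ with $\overline{W}\subset U$. What remains is to upgrade this $V$ from merely having bounded first variation to satisfying condition $(H_p)$.

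For this, I would observe that for each $V_i$ the hypothesis $(H_p)$ together with the uniform bound $\lVert H(V_i)\rVert_{L^p(V_i)}\leq\alpha$ and H\"older's inequality yield Allard's condition \eqref{equ:Allard-bound}:
\[
\lvert\d V_i(X)\rvert \leq \alpha\,\lVert X\rVert_{L^q(V_i)}
\]
for every compactly supported $C^1$ vector field $X$. The next step is to pass this inequality to the limit. For any fixed smooth compactly supported $X$, the first variation $\d V_{i_j}(X)=\int\div_S X\, dV_{i_j}(x,S)$ tests $V_{i_j}$ against the continuous compactly supported integrand $(x,S)\mapsto\div_S X(x)$, so by weak-$\ast$ convergence of varifolds $\d V_{i_j}(X)\to\d V(X)$. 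Similarly, since $\lVert V_{i_j}\rVert=\pi_\ast V_{i_j}\to\pi_\ast V=\lVert V\rVert$ and $\lvert X\rvert^q$ is continuous and compactly supported,
\[
\lVert X\rVert_{L^q(V_{i_j})}^q=\int \lvert X\rvert^q\, d\lVert V_{i_j}\rVert \to \int \lvert X\rvert^q\, d\lVert V\rVert=\lVert X\rVert_{L^q(V)}^q.
\]
Combining these gives $\lvert\d V(X)\rvert\leq\alpha\lVert X\rVert_{L^q(V)}$ for all smooth $X$ with compact support. Since $p>1$, Proposition \ref{prop:no-sing} applied to $V$ then furnishes $H(V;\cdot)\in L^p(V)$ with $\lVert H(V;\cdot)\rVert_{L^p(V)}\leq\alpha$, and the singular part $\lVert\d V\rVert_{sing}$ vanishes, which is exactly condition $(H_p)$ as formulated in \eqref{eq:Hp}.

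The main obstacle I anticipate is the subtlety in passing from weak-$\ast$ convergence on $Gr(n)$ to convergence of the $L^q$ norms. Strictly, $\lVert X\rVert_{L^q(V)}$ depends on the projected measure $\lVert V\rVert$ on the base, and one must verify that the varifold convergence $V_{i_j}\to V$ on $Gr(n)$ does descend to $\lVert V_{i_j}\rVert\to\lVert V\rVert$ on $U$; this is immediate from $\pi$-continuity of $\lvert X\rvert^q$ and the definition $\lVert V\rVert=\pi_\ast V$. A minor additional care is needed to ensure the test vector field's support sits inside a precompact $W\subset U$ on which the uniform mass bound $\sup_i\lVert V_i\rVert(W)<\infty$ holds, so the $L^q$-norms are finite and the limit argument is legitimate; a standard exhaustion of $U$ by such $W$ makes this routine.
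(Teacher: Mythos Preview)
Your proof is correct and follows essentially the same approach as the paper: apply Theorem \ref{thm:compactnessvarifolds} to extract a convergent subsequence, pass Allard's inequality \eqref{equ:Allard-bound} through the limit using $\delta V_{i_j}(X)\to\delta V(X)$ and $\lVert X\rVert_{L^q(V_{i_j})}\to\lVert X\rVert_{L^q(V)}$, then invoke Proposition \ref{prop:no-sing}. Your additional remarks on the descent of varifold convergence to the projected measures and the exhaustion argument are reasonable elaborations of points the paper leaves implicit.
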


\begin{proof}
Using Theorem \ref{thm:compactnessvarifolds}  there is a subsequence $\{ V_{i_j} \}$ and an integer-rectifiable varifold $V$ of 
locally of bounded first variation in $U$ such that $V_{i_j} \to V$ in the sense of Radon measures on $Gr(n)$. 

Given any smooth test vector field $X$, observe that 
$$\delta V_{i_j}(X)=\int_{Gr(n)(U)}\div_S X\ dV_{i_j}\rightarrow \int_{Gr(n)(U)}\div_S X\ dV=\delta V(X)$$
and $$\lVert X\rVert_{L^q(V_{i_j})}\rightarrow \lVert X\rVert_{L^q(V)}.$$Thus condition \eqref{equ:Allard-bound} holds for $V$. By Proposition \ref{prop:no-sing}, the theorem follows.

\end{proof}

\bigskip

The  integer-rectifiable varifolds that are locally of bounded first variation and that satisfy $(H_p)$ with $p=2$ are of particular importance in this paper, in part because of the previous compactness result and in part because of the following partial regularity results proved by Menne \cite{m2}.

\begin{thm}
\label{thm:weak-regularity-M2}
Suppose $U$ is an open subset of $\R^{N}$ and $V$ is an integer rectifiable $n$-varifold with locally bounded first variation (equivalently, with $\lVert \d V \rVert$ a Radon measure). Then there exists a countable collection $C$ of $n$-dimensional $C^2$ submanifolds of $\R^{N}$  such that $\lVert V\rVert(U \setminus \cup C) = 0$ and each member $M \in C$ satisfies:
$$
T_zV=T_zM
$$
and
$$
H(V; z) = H(M; z),
$$
for $\lVert V\rVert$ almost all $z \in U \cap M$.
\end{thm}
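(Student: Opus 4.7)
The plan is to prove the partial $C^2$ regularity by combining the standard rectifiable structure of $V$ with a measure-theoretic second-order differentiability coming from the boundedness of $\lVert\delta V\rVert$, and then applying a Whitney-type $C^2$ extension.

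First, I would invoke the rectifiability of $V$ itself: by Simon's structure theorem for integer-rectifiable $n$-varifolds, the underlying set $M$ is, up to a $\mathcal{H}^n$-null set, a countable union of Lipschitz graphs over $n$-planes in $\R^N$. Rademacher's theorem gives classical differentiability of each graphing function at $\mathcal{H}^n$-a.e.\ point, and the resulting tangent planes coincide with the approximate tangent planes $T_xV$.

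Next, the hypothesis that $\lVert\delta V\rVert$ is Radon gives, through the Radon--Nikodym decomposition already recorded in \eqref{equ:HandSing}, a generalized mean curvature $H(V;\cdot)$ which is $\lVert V\rVert$-measurable. Testing the first variation formula against $C^1$ vector fields supported near a single Lipschitz graph and integrating by parts in those graph coordinates translates the bound on $\lVert\delta V\rVert$ into a bound on the distributional second derivatives of the graphing functions, weighted by the multiplicity. The key analytic step is to upgrade this to a pointwise statement: at $\lVert V\rVert$-almost every point one obtains a genuine second-order Taylor expansion of the graphing function, i.e.\ a quadratic polynomial whose remainder decays faster than quadratically in an approximate (density-one) sense. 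This is the second-order analog of Rademacher's theorem for Lipschitz functions whose distributional Hessian is a Radon measure, and it is the heart of Menne's argument.

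With pointwise second-order approximability established on a set of full $\lVert V\rVert$-measure, I would apply a Lusin--Whitney $C^2$ extension: exhaust each Lipschitz graph, up to arbitrarily small $\mathcal{H}^n$-measure, by closed subsets on which the quadratic remainders are uniformly controlled and the Whitney compatibility of the candidate $2$-jets holds; on such subsets one obtains a $C^2$ submanifold $M$ agreeing with the graph. Countable exhaustion yields the collection $C$. The identification $T_zV=T_zM$ then follows from the Rademacher step, and $H(V;z)=H(M;z)$ follows by matching first variations on $M$ against \eqref{equ:HandSing} and using that $\lVert\delta V\rVert_{sing}$ charges no point where the graphing function is twice differentiable in the strong sense. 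The principal obstacle is the second step above: converting the measure-theoretic control on $\lVert\delta V\rVert$ into pointwise quadratic Taylor expansions at $\lVert V\rVert$-almost every point, which requires careful covering arguments and delicate bookkeeping of the multiplicity function $\theta$ wherever graphs overlap.
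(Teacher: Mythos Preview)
The paper does not prove this theorem; it is quoted from Menne \cite{m2} as a black-box result (the paper says explicitly ``the following partial regularity results proved by Menne \cite{m2}'' and then states Theorems \ref{thm:weak-regularity-M2} and \ref{thm:Menne-tilt-excess}, noting that both ``depend in turn on'' the Brakke--Menne quadratic tilt-excess decay of Theorem \ref{thm:tilt-excess-B-M}). So there is no in-paper proof to compare against.

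Your outline is, in broad strokes, in the spirit of Menne's actual argument: rectifiable structure plus a pointwise second-order expansion at $\lVert V\rVert$-a.e.\ point, followed by a Whitney-type $C^2$ extension via Lusin approximation. One point of emphasis differs, however. You phrase the second-order step as ``integrating by parts in graph coordinates'' to bound the distributional Hessian of the graphing functions, then invoking a second-order Rademacher theorem. Menne does not work this way: the boundedness of $\lVert\delta V\rVert$ does not directly give a Radon distributional Hessian for the Lipschitz graphing maps (multiplicity and overlapping sheets obstruct such a clean translation). Instead, the analytic core is the quadratic tilt-excess decay (Theorem \ref{thm:tilt-excess-B-M}), which controls $r^{-n-1}\int_{B_r(a)}\lvert S-T_aV\rvert^2\,dV$ and from which Menne extracts an approximate second fundamental form and the pointwise second-order Taylor expansion. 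So your ``key analytic step'' is correctly identified as the crux, but the mechanism you propose for it (distributional Hessian bounds in graph coordinates) is not the one that actually works; the tilt-excess route is what is needed.
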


\begin{thm}
\label{thm:Menne-tilt-excess}
Suppose $U$ is an open set in $\R^{N}$, $1 \leq p \leq \infty$ and $V$ is an integer-rectifiable $n$-varifold  satisfying $(H_p)$. 
Let $R(z) \in \operatorname{Hom}( \R^{N}, \R^{N})$ be the orthogonal projection to $T_zV$. If $n = 1,2$ and $p > 1$ or $n > 2$ and $p \geq \frac{2n}{n +2}$ then for $\lVert V\rVert $ a.e. $a$:
$$
\lim_{r \to 0 } \frac{1}{\lVert V\rVert ({B_r(a)})} \int_{B_r(a)} \Big( \frac{|R(z) - R(a) - R(a)(z-a) \cdot DR(a)|}{|z-a|} \Big)^2 d\lVert V\rVert_z = 0.
$$
\end{thm}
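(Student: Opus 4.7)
The plan is to transfer the tilt-excess decay from the $C^2$-Lusin approximation provided by Theorem~\ref{thm:weak-regularity-M2}, combined with Allard's monotonicity formula for varifolds with $L^p$-mean curvature.

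I would first apply Theorem~\ref{thm:weak-regularity-M2} to obtain a countable collection $\{M_k\}$ of $n$-dimensional $C^2$ submanifolds such that $\lVert V\rVert(U \setminus \bigcup_k M_k) = 0$ and the tangent plane and mean curvature of $V$ coincide $\lVert V\rVert$-a.e.~on $M_k$ with those of $M_k$. Since each $M_k$ is $C^2$, the tangent-plane projection $R_{M_k}:M_k\to\operatorname{Hom}(\R^N,\R^N)$ is a $C^1$ map, so Taylor's theorem along $M_k$ gives, for every $a\in M_k$,
\[
\sup_{z\in M_k\cap B_r(a)}\frac{\lvert R_{M_k}(z)-R_{M_k}(a)-DR_{M_k}(a)\bigl[R_{M_k}(a)(z-a)\bigr]\rvert}{|z-a|} \longrightarrow 0
\]
as $r\to 0$. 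In particular, the average of the tilt-excess integrand over $M_k\cap B_r(a)$ against $\lVert V\rVert$ is $o(1)$.

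To control the complement, I would restrict attention to a $\lVert V\rVert$-full-measure set of good points $a$, at which (i) the density $\Theta^n(\lVert V\rVert,a)$ exists and is finite; (ii) $\lVert V\rVert(B_\rho(a)\cap M_k)/\lVert V\rVert(B_\rho(a))\to 1$ with quantitative sub-polynomial decay of $\lVert V\rVert(B_\rho(a)\setminus M_k)$; and (iii) there is a uniform polynomial density bound $\lVert V\rVert(B_\rho(a))\leq C_a\rho^n$ for small $\rho$. Properties (i) and (iii) are consequences of Allard's monotonicity formula applied to $V$, whose remainder term $\int_{B_r(a)}|H|\,d\lVert V\rVert \leq \lVert H\rVert_{L^p(V)}\lVert V\rVert(B_r(a))^{1/q}$ is absorbable precisely when $p>1$ for $n\leq 2$ or $p\geq 2n/(n+2)$ for $n>2$. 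The quantitative strengthening of (ii) is the delicate part and is exactly where the $(H_p)$-condition is exploited to sharpen the $C^2$-Lusin approximation.

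The remaining step is to decompose the complement $B_r(a)\setminus M_k$ into dyadic annuli $A_j = B_{2^{-j}r}(a)\setminus B_{2^{-j-1}r}(a)$ and bound the integrand there using its a priori pointwise bound $C/|z-a|^2\leq C\cdot 4^{j+1}/r^2$; the quantitative decay of $\lVert V\rVert(A_j\setminus M_k)$ relative to $(2^{-j}r)^n$, from property (ii), makes the annular sum summable and, after dividing by $\lVert V\rVert(B_r(a))\sim r^n$, yields the required $o(1)$ bound. The principal obstacle is establishing this quantitative Lusin-approximation estimate: the $|z-a|^{-2}$ singularity of the tilt-excess integrand is non-integrable against $\mathcal{H}^n$ for $n\leq 2$, so a pointwise ratio statement $\lVert V\rVert(B_\rho(a)\setminus M_k)=o(\lVert V\rVert(B_\rho(a)))$ is not by itself sufficient, and one must use the sharper form of the $(H_p)$-condition in the hypothesis to upgrade it to a quantitative decay statement.
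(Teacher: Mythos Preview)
The paper does not prove this theorem; it is quoted from Menne \cite{m2} (and the paper notes that both Theorems~\ref{thm:weak-regularity-M2} and~\ref{thm:Menne-tilt-excess} rest on the quadratic tilt-excess decay of Theorem~\ref{thm:tilt-excess-B-M}). So there is no in-paper proof to compare against, only the cited source.

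Your proposal has a structural circularity. In Menne's work \cite{m2} the logical order is the reverse of what you assume: the approximate second-order differentiability of the tangent-plane map (the content of Theorem~\ref{thm:Menne-tilt-excess}) is established first, and the $C^2$-rectifiability statement (Theorem~\ref{thm:weak-regularity-M2}) is then \emph{deduced} from it via a Whitney-type extension argument. Invoking Theorem~\ref{thm:weak-regularity-M2} as the starting input therefore presupposes what you are trying to prove. If you want an independent route, you would have to supply an alternative proof of the $C^2$-Lusin approximation that does not already pass through approximate second-order differentiability; no such shortcut is known.

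Independently of the circularity, your argument is openly incomplete at the decisive step. You correctly identify that the contribution of $B_r(a)\setminus M_k$ carries a non-integrable weight $|z-a|^{-2}$, so the mere density-point statement $\lVert V\rVert(B_\rho(a)\setminus M_k)=o(\lVert V\rVert(B_\rho(a)))$ does not suffice; one needs a decay rate strictly faster than $\rho^{n}$ (indeed, faster than $\rho^{n+2}$ after summing the dyadic shells) for $\lVert V\rVert(B_\rho(a)\setminus M_k)$. You assert that the $(H_p)$ hypothesis with the stated threshold on $p$ delivers this, but you do not prove it, and in fact this quantitative Lusin-approximation estimate is essentially equivalent in difficulty to the theorem itself. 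Menne's actual argument goes the other way: he first obtains pointwise second-order decay of the height and tilt via a Campanato-type iteration driven by the tilt-excess decay (Theorem~\ref{thm:tilt-excess-B-M}) and the $(H_p)$ bound, and only afterwards packages this as $C^2$-rectifiability.
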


In particular the theorem applies to $p=2$; we will henceforth make use almost exclusively of the $p=2$ case.

\begin{defn}
We will refer to $a$ satisfying the conclusions of Theorems \ref{thm:weak-regularity-M2} and \ref{thm:Menne-tilt-excess} as {\em points of $C^2$ rectifiability}.
\end{defn}

Theorems \ref{thm:weak-regularity-M2} and \ref{thm:Menne-tilt-excess} depend in turn on the following tilt-excess decay estimate proved by Brakke \cite{b} and Menne \cite{m1}:

\begin{thm}
\label{thm:tilt-excess-B-M}
Suppose $U$ is an open subset of $\R^{N}$ and $V$ is an integer-rectifiable $n$-varifold with locally bounded first variation (equivalently, with $\lVert \d V \rVert$ a Radon measure). For $\lVert V\rVert$-a.e. $x \in U$, the approximate tangent space $T_xV \in Gr(n)$ exists and
$$
\lim_{r \to 0+} r^{-1/2-n/2} \left( \int_{\pi^{-1}(B_r(x))} \lvert S -T_xV\rvert^2 d\tilde{V} \right)^{1/2} = 0
$$
\end{thm}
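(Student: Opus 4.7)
The plan is to follow the Brakke--Menne strategy, which breaks the result into three essentially independent pieces: existence of the approximate tangent plane, a Caccioppoli-type inequality relating tilt-excess to height-excess, and a ``decay of height-excess'' statement coming from the definition of approximate tangent plane. I will organize the sketch in that order, with the Caccioppoli step being the technical heart.

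First, the existence of $T_xV$ for $\lVert V\rVert$-a.e.\ $x$ is immediate from rectifiability: writing $M=\bigcup_k M_k$ where each $M_k$ is contained in a $C^1$ submanifold, one selects $x$ to be a density-$1$ point of each relevant $M_k$ and a Lebesgue point of $\theta$, so the blow-ups $\eta_{x,r\#}\lVert V\rVert$ converge weakly to $\theta(x)\mathcal{H}^n\restrict T_xV$. At such $x$ one also has $\lim_{r\to0}r^{-n}\lVert V\rVert(B_r(x))=\omega_n\theta(x)$, a fact we will use repeatedly.

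Second, write $P=T_xV$ and let $h(z)=z-\pi_P(z-x)$ denote the ``height'' over $P$. The Caccioppoli step uses the first variation against the test vector field $X=\phi^2\,h$, where $\phi$ is a cutoff with $\phi\equiv1$ on $B_r(x)$, $\supp\phi\subset B_{2r}(x)$, and $\lvert\n\phi\rvert\leq C/r$. A direct calculation gives
\begin{equation*}
\div_S X = \phi^2\,\tr_S(I-\pi_P) + 2\phi\,h\cdot\n_S\phi,
\end{equation*}
and the identity $\tr_S(I-\pi_P)=\tfrac12 \lvert S-P\rvert^2$ (in the Hilbert--Schmidt norm on projections) converts this into
\begin{equation*}
\tfrac12\int \phi^2\,\lvert S-P\rvert^2\,d\tilde V = \delta V(X) - 2\int \phi\,h\cdot\n_S\phi\,d\tilde V.
\end{equation*}
The second term is controlled by Cauchy--Schwarz and absorbed, yielding
\begin{equation*}
\int_{\pi^{-1}(B_r(x))} \lvert S-P\rvert^2\,d\tilde V \leq \frac{C}{r^2}\int_{B_{2r}(x)} \lvert h\rvert^2\,d\lVert V\rVert + C\,r\,\lVert\delta V\rVert(B_{2r}(x)) \cdot \sup_{B_{2r}}\frac{\lvert h\rvert}{r},
\end{equation*}
the last factor being bounded by $1$.

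Third, one shows both right-hand terms are $o(r^{n+1})$. The height-excess integral is $o(r^{n+2})$: given $\e>0$, approximate tangent plane convergence gives $\lVert V\rVert(\{z\in B_{2r}: \lvert h(z)\rvert>\e r\})=o(r^n)$, while the trivial bound $\lvert h\rvert\leq 2r$ on $B_{2r}$ plus $\lVert V\rVert(B_{2r})=O(r^n)$ controls the remainder; choosing $\e$ arbitrarily small and then sending $r\to0$ yields $r^{-n-2}\int\lvert h\rvert^2\,d\lVert V\rVert\to0$. For the first variation term, $\lVert\delta V\rVert$ is a Radon measure so $\lVert\delta V\rVert(B_{2r}(x))=O(r^n)$ for $\lVert V\rVert$-a.e.\ $x$ (differentiation of Radon measures), and in fact $r^{-n}\lVert\delta V\rVert(B_{2r}(x))\to D_{\lVert V\rVert}\lVert\delta V\rVert(x)$ which is finite a.e., so $r\cdot\lVert\delta V\rVert(B_{2r}(x))=O(r^{n+1})$. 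Combined, we get $\int_{\pi^{-1}(B_r(x))} \lvert S-P\rvert^2\,d\tilde V = o(r^{n+1})$ at a.e.\ $x$, which is precisely the claimed decay after taking square root and dividing by $r^{1/2+n/2}$.

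The main obstacle is the rigorous justification of the Caccioppoli computation when $V$ has a generalized boundary $Z$: the test field $X$ need not vanish on $\supp\lVert\delta V\rVert_{sing}$, so the first variation must be decomposed carefully into the $H$ and $\nu$ pieces as in \eqref{equ:HandSing}. This is not a serious difficulty at a generic point (the singular boundary has density zero $\lVert V\rVert$-a.e.), but it does require selecting $x$ to lie outside a further $\lVert V\rVert$-null set where $D_{\lVert V\rVert}\lVert\delta V\rVert_{sing}>0$; this selection is the technical core of \cite{m1}. Once this is done the estimates above go through verbatim at such $x$.
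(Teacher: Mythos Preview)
The paper itself does not prove this theorem; it is quoted from Brakke \cite{b} and Menne \cite{m1}. Your sketch follows the standard Caccioppoli route, but there is a genuine gap in the third step.

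You claim both right-hand terms of the Caccioppoli inequality are $o(r^{n+1})$, yet for the first-variation contribution you only establish $r\,\lVert\delta V\rVert(B_{2r}(x))=O(r^{n+1})$. Since $o(r^{n+1})+O(r^{n+1})=O(r^{n+1})$, your argument as written yields only $\int\lvert S-P\rvert^2\,d\tilde V=O(r^{n+1})$, not the required $o(r^{n+1})$, and the theorem does not follow.

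The repair is to retain the sharper bound $\lvert\delta V(X)\rvert\le\int_{B_{2r}}\lvert h\rvert\,d\lVert\delta V\rVert$ rather than immediately invoking $\lvert h\rvert\le 2r$. Decompose $\lVert\delta V\rVert=\lvert H\rvert\,d\lVert V\rVert+\lVert\delta V\rVert_{sing}$. For the singular part, $\int_{B_{2r}}\lvert h\rvert\,d\lVert\delta V\rVert_{sing}\le 2r\,\lVert\delta V\rVert_{sing}(B_{2r})=o(r^{n+1})$ at $\lVert V\rVert$-a.e.\ $x$, because a measure singular to $\lVert V\rVert$ has $\lVert V\rVert$-density zero almost everywhere. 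For the absolutely continuous part, choose $x$ to be additionally a Lebesgue point of $\lvert H\rvert\in L^1_{loc}(\lVert V\rVert)$ and fix $M>\lvert H(x)\rvert$; then
\[
\int_{B_{2r}}\lvert h\rvert\lvert H\rvert\,d\lVert V\rVert \le M\int_{B_{2r}}\lvert h\rvert\,d\lVert V\rVert + 2r\int_{B_{2r}}(\lvert H\rvert-M)_+\,d\lVert V\rVert.
\]
The first summand is at most $M\,\lVert h\rVert_{L^2(B_{2r},\lVert V\rVert)}\,\lVert V\rVert(B_{2r})^{1/2}=M\cdot o(r^{n/2+1})\cdot O(r^{n/2})=o(r^{n+1})$ by your own height-excess decay, and the second is $2r\cdot o(r^n)=o(r^{n+1})$ since $(\lvert H\rvert-M)_+\le\bigl\lvert\lvert H\rvert-\lvert H(x)\rvert\bigr\rvert$ and $x$ is a Lebesgue point. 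This closes the gap; it is exactly the refinement that makes the Brakke--Menne argument work under the mere hypothesis of locally bounded first variation. (A minor typo: your height function should read $h(z)=(z-x)-\pi_P(z-x)$; as written it is off by the constant $x$.)
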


\bigskip

In fact we will use these notions as they apply to lagrangian subspaces. Hence we assume $N=2n$. 

We say that the integer rectifiable $n$-varifold $V$ is {\em a lagrangian integer rectifiable varifold} if for ${\cal H}^n$ a.e. $x$ the approximate tangent plane  $T_x V$ is a lagrangian $n$-plane. A {\em lagrangian varifold} is an $n$-varifold which has support contained in the Grassmann bundle $LGr\subset Gr(n)$ of lagrangian $n$-planes. Because the Grassmannian of Lagrangian planes at $x$, $LGr_x$, is closed as a subset of all the Grassmannian of all $n$-planes $Gr(n)_x$, and $LGr$ is closed in $Gr(n)$, the compactness Theorem \ref{thm:compactnessvarifolds} yields a compactness theorem for lagrangian integer-rectifiable varifolds as well.

\bigskip

We remark also that we may replace ${\mathbb R}^{2n}$ with a Calabi-Yau manifold $(N,\omega)$ in a straightforward way to obtain versions of all Theorems in this section which apply to lagrangian integer-rectifiable varifolds and lagrangian varifolds in $(N,\omega)$, though for exposition we will remain mostly in the Euclidean context.

\bigskip

For reference, we include the following table of terminology for the various classes of varifolds we will consider in the remainder of this paper, ordered by inclusion:

\begin{itemize}
	\item {\em lagrangian varifold}, a Radon measure on the bundle $LGr$.
	\item {\em lagrangian integer varifold} or {\em lagrangian integer-rectifiable varifold}, an integer-rectifiable varifold whose tangent planes are almost all lagrangian.
	\item {\em varifold with Maslov index zero}, a lagrangian integer varifold with a weakly-differentiable lift of the lagrangian angle, as described in \S \ref{sec:maslovindexzero}.
	\item {\em varifold with Maslov index zero and mean curvature in $L^p$}, a varifold with Maslov index zero which in addition satisfies the hypotheses of Theorem \ref{thm:Menne-tilt-excess}. We will choose $p=2$ for definiteness.
\end{itemize}

Because we will be interested in applications to lagrangian homology classes in a closed Calabi-Yau manifold, we will henceforth assume that all varifolds have finite mass and compact support.

\section{Mollification along a Varifold}\label{sec:mollification}

In what follows, inspired by Brakke's construction, we use mollification, but rather than mollify the varifold $V$ as Brakke does, we will mollify functions with respect to the varifold measure $\lVert V\rVert$. Our exposition follows that of Evans \cite{e}. For exposition we state results in this section for varifolds in ${\mathbb R}^{2n}$; the adaptation to the manifold case is straightforward and discussed in \S\ref{sec:manifoldcase}.\\

Consider the function $g:{\mathbb R}_+\rightarrow{\mathbb R}_+$ given by $$g(t)=\begin{cases}\exp\left(\frac{1}{t^2-1}\right)&t\in[0,1)\\0&t\in[1,\infty)\end{cases}$$ Observe that $0\leq g(t)\leq 1$ for all $t$.

\begin{defn}
	For any $\e>0$, the mollifier at scale $\e$ is $\phi_\e\in C^\infty_c({\mathbb R}^{2n})$ given by $$\phi_\e(x)=C\e^{-n}g(\e^{-1}\lvert x\rvert)$$where $C$ is chosen so that $\int_{{\mathbb R}^n}\phi_\e(x)dx=1$.
\end{defn}

Observe that the support of $\phi_\e$ is $B(0,\e)$.

\begin{defn}
	If $f$ is $\lVert V\rVert$-integrable, its $\e$-mollification with respect to $V$, $f_\e$, is
		$$f_\e(x)=\frac{\int f(y)\phi_\e(x-y)d\lVert V\rVert_y}{\int \phi_\e(x-z)d\lVert V\rVert_z+ \e \lVert V\rVert}.$$
\end{defn}\bigskip

\begin{rem}
We choose the mollifying function $\phi$ to have compact support because this choice is easily adapted to the manifold case.

To ensure that pointwise $f_\e(x) \to f(x)$ as $\e \to 0$ we need to normalize the mollification. In the case of mollification with respect to Lebesgue measure on ${\mathbb R}^n$, the normalization would be $\e^{-n}$, but for us, this quantity must depend on $x$. The most natural choice of normalization is $\left( \int \phi_\e(x-z)  d\lVert V\rVert_z \right)^{-1}$. However because $\phi$ is compactly supported, the normalization vanishes for $x$ sufficiently far from $\supp V$ and this may introduce singularities into $f_\e$. We use the additional term $\e \lVert V\rVert$ to ensure that the denominator in $f_\e$ does not vanish, and thus that $f_\e(x)$ is smooth for all $x\in{\mathbb R}^{2n}$.

\end{rem}

Mollification can be considered as a linear map $L^p(V) \to C^\infty_c(\R^{2n})$; that is, for $f \in L^p(V) $ define 
\begin{equation}
\label{equ:define-moll-operator}
L_\e(f) = \phi_\e \star f = f_\e.
\end{equation}
Then $L_\e$ is a smoothing operator.

\bigskip

The theory of mollification with respect to a varifold is largely similar to the standard theory of mollification with respect to Lebesgue measure on ${\mathbb R}^n$. For completeness we state the following basic facts about mollification. Our exposition follows that of Evans \cite{e}, mutatis mutandis.

\begin{lem}
	$f_\e\in C^\infty_c({\mathbb R}^{2n})$.
\end{lem}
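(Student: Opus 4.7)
The plan is to show separately that the numerator and denominator are smooth functions of $x$, that the denominator is bounded away from zero, and that the quotient vanishes outside a compact set. Writing
$$N(x)=\int f(y)\phi_\e(x-y)\,d\lVert V\rVert_y, \qquad D(x)=\int \phi_\e(x-z)\,d\lVert V\rVert_z+\e\lVert V\rVert,$$
we have $f_\e = N/D$, so smoothness of $f_\e$ will follow from smoothness of $N$ and $D$ together with the fact that $D$ never vanishes.

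First I would prove $N,D\in C^\infty(\mathbb{R}^{2n})$ by differentiation under the integral sign. Since $\phi_\e$ is $C^\infty_c$, every partial derivative $\partial^\alpha\phi_\e(x-y)$ in $x$ is continuous in $x$, compactly supported in $y$, and uniformly bounded on compact $x$-neighborhoods by $\lVert\partial^\alpha\phi_\e\rVert_\infty$. Because $V$ has finite mass by our standing assumption, the dominating function $\lvert f(y)\rvert\cdot\lVert\partial^\alpha\phi_\e\rVert_\infty\cdot \mathbf{1}_{B(x_0,\e+1)}$ is $\lVert V\rVert$-integrable for $f\in L^p(V)$ with $p\geq 1$ (Hölder against the finite measure). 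The standard dominated-convergence argument then gives $\partial^\alpha N(x)=\int f(y)\,\partial_x^\alpha\phi_\e(x-y)\,d\lVert V\rVert_y$ and similarly for $D$, so both are $C^\infty$.

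Next, observe $D(x)\geq\e\lVert V\rVert>0$ for every $x$ (assuming $V\neq 0$; otherwise $f_\e\equiv 0$ trivially). Since $\phi_\e\geq 0$, the integral term contributes nonnegatively, and the additive $\e\lVert V\rVert$ is a strictly positive constant. Consequently $1/D$ is $C^\infty$, and $f_\e=N\cdot(1/D)$ is $C^\infty$ as a product of smooth functions.

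Finally, for compact support: $V$ has compact support by assumption, say $\supp V\subset K$ for some compact $K\subset\mathbb{R}^{2n}$. If $x$ lies outside the closed $\e$-neighborhood $K_\e$ of $K$, then $\phi_\e(x-y)=0$ for every $y\in K\supset\supp V$, so $N(x)=0$, and hence $f_\e(x)=0$. Thus $\supp f_\e\subset K_\e$, which is compact, and $f_\e\in C^\infty_c(\mathbb{R}^{2n})$ as claimed. The only point requiring any care is verifying the dominating function for differentiation under the integral; this is routine precisely because $\phi_\e$ is compactly supported and smooth, so I do not anticipate a genuine obstacle.
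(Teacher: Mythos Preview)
Your argument is correct and is exactly the standard verification the paper has in mind; the paper's own proof consists of the single word ``Clear.'' You have simply unpacked the routine steps (differentiation under the integral sign using the smooth compactly supported $\phi_\e$, the strictly positive lower bound $D(x)\geq \e\lVert V\rVert$, and compact support of $V$), so there is nothing to compare.
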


\begin{proof}
Clear.
\end{proof}

For classical mollification with respect to Lebesgue measure (see for example \cite{e}), because $\int  \phi_\e (x-y) dy=1$,  trivially 
	\begin{equation}
		\int D\phi_\e (x-y) dy=0
	\end{equation}
In our case we have the weaker statement:

\begin{lem}\label{lem:Dphiepsilonbounded}
For  $x$ in the support of $\lVert V\rVert$ where $V$ has an approximate tangent plane, 
$$\int D\phi_\e (x-y)d\lVert V\rVert_y$$ 
is bounded independent of $\e$.
\end{lem}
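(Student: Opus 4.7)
The plan is to exploit the odd symmetry of $D\phi_\e$ (as the gradient of a radial function) against the approximate tangent $n$-plane of $V$ at $x$. For $\e$ bounded away from $0$ the bound is trivial, since $\lvert D\phi_\e\rvert\leq C\e^{-n-1}$ pointwise and $V$ has finite total mass, so the work lies in the small-$\e$ regime.

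First I would write $\phi_\e(w)=\e^{-n}\psi(w/\e)$ with $\psi(w)=Cg(|w|)$, so that $D\phi_\e(w)=\e^{-n-1}D\psi(w/\e)$, and rescale via $z=(y-x)/\e$ to obtain
\[
\int D\phi_\e(x-y)\,d\lVert V\rVert_y = \e^{-1}\int D\psi(-z)\,dV^{x,\e}(z),
\]
where $V^{x,\e}(A)=\e^{-n}\lVert V\rVert(x+\e A)$ is the blow-up measure at $x$. The approximate-tangent-plane hypothesis gives $V^{x,\e}\rightharpoonup \theta(x)\,{\cal H}^n\restrict T$ as $\e\to 0$, with $T=T_xV$. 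Since $D\psi(-z)=-Cg'(|z|)z/|z|$ is odd in $z$, the flat integral $\int_T D\psi(-z)\,d{\cal H}^n(z)$ vanishes, so $\int D\psi(-z)\,dV^{x,\e}(z)\to 0$.

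To upgrade this qualitative convergence to the rate $O(\e)$ required to absorb the $\e^{-1}$ factor, I would invoke the $C^2$-rectifiability result of Menne (Theorem \ref{thm:weak-regularity-M2}): at $\lVert V\rVert$-a.e.\ such $x$, $V$ agrees locally (up to a $\lVert V\rVert$-null set) with $\theta$ times ${\cal H}^n$ on a $C^2$ submanifold $M$, which near $x$ can be written as a graph $M=\{(u,h(u))\}$ over $T$ with $h(0)=0$, $Dh(0)=0$, $\lVert D^2 h\rVert_\infty\leq K$. The flat ($h=0$) contribution to $\int_M D\phi_\e(x-y)\,d{\cal H}^n$ vanishes by odd symmetry in $u$. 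The residual errors come from $h(u)=O(|u|^2)$ in the normal direction and $\sqrt{\det(I+Dh^T Dh)}=1+O(|u|^2)$ in the area element; they yield a tangential error of order $\e^{-n-1}\cdot\e^{n+2}=O(\e)$ and a normal error of order $\e^{-n-1}\cdot\e^{n+1}=O(1)$, both bounded in $\e$.

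The principal obstacle is that mere approximate tangency supplies no rate of approximation of $V$ by its tangent plane; the quadratic estimate $h(u)=O(|u|^2)$ is precisely what converts the qualitative cancellation into a quantitative $O(\e)$ bound. This estimate is supplied by Menne's $C^2$-rectifiability (or, alternatively, by the tilt-excess decay of Theorem \ref{thm:tilt-excess-B-M}). Once it is in place, completing the lemma is a bookkeeping exercise in Taylor expansion along the graph.
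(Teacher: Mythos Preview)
Your approach is reasonable but takes a genuinely different route from the paper's.

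The paper does not invoke Menne's results here at all. Instead it uses a non-standard rescaling: setting $\rho=\e^{(n+1)/n}$ (so that $\e^{-n-1}=\rho^{-n}$) and substituting $t=\rho^{-1}(y-x)$, one obtains
\[
\int D\phi_\e(x-y)\,d\lVert V\rVert_y \;=\; C\int g'\!\bigl(\rho^{1/(n+1)}\lvert t\rvert\bigr)\,\frac{t}{\lvert t\rvert}\,d\lVert \rho^{-1}(V-x)\rVert_t .
\]
Scaling by $\rho$ rather than by $\e$ absorbs the entire prefactor into the blow-up measure; the small parameter now appears only inside $g'(\rho^{1/(n+1)}\lvert t\rvert)$, which tends pointwise to $g'(0)=0$ while remaining uniformly bounded. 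The paper then concludes directly from this and from the varifold convergence $\lVert\rho^{-1}(V-x)\rVert\rightharpoonup\theta(x)\,{\cal H}^n\restrict T_xV$. So the paper's argument is lighter in machinery: it uses only the specific shape of the mollifier (namely the vanishing $g'(0)=0$) together with the bare existence of an approximate tangent plane, not second-order rectifiability.

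Your observation that a blow-up by $\e$ leaves an explicit $\e^{-1}$ to absorb, and that approximate tangency alone supplies no rate, is well taken; the paper sidesteps the issue by the $\rho$-trick rather than by supplying a rate. One caution on your alternative: Menne's Theorem~\ref{thm:weak-regularity-M2} gives a countable $C^2$ covering of $\supp\lVert V\rVert$, not the statement that $V$ coincides in a full neighborhood of $x$ with $\theta$ times ${\cal H}^n$ on a single $C^2$ graph, so the graph-plus-Taylor bookkeeping you sketch does not literally account for all of $\lVert V\rVert$ in $B_\e(x)$. If you want a rigorous version along your lines, the quadratic tilt-excess decay (Theorem~\ref{thm:tilt-excess-B-M}) is the more direct tool; it is exactly what the paper deploys later in the closely related error estimate of Theorem~\ref{thm:errorestimate}.
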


\begin{proof}
	We compute $D\phi_\e(x-y)$:
		\begin{equation}\label{eqn:Dphiepsilon}
			D\phi_\e(x) = C \e^{-n}\e^{-1}g'(\e^{-1}\lvert x\rvert) \frac{x}{\lvert x\rvert}
		\end{equation} 
	Therefore setting $\rho=\e^{\frac{n+1}{n}}$, 
		\begin{equation}
			\begin{aligned}
				\int D_x\phi_\e(x-y)d\lVert V\rVert_y&=C\rho^{-n}\int g'\left(\e^{-1}\lvert x-y\rvert\right)\frac{x-y}{\lvert x-y\rvert}d\lVert V\rVert_y\\
				&=C\int g'\left(\rho^{\frac{1}{n+1}}\lvert t\rvert\right)\frac{t}{\lvert t\rvert}d\lVert \rho^{-1}(V-x)\rVert t\\
			\end{aligned}
		\end{equation}
	As $\e\rightarrow 0$, $\rho\rightarrow 0$; the integration converges to integration on $T_xV$. Moreover, the integrand is uniformly bounded. 
\end{proof}

We remark that this lemma does not assert that $D\phi_\e$ is uniformly bounded in $L^1(V)$. 

\begin{lem}\label{lem:mollifiertomultiplicity}
	If $x$ is a point where $V$ has an approximate tangent plane, $$\int \phi_\e(x-y)d\lVert V\rVert_y\rightarrow \theta(x)$$ as $\e\rightarrow 0$, where $\theta(x)$ is the multiplicity of $V$ at $x$.
\end{lem}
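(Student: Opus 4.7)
The plan is to rewrite the left-hand side as the integral of a fixed compactly-supported continuous function against a rescaled measure, and then invoke the definition of approximate tangent plane.

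Precisely, let $\eta_{x,\e}(y)=\e^{-1}(y-x)$, and consider the rescaled Radon measure $\mu_\e := \e^{-n}\,(\eta_{x,\e})_\#\lVert V\rVert$ on $\R^{2n}$. The defining property of the approximate tangent plane $T_xV$ at a point $x$ of positive density $\theta(x)$ is that
$$\mu_\e \longrightarrow \theta(x)\, \mathcal{H}^n\!\restrict T_xV$$
in the sense of Radon measures as $\e\to 0$ (this is the formulation in Simon \cite{s}). Changing variables in the definition of $\phi_\e$, and writing $\phi_\e(x-y) = C\e^{-n}g(|\eta_{x,\e}(y)|)$, yields
$$\int \phi_\e(x-y)\,d\lVert V\rVert_y \;=\; \int C\,g(|z|)\,d\mu_\e(z).$$
Since $z\mapsto C\,g(|z|)$ is continuous and supported in the closed unit ball (independent of $\e$), the weak convergence above allows us to pass to the limit:
$$\int \phi_\e(x-y)\,d\lVert V\rVert_y \;\longrightarrow\; \theta(x)\int_{T_xV} C\,g(|z|)\,d\mathcal{H}^n(z).$$

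Finally, the normalization of $C$ is exactly what makes the right-hand side equal to $\theta(x)$. Indeed, although the displayed formula in the definition writes $\int_{\R^n}\phi_\e\,dx=1$, what is meant in the context of an $n$-dimensional mollifier acting on an $n$-rectifiable set in $\R^{2n}$ is that $C$ is the unique constant making $\int_{P} g(|z|)\,d\mathcal{H}^n(z) = C^{-1}$ for any (equivalently, every) $n$-plane $P$ through the origin. Substituting $P=T_xV$ gives the claimed limit $\theta(x)$.

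The only subtlety, and thus the main thing to be careful about, is the interpretation of the normalization constant $C$ and the dimensional bookkeeping: the mollifier is a function on $\R^{2n}$ but the integration is with respect to the $n$-dimensional measure $\lVert V\rVert$, so the scaling $\e^{-n}$ is what makes the change of variables produce the correct limit measure $\mu_\e$. Beyond this, no regularity of $V$ beyond the existence of an approximate tangent plane is needed, and the argument is a direct application of the weak-convergence characterization of approximate tangent planes together with the fact that $Cg(|\cdot|)$ is an admissible continuous, compactly supported test function.
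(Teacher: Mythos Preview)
Your proof is correct and follows essentially the same approach as the paper: rewrite the integral via the change of variables $z=\e^{-1}(y-x)$ as the fixed test function $Cg(|\cdot|)$ against the rescaled measure, invoke the approximate-tangent-plane convergence to $\theta(x)\,\mathcal{H}^n\!\restrict T_xV$, and use the normalization $C\int_{\R^n}g(|t|)\,dt=1$. Your explicit discussion of the dimensional bookkeeping (that the normalization is over an $n$-plane, not $\R^{2n}$) is a useful clarification of what the paper writes more tersely.
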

\begin{proof}
	\begin{equation}
		\begin{aligned}
			\int \phi_\e(x-y)d\lVert V\rVert_y&=C\e^{-n}\int g\left(\e^{-1}\lvert x-y\rvert\right)d\lVert V\rVert_x\\
			&\rightarrow \theta(x) C\int_{T_xV}g(\lvert t\rvert)dt=\theta(x)
		\end{aligned}
	\end{equation}
	where we have used the fact that $C\int_{{\mathbb R}^n}g(\lvert t\rvert )dt=1$.
\end{proof}

We  have the following explicit bounds on the derivatives of $f_\e$, which degenerate as $\e\rightarrow 0$ but will nonetheless be useful in \S \ref{sec:epsilonflows}.\bigskip

\begin{lem}
	There is a universal constant $C$ so that for any $f\in L^{\infty}(V)$,
		\begin{equation*}
			\lvert Df_\e(x)\rvert\leq C\e^{-n-2}\lVert f\rVert_{L^\infty(V)}
		\end{equation*}
	for each $x\in {\mathbb R}^{2n}$.
\end{lem}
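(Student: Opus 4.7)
The plan is to apply the quotient rule directly to the defining formula $f_\e=N/D$, where
$$N(x)=\int f(y)\phi_\e(x-y)\,d\lVert V\rVert_y\qquad\text{and}\qquad D(x)=\int \phi_\e(x-z)\,d\lVert V\rVert_z+\e\lVert V\rVert,$$
and then bound each piece using the explicit form of $\phi_\e$ together with the lower bound on $D(x)$ coming from the additive $\e\lVert V\rVert$ term. Rewriting
$$Df_\e(x)=\frac{DN(x)}{D(x)}-\frac{N(x)}{D(x)}\cdot\frac{DD(x)}{D(x)},$$
the standard estimate $|N(x)|\leq \lVert f\rVert_{L^\infty(V)}\int\phi_\e(x-y)\,d\lVert V\rVert_y\leq \lVert f\rVert_{L^\infty(V)}D(x)$ immediately gives $|N/D|\leq \lVert f\rVert_{L^\infty(V)}$. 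So the problem reduces to bounding $|DN|/D$ and $|DD|/D$ by $C\e^{-n-2}\lVert f\rVert_{L^\infty(V)}$.

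For the derivative factors, I would extract a pointwise bound on $D\phi_\e$. From \eqref{eqn:Dphiepsilon},
$$|D\phi_\e(x)|\leq C\e^{-n-1}\sup_{t\in[0,1)}|g'(t)|,$$
and $|g'|$ is bounded on $[0,1)$ because $\exp(1/(t^2-1))$ decays faster than any polynomial in $1/(1-t^2)$ as $t\to 1^-$. Since $D\phi_\e$ is supported in $B(0,\e)$, integrating gives $|DN(x)|\leq C\lVert f\rVert_{L^\infty(V)}\e^{-n-1}\lVert V\rVert(B(x,\e))$ and $|DD(x)|\leq C\e^{-n-1}\lVert V\rVert(B(x,\e))$.

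The key step, and the reason the estimate comes out $V$-free, is to exploit the regularizing summand $\e\lVert V\rVert$ in the denominator: it ensures $D(x)\geq \e\lVert V\rVert({\mathbb R}^{2n})$ everywhere, while trivially $\lVert V\rVert(B(x,\e))\leq \lVert V\rVert({\mathbb R}^{2n})$. The total-mass factors cancel and combining the two bounds yields
$$|Df_\e(x)|\leq \frac{C\e^{-n-1}\lVert V\rVert(B(x,\e))\lVert f\rVert_{L^\infty(V)}}{\e\lVert V\rVert({\mathbb R}^{2n})}+\lVert f\rVert_{L^\infty(V)}\frac{C\e^{-n-1}\lVert V\rVert(B(x,\e))}{\e\lVert V\rVert({\mathbb R}^{2n})}\leq 2C\e^{-n-2}\lVert f\rVert_{L^\infty(V)}.$$
There is no real obstacle here; the estimate is a direct quotient-rule calculation, and the conceptual point to highlight in the write-up is that the $\e\lVert V\rVert$ term introduced in the definition precisely to keep $f_\e$ smooth at points far from $\supp V$ also does the work of producing a universal (i.e., $V$-independent) constant in the bound.
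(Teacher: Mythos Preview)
Your proof is correct and follows essentially the same route as the paper: apply the quotient rule to $f_\e=N/D$, use the pointwise bound $|D\phi_\e|\leq C\e^{-n-1}$, and then exploit the lower bound $D(x)\geq\e\lVert V\rVert$ to cancel the total-mass factors. The paper's write-up is slightly more compressed (it bounds $\int|D\phi_\e|\,d\lVert V\rVert$ by $C\e^{-n-1}\lVert V\rVert$ directly rather than via $\lVert V\rVert(B(x,\e))$), but the argument is the same.
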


\begin{proof}
	First observe that 
		\begin{equation}\label{eqn:Linftybound}
			\begin{aligned}
			\lvert f_\e(x)\rvert&=\left\lvert\frac{\int f(y) \phi_\e(x-y) d\lVert V\rVert_y}{\int \phi_\e(x-z) d\lVert V\rVert_z+\e \lVert V\rVert}\right\rvert\\
			&\leq \lVert f\rVert_{L^\infty(V)}\frac{\int \phi_\e(x-y) d\lVert V\rVert_y}{\int \phi_\e(x-z) d\lVert V\rVert_z+ \e \lVert V\rVert}\leq \lVert f\rVert_{L^\infty(V)}
			\end{aligned}
		\end{equation}
	
	Observe that because
	\begin{equation}
		\lvert D\phi_\e(x)\rvert = \left\lvert C\e^{-n}\e^{-1} g'(\e^{-1}\lvert x\rvert)\frac{x}{\lvert x\rvert}\right\rvert
	\end{equation}
	we have $\lvert D\phi_\e(x)\rvert\leq C(1)\e^{-n-1}$, where $C(1)$ is the maximum of $C\lvert g'\rvert$ on $[0,1)$. 
		
	Now compute $Df_\e(x)$:
		\begin{equation}
			\begin{aligned}
			\lvert Df_\e(x)\rvert&=\left\rvert\frac{\int f(y)D\phi_\e(x-y)d\lVert V\rVert_y}{\int\phi_\e(x-z) d\lVert V\rVert_z + \e \lVert V\rVert}- f_\e(x)\frac{\int D\phi_\e(x-y)d\lVert V\rVert_y}{\int \phi_\e(x-z)d\lVert V\rVert_z+ \e \lVert V\rVert}\right\rvert\\
			&\leq2\lVert f\rVert_{L^\infty(V)} \frac{\int \lvert D\phi_\e(x-y)\rvert d\lVert V\rVert_y}{{\int \phi_\e(x-z)d\lVert V\rVert_z + \e \lVert V\rVert}} \\
			&\leq 2\lVert f\rVert_{L^\infty(V)} \frac{C(1)\e^{-n-1}\lVert V\rVert}{ \e \lVert V\rVert} 
			\end{aligned}
		\end{equation}
	
	\end{proof}

\begin{lem}\label{lem:derivativesofbetaepsilon}
	For any $k$, there is a universal constant $C=C(k)$ so that for any $f\in L^{\infty}(V)$, we have the estimate		\begin{equation*}
			\lvert D^kf_\e(x)\rvert\leq C\e^{-k-n-1}\lVert f\rVert_{L^\infty(V)}
		\end{equation*}	
	for each $x\in {\mathbb R}^{2n}$.
\end{lem}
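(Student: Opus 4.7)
The plan is to proceed by induction on $k$, using the previous lemma as the base case $k=1$. For the inductive step, I would write $f_\e = N_\e/\Phi_\e$ where
$$N_\e(x)=\int f(y)\phi_\e(x-y)\,d\lVert V\rVert_y, \quad \Phi_\e(x)=\int \phi_\e(x-z)\,d\lVert V\rVert_z + \e\lVert V\rVert,$$
and exploit the identity $\Phi_\e \cdot f_\e = N_\e$. Differentiating this identity $k$ times via the Leibniz rule and isolating the top-order derivative yields the recursive formula
$$D^k f_\e = \frac{1}{\Phi_\e}\Bigl(D^k N_\e - \sum_{j=1}^k \binom{k}{j} D^j \Phi_\e \cdot D^{k-j} f_\e\Bigr),$$
which expresses $D^k f_\e$ in terms of $D^k N_\e$, the derivatives $D^j \Phi_\e$, and lower-order derivatives $D^{k-j} f_\e$ controlled by the inductive hypothesis.

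To bound each term, I would first observe that by direct differentiation of $\phi_\e(x)=C\e^{-n} g(\e^{-1}|x|)$, one has the pointwise estimate $|D^j \phi_\e|\leq C(j)\e^{-n-j}$, which combined with the crude $L^\infty$-on-$\operatorname{supp}$ estimate used in the previous lemma gives
$$|D^j \Phi_\e(x)|\leq C(j) \e^{-n-j}\lVert V\rVert, \qquad |D^j N_\e(x)|\leq C(j)\e^{-n-j} \lVert V\rVert \cdot \lVert f\rVert_{L^\infty(V)}.$$
Together with the denominator bound $\Phi_\e \geq \e\lVert V\rVert$ (the essential role of the $\e\lVert V\rVert$ term in the definition of $f_\e$), the leading contribution comes from the $D^kN_\e/\Phi_\e$ term and the $j=k$ summand, both of which are bounded by $C(k)\e^{-k-n-1}\lVert f\rVert_{L^\infty(V)}$; the $j=k$ term uses only the trivial inductive base $|f_\e|\leq \lVert f\rVert_{L^\infty(V)}$. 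The intermediate terms $1\leq j\leq k-1$ are handled by applying the inductive hypothesis to $D^{k-j}f_\e$.

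The main obstacle in this approach is precisely the bookkeeping of powers of $\e$ in the intermediate terms: a naive application of the inductive hypothesis $|D^{k-j} f_\e|\leq C\e^{-(k-j)-n-1}\lVert f\rVert_{L^\infty(V)}$ combined with $|D^j \Phi_\e|/\Phi_\e \leq C\e^{-n-j-1}$ produces a power of $\e$ worse than $\e^{-k-n-1}$. The proof must therefore group and dominate these mixed terms using the fact that $\e<1$ together with the particular structure of the Leibniz expansion — most cleanly by grouping all contributions with two or more derivatives falling on $\Phi_\e$ factors and absorbing the resulting constants into $C(k)$. The critical verification, and the place where care is required, is to confirm that after this grouping, the worst-case exponent is indeed $\e^{-k-n-1}$ and that the combinatorial constants $C(k)$ remain finite at each stage of the induction.
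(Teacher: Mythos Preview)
The paper does not actually prove this lemma: its entire argument is ``The proof for the higher derivatives of $f_\e$ is similar to the proof for the first derivative and is left to the reader.'' So there is no detailed proof to compare against, only the suggestion that the $k=1$ argument generalises.

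Your approach---Leibniz applied to $\Phi_\e f_\e = N_\e$ together with the crude bounds $|D^j\phi_\e|\le C(j)\e^{-n-j}$ and $\Phi_\e\ge\e\lVert V\rVert$---is exactly the natural generalisation of the paper's $k=1$ argument, and you are right to flag the mixed terms as the crux. However, your proposed resolution does not work. The obstacle you identify is genuine and is \emph{not} removable by ``grouping'' or by invoking $\e<1$: the condition $\e<1$ only allows you to trade $\e^{-a}$ for $\e^{-b}$ with $b\ge a$, i.e.\ to pass to a \emph{weaker} bound, never a stronger one. Concretely, already at $k=2$ the term
\[
\frac{D\Phi_\e \cdot D f_\e}{\Phi_\e}
\]
is controlled, via your own estimates, only by $C\e^{-n-1-1}\cdot\e^{-n-2}=C\e^{-2n-4}$, which for every $n\ge 1$ is strictly worse than the claimed $\e^{-n-3}$. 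More generally the recursion $a_k\ge(n+2)+a_{k-1}$ forces $a_k\ge k(n+2)$, not $k+n+1$. A direct Fa\`a di Bruno expansion of $N_\e/\Phi_\e$ yields the same phenomenon: the terms with $m+1$ copies of $\Phi_\e$ in the denominator pick up a factor $\e^{-(m+1)(n+1)}$, and $m$ can be as large as $k$.

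In short, the specific exponent $\e^{-k-n-1}$ does not follow from the method the paper suggests and you implement; what one actually obtains is a bound of the form $C(k)\e^{-q(k,n)}\lVert f\rVert_{L^\infty(V)}$ for some polynomial exponent $q(k,n)$ (e.g.\ $q=k(n+2)$). Fortunately this is all that is needed downstream: the lemma is only invoked in \S\ref{sec:epsilonflows} at \emph{fixed} $\e>0$ to obtain qualitative $C^k$ control on $\beta_\e$, and any bound polynomial in $\e^{-1}$ suffices there. So the honest fix is either to weaken the stated exponent, or to note explicitly that only \emph{some} $C(k,n)\e^{-q(k,n)}$ bound is required and achieved.
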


\begin{proof}
The proof for the higher derivatives of $f_\e$ is similar to the proof for the first derivative and is left to the reader.
\end{proof}

\bigskip

\begin{thm}
\label{thm:strong-conv-Lp}
For $1 \leq p \leq \infty$,	if $f \in L^p(V)$  then $f_\e\rightarrow f$ strongly in $L^p(V)$.
\end{thm}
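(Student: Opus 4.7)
The plan follows the classical Evans-style mollification argument, adapted to the Radon measure $\lVert V\rVert$ in place of Lebesgue measure: establish pointwise a.e.~convergence first, then upgrade to $L^p$ via density of continuous functions together with a uniform $L^p$-boundedness of the operator $L_\e$.

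For the pointwise convergence, at $\lVert V\rVert$-a.e. $x$ the following hold simultaneously: the approximate tangent plane $T_xV$ exists with integer multiplicity $\theta(x)\ge 1$; the density ratio $\lVert V\rVert(B(x,r))/(\omega_n r^n)\to\theta(x)$ as $r\to 0$ (standard for rectifiable measures); and $x$ is a Lebesgue point of $f$ with respect to $\lVert V\rVert$ (by the Besicovitch differentiation theorem for Radon measures). Writing $N(x)$ for the denominator in the definition of $f_\e$, Lemma \ref{lem:mollifiertomultiplicity} gives $N(x)\to\theta(x)>0$ at such $x$. Splitting the numerator as
\[ \int f(y)\phi_\e(x-y)\,d\lVert V\rVert_y = f(x)\int \phi_\e(x-y)\,d\lVert V\rVert_y + \int (f(y)-f(x))\phi_\e(x-y)\,d\lVert V\rVert_y, \]
the first term tends to $f(x)\theta(x)$, and the second is controlled by $C\e^{-n}\int_{B(x,\e)}|f(y)-f(x)|\,d\lVert V\rVert_y$, i.e.~the product of the bounded ratio $\lVert V\rVert(B(x,\e))/\e^n$ and the Lebesgue average, which tends to zero. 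Dividing by $N(x)\to\theta(x)$ gives $f_\e(x)\to f(x)$.

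For $L^p$ convergence with $1\le p<\infty$, I first handle $g\in C_c^0(\R^{2n})$: the uniform bound $|g_\e|\le\|g\|_{L^\infty(V)}$ from \eqref{eqn:Linftybound}, combined with the pointwise a.e.~convergence and the finite mass of $\lVert V\rVert$, yields $g_\e\to g$ in $L^p(V)$ by dominated convergence. For general $f\in L^p(V)$ choose $g\in C_c^0$ with $\|f-g\|_{L^p(V)}<\delta$; the triangle inequality
\[ \|f_\e - f\|_{L^p(V)} \le \|L_\e(f-g)\|_{L^p(V)} + \|g_\e - g\|_{L^p(V)} + \|f - g\|_{L^p(V)} \]
reduces the theorem to a uniform bound $\|L_\e\|_{L^p(V)\to L^p(V)}\le C$. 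Applying Jensen's inequality to the sub-probability measure $d\mu_x(y)=\phi_\e(x-y)\,d\lVert V\rVert_y/N(x)$ gives $|f_\e(x)|^p\le \int |f(y)|^p\,\phi_\e(x-y)/N(x)\,d\lVert V\rVert_y$, so by Fubini the uniform boundedness reduces to $\sup_{y,\e}\int \phi_\e(x-y)/N(x)\,d\lVert V\rVert_x<\infty$.

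The hard step is this last uniform estimate. While $N(x)\ge\theta(x)/2\ge 1/2$ once $\e$ is sufficiently small, the required smallness is non-uniform in $x$. I would split the integral according to whether $N(x)\ge 1/2$: on the good set, $\phi_\e\le C\e^{-n}$ combined with the upper-density bound $\lVert V\rVert(B(y,\e))\le C\e^n$ gives a uniform contribution; on the exceptional set---whose $\lVert V\rVert$-measure is small by Egorov applied to the convergence $N\to\theta$---the crude lower bound $N(x)\ge\e\lVert V\rVert$ combined with upper-density control suffices. This is the analog of Young's convolution inequality in the varifold setting. The $p=\infty$ assertion in the theorem is subtle: strong convergence in $L^\infty(V)$ would require essentially uniform convergence, which fails for general bounded $f$ (e.g.~characteristic functions of measurable sets), so I expect this case is meant to require additional regularity of $f$, such as continuity on $\supp V$, in which case the pointwise estimate can be made uniform on compacta.
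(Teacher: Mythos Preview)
Your overall architecture---reduce to continuous functions via density plus a uniform $L^p\to L^p$ bound on $L_\e$---is exactly the paper's strategy. The paper does not bother with Lebesgue-point convergence for arbitrary $f$; it proves pointwise convergence only for continuous $f$ (Lemma~\ref{lem:continuousmollifier}) and then invokes dominated convergence, which is all that is needed once the operator bound is in hand.

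The substantive divergence, and the real gap in your proposal, is in the operator bound (the paper's Proposition~\ref{prop:bound-on-moll-op}). Your splitting argument rests on a uniform upper-density inequality $\lVert V\rVert(B(y,\e))\le C\e^n$, and no such bound is available for a general integer-rectifiable varifold: the $n$-density exists $\lVert V\rVert$-a.e., but the convergence $\e^{-n}\lVert V\rVert(B(y,\e))\to\omega_n\theta(y)$ is not uniform in $y$, and even under $(H_2)$ monotonicity does not furnish a uniform constant when $n\ge 2$. So neither the ``good set'' estimate nor the Egorov treatment of the ``bad set'' closes as written. The paper sidesteps density entirely: after the same Jensen/Fubini reduction to the kernel $K_\e(y)=\int \phi_\e(x-y)/N(x)\,d\lVert V\rVert_x$, it argues that as $\e\to 0$ the associated measure collapses to $\lVert V\rVert\,\delta_y$ (so the integral against $|f|^p$ tends to $\lVert f\rVert_{L^p}^p$), while at $\e=1$ the crude bound $N(x)\ge \lVert V\rVert$ makes $K_1$ trivially finite; the uniform bound on $(0,1]$ is then drawn from these two endpoint behaviors rather than from any density control.

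Your skepticism about $p=\infty$ is well placed: passing $p\to\infty$ in an $L^p$ estimate does not yield $L^\infty$ convergence, and for a bounded $f$ with a genuine jump $f_\e$ cannot converge uniformly. In the paper's later applications only finite-$p$ convergence (or weak-$*$ compactness in $L^\infty$) is actually used.
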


\begin{lem}\label{lem:continuousmollifier}
	If $f$ is continuous, then $f_\e(x)\rightarrow f(x)$ for $\lVert V\rVert$-a.e.~$x$.
\end{lem}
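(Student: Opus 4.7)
The plan is to combine continuity of $f$ with Lemma \ref{lem:mollifiertomultiplicity} to handle the numerator and denominator of the defining ratio separately. I first restrict to a point $x$ in the support of $\lVert V\rVert$ at which $V$ has an approximate tangent plane with positive integer multiplicity $\theta(x)$; by the very definition of an integer-rectifiable varifold, such points form a set of full $\lVert V\rVert$-measure.

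At such an $x$, I would split
\begin{equation*}
f_\e(x)-f(x)\cdot\frac{\int \phi_\e(x-z)\,d\lVert V\rVert_z}{\int \phi_\e(x-z)\,d\lVert V\rVert_z+\e\lVert V\rVert}
=\frac{\int (f(y)-f(x))\,\phi_\e(x-y)\,d\lVert V\rVert_y}{\int \phi_\e(x-z)\,d\lVert V\rVert_z+\e\lVert V\rVert}.
\end{equation*}
Given $\delta>0$, continuity of $f$ at $x$ yields some $r>0$ with $|f(y)-f(x)|<\delta$ whenever $|y-x|<r$. Since $\phi_\e$ is supported in $B(0,\e)$, for $\e<r$ the integrand in the numerator vanishes outside $B(x,r)$, so the right-hand side is bounded in absolute value by
\begin{equation*}
\delta\cdot\frac{\int \phi_\e(x-y)\,d\lVert V\rVert_y}{\int \phi_\e(x-z)\,d\lVert V\rVert_z+\e\lVert V\rVert}\leq \delta.
\end{equation*}

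For the remaining piece, Lemma \ref{lem:mollifiertomultiplicity} gives $\int \phi_\e(x-z)\,d\lVert V\rVert_z\to\theta(x)$ as $\e\to 0$, while the assumption of compact support and finite mass ensures $\e\lVert V\rVert\to 0$. Hence the ratio $\frac{\int \phi_\e(x-z)\,d\lVert V\rVert_z}{\int \phi_\e(x-z)\,d\lVert V\rVert_z+\e\lVert V\rVert}\to 1$, and combining with the previous bound I get $\limsup_{\e\to 0}|f_\e(x)-f(x)|\leq \delta$. Letting $\delta\to 0$ concludes the proof.

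This argument is almost entirely routine; there is no substantial obstacle. The only point that warrants care is the appearance of the regularizing term $\e\lVert V\rVert$ in the denominator, but since $\lVert V\rVert<\infty$ by the standing compact-support assumption, this term is harmless in the limit. No use of the lagrangian structure or of the Maslov-index hypothesis is needed; the conclusion depends only on the rectifiability of $V$ and the continuity of $f$.
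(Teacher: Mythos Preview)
Your proof is correct and follows essentially the same route as the paper: both arguments exploit the continuity of $f$ together with the fact that $\operatorname{supp}\phi_\e\subset B(0,\e)$ to bound the mollified difference by an arbitrary $\delta$. Your version is in fact slightly more careful than the paper's, since you explicitly separate out the contribution of the regularizing term $\e\lVert V\rVert$ in the denominator and dispatch it via Lemma~\ref{lem:mollifiertomultiplicity}, whereas the paper's computation tacitly absorbs this term (the equality on the second line of \eqref{eqn:contsmollifierbound} drops an $f(x)\,\e\lVert V\rVert$ term that in any case vanishes in the limit).
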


\begin{proof}
	Let $x\in \operatorname{supp}\lVert V\rVert$. Consider $\lvert f_\e(x)-f(x)\rvert$. We have
		\begin{equation}\label{eqn:contsmollifierbound}
			\begin{aligned}
			\lvert f_\e(x)-f(x)\rvert& =\left\lvert\frac{\int \phi_\e(x-y)f(y)d\lVert V\rVert_y}{\int \phi_\e(x-z) d\lVert V\rVert_z + \e \lVert V\rVert}-f(x)\right\lvert\\
			&=\left\lvert\frac{\int \phi_\e(x-y)(f(y)-f(x))d\lVert V\rVert_y}{\int \phi_\e(x-z) d\lVert V\rVert_z+ \e \lVert V\rVert}\right\lvert\\
			&\leq \frac{\int \phi_\e(x-y)\lvert f(y)-f(x)\rvert d\lVert V\rVert_y}{\int \phi_\e(x-z) d\lVert V\rVert_z+\e \lVert V\rVert}\\
			\end{aligned}
		\end{equation}
	 Since $f$ is continuous for any $\eta$ there is $\e$ so small that $\lvert f(x)-f(y)\rvert\leq \eta$ on $B(x,\e)$. For such $\e$,  \eqref{eqn:contsmollifierbound} is estimated by 
		\begin{equation}
			\begin{aligned}
			\frac{\int\phi_\e(x-y)\lvert f(x)-f(y)\rvert \eta d\lVert V\rVert_y}{\int \phi_\e(x-z) d\lVert V\rVert_z+\e \lVert V\rVert}\leq \frac{\int\phi_\e(x-y)\eta  d\lVert V\rVert_y}{\int \phi_\e(x-z) d\lVert V\rVert_z+\e \lVert V\rVert} \leq \eta
			\end{aligned}
		\end{equation}
\end{proof}

\begin{cor}
	Suppose $V$ has compact support. If $f$ is continuous, then $f_\e\rightarrow f$ strongly in $L^p(V)$ for each $p\geq 1$.
\end{cor}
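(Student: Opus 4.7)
The plan is to combine the pointwise convergence of Lemma \ref{lem:continuousmollifier} with a dominated convergence argument. First I would use the standing assumption that $V$ has compact support together with the continuity of $f$ on $\R^{2n}$ to conclude that $f$ is bounded on $\supp\lVert V\rVert$, so $\lVert f\rVert_{L^\infty(V)}<\infty$. The uniform bound \eqref{eqn:Linftybound} then gives $\lvert f_\e(x)\rvert\leq \lVert f\rVert_{L^\infty(V)}$ for every $x\in\R^{2n}$, and hence
$$\lvert f_\e(x)-f(x)\rvert^p \leq \left(2\lVert f\rVert_{L^\infty(V)}\right)^p$$
for every $p\geq 1$ and every $x\in\supp\lVert V\rVert$.

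Next, Lemma \ref{lem:continuousmollifier} yields $f_\e\to f$ pointwise $\lVert V\rVert$-a.e., so $\lvert f_\e - f\rvert^p \to 0$ pointwise $\lVert V\rVert$-a.e. Because $V$ is compactly supported (and has finite mass by the standing assumption at the end of the previous section), the constant function $\left(2\lVert f\rVert_{L^\infty(V)}\right)^p$ is $\lVert V\rVert$-integrable and serves as a dominating function on $\supp\lVert V\rVert$. Applying Lebesgue's dominated convergence theorem gives
$$\lVert f_\e - f\rVert_{L^p(V)}^p \;=\; \int \lvert f_\e - f\rvert^p \, d\lVert V\rVert \;\longrightarrow\; 0,$$
which is the desired strong convergence.

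This argument is essentially routine; there is no significant obstacle. The only point requiring care is to ensure that the dominating function is integrable, which is exactly where the compact support hypothesis (equivalently, the finite mass of $\lVert V\rVert$) is used. Without that hypothesis, the uniform bound on $f_\e$ would still hold, but the constant dominating function would not be in $L^p(V)$.
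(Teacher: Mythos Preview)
Your proof is correct and follows essentially the same approach as the paper: continuity plus compact support gives $f\in L^\infty(V)$, the bound \eqref{eqn:Linftybound} provides a uniform dominating constant, and the Dominated Convergence Theorem combined with Lemma~\ref{lem:continuousmollifier} yields the $L^p$ convergence. Your write-up is in fact slightly more careful than the paper's, which somewhat loosely says ``$f_\e^p$ is dominated by $f^p$'' rather than spelling out the constant dominating function and the role of finite mass.
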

\begin{proof}
	Since $f$ is continuous and $V$ has compact support, $f\in L^\infty(V)$. So by \eqref{eqn:Linftybound}, $f_\e^p$ is dominated by $f^p$, so the Dominated Convergence Theorem together with Lemma \ref{lem:continuousmollifier} gives $f_\e\rightarrow f$ in $L^p$.
\end{proof}

Recall that $L_\e: L^p(V)\rightarrow C^\infty({\mathbb R}^{2n})$ is the mollification operator, $L_\e(f) = f_\e$.

\begin{prop}
\label{prop:bound-on-moll-op}
Consider $L_\e:L^p(V)\rightarrow L^p(V)$ Then there is a constant $C$, independent of $\e$, $p$, and $f$ so that $\lVert L_\e f\rVert_{L^p(V)}\leq C\lVert f\rVert_{L^p(V)}$
\end{prop}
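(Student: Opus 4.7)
The plan is to view $L_\e$ as an integral operator, $L_\e f(x) = \int f(y) K_\e(x,y)\, d\lVert V\rVert_y$, with kernel $K_\e(x,y) = \phi_\e(x-y)/Z(x)$, where $Z(x) = \int \phi_\e(x-z)\, d\lVert V\rVert_z + \e\lVert V\rVert$ is the normalizing denominator. The crucial observation is that $\int K_\e(x,y)\, d\lVert V\rVert_y = (Z(x) - \e\lVert V\rVert)/Z(x) \leq 1$, so for each $x$ the kernel $K_\e(x,\cdot)\, d\lVert V\rVert$ behaves as a sub-probability measure in $y$.

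Given this, Jensen's inequality for sub-probability measures---that is, $\lvert \int g\, d\mu\rvert^p \leq \int \lvert g\rvert^p\, d\mu$ whenever $\mu(\Omega) \leq 1$ and $p \geq 1$---immediately yields $\lvert f_\e(x)\rvert^p \leq \int \lvert f(y)\rvert^p K_\e(x,y)\, d\lVert V\rVert_y$. Integrating against $d\lVert V\rVert_x$ and applying Fubini to swap the order of integration gives
$$\lVert f_\e\rVert_{L^p(V)}^p \leq \int \lvert f(y)\rvert^p A_\e(y)\, d\lVert V\rVert_y, \qquad A_\e(y) := \int \frac{\phi_\e(x-y)}{Z(x)}\, d\lVert V\rVert_x.$$
The proposition therefore reduces to producing a constant $C$, independent of $\e$, $y$, and $p$, with $A_\e(y) \leq C$; once that is in hand, $\lVert f_\e\rVert_{L^p(V)} \leq C^{1/p}\lVert f\rVert_{L^p(V)} \leq \max(1,C)\lVert f\rVert_{L^p(V)}$, which delivers the uniform bound in $p$.

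To bound $A_\e$ uniformly I would use two pointwise estimates on $\phi_\e$. The explicit formula $\phi_\e(x)=C\e^{-n}g(\e^{-1}\lvert x\rvert)$ together with the fact that $g$ is strictly positive and decreasing on $[0,1)$ yields both an upper bound $\phi_\e \leq C\e^{-n}$ and a lower bound $\phi_\e(x-z) \geq c\e^{-n}$ whenever $\lvert x-z\rvert \leq \e/2$. The latter forces $Z(x) \geq c\e^{-n}\lVert V\rVert(B(x,\e/2))$, so the $\e^{-n}$ factors cancel and
$$A_\e(y) \leq \frac{C}{c}\int_{B(y,\e)} \frac{d\lVert V\rVert_x}{\lVert V\rVert(B(x,\e/2))}.$$
I would conclude with a Vitali-type selection: choose a maximal disjoint family $\{B(x_j,\e/8)\}$ with $x_j \in B(y,\e) \cap \supp\lVert V\rVert$, so that by maximality $\{B(x_j,\e/4)\}$ covers $B(y,\e)\cap \supp\lVert V\rVert$. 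For any $x\in B(x_j,\e/4)$ one has $B(x_j,\e/4) \subset B(x,\e/2)$, hence $\lVert V\rVert(B(x,\e/2)) \geq \lVert V\rVert(B(x_j,\e/4))$ and the $j$-th local integral is at most $1$. Since the disjoint pivot balls $B(x_j,\e/8)$ lie in $B(y,9\e/8)$, a volume count in $\R^{2n}$ bounds the number of indices by $9^{2n}$, independent of $\e$ and $y$.

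The main obstacle, and the reason for this somewhat indirect route, is that the crude bound $Z(x) \geq \e\lVert V\rVert$ makes $A_\e$ blow up like $\e^{-1}$, while standard symmetrization such as Schur's test fails: the kernel $K_\e(x,y)$ is genuinely asymmetric in $x$ and $y$ because the denominator $Z(x)$ depends only on $x$. The essential trick is to exploit the lower bound $\phi_\e \geq c\e^{-n}$ on the half-ball $B(0,\e/2)$ to replace $Z(x)$ by a local mass $\lVert V\rVert(B(x,\e/2))$, and then to compare this $x$-dependent density to Vitali-selected pivot densities whose count is controlled by ambient-dimensional packing alone.
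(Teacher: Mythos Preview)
Your argument is correct, and the first half---the Jensen-type pointwise estimate followed by Fubini, reducing the question to a uniform bound on $A_\e(y)=\int \phi_\e(x-y)/Z(x)\,d\lVert V\rVert_x$---is exactly what the paper does. The divergence comes in how $A_\e(y)$ is controlled. The paper argues informally: it observes that the right-hand side of the Fubini inequality tends to $\lVert f\rVert_{L^p(V)}^p$ as $\e\to 0$ and that $\sup_y A_\e(y)$ is finite at $\e=1$, and concludes from this that a uniform constant exists. Your route is genuinely different and more explicit: you exploit the pointwise lower bound $\phi_\e\ge c\e^{-n}$ on the half-ball to replace $Z(x)$ by $c\e^{-n}\lVert V\rVert(B(x,\e/2))$, and then run a Vitali selection with ambient packing to bound the resulting integral by the dimensional constant $(C/c)\,9^{2n}$. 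What your approach buys is a concrete, fully uniform constant obtained without any appeal to continuity in $\e$ or to limiting behavior; the paper's approach is shorter but leaves the passage from ``bounded near $\e=0$ and at $\e=1$'' to ``bounded for all $\e\in(0,1]$'' somewhat implicit.
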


\begin{proof}
	Consider some $x\in \operatorname{supp}(\lVert V\rVert)$. We have
		\begin{equation}
			\begin{aligned}
			\lvert f_\e(x)\rvert^p &=\left\lvert \frac{\int f(y)\phi_\e(x-y)d\lVert V\rVert_y}{\int \phi_\e(x-z) d\lVert V\rVert_z+\e \lVert V\rVert}\right\rvert^p\\
			&\leq \left( \frac{\int \lvert f(y)\rvert \phi_\e(x-y)d\lVert V\rVert_y}{\int \phi_\e(x-z) d\lVert V\rVert_z+\e \lVert V\rVert}\right)^p\\
			&=\left( \frac{\int \lvert f(y)\rvert \phi_\e^{\frac{1}{p}}(x-y)\phi_\e^{1-\frac{1}{p}}(x-y) d\lVert V\rVert_y}{\int \phi_\e(x-z) d\lVert V\rVert_z+\e \lVert V\rVert}\right)^p\\
			&\leq \left( \frac{\left(\int \lvert f(y)\rvert^p \phi_\e(x-y)d\lVert V\rVert_y \right)^{\frac{1}{p}} \left(\int \phi_\e(x-y) d\lVert V\rVert_y\right)^{\frac{p-1}{p}}} {\int \phi_\e(x-z) d\lVert V\rVert_z+\e \lVert V\rVert}\right)^p\\
			&\leq \frac{\int \lvert f(y)\rvert^p\phi_\e(x-y)d\lVert V\rVert_y}{\int \phi_\e(x-z) d\lVert V\rVert_z+\e \lVert V\rVert}
			\end{aligned}
		\end{equation}
	Therefore
		\begin{equation}\label{eqn:fepsLp}
			\int \lvert f_\e(x)\rvert^p d\lVert V\rVert_x\leq \int \int \lvert f(y)\rvert^p \frac{\phi_\e(x-y)}{\int \phi_\e(x-z) d\lVert V\rVert_z+\e \lVert V\rVert}d\lVert V\rVert_x d\lVert V\rVert_y
		\end{equation}
		
	 Observe that as $\e\rightarrow 0$, the measure $$\int\frac{\phi_\e(x-y)}{\int \phi_\e(x-z) d\lVert V\rVert_z+\e \lVert V\rVert}d\lVert V\rVert_x d\lVert V\rVert_y$$ tends to $\lVert V\rVert\delta_y$; thus the right-hand side of \eqref{eqn:fepsLp} tends to $\lVert f\rVert_{L^p(V)}^p$. On the other hand at $\e=1$ the quantity $\displaystyle\sup_y \frac{\int \phi_\e(x-y)d\lVert V\rVert_x }{\int \phi_\e(x-z) d\lVert V\rVert_z+\e \lVert V\rVert}$ is finite and independent of both $f$ and $p$. The claim follows.
\end{proof}

\begin{proof}[Proof of Theorem \ref{thm:strong-conv-Lp}]
In the case $p<\infty$, given any $f\in L^p(V)$, since $\lVert V\rVert$ is a Radon measure, there is a sequence $f_j$ of continuous functions with $f_j\rightarrow f$ in $L^p$ (see, e.g., \cite{eg}). We have by Proposition \ref{prop:bound-on-moll-op}
	\begin{equation}
		\begin{aligned}
			\lVert f_\e- f\rVert_{L^p(V)} &\leq \lVert f_\e - \left(f_j\right)_\e\rVert_{L^p(V)} + \lVert \left(f_j\right)_\e-f_j\rVert_{L^p(V)} + \lVert f_j-f\rVert_{L^p(V)}\\
			&=\lVert L_\e(f-f_j)\rVert_{L^p(V)} +\lVert \left(f_j\right)_\e-f_j\rVert_{L^p(V)} + \lVert f_j-f\rVert_{L^p(V)}\\
			&\leq C\lVert f-f_j\rVert_{L^p(V)} + \lVert \left(f_j\right)_\e-f_j\rVert_{L^p(V)} + \lVert f_j-f\rVert_{L^p(V)}
		\end{aligned}
	\end{equation}
Now for any $\eta>0$, since $f_j\rightarrow f$ in $L^p(V)$, the first and third terms can each be made smaller than $\frac{\eta}{3}$; since each $f_j$ is continuous, Lemma \ref{lem:continuousmollifier} allows us to make the second term smaller than $\frac{\eta}{3}$.

For the case $p=\infty$, we may as well assume the support of each function involved is compact, apply the $L^p$ case, and let $p$ tend to $\infty$. \end{proof}

For use later we observe that mollification with respect to $V$ gives a proof of the following proposition.
\begin{prop}\label{prop:mollific}
	Let $f\in L^p(V)$. Then there is a sequence $f_j\in C^\infty_0({\mathbb R}^{2n})$ with $f_j\rightarrow f$ strongly in $L^p(V)$.
\end{prop}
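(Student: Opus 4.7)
The plan is to simply take the mollifications as the approximating sequence. Given $f \in L^p(V)$, define $f_j = L_{1/j}(f) = f_{1/j}$, the mollification of $f$ with respect to $V$ at scale $\e = 1/j$. The bulk of the work has already been done in \S\ref{sec:mollification}; this proposition is essentially a repackaging of Theorem \ref{thm:strong-conv-Lp} together with the smoothness/compactness properties of the mollification operator $L_\e$.

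First I would verify that each $f_j$ actually lies in $C^\infty_c(\mathbb{R}^{2n})$. Smoothness is immediate from the first lemma of \S\ref{sec:mollification}: the denominator of $f_\e(x)$ is bounded below by $\e \lVert V\rVert > 0$ (this is precisely why the extra term $\e\lVert V\rVert$ was introduced in the definition), and the numerator is a smooth function of $x$ since $\phi_\e \in C^\infty_c$. Compact support follows from the standing assumption at the end of \S3 that $V$ has compact support: for any $x$ with $\dist(x, \supp \lVert V\rVert) > \e$, the kernel $\phi_\e(x - y)$ vanishes for every $y \in \supp\lVert V\rVert$, so the numerator vanishes identically and hence $f_\e(x) = 0$. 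Thus $\supp f_\e \subset \{x : \dist(x, \supp\lVert V\rVert) \leq \e\}$, which is compact.

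Second, the $L^p(V)$ convergence $f_j \to f$ is exactly Theorem \ref{thm:strong-conv-Lp} applied with $\e = 1/j$. Combining these two observations, the sequence $\{f_j\} \subset C^\infty_c(\mathbb{R}^{2n})$ converges strongly to $f$ in $L^p(V)$, as required.

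There is no real obstacle here: the substantive work lies in the uniform boundedness of $L_\e$ on $L^p(V)$ (Proposition \ref{prop:bound-on-moll-op}) and the approximation of general $L^p$ functions by continuous ones together with the pointwise convergence for continuous functions (Lemma \ref{lem:continuousmollifier}), both of which have already been established. The proposition is in effect a clean restatement of Theorem \ref{thm:strong-conv-Lp}, recorded separately for use later in the paper.
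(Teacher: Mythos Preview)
Your proof is correct and takes exactly the same approach as the paper: the paper's proof consists of the single line ``Set $f_j=f_{\frac{1}{j}}$,'' and your exposition simply fills in the (already-established) justifications for smoothness, compact support, and $L^p$ convergence.
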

\begin{proof}
	Set $f_j=f_{\frac{1}{j}}$.
\end{proof}

\bigskip

\section{Varifolds with Maslov Index Zero}\label{sec:weak-derivative}

We wish to extend the notion of vanishing Maslov class to varifolds.  We do this by observing that in the smooth case, the vanishing of the Maslov class is equivalent to the existence of a continuous lift of the lagrangian angle. In the varifold setting, the idea of a `continuous lift' of the lagrangian angle is not sensible, because the varifold itself is not a continuous object. We therefore consider varifolds which admit a weakly differentiable lift of the lagrangian angle.

In this section, we will make precise a notion of weak derivative for functions on an integer-rectifiable varifold and use it to define a class of lagrangian integer rectifiable varifolds we call {\em Maslov index zero}. We use mollification as described in \S \ref{sec:mollification} and Menne's second-order rectifiability result to establish a formula relating the weak derivative of the lagrangian angle to the generalized mean curvature.  Finally we show show that the class of lagrangian varifolds with Maslov index zero is compact in a reasonable sense.\\

To compute the weak derivative of a function on a varifold we will use the first variation formula. Formally the computations are possible on, for example, varifolds with bounded first variation. However the generalized boundary or, equivalently, the singular part of the variation measure $|| \d V||$ will come into these computations. To simplify the computations we will formulate the weak derivative on varifolds satisfying the condition $( H_p )$, $p >1$ \eqref{eq:Hp}. This condition also satisfies a compactness result, Theorem \ref{thm:compact-Hp-varifolds}, that is essential for the applications of this paper.  \\

Throughout, let $V$ be a rectifiable $n$-varifold in the Riemannian $2n$-manifold $N$, with induced Radon measure $\lVert V\rVert$ that satisfies $( H_p )$, $p >1$.  If $\rho$ is a Lipschitz function in a neighborhood of the support of $\lVert V\rVert$ then the tangential component of the derivative of $\rho$ is well-defined $\lVert V\rVert$-a.e.~and will be denoted $\n \rho$.

\subsection{Weak derivatives along a rectifiable varifold} 

\begin{defn}
Suppose $V$ is a rectifiable $n$-varifold satisfying $(H_p)$ for $p>1$. Let $f$ be a $\lVert V\rVert$-integrable function. We say that $f$ has a {\em weak derivative} $F$, where $F$ has values in $TN$, if $F$ is $\lVert V\rVert $-integrable and for any $\varphi \in C^{\infty}_0(N)$:
$$
\int \n \var f d \lVert V\rVert  = -\int \var F d \lVert V\rVert .
$$
\end{defn}

\begin{prop}
	Weak derivatives, if they exist, are unique.
\end{prop}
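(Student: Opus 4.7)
The plan is a direct application of the fundamental lemma of the calculus of variations, transported to the varifold setting. Suppose $F_1$ and $F_2$ are both weak derivatives of $f$. Subtracting the two defining identities yields
$$
\int \var (F_1-F_2)\, d\lVert V\rVert = 0
$$
for every $\var\in C^{\infty}_0(N)$. Writing $G := F_1 - F_2$, which is a $\lVert V\rVert$-integrable $TN$-valued function along $\supp\lVert V\rVert$, the task is to show $G=0$ $\lVert V\rVert$-a.e.

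The first step is to reduce to a collection of scalar statements. In any local coordinate chart on $N$, trivialize $TN$ by a smooth frame $\{E_i\}$ and write $G=\sum_i G^i E_i$. Choosing the test function $\var$ to be a bump function supported in the chart and reading off each component separately gives
$$
\int \var G^i\, d\lVert V\rVert = 0
$$
for every smooth, compactly supported scalar $\var$ and every $i$.

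The second step is to promote this vanishing against $C^\infty_0$ test functions into vanishing of $G^i$ $\lVert V\rVert$-a.e. Here I would invoke Proposition \ref{prop:mollific}: smooth compactly supported functions are dense in $L^p(V)$ for any $p\geq 1$, so $G^i$ annihilates a dense subset of $L^q(V)$ (with $q$ the H\"older conjugate chosen according to the integrability of $G^i$ arising from $F_1,F_2\in L^1_{\mathrm{loc}}(V)$), which forces $G^i=0$ $\lVert V\rVert$-a.e. A standard partition-of-unity argument, using the extension of mollification to the manifold setting indicated in the authors' planned \S\ref{sec:manifoldcase}, then patches the chart-local conclusions into the global statement $G\equiv 0$ in $L^1(V;TN)$, proving uniqueness.

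The main point of care, rather than a genuine obstacle, is simply to ensure that the $TN$-valued nature of $F$ is handled honestly: the chart reduction must faithfully produce scalar statements equivalent to the original vector equation, and the density results of \S\ref{sec:mollification} must be transferred to $N$ without loss. Both issues are routine consequences of working in local frames and patching via partitions of unity.
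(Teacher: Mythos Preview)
Your proposal is correct and follows essentially the same route as the paper: subtract the two defining identities to get $\int \var(F_1-F_2)\,d\lVert V\rVert=0$ for all $\var\in C^\infty_0$, then conclude $F_1=F_2$ $\lVert V\rVert$-a.e. The paper's proof is a one-liner that simply asserts this last implication, whereas you spell it out via a local frame reduction to scalar components and the density result of Proposition~\ref{prop:mollific}; your version is more detailed but not materially different.
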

\begin{proof}
	Given $F_1,F_2$ weak derivatives for $f$, we have for any test function $\var$,
		\begin{equation}
			\begin{aligned}
				\int \var F_1 d\lVert V\rVert = -\int f \n\var d\lVert V\rVert = \int \var F_2d \lVert V\rVert
			\end{aligned}
		\end{equation}
	and since this holds for all test functions $\var$, we have $F_1=F_2$ $\lVert V\rVert$-almost everywhere.
\end{proof}

To illustrate the notion of weak derivative we consider the special case in which $U \subset M$ is open, $U \cap V \neq \emptyset$ and $g$ is a Lipschitz function on $U$ such that on $U \cap V$, $g_{|_V} = f$ (i.e., $f$ can be extended to a Lipschitz function on $U$).

\begin{prop}
\label{prop:weak-derivative-Lipschitz}
Let $V$ be a rectifiable $n$-varifold with induced Radon measure $\lVert V\rVert$ that satisfies $( H_p )$, $p >1$.
On $U \cap V$, $\n f$ is well-defined. Moreover, $f$ has a weak derivative $F$ on $U \cap V$, given by $F = \n f + f H$, where $H$ is the mean curvature of $V$.
\end{prop}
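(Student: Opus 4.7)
The plan is to derive $F = \nabla f + fH$ by applying the first variation identity to a product vector field built from the Lipschitz extension $g$ of $f$. Since $V$ satisfies $(H_p)$ with $p>1$, Proposition \ref{prop:no-sing} gives
\begin{equation*}
\int \div_V X\, d\lVert V\rVert = -\int H\cdot X\, d\lVert V\rVert
\end{equation*}
for every smooth $X$ with compact support; by a standard approximation argument (mollifying by any smoothing kernel and using $H\in L^p(V)$) this identity extends to Lipschitz $X$ with compact support in $U$.

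I would fix $\varphi\in C_0^\infty(U)$ and a coordinate direction $e$ in $\R^{2n}$, and apply the formula to $X = \varphi g e$. At a $\lVert V\rVert$-a.e.\ point, both $T_xV$ and $Dg(x)$ exist (by rectifiability of $V$ together with Rademacher's theorem), so the tangential divergence satisfies
\begin{equation*}
\div_V(\varphi g e) = g\,(\nabla\varphi\cdot e) + \varphi\,(\nabla g\cdot e),
\end{equation*}
where $\nabla\varphi$ and $\nabla g$ denote tangential gradients along $V$. Substituting into the first variation identity yields
\begin{equation*}
\int \bigl(g\,\nabla\varphi\cdot e + \varphi\,\nabla g\cdot e\bigr)\, d\lVert V\rVert = -\int \varphi g\,(H\cdot e)\, d\lVert V\rVert.
\end{equation*}

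Rearranging and using $g|_V = f$, $\nabla g|_V = \nabla f$, one gets
\begin{equation*}
\int f\,(\nabla\varphi\cdot e)\, d\lVert V\rVert = -\int \varphi\, e\cdot(\nabla f + fH)\, d\lVert V\rVert.
\end{equation*}
Since $e$ is an arbitrary coordinate direction, the vector identity
\begin{equation*}
\int f\,\nabla\varphi\, d\lVert V\rVert = -\int \varphi\,(\nabla f + fH)\, d\lVert V\rVert
\end{equation*}
follows, which is exactly the defining equation for the weak derivative, so $F = \nabla f + fH$. The manifold case is handled by replacing $\{e_j\}$ with a smooth local frame $\{Y_j\}$, or equivalently by isometrically embedding $N$ into Euclidean space and verifying that the additional $\varphi f\,\div_V Y_j$ terms match on both sides.

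The main obstacle is the justification of the first variation identity for the merely Lipschitz test field $\varphi g e$. One must show that the tangential gradient of a smooth approximation $g_k$ converges $\lVert V\rVert$-a.e.\ (or at least in $L^1(\lVert V\rVert)$) to $\nabla g$: Rademacher's theorem only gives pointwise convergence with respect to Lebesgue measure on $U$, whereas here one integrates against the lower-dimensional measure $\lVert V\rVert$. The resolution uses that a Lipschitz function on $U$ is tangentially differentiable along the rectifiable set $V$ at $\lVert V\rVert$-a.e.\ point (a consequence of the existence of the approximate tangent plane combined with the Rademacher differentiability), together with the uniform Lipschitz bound on $g_k$, to pass to the limit via dominated convergence. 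This is the standard technical step underlying Allard-type first variation identities with Lipschitz test fields.
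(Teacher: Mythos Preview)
Your proposal is correct and follows essentially the same route as the paper: apply the first variation identity to the product vector field $\varphi g e_i$ (the paper writes $f\rho e_i$), expand $\div_V$ by the Leibniz rule, and read off $F=\nabla f+fH$. The paper's proof simply asserts that $f\rho e_i$ is ``an admissible variation'' and proceeds; your discussion of why the first variation formula extends to Lipschitz test fields via approximation and $\lVert V\rVert$-a.e.\ tangential differentiability is more detailed than what the paper provides, but the underlying argument is the same.
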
  

\begin{proof}
Let $\rho$ be a compactly supported smooth test function with $\supp \rho$ contained in an open coordinate neighborhood $W$.  Let $\{e_1,\ldots,e_{2n}\}$ be a covariant constant frame in $W$. Observe that $f\rho e_i$ is an admissible variation, so we  can apply the first variation formula. We compute
	\begin{equation}
		\begin{aligned}
			-\int f\nabla\rho\cdot e_i\ d\lVert V\rVert&=-\int\left(\div(f\rho e_i)-\rho \nabla f\cdot e_i-f\rho \div(e_i)\right)\ d\lVert V\rVert\\
			&=\int f\rho H\cdot e_i\ d\lVert V\rVert+\int \rho\nabla f\cdot e_i\ d\lVert V\rVert
		\end{aligned}
	\end{equation}
	where we have used the Leibniz rule for the divergence operator and the fact that $e_i$ is covariant constant.
\end{proof}

\bigskip

\begin{ex} Let $\Sigma$ be a smooth embedded submanifold without boundary, and $U$ an open set so that $\Sigma\cap U$ and $\Sigma\cap U^c$ have nonempty interior in $\Sigma$. Let $V$ be the varifold arising from $\Sigma$. Then the characteristic function
\begin{equation} f(x) =
\begin{cases}
1 & x\in U\\
0 & x\notin U
\end{cases}
\end{equation}
does not have a weak derivative on $V$. 
\end{ex}

The fact that there is no weak derivative for such $f$ is central to our purposes. We use the lack of a weak derivative to detect ``jump discontinuities".

\begin{rem}
	We have only defined the notion of weak derivative for varifolds satisfying $(H_p)$ for $p>1$. In particular the generalized boundary vanishes in this case. For varifolds with nonvanishing generalized boundary, the formula for a weak derivative would necessarily involve a generalized boundary term. Menne \cite{m3} \cite{m4} has studied function theory on varifolds in a general setting; these results are potentially important for generalizations of the present paper.
\end{rem}

The quantity $\nabla f+fH$ also appears in the  Sobolev inequality proved by Michael-Simon (\cite{ms}, Theorem 2.1) and Allard (\cite{a}, Theorem 7.3):
\begin{thm}\label{thm:MS}
	Let $V$ be an integer-rectifiable $m$-varifold in ${\mathbb R}^N$ that satisfies $( H_p )$, $p >1$. Then there is a universal constant $C(N)$ so that for any $u\in C^1_c({\mathbb R}^N)$,
		$$\lVert u\rVert_{L^{\frac{m}{m-1}}(V)}\leq C(N) \lVert \nabla u+uH\rVert_{L^1(V)},$$
\end{thm}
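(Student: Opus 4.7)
My plan is to follow the original Michael--Simon strategy, which rests on a weighted monotonicity-type inequality derived from the first variation formula. First I would reduce to the case $u \geq 0$ by splitting $u = u^+ - u^-$ and treating each piece separately. The test vector fields driving the argument are
$$X_{x_0,\rho}(x) = u(x)\, \psi(|x-x_0|)\, (x-x_0),$$
where $\psi \colon [0,\infty) \to [0,1]$ is a nonincreasing Lipschitz cutoff that equals $1$ on $[0, \rho/2]$ and vanishes on $[\rho, \infty)$. Because $V$ satisfies $(H_p)$ with $p > 1$, Proposition \ref{prop:no-sing} kills the singular part of $\lVert \delta V \rVert$, so the first variation collapses to the clean identity $\int \div_S X_{x_0,\rho}\, dV(x,S) = -\int H \cdot X_{x_0,\rho}\, d\lVert V \rVert$.

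The second step is to compute $\div_S X_{x_0,\rho}$ at a point of $C^2$-rectifiability. Splitting the tangential derivative into three pieces (a $\nabla u$ contribution, a $\psi'$ contribution through the radial factor, and a $\dim S$ contribution from the identity term on $(x-x_0)$) and then letting $\psi$ approach $\mathbf{1}_{[0,\rho]}$, I would derive a weighted monotonicity inequality of the form
$$\frac{d}{d\rho}\!\left(\rho^{-m}\int_{B_\rho(x_0)} u\, d\lVert V \rVert\right) \geq -C\, \rho^{-m}\int_{B_\rho(x_0)} (|\nabla u| + u|H|)\, d\lVert V \rVert$$
in the sense of distributions on $(0,\infty)$. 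Integrating from $0$ to a chosen scale $\rho_0$, and using the density lower bound $\liminf_{\rho \to 0^+} \rho^{-m}\lVert V \rVert(B_\rho(x_0)) \geq \omega_m$ at $\lVert V\rVert$-a.e.\ $x_0$ combined with continuity of $u$, I obtain the pointwise bound
$$\omega_m\, u(x_0) \leq \rho_0^{-m} \int_{B_{\rho_0}(x_0)} u\, d\lVert V \rVert + C \int_0^{\rho_0}\!\sigma^{-m} \int_{B_\sigma(x_0)} (|\nabla u| + u|H|)\, d\lVert V \rVert\, d\sigma.$$

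Finally I would convert this pointwise estimate into the $L^{m/(m-1)}$ bound through the classical Vitali-covering / distribution-function argument: at each $x_0$ choose $\rho_0 = \rho_0(x_0)$ so that the two right-hand terms are comparable, estimate the superlevel sets $\{u > t\}$ by a maximal function of $|\nabla u| + u|H|$, and integrate in $t$ in the style of Marcinkiewicz interpolation; the $L^1$ side comes out with an $m/(m-1)$ exponent precisely because of the dimensional factor $\rho^{-m}$ against $\rho^{m-1}\,d\rho$. The main obstacle is the second step: executing the weighted monotonicity rigorously on a general rectifiable varifold requires a careful regularization of the Lipschitz cutoff $\psi$, honest use of the Grassmann-bundle measure $dV(x,S)$ when projecting $(x-x_0)$ onto the approximate tangent plane $S$, and control of error terms coming from the difference between the approximate tangent plane at $x_0$ and those at nearby points. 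The hypothesis $(H_p)$ with $p>1$ is essential here, since without it $\lVert \delta V\rVert_{\mathrm{sing}}$ would feed an uncontrolled boundary contribution into the monotonicity formula and block the identification of $\delta V(X)$ with $-\int H\cdot X\, d\lVert V\rVert$.
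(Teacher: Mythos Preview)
The paper does not actually prove this theorem; it simply records it as the Sobolev inequality of Michael--Simon (\cite{ms}, Theorem~2.1) and Allard (\cite{a}, Theorem~7.3) and moves on. So there is no in-paper argument to compare against. Your sketch is a faithful outline of the Michael--Simon proof: the radial test field $u\,\psi(|x-x_0|)(x-x_0)$, the weighted monotonicity for $\rho^{-m}\int_{B_\rho}u\,d\lVert V\rVert$, and the covering/distribution-function step that converts the pointwise bound into the $L^{m/(m-1)}$ estimate are exactly the ingredients in \cite{ms}. A couple of small corrections: the splitting $u=u^+-u^-$ does not keep you in $C^1_c$, so one usually proves the inequality for nonnegative $u$ and then applies it to a $C^1$ approximation of $|u|$ (or just to $|u|$ after extending to Lipschitz test functions). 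Also, your last paragraph overstates the role of $p>1$: the definition of $(H_p)$ already requires $\delta V(X)=-\int H\cdot X\,d\lVert V\rVert$ with no singular part, for every $p\ge1$; Proposition~\ref{prop:no-sing} is the converse step (Allard's condition implies $(H_p)$), not something you need to invoke once $(H_p)$ is assumed. What the inequality actually requires is only that the first variation be represented by an integrable $H$, which is the content of $(H_p)$ itself.
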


Using Proposition \ref{prop:weak-derivative-Lipschitz}, we may rephrase this result so that it looks more like the classical Sobolev inequality:
\begin{thm}\label{thm:MSrestate}
	Let $V$ be an integer-rectifiable $m$-varifold in ${\mathbb R}^N$ that satisfies $( H_p )$, $p >1$. Then there is a universal constant $C(N)$ so that for any $u\in C^1_c({\mathbb R}^N)$,
		$$\lVert u\rVert_{L^{\frac{m}{m-1}}(V)}\leq C(N) \lVert U\rVert_{L^1(V)},$$
	where $U$ is the weak derivative of $u$ with respect to $V$.
\end{thm}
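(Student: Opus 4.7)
The plan is essentially a one-line reduction: Theorem \ref{thm:MSrestate} differs from the classical Michael--Simon inequality (Theorem \ref{thm:MS}) only in that the right-hand side has been rewritten in terms of the paper's notion of weak derivative. So the task is to identify the weak derivative of a $C^1_c$ test function with the expression $\nabla u + uH$ that already appears in Theorem \ref{thm:MS}.

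First, I would observe that any $u \in C^1_c({\mathbb R}^N)$ is, in particular, a Lipschitz function defined on an open neighborhood of $\supp \lVert V\rVert$. Therefore Proposition \ref{prop:weak-derivative-Lipschitz} applies and tells us precisely that $u$ admits a weak derivative with respect to $V$, and that this weak derivative is
\begin{equation*}
U = \nabla u + u H,
\end{equation*}
where $\nabla u$ denotes the tangential component of the ambient gradient, which is well-defined $\lVert V\rVert$-a.e.~because $V$ has an approximate tangent plane $\lVert V\rVert$-a.e.

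Second, I would invoke Theorem \ref{thm:MS} to obtain
\begin{equation*}
\lVert u\rVert_{L^{\frac{m}{m-1}}(V)} \le C(N)\,\lVert \nabla u + uH \rVert_{L^1(V)} = C(N)\,\lVert U\rVert_{L^1(V)},
\end{equation*}
which is the claim. The hypothesis that $V$ satisfies $(H_p)$ with $p>1$ is exactly what is needed both for Proposition \ref{prop:weak-derivative-Lipschitz} (so that the singular part of $\lVert\delta V\rVert$ vanishes and the integration-by-parts formula takes the clean form used in the definition of weak derivative) and for the Michael--Simon inequality itself.

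There is essentially no obstacle beyond verifying that the hypotheses of Proposition \ref{prop:weak-derivative-Lipschitz} are met by a $C^1_c$ function; the content of the theorem is purely translational, showing that the two formulations coincide under the identification provided by that proposition.
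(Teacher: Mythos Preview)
Your proposal is correct and matches the paper's approach exactly: the paper does not even give a separate proof of Theorem~\ref{thm:MSrestate}, but simply introduces it with the sentence ``Using Proposition~\ref{prop:weak-derivative-Lipschitz}, we may rephrase this result so that it looks more like the classical Sobolev inequality,'' which is precisely the identification $U=\nabla u+uH$ you spell out.
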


\bigskip

\begin{prop}\label{divergenceformula}
	$F$ is a weak derivative for $f$ if and only if for each compactly-supported smooth test vector field $X$ we have
		\begin{equation*}
			\int f\div X d\lVert V\rVert=-\int F\cdot X d\lVert V\rVert
		\end{equation*}
\end{prop}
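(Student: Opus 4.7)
The plan is to exploit the global Cartesian frame on $\R^{2n}$ and the Leibniz rule for the tangential divergence; the argument closely parallels the computation in the proof of Proposition~\ref{prop:weak-derivative-Lipschitz}.

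First I would unpack the defining identity into scalar components. Since $F$ takes values in $TN=\R^{2n}$ and $\nabla\varphi$ denotes the tangential gradient (also a vector in $T_xV\subset T_xN$), the equation $\int(\nabla\varphi)\,f\,d\lVert V\rVert=-\int\varphi F\,d\lVert V\rVert$ is most naturally read as a vector identity in $TN$, equivalent to the family of scalar identities
\begin{equation*}
\int f\,\nabla\varphi\cdot e_i\,d\lVert V\rVert=-\int \varphi\,F\cdot e_i\,d\lVert V\rVert
\end{equation*}
as $\varphi$ ranges over $C^\infty_c(\R^{2n})$ and $e_i$ over the standard basis. This is in fact the form used in the proof of Proposition~\ref{prop:weak-derivative-Lipschitz}.

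For the forward direction, given a test vector field $X$ I write $X=\sum_i X^i e_i$. Because each $e_i$ is parallel in $\R^{2n}$, the Leibniz rule for $\div_V$ gives $\div_V X=\sum_i \nabla X^i\cdot e_i$, hence
\begin{equation*}
\int f\,\div_V X\,d\lVert V\rVert=\sum_i \int f\,\nabla X^i\cdot e_i\,d\lVert V\rVert.
\end{equation*}
Applying the componentwise weak derivative identity with the scalar test function $\varphi=X^i$ to each summand then yields $-\sum_i\int X^i\,F\cdot e_i\,d\lVert V\rVert=-\int F\cdot X\,d\lVert V\rVert$, as desired.

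For the reverse direction I fix $\varphi\in C^\infty_c(\R^{2n})$ and a basis vector $e_i$ and apply the hypothesis to the test vector field $X=\varphi e_i$. Since $De_i=0$, the Leibniz rule gives $\div_V(\varphi e_i)=\nabla\varphi\cdot e_i$, so the hypothesis collapses to $\int f\,\nabla\varphi\cdot e_i\,d\lVert V\rVert=-\int \varphi\,F\cdot e_i\,d\lVert V\rVert$, which is exactly the $e_i$-component of the defining identity.

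There is no real analytic obstacle: the entire proof amounts to two invocations of the product rule together with the parallelism of the Euclidean frame. The only point where care would be needed is in transporting the equivalence to the Calabi-Yau setting, where one must use a partition of unity and local orthonormal frames $\{Y_i\}$; the extra terms $\div_V Y_i\ne 0$ that appear through the Leibniz rule enter symmetrically on both sides of the identity, so the equivalence persists.
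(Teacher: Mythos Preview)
Your proof is correct and follows essentially the same approach as the paper: both decompose the test vector field in a covariant-constant (Euclidean) frame, use the Leibniz rule together with $\div_V e_i=0$ to reduce to the scalar identity, and for the converse test against $X=\varphi e_i$. The only difference is cosmetic notation and your additional remark about the Calabi-Yau case, which the paper does not include here.
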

\begin{proof}
	Given a test vector field $X$, we can write $X=X^i E_i$ where $\{E_1,\ldots, E_{2n}\}$ are a covariant constant ambient orthonormal frame. We have:
		\begin{equation}
			\begin{aligned}
			\int f\div X d\lVert V\rVert&=\int f \div X^i E_i d\lVert V\rVert \\
			&= \int f\left(\n X^i\cdot E_i+X^i\div E_i\right)d\lVert V\rVert \\
			&= \int f\n X^i\cdot E_id\lVert V\rVert
			\end{aligned}
		\end{equation}
		because the $E_i$ are covariant constant.
		
	Now if $F$ is a weak derivative for $f$, we can rewrite the latter integral as
	\begin{equation}
	\begin{aligned}
		 \int f\n X^i\cdot E_id\lVert V\rVert&= \left(\int f\n X^i d\lVert V\rVert\right)\cdot E_i\\
		 &=-\left(\int F X^i d\lVert V\rVert \right)\cdot E_i=-\int F\cdot X d\lVert V\rVert 
	\end{aligned}
	\end{equation}
	where again we have used the fact that each $E_i$ is covariant constant.
	
	Conversely, given a test function $\var$, apply the formula $\int f\div X d\lVert V\rVert=-\int F\cdot X d\lVert V\rVert$ to the vector field $X=\var E$ for some covariant constant vector field $E$ to obtain $\int f \n\var d\lVert V\rVert = - \int \var Fd \lVert V\rVert$.
\end{proof}

\bigskip

Recall Menne's decomposition theorem for varifolds stated in Theorem \ref{thm:weak-regularity-M2}   : If $\lVert \d V \rVert$ is a Radon measure then there is a countable collection $\{ M_i \}$ such that:
$$
V = M_0 \cup \cup_{i=1}^\infty M_i,
$$
where $M_0$ has $\lVert V\rVert $-measure zero and for each $i =1,2,3 \dots$ there is a $C^2$ submanifold $N_i$ with $M_i \subset N_i$. Moreover, the generalized mean curvature $H$ of $V$ satisfies for a.e. $x \in M_i$
$$
H(x) = H^{N_i}(x),
$$
where $H^{N_i}$ is the mean curvature along $N_i$.

Given an ambient $C^2$ vector field $X$ we can decompose $X$ into tangential $X^{\top}$ and normal $X^{\perp}$ components along $N_i$. Both $X^{\top}$ and $X^{\perp}$ can be extended to $C^1$ ambient vector fields and therefore both $\div_{N_i} X^{\top}$ and $\div_{N_i} X^{\perp}$ are well-defined integrable functions. We define the decomposition of $X$ into tangential $X^{\top}$ and normal $X^{\perp}$ components along  $M_i$ using the decomposition along $N_i$ and define
$$
\div_{M_i} X^{\top} = \div_{N_i} X^{\top}
$$
$$
\div_{M_i} X^{\perp} = \div_{N_i} X^{\perp}
$$
We have, thus, defined the decomposition of $X$ into tangential $X^{\top}$ and normal $X^{\perp}$ components along $V$ $\lVert V\rVert $ a.e. and defined the integrable functions $\div_{V} X^{\top}$ and $\div_{V} X^{\perp}$ $\lVert V\rVert $ a.e.

The next result gives a partial characterization of the weak derivative along a varifold that satisfies $(H_p)$ for $p >1$.

\begin{thm}
\label{thm:weak-derivative-Menne}
Suppose $V$ is a $n$-varifold that satisfies $(H_p)$ for $p >1$. Suppose $f \in L^\infty( V )$ with weak derivative $F \in L^p( V)$. Then $F$ admits a decomposition 
$$
F = F^{\top} + fH,
$$ 
where $F^{\top} \in L^p( V)$ satisfies $F^{\top}(x) \in T_x V$ for $\lVert V\rVert $ a.e. $x$. Moreover,
$$
\int F^{\top} \cdot X d\lVert V\rVert  = - \int f \div_V X^{\top} d\lVert V\rVert .
$$
\end{thm}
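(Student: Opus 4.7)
The plan is to use Proposition \ref{divergenceformula} to recast the weak-derivative condition as an integration-by-parts identity, and then apply Menne's decomposition (Theorem \ref{thm:weak-regularity-M2}) to evaluate the normal part of the test vector field's divergence in terms of the mean curvature. This identifies the obstruction to tangentiality as precisely $fH$.

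First I would fix a smooth compactly supported test vector field $X$ and decompose it pointwise as $X = X^\top + X^\perp$ along $V$ via the approximate tangent planes, as explained just before the theorem. By Menne's decomposition, on each $C^2$ submanifold $N_i$ a direct frame computation gives $\div_{N_i} X^\perp$: extending an orthonormal frame $\{e_1,\ldots,e_n\}$ for $TN_i$ and differentiating the identity $X^\perp \cdot e_k = 0$ along $e_k$ yields $\sum_k D_{e_k}X^\perp \cdot e_k = -\sum_k X^\perp \cdot D_{e_k}e_k = -X^\perp \cdot H^{N_i}$, since the tangential part of $\sum_k D_{e_k}e_k$ pairs to zero against $X^\perp$. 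Because $H^{N_i}$ is normal, $\div_{N_i}X^\perp = -X \cdot H^{N_i}$, and Menne's identifications promote this to $\div_V X^\perp = -X \cdot H$ $\lVert V\rVert$-a.e. Substituting into Proposition \ref{divergenceformula},
$$-\int F \cdot X \, d\lVert V\rVert = \int f \div_V X \, d\lVert V\rVert = \int f \div_V X^\top \, d\lVert V\rVert - \int f H \cdot X \, d\lVert V\rVert.$$
Setting $F^\top := F - fH$ (which lies in $L^p(V)$ since $f \in L^\infty(V)$, $H \in L^p(V)$, and $F \in L^p(V)$) immediately delivers the integration-by-parts identity
$$\int F^\top \cdot X \, d\lVert V\rVert = -\int f \div_V X^\top \, d\lVert V\rVert$$
claimed in the theorem.

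The main obstacle is then verifying that $F^\top(x) \in T_xV$ for $\lVert V\rVert$-a.e.~$x$: the normal bundle to $V$ is not globally smooth, so we cannot test $F^\top$ directly against globally defined smooth normal vector fields. The remedy is to localize at points of $C^2$ rectifiability. For $\lVert V\rVert$-a.e.~$x_0 \in M_i \subset N_i$ one can choose a neighborhood $U$ of $x_0$ so small that $N_i \cap U$ is a $C^2$ graph over $T_{x_0}N_i$ with the normal bundle spanned by smooth ambient vector fields $\nu_1,\ldots,\nu_n$. For any $\varphi \in C_c^\infty(U)$ the test field $X = \varphi \nu_j$ satisfies $X^\top \equiv 0$ along $N_i$, so the contribution of $M_i$ to $\int f \div_V X^\top \, d\lVert V\rVert$ vanishes. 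Contributions from other pieces $M_k$ meeting $U$ can be made arbitrarily small by shrinking $U$, using that at a point of $C^2$ rectifiability the $\lVert V\rVert$-density of $M_i$ at $x_0$ equals that of $V$ and that Theorem \ref{thm:Menne-tilt-excess} controls how close the tangent planes of those other pieces must be to $T_{x_0}V$. One obtains $\int F^\top \cdot \varphi \nu_j \, d\lVert V\rVert = 0$ for every such $\varphi$, hence $F^\top \cdot \nu_j = 0$ $\lVert V\rVert$-a.e.~in $U$; since $\{\nu_j(x_0)\}$ spans $T_{x_0}V^\perp$, $F^\top(x_0) \in T_{x_0}V$. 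Covering $\lVert V\rVert$-almost all of $V$ by such neighborhoods completes the proof.
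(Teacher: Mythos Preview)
Your derivation of the integration-by-parts identity is essentially the same computation the paper carries out: both rely on Menne's decomposition to justify the pointwise formula $\div_V X^\perp = -H\cdot X$ and then substitute it into the first-variation identity. You do it slightly more directly by applying Proposition~\ref{divergenceformula} to the given $f$, whereas the paper first proves the identity $\int f\,\div_V X^\top\,d\lVert V\rVert = -\int \nabla f\cdot X\,d\lVert V\rVert$ for \emph{smooth} $f$ and then approximates; but at this level the arguments coincide.

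The substantive difference is in how tangentiality of $F^\top=F-fH$ is established, and here your argument has a genuine gap. You want to test $F^\top$ against local normal fields $\varphi\nu_j$ built from a single $C^2$ piece $N_i$ and then dismiss the contributions of the other pieces $M_k$ by ``shrinking $U$''. But the identity you are using is $\int F^\top\cdot(\varphi\nu_j)\,d\lVert V\rVert=-\int f\,\div_V(\varphi\nu_j)^\top\,d\lVert V\rVert$, and on $M_k$ the tangential projection is taken with respect to $N_k$, not $N_i$. Even if tilt-excess decay forces $T_yV$ close to $T_{x_0}V$ for $y$ near $x_0$, that only makes the \emph{pointwise} tangential component $(\varphi\nu_j)^\top$ small; the divergence $\div_V(\varphi\nu_j)^\top$ involves derivatives of this projection and is not controlled by tilt-excess alone. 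Moreover, as you shrink $U$ both sides of the identity go to zero, so a naive smallness argument proves nothing; you would need a quantitative blow-up or Lebesgue-point statement that you have not supplied. Your density claim (``the $\lVert V\rVert$-density of $M_i$ at $x_0$ equals that of $V$'') is also not part of what Menne's decomposition asserts: the $M_i$ may overlap, and no single piece need carry the full density.

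The paper sidesteps all of this. Having established the smooth case, it takes $f_j\in C^\infty_c$ with $f_j\to f$ in $L^\infty(V)$ (via mollification along $V$) and observes that $\nabla f_j$ is manifestly tangential for each $j$. Passing to the limit in the smooth identity shows $\nabla f_j\rightharpoonup F-fH$ weakly in $L^p(V)$; since the tangential vector fields form a closed (hence weakly closed) subspace of $L^p$, the limit $F-fH$ is tangential a.e. This approximation device is what makes the tangentiality argument short and robust, and it is the ingredient your proposal is missing.
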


\begin{proof}
First we claim that if $f, X\in C^1_c({\mathbb R}^{2n})$ then $$\int f\div_V(X^\top)d\lVert V\rVert = -\int \nabla f\cdot Xd\lVert V\rVert$$ To see this we compute the first variation of $\lVert V\rVert$ with respect to the vector field $fX$, using Menne to decompose $X=X^\top+X^\perp$ into $C^1$ vector fields on the $C^2$ pieces of some Menne decomposition and using the fact that pointwise (as in the $C^2 $ case) $\div_VX^\perp=-H\cdot X$,
		\begin{equation}
			\begin{aligned}
				\delta V(fX)&=\int \div_V(fX)d\lVert V\rVert=\int \nabla f\cdot X+f\div_V Xd\lVert V\rVert \\
				&= \int \nabla f\cdot X+ f\div_V X^\top+f\div_VX^\perp d\lVert V\rVert \\
				&=\int \nabla f\cdot X+ f\div_V X^\top-fH\cdot Xd\lVert V\rVert \\
			\end{aligned}
		\end{equation}
	On the other hand, by the definition of $H$, we have $\delta V(fX)=-\int H\cdot(fX)d\lVert V\rVert$.
	
Next, suppose that $\{ f_j \}$ is a sequence of $C^\infty_c({\mathbb R}^{2n})$ functions with $f_j\rightarrow f$ in $L^\infty$, constructed say using Proposition \ref{prop:mollific}. For any $X \in C^1_c({\mathbb R}^{2n})$,
		\begin{equation}
			\begin{aligned}
				\int \nabla f_j Xd\lVert V\rVert &= -\int f_j H\cdot X d\lVert V\rVert - \int f_j \div_V X d\lVert V\rVert
			\end{aligned}
		\end{equation}
	which converges (using Proposition \ref{prop:weak-derivative-Lipschitz}) to 
		\begin{equation}
			-\int fH\cdot Xd\lVert V\rVert - \int f\div_V Xd\lVert V\rVert=-\int f H\cdot Xd\lVert V\rVert +\int F\cdot Xd\lVert V\rVert
		\end{equation} 
Since $X$ is arbitrary, this means $\nabla f_j\rightharpoonup F-fH$ in $L^p(V)$. Moreover, $H$ is $\lVert V\rVert$-almost everywhere normal and $\nabla f_j$ are everywhere tangential, so $F-fH=F^\top$.

The claimed integration-by-parts formula for $F^\top$ follows from the fact that for each $j$, $$\int f_j\div_V(X^\top)d\lVert V\rVert = -\int \nabla f_j\cdot Xd\lVert V\rVert.$$

\end{proof}

\begin{cor}\label{powerrule}
Let  $V$be an $n$-varifold that satisfies $(H_p)$ for $p >1$.
	Suppose $f \in L^\infty(V)$ has a weak derivative $F\in L^p(V)$, and $g=f^k$ for some $k\neq0$. Then $g$ has weak derivative
	\begin{equation*}
		G=kf^{k-1}F-(k-1)f^kH
	\end{equation*}
\end{cor}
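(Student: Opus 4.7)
The plan is to establish the formula first for smooth $f$ by direct computation, then extend to the general case by $V$-mollification. For the smooth case, if $f$ is $C^1$ with range in a domain where $t\mapsto t^k$ is smooth (which for non-integer $k$ requires that $f$ avoid $0$; I take this as implicit in the corollary's hypotheses), then $g=f^k$ is $C^1$ and Proposition \ref{prop:weak-derivative-Lipschitz} yields
$$G=\n g + gH = kf^{k-1}\n f + f^k H = kf^{k-1}(\n f + fH) - (k-1) f^k H = kf^{k-1}F - (k-1) f^k H,$$
where $F=\n f + fH$ is the weak derivative of $f$ supplied by the same proposition.

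For general $f\in L^\infty(V)$ with weak derivative $F\in L^p(V)$, I would set $f_j = L_{1/j}f$, the $V$-mollifications. By \eqref{eqn:Linftybound} the $f_j$ are uniformly bounded in $L^\infty(V)$ by $\lVert f\rVert_{L^\infty(V)}$, and $f_j\to f$ in every $L^r(V)$ with $r<\infty$ by Theorem \ref{thm:strong-conv-Lp}. The argument in the proof of Theorem \ref{thm:weak-derivative-Menne} shows $\n f_j \rightharpoonup F - fH$ weakly in $L^p(V)$, so the weak derivatives $F_j := \n f_j + f_j H$ of the $f_j$ satisfy $F_j \rightharpoonup F$ in $L^p(V)$. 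Applying the smooth case to each $f_j$,
$$\int f_j^k \n\var\, d\lVert V\rVert = -\int \var\bigl[k f_j^{k-1} F_j - (k-1) f_j^k H\bigr]\, d\lVert V\rVert \qquad \text{for all } \var\in C^\infty_c.$$
Let $q$ be the H\"older conjugate of $p$. The left side converges to $\int f^k \n\var\, d\lVert V\rVert$ since $f_j^k\to f^k$ strongly in $L^q(V)$ by the Lipschitz-composition estimate on the uniformly bounded range. The $H$-term on the right converges by dominated convergence against $H\in L^p(V)$. The remaining term $-\int \var k f_j^{k-1} F_j\, d\lVert V\rVert$ is controlled by splitting: $\int \var k(f_j^{k-1}-f^{k-1}) F_j \to 0$ by H\"older, since $f_j^{k-1}\to f^{k-1}$ strongly in $L^q(V)$ while $\{F_j\}$ is uniformly bounded in $L^p(V)$; and $\int \var k f^{k-1} F_j \to \int \var k f^{k-1} F$ by weak-strong pairing, because $\var k f^{k-1}\in L^q(V)$. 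Identifying the limits proves the corollary.

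The principal technical point is the weak-strong pairing, which combines the strong $L^q$ convergence of the composition factor $f_j^{k-1}$ with the weak $L^p$ convergence of $F_j$; this in turn hinges on the uniform $L^\infty$ bound on $\{f_j\}$ (and, for non-integer $k$, on the tacit avoidance of $0$ by $f$) to secure the Lipschitz-composition estimate on the range of the $f_j$.
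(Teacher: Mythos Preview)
Your proof is correct and follows essentially the same route as the paper: verify the formula for smooth $f$ via Proposition \ref{prop:weak-derivative-Lipschitz}, approximate a general $f$ by its $V$-mollifications $f_j$, and pass to the limit in the weak-derivative identity. The only cosmetic difference is in the limit step: the paper decomposes $G_j$ into tangential and normal parts (using $G_j^\top=kf_j^{k-1}\nabla f_j$ and $G_j^\perp=f_j^kH$) and sends each to its limit via the argument of Theorem \ref{thm:weak-derivative-Menne}, whereas you keep the product $f_j^{k-1}F_j$ intact and pass to the limit by a weak--strong pairing (uniform $L^p$-bound on $F_j$ against strong $L^q$-convergence of $f_j^{k-1}$). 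Both arguments rely on the same ingredients --- the uniform $L^\infty$ bound \eqref{eqn:Linftybound}, the weak convergence $\nabla f_j\rightharpoonup F-fH$ from the proof of Theorem \ref{thm:weak-derivative-Menne}, and $H\in L^p(V)$ --- so the difference is purely organizational.
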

\begin{proof}
	Using Proposition \ref{prop:mollific}, let $f_j$ be a sequence of $C^\infty_0({\mathbb R}^{2n})$ functions with $f_j\rightarrow f$ in $L^\infty(V)$. Each $f_j$ satisfies the claimed formula by direct computation, that is, $g_j=f_j^k$ has weak derivative
		$$G_j=kf_j^{k-1}F_j-(k-1)f_j^kH$$
	We have $G_j^\top=f_j^{k-1}\nabla f_j$; as in the proof of the Theorem this goes to $kf^{k-1}F^\top$. Clearly  $G_j^\perp=f_j^kH\rightarrow f^kH$. So $$G_j\rightarrow kf^{k-1}(F-fH)+f^kH=kf^{k-1} F -(k-1)f^kH\text{ in }L^p(V).$$
	
	On the other hand, we have for any smooth test function $\varphi$
	$$\int g_j\nabla \varphi d\lVert V\rVert=-\int G_j\varphi d\lVert V\rVert,$$and taking the limit in $j$ we obtain that $G$ is a weak dervative for $g$.
\end{proof}

More generally, we have the following chain rule:
\begin{cor}
Let  $V$be an $n$-varifold that satisfies $(H_p)$ for $p > 1$.
	Let $h\in C^1(\mathbb{R})$. If $f \in L^\infty(V)$ has a weak derivative $F\in L^p(V)$, and $g=h\circ f$, then $g$ has weak derivative
	\begin{equation*}
		G=h'(f)F+\left(h(f)-fh'(f)\right)H
	\end{equation*}
\end{cor}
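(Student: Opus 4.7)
The plan is to mimic the strategy of Corollary \ref{powerrule}, now allowing a general $C^1$ function $h$ in place of $t \mapsto t^k$. Using Proposition \ref{prop:mollific}, I would choose a sequence $f_j \in C^\infty_c(\mathbb{R}^{2n})$ with $f_j \to f$ strongly in $L^p(V)$ and, by the bound \eqref{eqn:Linftybound}, uniformly bounded by $M := \lVert f\rVert_{L^\infty(V)}$; after passing to a subsequence, $f_j \to f$ pointwise $\lVert V\rVert$-a.e. Since each $f_j$ is $C^1$, the classical chain rule combined with Proposition \ref{prop:weak-derivative-Lipschitz} gives that $g_j := h \circ f_j$ has weak derivative
\begin{equation*}
G_j = \nabla(h \circ f_j) + (h \circ f_j)H = h'(f_j)\nabla f_j + h(f_j)H = h'(f_j)F_j + \bigl(h(f_j) - f_j h'(f_j)\bigr)H,
\end{equation*}
where $F_j = \nabla f_j + f_j H$ is the weak derivative of $f_j$. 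So I need only pass to the limit $j \to \infty$ in this identity.

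The main work is justifying that $G_j \rightharpoonup G$ weakly in $L^p(V)$. Because $h$ and $h'$ are continuous on the compact set $[-M,M]$, they are uniformly bounded there, and $h(f_j) \to h(f)$, $h'(f_j) \to h'(f)$ pointwise $\lVert V \rVert$-a.e. and uniformly bounded. Dominated convergence (with dominant $C\lvert H\rvert \in L^p(V)$) then gives strong convergence in $L^p(V)$ of the normal terms $h(f_j)H \to h(f)H$, $f_j h'(f_j)H \to f h'(f)H$, and $h'(f_j)f_j H \to h'(f)fH$. For the tangential piece, the proof of Theorem \ref{thm:weak-derivative-Menne} yields $\nabla f_j \rightharpoonup F^\top$ weakly in $L^p(V)$; the key step is to show the bounded factor $h'(f_j)$ preserves this weak convergence. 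For any test $Y \in L^q(V)$ with $q$ the H\"older conjugate of $p$,
\begin{equation*}
\int h'(f_j)\nabla f_j \cdot Y \, d\lVert V\rVert = \int \nabla f_j \cdot \bigl(h'(f_j)Y\bigr)\, d\lVert V\rVert,
\end{equation*}
and $h'(f_j)Y \to h'(f)Y$ strongly in $L^q(V)$ by dominated convergence, so the weak-strong pairing yields the limit $\int h'(f)F^\top \cdot Y \, d\lVert V\rVert$. This proves $h'(f_j)\nabla f_j \rightharpoonup h'(f)F^\top$ weakly in $L^p(V)$, and combining with the strong-convergence results above gives $G_j \rightharpoonup G = h'(f)F + (h(f) - fh'(f))H$ in $L^p(V)$.

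Finally, $h(f_j) \to h(f)$ strongly in $L^p(V)$ by the same dominated-convergence argument. Taking the limit $j \to \infty$ in the defining identity
\begin{equation*}
\int h(f_j)\, \nabla \varphi \, d\lVert V\rVert = -\int G_j \varphi \, d\lVert V\rVert, \qquad \varphi \in C^\infty_c(\mathbb{R}^{2n}),
\end{equation*}
yields $\int g \, \nabla\varphi \, d\lVert V\rVert = -\int G \varphi \, d\lVert V\rVert$, establishing $G$ as the weak derivative of $g$. The main obstacle is the nonlinear composition $h'(f_j)\nabla f_j$ in the weak topology; the resolution is to transfer the bounded factor $h'(f_j)$ onto the $L^q$ side where it converges strongly, exploiting only the uniform $L^\infty$ bound on $f_j$ together with the continuity of $h'$.
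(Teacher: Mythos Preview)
Your proposal is correct and follows essentially the same approximation-by-smooth-functions strategy as the paper. The only differences are bookkeeping: the paper invokes the $L^\infty$ case of Theorem~\ref{thm:strong-conv-Lp} to get $f_j\to f$ in $L^\infty(V)$, which makes $h(f_j)\to h(f)$ and $h'(f_j)\to h'(f)$ in $L^\infty(V)$ immediate by uniform continuity, whereas you work with $L^p$ convergence plus a uniform $L^\infty$ bound and pointwise a.e.\ convergence, then appeal to dominated convergence. Your treatment of the tangential term via the weak--strong pairing is in fact more explicit than the paper's, which writes $G_j^\top\to h'(f)F^\top$ in $L^p(V)$ without spelling out that only weak convergence of $\nabla f_j$ to $F^\top$ was established in Theorem~\ref{thm:weak-derivative-Menne}.
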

\begin{proof}
	As above, we observe that the claimed formula holds for smooth $f$. For general $f\in L^\infty(V)$, we let $f_j$ be a smooth approximating sequence with $f_j\rightarrow f$ in $L^\infty(V)$. Consider $g_j=h\circ f_j$ and its weak derivative
	$$G_j=h'(f_j)F_j+\left(h(f_j)-f_j h'(f_j)\right)H.$$
	Since $f\in L^\infty(V)$, we may assume that each $\lVert f_j\rVert_{L^\infty(V)}\leq 2\lVert f\rVert_{L^\infty(V)}$; thus because $h\in C^1$ we see that $h'(f_j)$ and $ h(f_j)$ are uniformly bounded in $L^\infty(V)$. Moreover $h'(f_j)\rightarrow h'(f)$ and $h(f_j)\rightarrow h(f)$ in $L^\infty(V)$.
	
	Thus because $H\in L^p(V)$, $$G_j^\perp=h(f_j) H\rightarrow h(f)H\text{ in }L^p(V).$$ Also $\nabla f_j\rightarrow F^\top$, so we have
	$$G_j^\top=h'(f_j)\nabla f_j\rightarrow h'(f)F^\top=h'(f)(F-fH) \text{ in }L^p(V)$$
	so that $$G_j\rightarrow h'(f)F+\left(h(f)-fh'(f)\right)H\text{ in }L^p(V).$$ As in the previous corollary this shows that $h'(f)F+\left(h(f)-fh'(f)\right)H$ is a weak derivative for $h$.
\end{proof}

Another interesting corollary is that, on varifolds satisfy $(H_m)$, the Michael-Simon inequality extends from ambient $C^1$ functions to functions which are weakly-differentiable with respect to $\lVert V\rVert$.
\begin{thm}
	Let $m\geq 2$. Let $V$ be a rectifiable $m$-varifold in ${\mathbb R}^N$ that satisfies $(H_m)$. There is a universal constant $C(N)$ so that for any $f\in L^{\frac{m}{m-1}}(V)$ which has a weak derivative $F\in L^1(V)$,
		$$\lVert f\rVert_{L^{\frac{m}{m-1}}(V)}\leq C(N) \lVert F\rVert_{L^1(V)}.$$
\end{thm}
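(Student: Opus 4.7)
The strategy is to approximate $f$ by the $V$-mollifications $f_\e$ from \S \ref{sec:mollification} and apply the Michael--Simon inequality in the form of Theorem \ref{thm:MSrestate} to each smooth approximant. Since $f_\e \in C^\infty_c(\R^N)$, $f_\e \to f$ in $L^{m/(m-1)}(V)$ by Theorem \ref{thm:strong-conv-Lp}, and Proposition \ref{prop:weak-derivative-Lipschitz} identifies the weak derivative of $f_\e$ along $V$ as $U_\e = \nabla^V f_\e + f_\e H$, applying Theorem \ref{thm:MSrestate} yields
\begin{equation*}
\|f_\e\|_{L^{m/(m-1)}(V)} \leq C(N)\|U_\e\|_{L^1(V)}.
\end{equation*}
The left side converges to $\|f\|_{L^{m/(m-1)}(V)}$, so the theorem reduces to establishing the bound $\limsup_{\e\to 0}\|U_\e\|_{L^1(V)} \leq C'\|F\|_{L^1(V)}$.

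To obtain this bound, I would unravel the definition of $f_\e$ and use the splitting $P_x\nabla_y\phi_\e(x-y) = \nabla^{V,y}\phi_\e(x-y) + (P_x-P_y)\nabla_y\phi_\e(x-y)$, where $P_x$ is the orthogonal projection onto $T_xV$. Integration by parts using the defining relation of $F$ applied to the test function $y\mapsto\phi_\e(x-y)$, together with the first variation identity $\int\nabla^{V,y}\phi_\e(x-y)\,d\|V\|_y = -\int\phi_\e(x-y)H(y)\,d\|V\|_y$ coming from $(H_m)$, yields the decomposition
\begin{equation*}
U_\e(x) = F_\e(x) + f_\e(x)(H(x) - H_\e(x)) - R_\e(x),
\end{equation*}
where $F_\e$ and $H_\e$ denote the $V$-mollifications of $F$ and $H$ respectively, and
\begin{equation*}
R_\e(x) = \frac{1}{D_\e(x)}\int(f(y)-f_\e(x))(P_x-P_y)\nabla_y\phi_\e(x-y)\,d\|V\|_y
\end{equation*}
is a tilt-type error.

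Each piece is then controlled separately. Proposition \ref{prop:bound-on-moll-op} gives $\|F_\e\|_{L^1(V)} \leq C\|F\|_{L^1(V)}$. For the middle term, H\"older's inequality yields $\|f_\e(H-H_\e)\|_{L^1(V)} \leq \|f_\e\|_{L^{m/(m-1)}(V)}\|H-H_\e\|_{L^m(V)}$, which vanishes since $\|f_\e\|_{L^{m/(m-1)}(V)}$ is uniformly bounded by Proposition \ref{prop:bound-on-moll-op}, while $H_\e \to H$ in $L^m(V)$ by $(H_m)$ and Theorem \ref{thm:strong-conv-Lp}. Finally, $\|R_\e\|_{L^1(V)}\to 0$ is obtained by pairing Cauchy--Schwarz with the $L^2$ tilt-excess decay of Theorems \ref{thm:tilt-excess-B-M} and \ref{thm:Menne-tilt-excess}.

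I expect the main obstacle to be the tilt-error estimate $\|R_\e\|_{L^1(V)}\to 0$. The bound $|\nabla\phi_\e| = O(\e^{-m-1})$ on an $\e$-ball of $V$-mass $O(\e^m)$ means that the tilt factor $(P_x - P_y)$ must, after integration, contribute decay of order $\e$, which is tight relative to the bare tilt-excess rate; the argument must therefore pair the pointwise tilt-excess with a Poincar\'e-type control on the oscillation of $f$ over small $V$-balls coming from $F\in L^1(V)$. Once these estimates are in hand, combining with $\|f_\e\|_{L^{m/(m-1)}(V)}\to\|f\|_{L^{m/(m-1)}(V)}$ gives the desired Sobolev inequality, with the final constant absorbing the universal constant from Proposition \ref{prop:bound-on-moll-op}.
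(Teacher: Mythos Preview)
Your route is genuinely different from the paper's. The paper never attempts to control the tilt-type error $R_\e$: instead it approximates $f$ by smooth $f_j$ and shows \emph{directly} that the weak derivatives $F_j$ converge to $F$ in $L^1(V)$. It splits $F_j = F_j^\top + F_j^\perp$; the normal part $F_j^\perp = f_j H \to fH$ in $L^1$ by H\"older (using $H\in L^m$ and $f_j\to f$ in $L^{m/(m-1)}$), and for the tangential part it argues, as in the proof of Theorem \ref{thm:weak-derivative-Menne}, that $\nabla f_j \rightharpoonup F^\top$ and then upgrades this to strong $L^2$ (hence $L^1$) convergence by a diagonal testing argument showing $\|\nabla f_j\|_{L^2}^2\to\|F^\top\|_{L^2}^2$. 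Michael--Simon for each $f_j$ then passes to the limit.

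Your decomposition $U_\e = F_\e + f_\e(H-H_\e) - R_\e$ is exactly the structure of Theorem \ref{thm:errorestimate}, but the error estimate there (equation \eqref{equ:error-estimate}) carries a constant depending on $\|\beta\|_{L^\infty(V)}$, which is unavailable for a general $f\in L^{m/(m-1)}(V)$. The ``Poincar\'e-type control on the oscillation of $f$'' you invoke is precisely what would be needed to replace that $L^\infty$ bound, and it is not supplied by anything in the paper: Theorems \ref{thm:tilt-excess-B-M} and \ref{thm:Menne-tilt-excess} give pointwise tilt-excess decay, but pairing that with an $\e$-scale oscillation bound on $f$ that integrates in $x$ to something controlled by $\|F\|_{L^1(V)}$ would itself require a varifold Poincar\'e inequality, which is not established here. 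So the step you flag as ``the main obstacle'' is a genuine gap, and the paper's approach was chosen precisely to bypass it.
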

\begin{proof}
	Let $f_j\in C^\infty_c({\mathbb R}^N)$ have $f_j\rightarrow f$ in $L^{\frac{m}{m-1}}(V)$. Each $f_j$ has weak derivative $F_j$ with
		$$F_j^\perp=f_jH\rightarrow fH$$
	in $L^1(V)$ and as in the proof of Theorem \ref{thm:weak-derivative-Menne} 
		$$F_j^\top\rightharpoonup F-fH$$
	in $L^2(V)$. Now consider the smooth test vector field $F_k$, and obtain
		$$\int (F_j-f_jH)\cdot F_k\ d\lVert V\rVert \rightarrow \int F^\top \cdot F_k\ d\lVert V\rVert $$
	as $j\rightarrow\infty$. Passing to a subsequence, we have 
	$$\int (F_j-f_jH)\cdot F_j d\lVert V\rVert\rightarrow \lVert F^\top\rVert_{L^2(V)}^2$$
	On the other hand by orthogonality, $\int (F_j-f_jH)\cdot F_j d\lVert V\rVert=\lVert F_j^\top\rVert_{L^2(V)}^2$. So $F_j-f_jH$ converges weakly and in norm to $F^\top$, hence $F_j-f_jH\rightarrow F^\top$ in $L^2(V)$, hence $F_j-f_jH\rightarrow F^\top=F-fH$ in $L^1(V)$.
	
	Thus $F_j\rightarrow F$ in $L^1(V)$. Using the restated Michael-Simon inequality (Theorem \ref{thm:MSrestate}), each pair $(f_j,F_j)$ has
		$$\lVert f_j\rVert_{L^{\frac{m}{m-1}}(V)}\leq C(N) \lVert F_j\rVert_{L^1(V)}.$$
	The claimed inequality follows.
\end{proof}

\begin{rem}
	It is standard to go from the inequality above to inequalities involving other exponents. Observe that to do so involves changing the assumed bound on $H$ as well.
\end{rem}

\bigskip

\subsection{Varifolds with Maslov index zero}\label{sec:maslovindexzero}

Here we assume $N$ is a Calabi-Yau manifold and $V$ is a integer rectifiable lagrangian $n$-varifold in $N$ that satisfies $( H_p )$, $p >1$. At each point $x$ in the support of $V$ with approximate tangent plane $T_xV$, since $T_xV$ is a lagrangian $n$-plane in $N$, there is an associated lagrangian angle $\ell(x) \in S^1$.

\begin{defn}
Let $V$ be a lagrangian integer-rectifiable varifold. We say $V$ has $(r,s)$-Maslov index zero if there is a real valued lift of the $S^1$ lagrangian angle, denoted $\b \in L^r(V)$, such that $\b$ has a weak derivative $B \in L^s( V)$, where $1 \leq s \leq r \leq \infty$.
\end{defn}

\begin{prop}\label{prop:embeddedZM}
If the varifold $V$ is given by a smoothly embedded lagrangian submanifold $L \subset N$ then the condition that $V$ has $(r,s)$-Maslov index zero is equivalent to the classical notion of the Maslov class vanishing on the smooth lagrangian submanifold $L$ in the case that $r = s  > n$.
\end{prop}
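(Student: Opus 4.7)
The claim is an equivalence, so I would prove two implications. The forward direction (classical vanishing Maslov class $\Rightarrow$ varifold $(r,s)$-Maslov index zero) is a matter of verifying the definition against the smooth object, and should be straightforward. The nontrivial direction is the converse, where the key mechanism is Morrey's embedding $W^{1,s}(L)\hookrightarrow C^{0,1-n/s}(L)$ available precisely because $s>n=\dim L$, together with the identification of the varifold weak derivative of \S\ref{sec:weak-derivative} with a classical Sobolev weak derivative on the smooth submanifold $L$.

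\textbf{Forward direction.} Assume the classical Maslov class of $L$ vanishes. By Theorem \ref{thm:Maslov-zero}, the $S^1$-valued lagrangian angle $\ell$ admits a smooth real lift $\beta: L\to\R$. Since $V$ has finite mass (i.e.\ $L$ is compact), $\beta\in L^\infty(V)\subset L^r(V)$ for every $r$. Because $L$ is smooth and embedded, one may extend $\beta$ to a Lipschitz (indeed smooth) function on an open tubular neighborhood of $L$; Proposition \ref{prop:weak-derivative-Lipschitz} then gives that $\beta$ has a weak derivative $B=\nabla\beta+\beta H$ along $V$, which is smooth and bounded on $L$, hence in $L^s(V)$ for every $s$. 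Thus $V$ has $(r,s)$-Maslov index zero for every admissible $(r,s)$, in particular for $r=s>n$.

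\textbf{Backward direction.} Assume $V$ has $(r,s)$-Maslov index zero with $r=s>n$, so there is $\beta\in L^r(V)$ lifting $\ell$, with weak derivative $B\in L^s(V)$. Applying Theorem \ref{thm:weak-derivative-Menne}, decompose $B=B^\top+\beta H$ where $B^\top$ is tangential and satisfies
\begin{equation*}
\int B^\top\cdot X\,d\lVert V\rVert = -\int \beta\,\div_V X^\top\,d\lVert V\rVert
\end{equation*}
for every ambient $C^1_c$ vector field $X$. Now use the smoothness of the embedding: any compactly supported smooth vector field $Y$ on $L$ may be realized as $X^\top|_L$ for some ambient $X$, by extending $Y$ through a tubular neighborhood and cutting off. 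On the smooth submanifold $L$, $\div_V X^\top$ coincides with the intrinsic divergence $\div_L Y$. Hence the displayed identity specializes to $\int_L B^\top\cdot Y\,d\mathcal{H}^n = -\int_L \beta\,\div_L Y\,d\mathcal{H}^n$, which is exactly the definition of $\beta$ having classical Sobolev weak gradient $B^\top$ on $L$. Therefore $\beta\in W^{1,s}(L)$.

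\textbf{Conclusion and main obstacle.} Because $L$ is a compact smooth $n$-manifold and $s>n$, Morrey's inequality gives $\beta\in C^{0,1-n/s}(L)$; its continuous representative is a continuous real lift of $\ell$, so by Theorem \ref{thm:Maslov-zero} the Maslov class of $L$ vanishes. The main technical obstacle is the one handled in the previous paragraph: one must confirm that the varifold-theoretic notion of weak derivative, defined via ambient test functions on $N$, genuinely recovers the intrinsic Sobolev weak derivative on the underlying manifold $L$. The smoothness of the embedding supplies enough ambient extensions of intrinsic test objects to make this identification, after which the standard Sobolev embedding finishes the argument. The hypothesis $s>n$ is essential and cannot be weakened by this method, since without Morrey one only obtains an $L^r$ lift and cannot rule out jump discontinuities of the kind flagged in the introduction.
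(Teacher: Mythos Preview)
Your proposal is correct and follows essentially the same route as the paper's proof: in both directions the argument is that a smooth lift exists iff the Maslov class vanishes (Theorem~\ref{thm:Maslov-zero}), and for the converse one identifies the varifold weak derivative with the intrinsic Sobolev weak derivative on $L$ and invokes the Sobolev (Morrey) embedding $W^{1,s}\hookrightarrow C^0$ for $s>n$. The paper states this tersely, while you have spelled out the identification step; one minor caveat is that Theorem~\ref{thm:weak-derivative-Menne} as stated assumes $f\in L^\infty$, but on a smooth embedded $L$ the decomposition $B=B^\top+\beta H$ and the integration-by-parts identity you need follow directly from Proposition~\ref{divergenceformula} and the classical divergence theorem on $L$, without that hypothesis.
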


\begin{proof}
If the Maslov class vanishes, there is a smooth lift of the $S^1$ lagrangian angle. Conversely, if there is a scalar-valued lift $\b \in L^r(V)$ of the $S^1$ lagrangian angle with weak derivative $B \in L^s(V)$ then $\b \in W^{1,s}(V)$. Hence by the Sobolev embedding theorem $\b \in C^0(V)$. This implies that the Maslov class vanishes.
\end{proof}

For the purposes of this paper we confine ourselves to the special case with $r = \infty$ and $s = 2$. Therefore we define:

\begin{defn}
We say  $V$ has Maslov index zero if there is a real valued lift of the $S^1$ lagrangian angle, denoted $\b \in L^\infty(V)$, such that $\b$ has a weak derivative $B \in L^2(V)$.
\end{defn}

\begin{prop}\label{prop:immersedZM}
If $V$ is given by a smoothly immersed lagrangian submanifold $L:\Sigma\rightarrow N$ with vanishing Maslov class, then $V$ has Maslov index zero.
\end{prop}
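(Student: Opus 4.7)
My plan is to lift the lagrangian angle to a smooth scalar function on $\Sigma$ via Theorem~\ref{thm:Maslov-zero}, push it forward to the varifold, and then identify the weak derivative from the classical identity $d\beta = L^*(H\intprod\omega)$ together with the first variation formula for the immersion.

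First, the Maslov class vanishing hypothesis, combined with Theorem~\ref{thm:Maslov-zero}, produces a smooth real-valued lift $\beta\in C^\infty(\Sigma)$ of the $S^1$-valued assignment $p\mapsto \ell(L_*T_p\Sigma)$. Since the paper's standing convention requires compact support, I may assume $\Sigma$ is closed, so $\beta$ is bounded; off the self-intersection locus (a $\lVert V\rVert$-null set), $\beta$ descends to a function on $\supp\lVert V\rVert$ lying in $L^\infty(V)$. Using $d\beta = L^*(H\intprod\omega)$, the K\"ahler identity $\omega(X,Y)=g(JX,Y)$, and the fact that $JH$ is tangent along a lagrangian submanifold, one obtains $\nabla^\Sigma\beta = JH$, and Proposition~\ref{prop:weak-derivative-Lipschitz} suggests the candidate weak derivative
$$B = \nabla^\Sigma\beta + \beta H = JH + \beta H,$$
which is bounded since $L$ is a smooth map from a compact domain, and hence lies in $L^2(V)$.

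Next, to verify the weak-derivative identity I would use the equivalent formulation of Proposition~\ref{divergenceformula}: given a test vector field $X\in C^\infty_c(N,TN)$, apply the classical first variation formula $\int_\Sigma \div_\Sigma Y\,d\mathrm{vol}_\Sigma = -\int_\Sigma H\cdot Y\,d\mathrm{vol}_\Sigma$ to $Y=\beta X|_\Sigma$; expanding $\div_\Sigma(\beta X) = \beta\div_\Sigma X + \nabla^\Sigma\beta\cdot X$ (using that $\nabla^\Sigma\beta$ is tangent) and substituting $\nabla^\Sigma\beta=JH$ gives, after pushing forward via $L_*$, the required identity $\int \beta\div_V X\,d\lVert V\rVert = -\int (JH+\beta H)\cdot X\,d\lVert V\rVert$.

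The main subtlety, and the step I would check most carefully, is the handling of self-intersections of $L$: over a self-intersection point $\beta$ may take multiple values, so it does not literally descend as a single-valued function on $N$. However, the self-intersection locus is $\lVert V\rVert$-null and the identities above are identities of integrals rather than pointwise statements, so $\beta\in L^\infty(V)$ and the weak-derivative equation both make sense and the argument goes through without further modification.
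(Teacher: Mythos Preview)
Your proposal is correct and follows essentially the same approach as the paper: lift $\beta$ to a smooth scalar on $\Sigma$ via the vanishing Maslov class, observe that it descends to $L^\infty(V)$ off the $\lVert V\rVert$-null self-intersection locus, and verify $B=\nabla^\Sigma\beta+\beta H$ is the weak derivative by applying the classical first variation formula on $\Sigma$ to $\beta X$ and pushing forward. The paper phrases the pullback step slightly more formally via the bundle $L^*TN\to\Sigma$ (to make $\Sigma$ embedded), and there is a sign convention difference ($\pm JH$) tied to the choice of $\omega(X,Y)=g(JX,Y)$ versus $g(X,JY)$, but neither affects the substance of the argument.
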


\begin{proof} By hypothesis, the Maslov class $[h]=[d\beta]=0$ in $H^1(\Sigma;{\mathbb R})$, so there is a smooth function $\beta:\Sigma\rightarrow {\mathbb R}$ with, for all $p\in \Sigma$, $e^{i\beta(p)}=\ell(L_*T_p\Sigma)$. If $L$ is not an embedding, the assignment $L(x)\mapsto \beta(L(x))$ may not be well-defined, but because $L$ is in immersion, it is only so on a set of $\lVert V\rVert$-measure zero, so we may think of $\beta\in L^\infty(V)$. Then $\beta$ is a lift of the lagrangian angle. Similarly, $L_*(\nabla\beta)$ is defined $\lVert V\rVert$-a.~e.

We claim that $B=L_*(\nabla\beta)+\beta H$ is a weak derivative for $\beta$. Let $X$ be a test vector field. Considering the bundle $L^*TN\rightarrow \Sigma$, observe that $\Sigma$ is an embedded submanifold of $L^*TN$, and we can equip $L^*TN$ with a Riemannian metric which agrees with the pullback metric along $\Sigma$, so that the local submanifold geometry of $\Sigma$ in $L^*TN$ is the same as that of $L(\Sigma)$ in $N$. In particular, if we let $\mu_\Sigma$, $\div_\Sigma$, and $H_\Sigma$ denote the volume form, divergence operator, and mean curvature respectively of $\Sigma$ in $L^*TN$, we have
\begin{equation}
	\begin{aligned}
		\int \beta \div_V Xd\lVert V\rVert &= \int \beta \div_\Sigma L^*X\ d\mu_\Sigma\\
		 &= \int_\Sigma \beta \div_\Sigma (L^*X)^\top\ d\mu_\Sigma+ \int_\Sigma \beta \div	_\Sigma (L^*X)^\perp\ d\mu_\Sigma\\
		 &= -\int_\Sigma \nabla\beta \cdot (L^*X)^\top d\mu_\Sigma - \int_\Sigma \beta H_\Sigma\cdot (L^*X)^\perp\ d\mu_\Sigma\\
		 &= -\int L_*(\nabla\beta) \cdot X^\top d\lVert V\rVert - \int \beta H \cdot X^\perp d\lVert V\rVert\\
		 &= -\int L_*(\nabla\beta) \cdot X d\lVert V\rVert - \int \beta H \cdot X d\lVert V\rVert = -\int B\cdot X d\lVert V\rVert
	\end{aligned}
\end{equation}

\end{proof}

The following example illustrates the essential global nature of the zero Maslov index property and its relationship to the notion of vanishing Maslov class for an immersed lagrangian submanifold. In particular, it is possible to decompose a varifold with Maslov index zero into a sum of varifolds, each summand of which does not have Maslov index zero.

\begin{ex} 
Consider the figure-eight curve in Figure \ref{fig:fig8trans} as a 1-varifold $V$ in $\R^2$. There is an immersion $C$ of $S^1$ into $\R^2$ whose image is $V$; thus $V$ is a lagrangian varifold satisfying $(H_2)$. The lagrangian angle $\beta$ in this case is nothing more than the angle $\theta$ between the tangent line to the immersion and some fixed line, say the $x$-axis. Because the winding number of the immersion is zero, its Maslov class vanishes and hence the varifold $V$ has  Maslov index zero.

\begin{figure}
\centering
\includegraphics{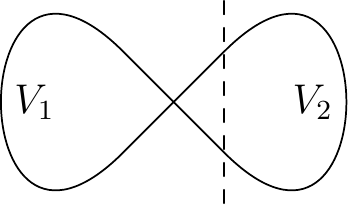}
\caption{The transversely-self-intersecting figure-eight curve $V$, decomposed.}\label{fig:fig8trans}
\end{figure}

Decompose $V$ into $V_1$ and $V_2$ as indicated in Figure \ref{fig:fig8trans}. Each $V_i$ comes from a smooth immersion $C_i$.  Computing classically we see that the change in $\theta$ along $V_1$ and $V_2$ is $\pm \frac{3\pi}{2}$.

Both $V_1$ and $V_2$ are lagrangian varifolds whose supports overlap in a set of $\lVert V\rVert$-measure zero, and $V=V_1+V_2$ as measures. However, observe that each $V_i$ has nonvanishing generalized boundary, hence does not satisfy condition $(H_2)$. Thus neither $V_1$ nor $V_2$ has Maslov index zero in our sense.\\

Now consider the figure-eight curve in Figure \ref{fig:fig8tan} as a 1-varifold $W$ in $\R^2$. As above, $W$ has Maslov index zero in our sense. If we let $V_3$ be the left lobe of $W$ and $V_4$ the right lobe, then again we have a decomposition $W=V_3+V_4$ as varifolds; moreover $V_3$ and $V_4$ themselves  arise from smooth immersions $C_3$, $C_4$ of $S^1$, hence satisfy $(H_2)$.

\begin{figure}[h]
\centering
\includegraphics{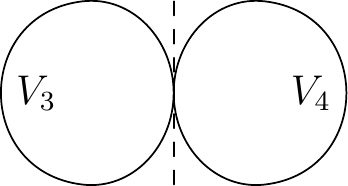}
\caption{The tangentially-self-intersecting figure-eight curve $W$, decomposed.}\label{fig:fig8tan}
\end{figure}

However, $V_3$ and $V_4$ do not have Maslov index zero in our sense. Classically, the immersions $C_3$, $C_4$ have nonzero winding numbers which sum to zero.\end{ex}

\begin{defn}
If $V$ is a lagrangian varifold coming from a special lagrangian variety, we say $V$ is a {\em special lagrangian varifold}.
\end{defn}

\begin{prop}
If $V$ is a special lagrangian varifold, then $V$ has Maslov index zero.
\end{prop}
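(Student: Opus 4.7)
The plan is to take the constant lift of the lagrangian angle furnished by the calibration and check that its weak derivative vanishes. A special lagrangian variety is calibrated by $\Real(e^{-i\theta}\ell)$ for some fixed phase $\theta\in\R$, so at every regular point of $V$ the tangent plane satisfies $\ell(T_xV)=e^{i\theta}$. By Almgren's regularity theorem (invoked in the introduction), the singular locus of $V$ has Hausdorff codimension at least two and hence $\lVert V\rVert$-measure zero. Therefore the constant assignment $\beta(x):=\theta$ is a bona fide measurable lift of the $S^1$-valued lagrangian angle, and trivially $\beta\in L^\infty(V)$.

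Next I would observe that since $V$ comes from a volume-minimizing variety, it has vanishing first variation, $\delta V\equiv 0$; equivalently, $V$ satisfies $(H_2)$ with $H=0$ $\lVert V\rVert$-almost everywhere. Using Proposition \ref{divergenceformula} as the criterion for a weak derivative, I would verify that $B\equiv 0$ does the job: for any smooth compactly-supported test vector field $X$,
$$\int \beta\,\div_V X\,d\lVert V\rVert \;=\; \theta\,\delta V(X)\;=\;0\;=\;-\int 0\cdot X\,d\lVert V\rVert.$$
Consequently $B\equiv 0\in L^2(V)$ is a weak derivative for $\beta$, and $V$ has Maslov index zero.

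The only subtle point is confirming that the $S^1$-valued lagrangian angle is genuinely well-defined $\lVert V\rVert$-a.e.\ and takes the constant value $e^{i\theta}$ there; this is exactly what Almgren's regularity provides. Once that input is in hand, vanishing of the weak derivative comes for free from the fact that $V$ is minimal, so there is no real obstacle.
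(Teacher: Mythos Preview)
Your proof is correct and follows essentially the same approach as the paper: take the constant lift $\beta\equiv\theta$ and verify that its weak derivative is zero using the fact that $V$ is stationary. The paper works directly with the definition of weak derivative (testing against $\nabla\varphi$ for scalar test functions $\varphi$ and a fixed frame), whereas you invoke the equivalent divergence characterization from Proposition~\ref{divergenceformula}; the underlying computation is the same.
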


\begin{proof}
If $V$ is special lagrangian then there is a constant lift of the lagrangian angle. We claim that this constant lift has weak derivative zero. That is, we must show that for any $\varphi \in C^{\infty}_0(M)$:
$$
\int \n \var d\lVert V\rVert = 0.
$$
Let $\{e_1, \dots, e_{2n} \}$ be a fixed frame. Using the first variation formula and the fact that $V$ is stationary we have:
$$
0 = -\int H\cdot (\var e_j) d\lVert V\rVert=\int \div_V (\var e_j) d\lVert V\rVert = \int \n \var \cdot e_j d\lVert V\rVert.
$$
The result follows.
\end{proof}

\bigskip

\subsection{A formula for the weak derivative of $\beta$} We will now use mollification to give a formula for the weak derivative of $\beta$ on a varifold with Maslov index zero that satisfies $(H_2)$.

We can think of $\beta$ as a composition $\beta(x)=\tilde{\beta}\circ T$, where $\tilde{\beta}:LGr \rightarrow S^1$ is the mod-$\pi$ angle function on the lagrangian Grassmannian and $x\mapsto T_xV$ is the tangent plane map, which by Menne's Theorem \ref{thm:Menne-tilt-excess} has a $\lVert V\rVert$-approximate differential $\nabla T$ for $\lVert V\rVert$-almost every $x$. Observe that $T_SLGr \subset \operatorname{Hom}({\mathbb R}^{2n},{\mathbb R}^{2n})$ and following Menne, we think of $(\nabla_Z)T(x)\in\operatorname{Hom}({\mathbb R}^{2n},{\mathbb R}^{2n})$ for each $x\in{\mathbb R}^{2n}$, $Z\in {\mathbb R}^{2n}$.

In fact we will first prove a more general result for any function $f\in L^p(V)$ obtained as $f={\mathbb F}\circ T$ for some smooth ${\mathbb F}:LGr \rightarrow {\mathbb R}$. At points $x$ where $T$ has a $\lVert V\rVert$-approximate differential, so does $f$, and we have following formula for it:

\begin{defn}
	We define the tangential gradient of $f$ at a  point $x$ of $C^2$ rectifiability as follows: for any $Z\in T_xV$,
	$$\nabla f(x)\cdot Z=\langle {\mathbb D}{\mathbb F}(T_xV),(\nabla_ZT)(x)\rangle$$
	where ${\mathbb D}$ is the gradient on $LGr $ with respect to the invariant metric and $\langle \cdot,\cdot\rangle$ is the inner product on $\operatorname{Hom}({\mathbb R}^{2n},{\mathbb R}^{2n})$. We will write
	$$\nabla f(x)=\langle {\mathbb D}{\mathbb F}(T_xV),(\nabla T)(x)\rangle$$
	when we need to omit the test vector $Z$.
\end{defn}

\begin{thm}\label{thm:mollificationnabla}
	$Df_\e\rightarrow \nabla f$ pointwise $\lVert V\rVert$-almost everywhere. In particular, at any point $x$ of $C^2$ rectifiability, we have $Df_\e(x)\rightarrow \nabla f(x)$ as $\e\rightarrow 0$.
\end{thm}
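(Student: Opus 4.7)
The plan is to compute $Df_\e(x)$ explicitly at a point $x$ of $C^2$ rectifiability and pass to the limit as $\e\to 0$ via a rescaling argument. First, I would differentiate the quotient defining $f_\e$ to obtain
$$Df_\e(x) = \frac{1}{D_\e(x)}\int\bigl[f(y)-f_\e(x)\bigr]\,D_x\phi_\e(x-y)\,d\lVert V\rVert_y,$$
where $D_\e(x) := \int\phi_\e(x-z)\,d\lVert V\rVert_z + \e\lVert V\rVert$. Lemma~\ref{lem:mollifiertomultiplicity} gives $D_\e(x)\to \theta(x)$, and, since $x$ is a point of $C^2$ rectifiability, the tangent plane map $T$ is approximately continuous at $x$; a minor variant of Lemma~\ref{lem:continuousmollifier} then yields $f_\e(x)\to f(x)$. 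Together with the uniform bound of Lemma~\ref{lem:Dphiepsilonbounded}, this shows the $[f(x)-f_\e(x)]$ contribution to $Df_\e(x)$ vanishes in the limit.

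Second, I would rescale by setting $y = x + \e t$ and introducing the varifold $V_\e$ obtained by rescaling $V$ at $x$ by the factor $\e$. Because the approximate tangent plane exists at $x$, one has $\lVert V_\e\rVert\to \theta(x)\,\mathcal{H}^n\restrict T_xV$ weakly, and the numerator rewrites as
$$\e^{-1}\int\bigl[f(x+\e t)-f(x)\bigr]\,D\phi(-t)\,d\lVert V_\e\rVert_t.$$
Menne's second-order rectifiability, together with the smoothness of $\mathbb{F}$ and the chain rule $\nabla f(x)\cdot Z = \langle \mathbb{D}\mathbb{F}(T_xV),(\nabla_ZT)(x)\rangle$, yields the first-order expansion
$$f(x+\e t)-f(x) = \e\,\nabla f(x)\cdot P_{T_xV}(t) + r(\e,t),$$
with $r(\e,t)=o(\e|t|)$ in the $L^2$-average sense provided by Theorem~\ref{thm:Menne-tilt-excess}.

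Third, I would pass to the limit. The leading term becomes
$$\int_{T_xV}\bigl[\nabla f(x)\cdot t\bigr]\,D\phi(-t)\,d\mathcal{H}^n(t),$$
which integration by parts on $T_xV$ (using $\int_{T_xV}\phi = 1$) evaluates componentwise: for $t\in T_xV$, $D\phi(-t)$ has vanishing $T_xV^\perp$-component, while for tangential indices $\int_{T_xV} t_i\,\partial_j\phi(-t)\,d\mathcal{H}^n = \delta_{ij}$. The result is precisely $\nabla f(x)$, viewed as a vector in $T_xV$ extended by zero in the normal directions. Combined with $D_\e(x)\to \theta(x)$ in the denominator and the factor $\theta(x)$ appearing from the weak convergence of $\lVert V_\e\rVert$, the $\theta(x)$'s cancel.

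The main obstacle I expect is showing that the $r(\e,t)$ contribution vanishes. The prefactor $\e^{-1}$ is offset by one power of $\e$ from the first-order term, but the remainder must be controlled uniformly against $D\phi(-t)$ on the rescaled support. The quantitative $L^2$ tilt-excess of Theorem~\ref{thm:Menne-tilt-excess}, combined with Menne's decomposition and a Cauchy--Schwarz estimate against the compactly supported $|D\phi|$, is precisely what delivers the required $o(1)$ bound and completes the argument. The first (pointwise a.e.) half of the statement then follows from the second, since $\lVert V\rVert$-a.e.\ point is a point of $C^2$ rectifiability by Theorems~\ref{thm:weak-regularity-M2} and \ref{thm:Menne-tilt-excess}.
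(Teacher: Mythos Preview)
Your approach is essentially the same as the paper's: both compute $Df_\e(x)$ via the quotient rule, isolate the $[f(x)-f_\e(x)]\int D\phi_\e$ piece (which vanishes by Lemma~\ref{lem:Dphiepsilonbounded}), extract the main contribution $\nabla f(x)$ by rescaling to the tangent plane and integrating by parts on $T_xV$, and kill the remainder using Menne's tilt-excess. The only substantive difference is in how the remainder is organized. The paper does not assert a first-order expansion of $f$ in the spatial variable directly; instead it Taylor-expands the smooth function $\mathbb{F}$ on the Grassmannian, writing
\[
f(y)-f(x)=\langle \mathbb{D}\mathbb{F}(T_xV),\,R(y)-R(x)\rangle+O\bigl(|R(y)-R(x)|^2\bigr),
\]
and then splits the linear piece further into $\nabla f(x)\cdot(y-x)$ plus $\langle\mathbb{D}\mathbb{F}(T_xV),R(y)-R(x)-\nabla_{y-x}T(x)\rangle$. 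This yields two separate error terms: the first is controlled, after Cauchy--Schwarz, by Theorem~\ref{thm:Menne-tilt-excess} exactly as you indicate; the second, quadratic, term requires the Brakke--Menne tilt-excess decay (Theorem~\ref{thm:tilt-excess-B-M}), which your sketch does not explicitly invoke. Your single-remainder formulation is correct but hides this decomposition; when you unwind what ``$r(\e,t)=o(\e|t|)$ in the $L^2$-average sense'' actually means for $f=\mathbb{F}\circ T$, you will need both results (or to rederive the quadratic bound from Theorem~\ref{thm:Menne-tilt-excess}, which is possible but is an extra step).
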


\begin{proof}
	We first use Taylor's theorem applied to ${\mathbb F}$, which is a smooth function. To apply Menne's Theorem \ref{thm:Menne-tilt-excess}, we identify $T_xV$ with orthogonal projection onto $T_xV$, $R(x)\in \operatorname{Hom}({\mathbb R}^{2n},{\mathbb R}^{2n})$. We have

\begin{equation}
		\begin{aligned}
		f(y)&={\mathbb F}(T_{y}V)={\mathbb F}(T_{x}V)+\langle{\mathbb D}{\mathbb F}(T_xV),(R(y)-R(x))\rangle+O(\lvert R(y) - R(x) \rvert^2)
		\end{aligned}
	\end{equation}

	 Recall the computation of $Df_\e(x)$.
\begin{equation}\label{eqn:Dfepsilon}
	\begin{aligned}
				Df_\e(x)&=\frac{\int D_x\phi_\e(x-y) f(y)d\lVert V\rVert_y}{\int \phi_\e(x-z) d\lVert V\rVert_z +\e \lVert V\rVert}-\frac{\int D_x\phi_\e(x-y) d\lVert V\rVert_y}{\int \phi_\e(x-z) d\lVert V\rVert_z+\e \lVert V\rVert}f_\e(x)
				\end{aligned}
		\end{equation}
		
	Consider the numerator of the first term of \eqref{eqn:Dfepsilon}:
		\begin{equation}\label{eqn:numeratorDfepsilonexpanded}
			\begin{aligned}
				\int f(y)&D\phi_\e(x-y)d\lVert V\rVert_y\\
				=& \int \left[{\mathbb F}(T_{x}V)+\langle {\mathbb D}{\mathbb F}(T_xV),(R(y)-R(x))\rangle+O(\lvert R(y) - R(x) \rvert^2)\right]D\phi_\e(x-y)d\lVert V\rVert_y \\
				=&f(x)\int D\phi_\e(x-y)d\lVert V\rVert_y + \int \langle{\mathbb D}{\mathbb F}(T_xV),\nabla_{y-x} T(x)\rangle D\phi_\e(x-y)d\lVert V\rVert_y \\
				&+ \int \langle{\mathbb D}{\mathbb F}(T_xV),(R(y)-R(x)) -\nabla_{y-x} T(x)\rangle D\phi_\e(x-y)d\lVert V\rVert_y \\
				&+\int  O(\lvert R(y) - R(x) \rvert^2)D\phi_\e(x-y)d\lVert V\rVert_y\\
				=&f(x)\int D\phi_\e(x-y)d\lVert V\rVert_y + \int  \nabla f(x)\cdot (y-x) D\phi_\e(x-y)d\lVert V\rVert_y\\
				&+\int \langle {\mathbb D}{\mathbb F}(T_xV),(R(y)-R(x) -\nabla_{y-x}T\rangle D\phi_\e(x-y)d\lVert V\rVert_y \\
				&+\int  O(\lvert R(y) - R(x) \rvert^2)D\phi_\e(x-y)d\lVert V\rVert_y
			\end{aligned}
		\end{equation}
	so that 
	
	\begin{equation}\label{eqn:Dfepsilonexpanded}
		\begin{aligned}
		Df_\e(x)&=
		\left[f(x)-f_\e(x)\right]\frac{\int D\phi_\e(x-y)d\lVert V\rVert_y}{\int \phi_\e(x-z)d\lVert V\rVert_z +\e \lVert V\rVert}\\
		&+\frac{\int  \nabla f(x)\cdot (y-x) D\phi_\e(x-y)d\lVert V\rVert_y}{\int \phi_\e(x-z)d\lVert V\rVert_z +\e \lVert V\rVert}\\
		&+\frac{\int \langle {\mathbb D}{\mathbb F}(T_xV),(R(y)-R(x) -\nabla_{y-x}T\rangle D\phi_\e(x-y)d\lVert V\rVert_y }{\int \phi_\e(x-z)d\lVert V\rVert_z +\e \lVert V\rVert}\\
		&+\frac{\int  O(\lvert R(y) - R(x) \rvert^2)D\phi_\e(x-y)d\lVert V\rVert_y}{\int \phi_\e(x-z)d\lVert V\rVert_z +\e \lVert V\rVert}
		\end{aligned}
	\end{equation}
	
	By virtue of Lemma \ref{lem:Dphiepsilonbounded} and Theorem \ref{thm:strong-conv-Lp}, the first term tends to zero. We will now handle the remaining terms of \eqref{eqn:Dfepsilonexpanded}. Observe that by Lemma \ref{lem:mollifiertomultiplicity}, each denominator tends to $\theta(x)$.
	
		Consider the numerator of the second term of \eqref{eqn:Dfepsilonexpanded}.  
	\begin{equation}
		\begin{aligned}
		\int \nabla f(x)&\cdot (y-x) D_x\phi_\e(x-y)d\lVert V\rVert_y\\
		&= -\e^{-1}\int\nabla f(x)\cdot (\e t) g'(\lvert t\rvert) \frac{t}{\lvert t\rvert} d\lVert \e^{-1} V-x\rVert t\\
		&=-\int\nabla f(x)\cdot t g'(\lvert t\rvert) \frac{t}{\lvert t\rvert} d\lVert \e^{-1} V-x\rVert t\\
		\end{aligned}
	\end{equation}
	As $\e\rightarrow 0$, this  converges to 
		\begin{equation}
			-\theta(x)\int_{T_xV} \nabla f(x)\cdot tg'(\lvert t\rvert) \frac{t}{\lvert t\rvert} dt
		\end{equation}
	and observing that on a plane, $\nabla \phi_1=g'(\lvert t\rvert)\frac{t}{\lvert t\rvert}$, we may integrate by parts to obtain
	\begin{equation}
		\theta(x) \nabla f(x) \int_{T_xV} \phi_1( t)dt
	\end{equation}
	Our choice of normalization is that the integral of $\phi_1$ on ${\mathbb R}^n$ is 1, so
		\begin{equation}
			\int \nabla f(x)\cdot (y-x) D_x\phi_\e(x-y)d\lVert V\rVert_y\rightarrow \theta(x)\nabla f(x)
		\end{equation}
	The corresponding denominator $\int \phi_\e(x-z) d\lVert V\rVert_z+\e \lVert V\rVert \rightarrow \theta(x)$, so the corresponding term in the limit of \eqref{eqn:Dfepsilon} is $\nabla f(x)$.
	
	Now consider the third term of \eqref{eqn:Dfepsilonexpanded}. Recall that by our choice of $\phi_\e$, all integrals are over $B(x,\e)$.
		\begin{equation}
			\begin{aligned}
				&\left\lvert \int  {\mathbb D}{\mathbb F}(T_xV)(R(y)-R(x) -\nabla_xT\cdot (y-x)) D\phi_\e(x-y)d\lVert V\rVert_y \right\rvert\\
				&\leq C\e^{-n}\e^{-1} \int \lvert R(y)-R(x)-\nabla_xT\cdot (y-x)\rvert g'\left(\frac{\lvert x-y\rvert}{\e}\right)d\lVert V\rVert_y\\
				&\leq C \e^{-n}\e^{-1}\int \lvert R(y)-R(x)-\nabla_xT\cdot (y-x)\rvert d\lVert V\rVert_y\\
				&\leq C \e^{-n} \int \frac{\lvert R(y)-R(x)-\nabla_xT\cdot (y-x)\rvert}{\lvert x-y\rvert} d\lVert V\rVert_y\\
				&\leq C \lVert V\rVert (B(x,\e))^{\frac{1}{2}} \e^{-n} \left(\int \left(\frac{\lvert R(y)-R(x)-\nabla_xT\cdot (y-x)\rvert}{\lvert x-y\rvert}\right)^2 d\lVert V\rVert_y\right)^{\frac{1}{2}}\\
				&= C  \frac{\lVert V\rVert (B(x,\e)) \e^{-n}}{\lVert V\rVert (B(x,\e))^{\frac{1}{2}}}\left(\int \left(\frac{\lvert R(y)-R(x)-\nabla_xT\cdot (y-x)\rvert}{\lvert x-y\rvert}\right)^2 d\lVert V\rVert_y\right)^{\frac{1}{2}}\\
			\end{aligned}
		\end{equation}
	where the constant $C$ absorbs ${\mathbb DF}$ and $g'$ and the penultimate step is an application of H\"older's inequality.  Menne's Theorem \ref{thm:Menne-tilt-excess} says precisely the last line of this inequality tends to $0$ as $\e\rightarrow 0$.
	
	For the fourth and final term of \eqref{eqn:Dfepsilonexpanded}, we estimate:
		\begin{equation}
			\begin{aligned}
				&\left\lvert\int  O(\lvert R(y) - R(x) \rvert^2)D\phi_\e(x-y)d\lVert V\rVert_y\right\rvert\\
				&\leq C\e^{-n}\e^{-1}\int \lvert R(y)-R(x)\rvert^2 g'\left(\frac{\lvert x-y\rvert}{\e}\right)d\lVert V\rVert_y \\
				&\leq C\e^{-n}\e^{-1}\int_{B(x,\e)} \lvert R(y)-R(x)\rvert^2 d\lVert V\rVert_y \\
				&=C\left(\e^{-\frac{n}{2}-\frac{1}{2}}\left(\int_{B(x,\e)} \lvert R(y)-R(x)\rvert^2 d\lVert V\rVert_y\right)^{\frac{1}{2}}\right)^2
			\end{aligned}
		\end{equation}
	which tends to zero by Brakke-Menne's quadratic tilt-excess decay estimate Theorem \ref{thm:tilt-excess-B-M}.

	Thus we have $Df_\e(x)\rightarrow \nabla f(x)$ as $\e\rightarrow 0$, as claimed.
\end{proof}

\begin{prop}\label{prop:nablabeta}
	At a point $x$ of $C^2$ rectifiability, $\nabla \beta(x)=-JH(x)$.
\end{prop}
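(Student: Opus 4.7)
The plan is to reduce the claim to the classical formula for the gradient of the lagrangian angle on a smooth lagrangian submanifold, since the definition of $\nabla\beta(x)$ at a point of $C^2$ rectifiability is pointwise and involves only the tangent plane at $x$ and its infinitesimal variation. First I would invoke Menne's decomposition theorem (Theorem \ref{thm:weak-regularity-M2}) to produce a $C^2$ submanifold $M$ with $x \in M$, $T_yV = T_yM$ for $\lVert V\rVert$-a.e.\ $y$ in a neighborhood of $x$ in $M$, and $H(V;x) = H(M;x)$. Because $V$ is a lagrangian varifold, the tangent planes of $M$ coincide with lagrangian planes on a full-measure set, and continuity of the Gauss map of the $C^2$ submanifold $M$ then forces $M$ itself to be lagrangian on a neighborhood of $x$.

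Next I would identify the varifold tangential gradient with the classical submanifold gradient. Writing $T^M\colon M\to LGr$ for the Gauss map of $M$ and $\beta_M = \tilde\beta\circ T^M$ for the resulting smooth lift of the lagrangian angle on $M$, the chain rule gives
\begin{equation*}
	d\beta_M(x)\cdot Z = \langle\mathbb{D}\tilde\beta(T_xM),\, DT^M(x)\cdot Z\rangle, \qquad Z\in T_xM.
\end{equation*}
By Theorem \ref{thm:Menne-tilt-excess}, the approximate differential $\nabla T$ of $T$ exists at $x$, and since $T$ agrees $\lVert V\rVert$-a.e.\ with $T^M$ on $M$, this approximate differential must equal $DT^M(x)$. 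Hence the varifold definition $\nabla\beta(x)\cdot Z = \langle\mathbb{D}\tilde\beta(T_xV), (\nabla_Z T)(x)\rangle$ reduces to the classical $\nabla\beta_M(x)$.

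Finally I would appeal to the classical calculation on the smooth lagrangian $M$, namely the identity $d\beta_M = i^*(H(M;\cdot)\intprod\omega)$ due to Harvey--Lawson \cite{hl} and recalled in \S 1. Passing to the metric dual, using that $JH$ is tangent to the lagrangian submanifold $M$, and matching the overall sign against the conventions \eqref{eqn.betadef} and \eqref{eqn.grassgradbeta} used throughout the paper, yields $\nabla\beta_M(x) = -JH(M;x)$. Combined with $H(V;x) = H(M;x)$ this gives $\nabla\beta(x) = -JH(x)$.

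The main technical point I expect to have to check carefully is the identification of the approximate differential $(\nabla_Z T)(x)$ with the smooth differential $DT^M(x)\cdot Z$. This should follow from the $\lVert V\rVert$-a.e.\ agreement of $T$ and $T^M$ on $M$ combined with the quadratic tilt-excess decay of Theorem \ref{thm:Menne-tilt-excess}, but the two differentials live in slightly different settings (one as an approximate differential of a Grassmannian-valued map on a rectifiable set, the other as the honest derivative of a smooth Gauss map), so the definitions need to be aligned with care. Once this identification is in hand the remainder of the argument is essentially a transcription of the classical smooth calculation.
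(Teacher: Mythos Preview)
Your approach is essentially the paper's: reduce to a $C^2$ submanifold $M$ via Menne, identify the $\lVert V\rVert$-approximate differential of $T$ at $x$ with the Gauss-map derivative of $M$ (the paper cites \S3.8 of \cite{m2} for exactly this identification, which you correctly flag as the main technical point), and then compute. The one gap is your continuity argument that $M$ must be lagrangian near $x$: Menne's theorem only gives $T_zM = T_zV$ for $\lVert V\rVert$-a.e.\ $z \in M$, and a set of full $\lVert V\rVert$-measure in $M$ need not be dense in $M$, so continuity of the Gauss map of $M$ does not force $M$ to be lagrangian in a neighborhood. The paper sidesteps this by not invoking the global Harvey--Lawson identity on $M$ at all; instead it pairs $\mathbb{D}\tilde\beta(T_xM)$ directly with the second fundamental form $\II^M(Z,\cdot)$ using the explicit formula \eqref{eqn.grassgradbeta} from \S\ref{sec:variationofbeta}, a pointwise linear-algebra computation requiring only that the single plane $T_xM$ be lagrangian. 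That computation yields $\langle\mathbb{D}\tilde\beta(T_xM),\II^M(Z,\cdot)\rangle = \tr\II^M(\cdot,\cdot,JZ) = H(M;x)\cdot JZ = -JH(V;x)\cdot Z$, which is your conclusion. Your argument is easily repaired by making this substitution.
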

\begin{proof}
	Because $x$ is a point of $C^2$ rectifiability, we have $x$ in some $C^2$ submanifold $M$ so that $T_xM=T_xV$ and $H(M;x)=H(V;x)$. Moreover, it follows from the proof of Menne's Theorem \ref{thm:Menne-tilt-excess} (see \S 3.8 of \cite{m2}) that the $\lVert V\rVert$-approximate differential of $T$ computed at $x$ is the same as that coming from $M$. So we compute $\nabla\beta$ with respect to $M$, observing that the approximate differential of the tangent plane to a $C^2$ submanifold is its second fundamental form $\II^M$.
			\begin{equation}
				\begin{aligned}
					\nabla \beta(x)=\nabla^M\beta(x)\cdot Z&=\langle {\mathbb D}\tilde{\beta}(T_xM),(\nabla_Z T)^M(x)\rangle\\
					&=\tr(\II^M(\cdot,\cdot,JZ))\\
					&=H(M;x)\cdot JZ\\
					&=H(V;x)\cdot JZ\\
					&=-JH(V;x)\cdot Z
				\end{aligned}
			\end{equation}
	where we have used the computation of ${\mathbb D}\tilde{\beta}$ as in \S \ref{sec:variationofbeta}.
			
	In particular, $\nabla\beta(x)=\nabla^M\beta(x)=-JH$ is independent of which such $C^2$ submanifold $M$ we chose.
\end{proof}

\begin{rem}
	Observe that this formula for $\nabla\beta(x)$ is the same as in the $C^2$ case, see~\cite{hl}.

	With respect to $\beta$, for varifolds satisfying $(H_2)$, we can thus interpret Theorem \ref{thm:mollificationnabla} as saying that \emph{almost everywhere, in the limit $\e\rightarrow 0$, mollification and (tangential) differentiation commute}.
\end{rem}

\begin{rem}
	For curvature varifolds in the sense of Hutchinson \cite{hutch} and Mantegazza \cite{mant}, similar results to Proposition \ref{prop:nablabeta} hold for other functions ${\mathbb F}$, whose differentials ${\mathbb DF}$ we think of as algebraic operations on the second fundamental form other than trace.
\end{rem}
\bigskip

\begin{thm}\label{thm:harvey-lawson-varifold}
Suppose $V$ is a lagrangian $n$-varifold with Maslov index zero satisfying $(H_2)$. Suppose that the lagrangian angle $\b \in L^\infty(V)$ has weak derivative $B$. Suppose $U$ is an open subset of $\R^{2n}$. Then for $\lVert V\rVert$-almost all $z \in U$:
\begin{equation}
\label{equ:compute-weak-derivative}
B(z) =\nabla\beta(z)+\beta H(z) =-JH(z) + \b H( z)
\end{equation}
\end{thm}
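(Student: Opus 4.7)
The plan is to prove \eqref{equ:compute-weak-derivative} by identifying the weak derivative of $\beta$ with the limit of derivatives of its mollifications $\beta_\e$. First, apply Theorem \ref{thm:weak-derivative-Menne} to split $B = B^\top + \beta H$ with $B^\top(x) \in T_xV$ for $\lVert V\rVert$-almost every $x$. Since $-JH$ is tangential to $V$ ($H$ is normal along a lagrangian varifold and $J$ carries normal vectors to tangential ones on a lagrangian plane), the desired identity reduces to showing $B^\top = -JH$ $\lVert V\rVert$-almost everywhere. By Proposition \ref{prop:nablabeta}, at any point $x$ of $C^2$ rectifiability we have $\nabla\beta(x) = -JH(x)$ (where $\nabla\beta$ is the Grassmannian tangential gradient defined in \S \ref{sec:weak-derivative}), so the target is equivalent to identifying $B^\top$ with $\nabla\beta$ at almost every such point.

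Next I would bring in mollification. Because $\beta_\e$ is smooth, Proposition \ref{prop:weak-derivative-Lipschitz} gives that its weak derivative on $V$ is $\nabla\beta_\e + \beta_\e H$, where $\nabla\beta_\e$ denotes the tangential component of the ambient gradient $D\beta_\e$. For any smooth compactly-supported test vector field $X$, Proposition \ref{divergenceformula} then yields
\begin{equation*}
\int \beta_\e \, \div_V X \, d\lVert V\rVert = -\int (\nabla\beta_\e + \beta_\e H) \cdot X \, d\lVert V\rVert.
\end{equation*}
I would pass to the limit as $\e \to 0$ in this identity. On the left, strong convergence $\beta_\e \to \beta$ in $L^2(V)$ (Theorem \ref{thm:strong-conv-Lp}) combined with $\div_V X \in L^\infty(V)$ and Proposition \ref{divergenceformula} applied to $(\beta,B)$ gives
\begin{equation*}
\int \beta_\e \, \div_V X \, d\lVert V\rVert \;\longrightarrow\; \int \beta \, \div_V X \, d\lVert V\rVert = -\int B \cdot X \, d\lVert V\rVert.
\end{equation*}
For the $\beta_\e H$ piece on the right, Cauchy–Schwarz together with $\beta_\e \to \beta$ in $L^2(V)$ and $H \in L^2(V)$ gives $\int \beta_\e H \cdot X \to \int \beta H \cdot X$. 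Subtracting, the convergence reduces to showing that $\int \nabla\beta_\e \cdot X \, d\lVert V\rVert \to \int (-JH) \cdot X \, d\lVert V\rVert$, which combined with the preceding steps yields $B = -JH + \beta H$ and hence $B^\top = -JH$.

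The main obstacle is precisely this last convergence, because Theorem \ref{thm:mollificationnabla} only provides pointwise convergence $D\beta_\e(x) \to \nabla\beta(x) = -JH(x)$ at $C^2$-rectifiable points (applying locally since $\beta$ is, in a neighborhood of any such $x$, of the form $\tilde\beta \circ T$ for a smooth real-valued local lift), while the uniform derivative bounds of Lemma \ref{lem:derivativesofbetaepsilon} blow up like $\e^{-n-2}$ and therefore do not dominate. To handle this I would integrate by parts in the other direction: since $\beta_\e$ is smooth, Theorem \ref{thm:weak-derivative-Menne} (or direct computation on the Menne pieces) gives
\begin{equation*}
\int \nabla\beta_\e \cdot X \, d\lVert V\rVert = -\int \beta_\e \, \div_V X^\top \, d\lVert V\rVert,
\end{equation*}
and the right-hand side now converges, by $\beta_\e \to \beta$ in $L^2(V)$ and boundedness of $\div_V X^\top$, to $-\int \beta \, \div_V X^\top d\lVert V\rVert = \int B^\top \cdot X \, d\lVert V\rVert$ using Theorem \ref{thm:weak-derivative-Menne}. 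Combining these two limits across the mollified weak-derivative identity forces $B^\top = -JH$ as distributions paired against test fields $X$, and since both sides lie in $L^2(V)$ this yields the pointwise identity $\lVert V\rVert$-a.e., completing the proof of \eqref{equ:compute-weak-derivative}.
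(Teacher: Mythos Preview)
There is a genuine gap in the final step. You have correctly established, in two essentially equivalent ways, that $\nabla\beta_\e \rightharpoonup B^\top$ weakly in $L^2(V)$: once by passing to the limit in the mollified weak-derivative identity, and once via the integration by parts $\int \nabla\beta_\e \cdot X\,d\lVert V\rVert = -\int \beta_\e\,\div_V X^\top\,d\lVert V\rVert \to \int B^\top \cdot X\,d\lVert V\rVert$. But these are the \emph{same} limit computed twice, and neither computation shows that the limit equals $\int(-JH)\cdot X\,d\lVert V\rVert$. The sentence ``combining these two limits \ldots\ forces $B^\top = -JH$'' is therefore circular: you have identified the weak limit of $\nabla\beta_\e$ as $B^\top$, but nowhere connected it to $-JH$. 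The pointwise convergence $D\beta_\e \to \nabla\beta = -JH$ from Theorem~\ref{thm:mollificationnabla} is mentioned only as an obstacle and then set aside; it is never actually used to close the argument.

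What is missing is an identification of the pointwise a.e.\ limit $\nabla\beta = -JH$ with the weak $L^2$ limit $B^\top$. The paper supplies this by upgrading weak convergence to strong convergence: having $\nabla\beta_{\eta_j} \rightharpoonup B^\top$, it tests against $X = \nabla\beta_{\e_k}$ and passes to a diagonal subsequence along which $\lVert \nabla\beta_{\e_k}\rVert_{L^2(V)}^2 \to \lVert B^\top\rVert_{L^2(V)}^2$; weak convergence together with convergence of norms gives strong $L^2$ convergence, so (along a further subsequence) $\nabla\beta_{\e_k} \to B^\top$ a.e., and comparison with Theorem~\ref{thm:mollificationnabla} yields $B^\top = \nabla\beta = -JH$. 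An alternative repair of your approach would be to observe that the weak convergence you obtained already gives a uniform $L^2(V)$ bound on $\nabla\beta_\e$, and then argue (via Egorov on the finite measure $\lVert V\rVert$ and the resulting uniform integrability) that pointwise a.e.\ convergence to $-JH \in L^2(V)$ forces the weak limit to equal $-JH$ as well; uniqueness of weak limits then gives $B^\top = -JH$. Either way, some bridge between the pointwise and weak limits is essential, and your proposal does not build it.
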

\begin{proof}
Let $\eta_j$ be a sequence with $\eta_j\rightarrow 0$. We have for any ambient test vector field $X$
\begin{equation}
\begin{aligned}
-\int B \cdot Xd\lVert V\rVert &= \int \beta\div Xd\lVert V\rVert\\
 &= \lim \int \beta_{\eta_j} \div Xd\lVert V\rVert \\
 &= \lim -\int X\cdot \nabla\beta_{\eta_j}d\lVert V\rVert -\int\beta_{\eta_j}H\cdot Xd\lVert V\rVert\\
 &=\lim -\int X\cdot \nabla\beta_{\eta_j}d\lVert V\rVert - \int\beta H\cdot Xd\lVert V\rVert 
\end{aligned}
\end{equation}
Now as in the proof of Theorem \ref{thm:weak-derivative-Menne}, splitting $X=X^\top+X^\perp$, we obtain the equation

$$\int (B-\beta H)\cdot Xd\lVert V\rVert = \lim \int X\cdot \nabla\beta_{\eta_j}d\lVert V\rVert $$
i.~e., $\nabla\beta_{\eta_j} \rightharpoonup  B-\beta H=B^\top$ in $L^2(V)$.

Now let $\e_k\rightarrow 0$ and consider the test vector field $X=D\beta_{\e_k}$. We have

$$\int B^\top\cdot \nabla\beta_{\e_k} d\lVert V\rVert =\lim_{\eta\rightarrow 0} \int \nabla\beta_{\eta_j} \cdot \nabla\beta_{\e_k} d\lVert V\rVert$$
As $k\rightarrow \infty$, the left-hand side converges to $\lVert B^\top\rVert^2_{L^2(V)}$. We may take a diagonal sequence on the right-hand side to conclude that  as $k\rightarrow\infty$,  $$\lVert \nabla\beta_{\e_k}\rVert^2_{L^2(V)}\rightarrow \lVert B^\top\rVert^2_{L^2(V)}$$

Thus $\nabla\beta_{\e_k}$ converges weakly and in norm to $B^\top$, hence $\nabla\beta_{\e_k}\rightarrow B^\top$ in $L^2(V)$.

On the other hand, the almost-everywhere pointwise limit of $\nabla\beta_{\e_k}$, by Theorem \ref{thm:mollificationnabla}, is $\nabla \beta$. Therefore we have
$$B^\top=\nabla\beta$$
almost everywhere. By Proposition \ref{prop:nablabeta}, $B^\top=-JH$.

By the statement of Theorem \ref{thm:weak-derivative-Menne}, $B^\perp=\beta H$ almost everywhere.
\end{proof}

\begin{rem} 
Observe that the formula \eqref{equ:compute-weak-derivative}  is formally the same as if $\beta$ were an ambient Lipschitz function.
\end{rem}

\begin{rem} 
We have chosen $p=2$ in Theorem \ref{thm:harvey-lawson-varifold} for convenience; for other $2 \leq p<\infty$ we would obtain the same formula. Observe that from (\ref{equ:compute-weak-derivative}) and the assumption $\beta\in L^\infty(V)$, it follows that $B \in L^p(V)$ if and only if $H \in L^p(V)$. 
\end{rem}

\bigskip

\subsection{Compactness of the zero-Maslov condition}
As in the classical case, our notion of weak derivative has good compactness properties.  We note these in the following lemmas, which in turn lead to a weak compactness result for Maslov zero index varifolds.

\begin{lem}
Let $V$ be an integer-rectifiable varifold satisfying $(H_p)$ for $p >1$ and let $f_j \in L^\infty( V )$ be a sequence of functions with weak derivatives $F_j \in L^p(\lVert V\rVert )$. Suppose there is a constant $C$ such that 
$$
\lVert f_j\rVert _{L^{\infty}( V)} < C, \;\;\;  \lVert F_j\rVert _{L^{p}( V)} < C 
$$
Then there is a subsequence (which we still denote $\{ f_j \}$) and a function $f \in L^\infty(V)$ with weak derivative $F \in L^p(V)$ such that $\lVert f\rVert_{L^\infty(V)} < C$, $ \lVert F\rVert _{L^{p}( V)} < C$  and for any $\varphi \in C^{\infty}_0(\R^{2n})$:
\begin{equation}
\label{equ:weak-limits-f-F}
\int \var f_j d\lVert V\rVert \to \int \var f d\lVert V\rVert, \;\;\;\; \int \var F_j d\lVert V\rVert \to \int \var F d\lVert V\rVert,
\end{equation}
\end{lem}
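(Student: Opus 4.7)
The plan is to extract weak(-$*$) limits componentwise and then pass to the limit in the defining identity of the weak derivative.

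First, since $\lVert V\rVert$ is a finite Radon measure, $L^\infty(V)\subset L^{p'}(V)$ for any $p'$, and $L^\infty(V)$ is the dual of $L^1(V)$. By the Banach--Alaoglu theorem, the bounded sequence $\{f_j\}\subset L^\infty(V)$ has a subsequence (which I still call $\{f_j\}$) converging weak-$*$ to some $f\in L^\infty(V)$, with $\lVert f\rVert_{L^\infty(V)}\leq \liminf \lVert f_j\rVert_{L^\infty(V)}\leq C$. Since $p>1$, $L^p(V;TN)$ is reflexive; so the bounded sequence $\{F_j\}$ has a further subsequence converging weakly in $L^p$ to some $F\in L^p(V;TN)$, again with $\lVert F\rVert_{L^p(V)}\leq C$ by lower semicontinuity of the norm under weak convergence.

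Next I verify \eqref{equ:weak-limits-f-F}. For any $\varphi\in C^\infty_0(\R^{2n})$, $\varphi$ has compact support, so $\varphi\in L^1(V)\cap L^q(V)$ where $q$ is the H\"older conjugate of $p$. The first convergence in \eqref{equ:weak-limits-f-F} is exactly weak-$*$ convergence tested against $\varphi\in L^1(V)$, and the second is weak convergence tested against the vector-valued $\varphi E_i\in L^q(V;TN)$ for each component in a covariant-constant frame.

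Finally I check that $F$ is the weak derivative of $f$. Fix $\varphi\in C^\infty_0(\R^{2n})$. The tangential gradient $\nabla\varphi$ is bounded $\lVert V\rVert$-a.e.\ and supported in a compact set, so each of its components (in a covariant-constant frame) lies in $L^1(V)\cap L^q(V)$. Hence
\begin{equation*}
\int \nabla\varphi\cdot f\, d\lVert V\rVert = \lim_{j\to\infty}\int \nabla\varphi \cdot f_j\, d\lVert V\rVert = \lim_{j\to\infty}\left(-\int \varphi F_j\, d\lVert V\rVert\right) = -\int \varphi F\, d\lVert V\rVert,
\end{equation*}
where the first equality uses weak-$*$ convergence of $f_j$ tested against the $L^1$-function $\nabla\varphi$, the second uses the hypothesis that $F_j$ is the weak derivative of $f_j$, and the last uses weak convergence of $F_j$ tested against $\varphi\in L^q(V)$. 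Thus $F$ is a weak derivative of $f$, completing the proof.

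There is no substantial obstacle here; the argument is a standard weak compactness/passage-to-the-limit routine, and the only mild subtlety is ensuring the test vector $\nabla\varphi$ lives in the correct dual spaces, which follows from its boundedness and compact support together with the assumed finite mass of $V$.
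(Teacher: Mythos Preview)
Your proof is correct and follows essentially the same approach as the paper: extract weak(-$*$) limits of $f_j$ and $F_j$ by compactness, then pass to the limit in the identity $\int f_j\nabla\varphi\,d\lVert V\rVert=-\int \varphi F_j\,d\lVert V\rVert$. Your justification for $\int f_j\nabla\varphi\,d\lVert V\rVert\to\int f\nabla\varphi\,d\lVert V\rVert$ directly via weak-$*$ duality (noting $\nabla\varphi\in L^1(V)$ since it is bounded and compactly supported and $\lVert V\rVert$ is finite) is in fact cleaner than the paper's, which passes through an unnecessary pointwise-convergence step.
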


\begin{proof}
The existence of $f$ and $F$ and (\ref{equ:weak-limits-f-F}) follow from the precompactness of  $L^\infty(V)$ and $L^p(V)$ for $p > 1$  in the weak topology. To show that $F$ is the weak derivative of $f$ we must verify that for any $\varphi \in C^{\infty}_0(\R^{2n})$:
\begin{equation}
\label{equ:weak-deriv.-in-the-limit}
\int \nabla \var f d\lVert V\rVert = -\int \var F d\lVert V\rVert 
\end{equation}
Since $f_j \to f$ in $L^\infty(V)$ we have $f_j(x) \to f(x)$, for $V$ a.e. $x$. Using $| \nabla \var | \leq | D \var |$ by dominated convergence we have that as $j \to \infty$:
$$
\int \nabla \var f_j d\lVert V\rVert  \to \int \nabla \var f d\lVert V\rVert. 
$$
Then (\ref{equ:weak-deriv.-in-the-limit}) follows from (\ref{equ:weak-limits-f-F}) and for any $\varphi \in C^{\infty}_0(\R^{2n})$ and every $j$:
$$
\int \nabla \var f_j d\lVert V\rVert = -\int \var F_j d\lVert V\rVert
$$ 
The result follows.
\end{proof}

Our compactness result will concern a sequence $(V_j)$ of Maslov-index-zero varifolds, each with an associated lagrangian angle $\beta_j$ and weak derivative $B_j$. We will therefore need to examine the convergence of the triple $(V_j, \beta_j,B_j)$. We will use the following result about Radon measures:

\begin{lem}\label{lem:radon-measures-p}
	Let $1\leq q<\infty$, and let $(\mu_j)$, $(\nu_j)$ be sequences of Radon measures on ${\mathbb R}^{2n}$ with $\mu_j\rightarrow \mu$ and $\nu_j\rightarrow \nu$ in the weak topology. Suppose that that there exists $C$, independent of $j$, so that for each $\nu_j$-measurable $A\subset{\mathbb R}^{2n}$, $A$ is $\mu_j$-measurable and $$(\mu_j(A))^q\leq C\nu_j(A).$$	
	Then $\mu$ is absolutely continuous with respect to $\nu$ and for any $\nu$-measurable set $A$, $A$ is $\mu$ measurable with $$\left(\mu(A)\right)^q\leq C\nu(A).$$
\end{lem}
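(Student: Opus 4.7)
The plan is to reduce the inequality to open sets with boundaries of $(\mu+\nu)$-measure zero, where weak convergence gives genuine pointwise convergence of the measures on these sets, and then to extend to arbitrary $\nu$-measurable sets by outer regularity of Radon measures. The main obstacle is that weak convergence of Radon measures gives only $\mu(V)\leq \liminf_j\mu_j(V)$ for $V$ open and $\mu(K)\geq \limsup_j\mu_j(K)$ for $K$ compact; neither bound alone lets us pass the hypothesis $(\mu_j(A))^q\leq C\nu_j(A)$ to the limit, because the $\liminf$ on the left and the $\liminf$ on the right do not line up with $\nu(A)$ in the correct direction.

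First I would single out the class of \emph{tame} bounded open sets $V\subset\R^{2n}$, by which I mean those with $\mu(\partial V)=\nu(\partial V)=0$. For such $V$ the Portmanteau theorem for weakly converging Radon measures gives genuine convergence $\mu_j(V)\to\mu(V)$ and $\nu_j(V)\to\nu(V)$. The hypothesis $(\mu_j(V))^q\leq C\nu_j(V)$ may then be passed to the limit directly, yielding $(\mu(V))^q\leq C\nu(V)$.

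Next I would verify that tame sets are abundant. For any $x\in\R^{2n}$ the spheres $\{\partial B_r(x)\}_{r>0}$ are pairwise disjoint, and since $\mu+\nu$ is locally finite, $(\mu+\nu)(\partial B_r(x))>0$ for at most countably many $r$. Hence any open $U\subset\R^{2n}$ is a countable union of tame open balls. The union of the first $n$ such balls is again tame, since its boundary lies in a finite union of spherical boundaries, and these finite unions increase to $U$. By the $n\to\infty$ limit in the preceding paragraph together with continuity of measures from below, one gets $(\mu(U))^q\leq C\nu(U)$ for every open $U$.

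Finally, for an arbitrary $\nu$-measurable set $A$, outer regularity of $\nu$ produces open $U\supset A$ with $\nu(U)$ arbitrarily close to $\nu(A)$; since $\mu(A)\leq \mu(U)$ and $(\mu(U))^q\leq C\nu(U)$, taking the infimum over such $U$ yields $(\mu(A))^q\leq C\nu(A)$. In particular $\nu(A)=0$ forces $\mu(A)=0$, so $\mu\ll\nu$. The passage from Borel sets to the $\nu$-completion is then routine: any $\nu$-null Borel set is $\mu$-null by the inequality just established, so any $\nu$-measurable $A$ differs from a Borel set by a $\mu$-null set and is therefore $\mu$-measurable, with the inequality preserved.
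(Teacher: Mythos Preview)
Your proof is correct, and it follows a genuinely different path from the paper's.

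The paper never isolates ``continuity sets'' or invokes Portmanteau. Instead it exploits the asymmetric semicontinuity of weak convergence directly: from $\mu(U)\leq\liminf_j\mu_j(U)$ for open $U$ and $\limsup_j\nu_j(K)\leq\nu(K)$ for compact $K$, it deduces $(\mu(\mathring A))^q\leq C\nu(\overline A)$ for every bounded $A$. Applying this to shrinking open neighborhoods $N_r(K)$ of a compact $K$ yields $(\mu(K))^q\leq C\nu(K)$; the inequality is then pushed to Borel sets via \emph{inner} regularity, and finally to the $\nu$-completion by Radon--Nikodym. Your route instead upgrades to genuine convergence on the tame sets, exhausts every open set by tame finite unions of balls, and then closes with \emph{outer} regularity.

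Both arguments are short and of comparable difficulty. Your approach has the advantage that once Portmanteau fires, the limit passage is symmetric and transparent; the price is the auxiliary observation about abundance of tame balls. The paper's approach avoids that observation by accepting the interior/closure mismatch and removing it only at the compact-set stage. One small presentational point: when you write $\mu(A)\leq\mu(U)$ before establishing $\mu$-measurability of $A$, this should be read as outer measure; you do eventually justify this, but flagging it up front would make the logical order cleaner.
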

\begin{proof}
	First we recall (see e.~g.~\cite{eg}) that the convergence $\mu_j\rightarrow\mu$, $\nu_j\rightarrow \nu$ implies that for any open set $U$ and any compact set $K$, we have
		\begin{equation}
			\begin{aligned}
				\mu(U)\leq \liminf_j \mu_j(U), \;\; \limsup_j\mu_j(K)\leq \mu(K)\\
				\nu(U)\leq \liminf_j \nu_j(U), \;\; \limsup_j\nu_j(K)\leq \nu(K)
			\end{aligned}
		\end{equation}
	Therefore, for any bounded set $A$,
	$$\left(\mu(\mathring{A})\right)^q\leq C\nu(\overline{A})$$
	where $\mathring{A}$ and $\overline{A}$ are the interior and closure of $A$, respectively.
	
	Let $K$ be any compact set, and let $N_r(K)$ be the open $r$-neighborhood of $K$. Then we have
	$$K=\bigcap_{r>0} N_r(K)=\bigcap_{r>0} \overline{N_r(K)},$$
	hence
	\begin{equation}
		\left(\mu(K)\right)^q=\lim_{r\searrow 0}\mu\left(N_r(K)\right)^q\leq C \lim_{r\searrow 0}\nu\left(\overline{N_r(K)}\right)=C\nu(K).
	\end{equation}
	 Let $B$ be any Borel set; we have that $B$ is both $\nu$- and $\mu$-measurable and
	\begin{equation}\label{eqn:borel-ineq}
		\begin{aligned}
		\left(\mu(B)\right)^q&=\sup\left\{\left(\mu(K)\right)^q\middle\vert K\subset B, K\text{ compact}\right\}\\
		&\leq C\sup\left\{\nu(K)\middle\vert K\subset B, K\text{ compact}\right\}=C\nu(B).
		\end{aligned}
	\end{equation}
	
	Now if $A$ is a set with $\nu(A)=0$, then for any $k\in\mathbb{N}$ there is an open set $U_k$ with $A\subset U_k$ and $\nu(U_k)<\frac{1}{k}$. Then $A \subset U=\bigcap_{k}U_k$, $U$ is Borel, and for all $k\in\mathbb{N}$, $\left(\mu(U)\right)^q\leq C\frac{1}{k}$. So $U$ is a nullset for $\mu$, hence $\mu(A)=0$. Thus we have shown that $\mu$ is absolutely continuous with respect to $\nu$.\\
	
	By the Radon-Nikodym theorem, we have that any $\nu$-measurable set is $\mu$-measurable, and the inequality \eqref{eqn:borel-ineq} extends for all $\nu$-measurable $A$.
	
\end{proof}

\begin{prop}
\label{lem:Lpweak-conv}
Let $V_j$ be a sequence of integer-rectifiable varifolds which converge weakly to the integer-rectifiable varifold $V$. Suppose $V_j$ and $V$ satisfy $(H_p)$ for $p >1$. Let $f_j \in L^\infty( V_j)$ be a sequence of functions with weak derivatives $F_j \in L^p( V)$. Suppose there is a constant $C$ such that 
$$
\lVert f_j\rVert _{L^{\infty}( V_j)} < C, \;\;\;  \lVert F_j\rVert _{L^{p}( V_j)} < C 
$$
Then there is a subsequence (which we still denote $\{ f_j \}$) and a function $f \in L^\infty(V)$ with weak derivative $F \in L^p( V)$ such that $\lVert f\rVert_{L^\infty(V)} < C$, $\lVert F\rVert _{L^{p}( V)} < C$ and for any $\varphi \in C^{\infty}_0(\R^{2n})$:
$$
\int \var f_j d\lVert V_j\rVert \to \int \var f d\lVert V\rVert, \;\;\;\; \int \var F_j d\lVert V_j\rVert \to \int \var F d\lVert V\rVert,
$$
\end{prop}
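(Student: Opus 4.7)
The plan is to lift everything to the Grassmann bundle $Gr(n)$, where the varifold convergence $V_j \rightharpoonup V$ is given, extract weak limits of $(f_j\circ\pi)V_j$ and $(F_j\circ\pi)V_j$ there, and then use Lemma~\ref{lem:radon-measures-p} together with the integer-rectifiable structure of $V$ to identify these limits as $(f\circ\pi)V$ and $(F\circ\pi)V$. This Grassmann-level identification is the key bridge between the convergence data on $\R^{2n}$ and the weak derivative identity, which involves the tangential divergence $\div_{V_j}X$ and so couples $f_j$ to the tangent planes of $V_j$.

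First I would extract subsequences. The signed Radon measures $(f_j\circ\pi)V_j$ on $Gr(n)$ have total variation bounded by $\lVert f_j\rVert_{L^\infty(V_j)}\,V_j(Gr(n))\leq C\lVert V_j\rVert$, uniformly bounded since $V_j\rightharpoonup V$; similarly, by H\"older, the vector-valued $(F_j\circ\pi)V_j$ have total variation at most $\lVert F_j\rVert_{L^p(V_j)}\,V_j(Gr(n))^{1/q}\leq C\lVert V_j\rVert^{1/q}$. Weak-$*$ compactness of Radon measures on the locally compact space $Gr(n)$ then yields subsequences with $(f_j\circ\pi)V_j\rightharpoonup\Gamma$, $(F_j\circ\pi)V_j\rightharpoonup\sigma$, and $(|f_j|\circ\pi)V_j\rightharpoonup\Gamma^+$.

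To identify these limits, I apply Lemma~\ref{lem:radon-measures-p} to $\mu_j=(|f_j|\circ\pi)V_j$, $\nu_j=V_j$ with $q=1$: the bound $\mu_j(A)\leq C\nu_j(A)$ passes to $\Gamma^+\ll V$ with density at most $C$, and the standard inequality $|\Gamma|\leq\Gamma^+$ (lower semicontinuity of total variation) gives $\Gamma\ll V$ with $L^\infty$ density bounded by $C$. Since $V$ is integer-rectifiable, $V=\delta_{T_xV}\otimes\lVert V\rVert$ is supported on the tangent plane section $\Sigma=\{(x,T_xV) : x\in\supp\lVert V\rVert\}\subset Gr(n)$; hence so is $\Gamma$, and it takes the form $\Gamma=(f\circ\pi)V$ for some $f\in L^\infty(V)$ with $\lVert f\rVert_{L^\infty(V)}\leq C$. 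Projecting by $\pi$ yields $f_j\lVert V_j\rVert=\pi_*((f_j\circ\pi)V_j)\rightharpoonup\pi_*\Gamma=f\lVert V\rVert$, which is the first claimed convergence. For $F$ the same lemma (with $q=p/(p-1)$) forces $\sigma$ to be supported on $\Sigma$ but supplies only a weak-$L^p$ density bound; to obtain the $L^p$ norm bound I use duality, noting that the functionals $T_j(X)=\int X\cdot F_j\,d\lVert V_j\rVert$ on $C^1_c$ satisfy $|T_j(X)|\leq C\lVert X\rVert_{L^q(V_j)}$, which tends to $C\lVert X\rVert_{L^q(V)}$ by the weak convergence of $V_j$. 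Equicontinuity in the supremum norm on compact supports, a diagonal extraction, and Riesz representation produce $F\in L^p(V)$ with $\lVert F\rVert_{L^p(V)}\leq C$ and $T_j(X)\to\int X\cdot F\,d\lVert V\rVert$; this delivers both the second claimed convergence and $\sigma=(F\circ\pi)V$.

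The weak derivative condition then follows by passing to the limit in Proposition~\ref{divergenceformula}, which asserts $\int f_j\div_{V_j}X\,d\lVert V_j\rVert=-\int F_j\cdot X\,d\lVert V_j\rVert$. Rewriting both sides as integrals on $Gr(n)$ against the continuous functions $(x,S)\mapsto\tr_S(DX(x))$ and $X\circ\pi$, and invoking the identifications $\Gamma=(f\circ\pi)V$ and $\sigma=(F\circ\pi)V$, one obtains $\int f\div_V X\,d\lVert V\rVert=-\int F\cdot X\,d\lVert V\rVert$, and Proposition~\ref{divergenceformula} identifies $F$ as the weak derivative of $f$ with respect to $V$. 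The main obstacle I expect is precisely the identification $\Gamma=(f\circ\pi)V$ on $Gr(n)$: the weak-$*$ convergence of $f_j\lVert V_j\rVert$ on $\R^{2n}$ alone cannot be pushed through the tangent-plane-dependent divergence $\div_{V_j}X$, and it is the combination of Lemma~\ref{lem:radon-measures-p} \emph{on $Gr(n)$} with the integer-rectifiability of $V$ that supplies this missing step by forcing the limit measure to be concentrated on the tangent plane section $\Sigma$ of $V$ itself.
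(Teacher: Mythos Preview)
Your approach is correct and runs parallel to the paper's, with a somewhat cleaner organization. The paper decomposes $f_j=f_j^+-f_j^-$ (and $F_j$ componentwise) into positive parts, applies Lemma~\ref{lem:radon-measures-p} on $\R^{2n}$ to produce $f\in L^\infty(V)$ and $F\in L^p(V)$, and only lifts to $Gr(n)$ at the final step, where it passes $\int_{Gr(n)}\proj_S D\varphi\,d\tilde\mu_j^\pm$ to the limit to handle the tangent-dependent gradient $\nabla\varphi$. You instead work on $Gr(n)$ from the outset, invoke Lemma~\ref{lem:radon-measures-p} there, and close via Proposition~\ref{divergenceformula} (the $\div_V X$ formulation) rather than the $\nabla\varphi$ definition. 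Both routes rest on the same two ingredients: Lemma~\ref{lem:radon-measures-p} and the fact that an integer-rectifiable $V$ is concentrated on its tangent section $\Sigma\subset Gr(n)$. Your explicit argument that $\Gamma\ll V$ on $Gr(n)$ forces $\Gamma=(f\circ\pi)V$ is in fact more careful than the paper's, which asserts the identity $\int_{Gr(n)}\proj_S D\varphi\,d\tilde\mu^+=\int\nabla\varphi\,f^+\,d\lVert V\rVert$ without justifying why the Grassmann-level limit $\tilde\mu^+$ must coincide with $(f^+\circ\pi)V$. One minor redundancy: once you have extracted $\sigma$ and shown it lives on $\Sigma$, the duality bound $|T_j(X)|\leq C\lVert X\rVert_{L^q(V_j)}\to C\lVert X\rVert_{L^q(V)}$ along that same subsequence already delivers $F\in L^p(V)$ with $\lVert F\rVert_{L^p(V)}\leq C$; no separate diagonal extraction is needed.
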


\begin{proof}
For each $j$ write:
$$ f^+_j(x) =
\begin{cases}
f_j(x) & \mbox{if} \;\;\;  f_j(x) \geq 0\\
0  &\mbox{if} \;\;\; f_j(x) < 0\\
\end{cases}
$$
$$ f^-_j(x) =
\begin{cases}
-f_j(x) & \mbox{if} \;\;\;  f_j(x) \leq 0\\
0  &\mbox{if} \;\;\; f_j(x) > 0\\
\end{cases}
$$
Then 
$$
f_j(x) = f_j^+(x) - f_j^-(x).
$$

For a $V_j$-measurable set $A \subset \R^{2n}$ define:
\begin{equation}
\begin{aligned} 
\mu^+_j(A) &= \int_A f_j^+ d\lVert V_j\rVert \\
\mu^-_j(A) &= \int_A f_j^- d \lVert V_j\rVert
\end{aligned}
\end{equation}
Then $\mu^+_j$ and $\mu^-_j$ are Radon measures. $\mu^+_j$ and  $\mu^-_j$ are both absolutely continuous with respect to $\lVert V_j\rVert$. Moreover,
\begin{equation} 
\label{equ:ineuality-on-mu+}
\begin{aligned}
\mu^+_j(A) &\leq  \lVert f_j\rVert_{L^\infty(V_j)} \lVert V_j\rVert (A) \leq C \lVert V_j\rVert (A) \\
\mu^-_j(A) &\leq  \rVert f_j\rVert_{L^\infty(V_j)} \lVert V_j\rVert (A) \leq C \lVert V_j\rVert(A) 
\end{aligned}
\end{equation}
Choosing subsequences we can suppose that there are Radon measures $W^+$ and $W^-$ such that as $j \to \infty$ we have:
$$
\mu^+_j \to W^+, \;\;\;\; \mu^-_j \to W^-
$$
where the convergence is weak convergence of Radon measures. By the $q=1$ case of Lemma \ref{lem:radon-measures-p} we have for any $\lVert V\rVert$-measurable set that 
\begin{equation}
	W^\pm(A)\leq C \lVert V\rVert(A).
\end{equation}\bigskip

Let $f^+=D_{\lVert V\rVert}W^+$, $f^-=D_{\lVert V\rVert}W^-$. We have for any $\var \in C^\infty_0(\R^{2n})$, as $j \to \infty$:
\begin{equation} 
\begin{aligned}
\int \var f_j^+ d\lVert V_j\rVert = \int \var d\mu_j^+&\to \int \var d W^+=\int \var f^+ d\lVert V\rVert\\
\int \var f_j^- d\lVert V_j\rVert =\int \var d\mu_j^-&\to \int\var dW^-=\int \var f^- d\lVert V\rVert
\end{aligned}
\end{equation}

Setting $f=f^+-f^-$, it follows from the $q=1$ case of Lemma \ref{lem:radon-measures-p} that $\lVert f^\pm\rVert_{L^\infty(V)} \leq C$, hence $\lVert f\rVert_{L^\infty(V)} \leq C$ and we have as $j \to \infty$,
$$
\int \var f_j d\lVert V_j\rVert \to \int \var f d\lVert V\rVert.
$$\bigskip

Next we consider the sequence $F_j \in L^p(V_j)$. Let $\{ e_1, \dots, e_{2n} \}$ be an orthonormal frame and set $F_j^i = F_j \cdot e_i$ for $i=1, \dots, 2n$. Define for each $j$ and $i$:
$$ F^{i, +}_j(x) =
\begin{cases}
F^i_j(x) & \mbox{if} \;\;\;  F^i_j(x) \geq 0\\
0  &\mbox{if} \;\;\; F^i_j(x) < 0\\
\end{cases}
$$
and similarly define $F^{i, -}_j$. For a $V_j$-measurable set $A \subset \R^{2n}$ define:
\begin{equation}
\begin{aligned}
\nu^{i,+}_j(A) &= \int_A F^{i, +}_j d\lVert V_j\rVert\\
\nu^{i,-}_j(A) &= \int_A F^{i, -}_j d\lVert V_j\rVert 
\end{aligned}
\end{equation}
Then $\nu^{i,+}_j$ and $\nu^{i,-}_j$ are Radon measures. Both are both absolutely continuous with respect to $\lVert V_j\rVert$. Moreover, using the H\"older inequality,
\begin{equation}
\begin{aligned} 
\nu^{i,+}_j(A) &\leq  \lVert F^i_j\rVert_{L^p(V_j)} \lVert V_j\rVert(A)^{\frac{p-1}{p}} \leq C \lVert V_j\rVert(A)^{\frac{p-1}{p}}  \\
\nu^{i,-}_j(A) &\leq  \lVert F^i_j\rVert_{L^p(V_j)} \lVert V_j\rVert(A)^{\frac{p-1}{p}}  \leq C \lVert V_j\rVert(A)^{\frac{p-1}{p}} 
\end{aligned}
\end{equation}
Applying Lemma \ref{lem:radon-measures-p} with $q$ chosen so $\frac{1}{p}+\frac{1}{q}=1$, we obtain limit measures $\nu^{i,\pm}$ which are absolutely continuous with respect to $\lVert V\rVert$. Let $F^{i,\pm}$ be the Radon-Nikodym derivative of $\nu^{i,\pm}$ with respect to $\lVert V\rVert$. Observe that for any $\var\in C^\infty_c$,
\begin{equation}
\begin{aligned}
\int \var F_j^{i,+} d\lVert V_j\rVert = \int \var d\nu_j^{i,+}&\to \int \var d \nu^{i,+}=\int \var F^{i,+} d\lVert V\rVert\\
\int \var F_j^{i,-} d\lVert V_j\rVert =\int \var d\nu_j^{i,-}&\to \int\var d\nu^{i,-}=\int \var F^{i,-} d\lVert V\rVert
\end{aligned}
\end{equation}

Setting $F=\sum \left(F^{i,+}-F^{i,-}\right)e_i$, we obtain $F$ so that for any smooth compactly supported test vector field $X$,
\begin{equation}
	\int X\cdot F_j d\lVert V_j\rVert\rightarrow \int X\cdot F d\lVert V\rVert
\end{equation}

In particular, consider $X$ with $\lVert X\rVert_{L^q(V)}\neq 0$. Then
\begin{equation}
	\frac{\int X\cdot F\ d\lVert V\rVert}{\lVert X\rVert_{L^q(V)}}=\lim_j\frac{\int X\cdot F_j\ d\lVert V_j\rVert}{\lVert X\rVert_{L^q(V_j)}}\leq C
\end{equation}
hence $\lVert F\rVert_{L^p(V)}\leq C$.\\

We wish to show that $F$ is the weak derivative of $f$. We have verified that:
$$
\int \var F_j d\lVert V_j \rVert \to \int \var F d\lVert V \rVert
$$
Next we wish to show  that:
\begin{equation}
\label{equ:nabla-convergence}
 \int \n \var f_j d\lVert V_j\rVert \to \int \n \var f d\lVert V\rVert.
\end{equation}
Observe that (perhaps passing to a subsequence) may assume that the Radon measures $\tilde{\mu}_j^\pm=f_j^\pm V_j$ on $Gr(n)$ converge to $\tilde{\mu}^\pm$ with $\mu^\pm(A)=\int_{A}\int_{Gr(n)_x} d\tilde{\mu}^\pm(x,S)$. Then we have:
\begin{equation}
\begin{aligned}
 \int \n \var f_j^+ d\lVert V_j\rVert &= \int_{Gr(n)} \proj_S D\var(x) d\tilde{\mu}_j^+(x, S) \\
 &\to  \int_{Gr(n)} \proj_S D\var(x) d\tilde{\mu}^+(x, S) = \int \n \var f^+ d\lVert V\rVert
 \end{aligned}
 \end{equation}
Similarly:
$$
 \int \n \var f_j^- d\lVert V_j\rVert \to \int \n \var f^- d\lVert V\rVert.
$$
Clearly, (\ref{equ:nabla-convergence}) follows. But since $F_j$ is the weak derivative of $f_j$ we have:
$$
 \int \n \var f_j d\lVert V_j\rVert = -\int \var F_j d\lVert V_j\rVert
$$
Taking the limit as $j \to \infty$ shows that $F$ is the weak derivative of $f$.
\end{proof}

\begin{lem}
\label{lem:L-infy-conv}
Let $V_j$ be a sequence of integer-rectifiable varifolds of uniformly bounded mass which converge weakly to the integer-rectifiable varifold $V$. Let $f_j \in L^{\infty}(V)$ be a sequence of functions. Suppose there is  a constant $C$ independent of $j$ such that 
$$
\lVert f_j\rVert_{L^{\infty}(V_j)} < C
$$
Then there is a subsequence (which we still denote $\{ f_j \}$) and a function $f \in L^{\infty}(V)$ such that $\lVert f\rVert_{L^{\infty}(V)} < C$ and for any $\varphi \in C^{\infty}_0(\R^{2n})$:
$$
\int \var \sin( f_j) d\lVert V_j\rVert \to \int \var \sin(f) d\lVert V\rVert, \;\;\;\; \int \var \cos(f_j) d\lVert V_j\rVert \to \int \var \cos(f) d\lVert V\rVert,
$$
\end{lem}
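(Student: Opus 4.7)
The plan is to extract weak-$*$ limits of the bounded trigonometric sequences and then reconstruct a single angle $f$ from the polar pair of limits; the main obstacle is showing that the two limits lie pointwise on $S^1$.

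First, I would apply the positive/negative-part construction from the proof of Proposition \ref{lem:Lpweak-conv}, run separately on the two uniformly bounded sequences $\sin(f_j)$ and $\cos(f_j)$, which are bounded in absolute value by $1$ independently of $C$. After passing to a subsequence, this yields densities $s, c \in L^\infty(V)$ with $\lvert s\rvert, \lvert c\rvert \leq 1$ such that
\begin{equation*}
\int \var \sin(f_j)\, d\lVert V_j\rVert \to \int \var s\, d\lVert V\rVert, \qquad \int \var \cos(f_j)\, d\lVert V_j\rVert \to \int \var c\, d\lVert V\rVert
\end{equation*}
for all $\var \in C^\infty_0(\R^{2n})$; absolute continuity of the limits with respect to $\lVert V\rVert$ and the pointwise $L^\infty$ bound on $s,c$ both come from the $q=1$ case of Lemma \ref{lem:radon-measures-p}.

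The central step, and the principal difficulty, is to show $s^2 + c^2 = 1$ $\lVert V\rVert$-a.e., since weak convergence does not in general commute with squaring. This is where the structural hypothesis of the intended application must be used: when $f_j$ is the scalar lift of the lagrangian angle of $V_j$, the scalar functions $\sin(f_j)$ and $\cos(f_j)$ are pullbacks under the tangent-plane map of the \emph{continuous} functions $\Im \ell, \Re \ell : LGr \to \R$. The varifold convergence $V_j \to V$ on the Grassmann bundle $Gr(n)$ then allows the integrals of $\var(x)\Im\ell(P)$ and $\var(x)\Re\ell(P)$ against $V_j$ to pass to the limit, identifying $s(x) = \Im \ell(T_xV)$ and $c(x) = \Re \ell(T_xV)$ $\lVert V\rVert$-a.e., whence $s^2+c^2=1$.

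With $s^2+c^2 = 1$ in hand, a measurable selection of a branch of $\arg(c+is)$ produces $f \in L^\infty(V)$ with $\sin f = s$ and $\cos f = c$, and the claimed convergences for $\sin$ and $\cos$ then hold by construction. To enforce $\lVert f\rVert_{L^\infty(V)} < C$, I would at each point choose the angular representative lying in $[-C,C]$; such a representative exists at $\lVert V\rVert$-a.e.\ point because the uniform bound $\lVert f_j\rVert_{L^\infty(V_j)} < C$ is compatible with the $S^1$-valued limit $(c,s)$ and $[-C,C]$ is compact, so a standard measurable-selection argument yields a measurable $f$ within the desired range.
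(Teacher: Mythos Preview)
Your argument does not prove the lemma as stated: you explicitly invoke the structural hypothesis that $f_j$ is a scalar lift of the lagrangian angle of $V_j$, so that $\cos f_j + i\sin f_j$ is the pullback of the \emph{continuous} function $\ell$ on $LGr$. The lemma, however, is formulated for arbitrary uniformly bounded $f_j \in L^\infty(V_j)$, and for such general $f_j$ there is no reason to expect $s^2+c^2=1$ after passing to weak limits. Indeed, if $f_j$ oscillates between two values whose sines cancel while the cosines average to some $c_0\in(-1,1)$, one obtains $s=0$, $c=c_0$, and $s^2+c^2<1$. So the central step of your proof fails outside the special case you import.

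The paper takes a different route that avoids this obstruction. Rather than reconstructing $f$ from the pair $(s,c)$, it first extracts $f$ as the weak limit of the $f_j$ themselves via Proposition~\ref{lem:Lpweak-conv}, and then proves by induction on $\ell$ that $\int \varphi f_j^{\ell}\, d\lVert V_j\rVert \to \int \varphi f^{\ell}\, d\lVert V\rVert$ for every positive integer $\ell$; the inductive step uses that the mollifications $(f_j)_\e$ (taken with respect to $V_j$) converge pointwise to $f_\e$, which lets one upgrade the test function. The convergence for $\sin$ and $\cos$ then follows from Taylor expansion together with the uniform $L^\infty$ bound. Since the only use of this lemma in the paper is Theorem~\ref{thm:convergence-beta}, where $f_j=\b_j$ genuinely is the lagrangian angle, your Grassmannian argument would suffice for that application and is arguably more transparent there---but it does not establish Lemma~\ref{lem:L-infy-conv} in the generality claimed. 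A smaller point: your final measurable-selection step yields only $\lVert f\rVert_{L^\infty(V)}\leq C$, not the strict inequality, since weak limits of measures supported in $\{e^{it}:\lvert t\rvert<C\}$ may land on the boundary of the closed arc.
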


\begin{proof}
To begin note that by Proposition \ref{lem:Lpweak-conv} there is a subsequence of $\{ f_j \}$ that we continue to denote $\{ f_j \}$ and a function $f \in L^{\infty}(V)$ such that for any $\varphi \in C^{\infty}_0(\R^{2n})$:
$$
\int \var f_j d\lVert V_j\rVert  \to \int \var f d\lVert V\rVert 
$$
We want to show that for each integer $\ell > 0$ and any $\varphi \in C^{\infty}_0(\R^{2n})$ we have as $j \to \infty$:
\begin{equation}
\label{equ:induction-L}
\int \var f_j^\ell d\lVert V_j\rVert  \to \int \var f^\ell d\lVert V\rVert .
\end{equation}
To prove (\ref{equ:induction-L}) we use induction and suppose we have for any $\varphi \in C^{\infty}_0(\R^{2n})$ as $j \to \infty$:
\begin{equation}
\label{equ:induction-L-1}
\int \var f_j^{\ell-1} d\lVert V_j\rVert  \to \int \var f^{\ell-1} d\lVert V\rVert .
\end{equation}

We use mollification as follows: Mollify $f_j$ with respect to the measure $V_j$. That is,
$$
(f_j)_\e(x) = \frac{\int f_j(y) \phi_\e(x-y) d\lVert V_j \rVert(y)}{\int  \phi_\e(x-y) d\lVert V_j \rVert(y) + \e \lVert V_j \rVert}
$$
Note that for all $j$, $(f_j)_\e \in C^\infty_0(\R^{2n})$ and
\begin{equation}
|| (f_j)_\e ||_{L^\infty(\R^{2n})} < C.
\end{equation}
In addition for every $\e >0$;
\begin{equation}
\lim_{j \to \infty} (f_j)_\e = (f)_\e   \;\;\;\mbox{pointwise}.
\end{equation}
We have by the inductive assumption and dominated convergence that for any $\e > 0$:
\begin{equation}
\lim_{j \to \infty} \int  (f_j)_\e \var f_j^{\ell-1} d\lVert V_j\rVert = \int (f)_\e \var  f^{\ell-1} d\lVert V\rVert
\end{equation}
Hence, letting $\e \to 0$
\begin{equation}
\lim_{j \to \infty} \int   \var f_j^{\ell} d\lVert V_j\rVert = \int  \var  f^{\ell} d\lVert V\rVert
\end{equation}
This proves the inductive step, verifying (\ref{equ:induction-L}) for each integer $\ell$.
The result follows using the Taylor expansion for sine and cosine.
\end{proof}

\begin{thm}
\label{thm:convergence-beta}
Let $V_j$ be a sequence of varifolds with Maslov index zero satisfying $(H_2)$, each with finite mass, that converge,  in the sense of varifold convergence, to the  lagrangian integer-rectifiable varifold $V$ also satisfying $(H_2)$. Suppose that for each $j$ the varifold $V_j$ admits a lift $\b_j$ of the $S^1$ lagrangian angle with weak derivative $B_j$ so that there are  constants $c, C$ independent of $j$ with:
$$
\lVert \b_j \rVert_{L^{\infty}(V_j)} < c,
$$
and
$$
 \lVert B_j \rVert_{L^{2}(V_j)} < C.
$$
Then $V$ admits a lift $\b$ of the $S^1$ lagrangian angle with weak derivative $B$ satisfying: 
$$
\lVert \b \rVert_{L^{\infty}(V)} < c, \;\;\;  \lVert B \rVert_{L^{2}(V)} < C.
$$

\end{thm}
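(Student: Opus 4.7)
The plan is to combine the three previously-established convergence results: Proposition \ref{lem:Lpweak-conv} (which gives weak limits of a function and its weak derivative under varifold convergence), Lemma \ref{lem:L-infy-conv} (which gives weak convergence of $\sin$ and $\cos$ of an $L^\infty$-bounded sequence), and the hypothesis that $V_j \to V$ as Radon measures on $Gr(n)$, which implies convergence of any integral against a continuous function on $LGr$. The essential issue to address is that extracting weak limits $\beta$ and $B$ is routine, but we must verify that the resulting $\beta$ is actually a lift of the $S^1$ lagrangian angle on $V$, not merely an $L^\infty$ function.

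First I would apply Proposition \ref{lem:Lpweak-conv} with $p=2$ to the sequence $(\beta_j, B_j)$. This produces a subsequence (still denoted $\{\beta_j\}$) and a limit $\beta \in L^\infty(V)$ with weak derivative $B \in L^2(V)$, satisfying the norm bounds
$$\lVert \beta \rVert_{L^\infty(V)} < c, \qquad \lVert B \rVert_{L^2(V)} < C$$
and the weak convergence $\int \varphi \beta_j\, d\lVert V_j\rVert \to \int \varphi \beta\, d\lVert V\rVert$ and $\int \varphi B_j\, d\lVert V_j\rVert \to \int \varphi B\, d\lVert V\rVert$ for every $\varphi \in C^\infty_0(\R^{2n})$.

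Next I would invoke Lemma \ref{lem:L-infy-conv} applied to the same subsequence $\{\beta_j\}$, which yields that for every $\varphi \in C^\infty_0(\R^{2n})$,
$$\int \varphi \sin(\beta_j)\, d\lVert V_j\rVert \to \int \varphi \sin(\beta)\, d\lVert V\rVert, \qquad \int \varphi \cos(\beta_j)\, d\lVert V_j\rVert \to \int \varphi \cos(\beta)\, d\lVert V\rVert.$$
The main step is to identify $\beta$ as a lift of the lagrangian angle of $V$. For each $j$, since $\beta_j$ is such a lift for $V_j$, we have $e^{i\beta_j(x)} = \ell(T_xV_j)$ for $\lVert V_j\rVert$-a.e. $x$. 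Now $\ell\colon LGr \to S^1$ is continuous, so its real and imaginary parts extend by zero to continuous functions on $Gr(n)$ (via the fact that $LGr$ is closed in $Gr(n)$, or by replacing $\ell$ by $\varphi(x)\Re\ell(S)\chi(S)$ with a suitable cutoff — but in our setting $\lVert V_j\rVert, \lVert V\rVert$ are supported on $LGr$, so the choice of extension is irrelevant). By varifold convergence $V_j \to V$ on $Gr(n)$,
$$\int \varphi(x)\,\Re \ell(T_xV_j)\, d\lVert V_j\rVert = \int \varphi(x)\,\Re\ell(S)\, dV_j(x,S) \to \int \varphi(x)\,\Re\ell(S)\, dV(x,S) = \int \varphi(x)\,\Re\ell(T_xV)\, d\lVert V\rVert,$$
and the left side equals $\int \varphi \cos(\beta_j)\, d\lVert V_j\rVert$. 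Comparing with the limit from Lemma \ref{lem:L-infy-conv}, we conclude $\cos(\beta(x)) = \Re\ell(T_xV)$ for $\lVert V\rVert$-a.e. $x$, and the analogous argument for the imaginary part gives $\sin(\beta(x)) = \Im\ell(T_xV)$. Hence $e^{i\beta(x)} = \ell(T_xV)$ almost everywhere, so $\beta$ is a lift of the $S^1$ lagrangian angle on $V$.

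The hardest part is the identification step just described, because it requires threading three different notions of convergence (weak varifold convergence on the Grassmann bundle, weak $L^\infty$-to-Radon convergence of $\cos\beta_j$ and $\sin\beta_j$ against $\lVert V_j\rVert$, and the pointwise identity $e^{i\beta_j} = \ell(T_\cdot V_j)$) and verifying they are compatible. Once the identification $e^{i\beta} = \ell \circ T$ is in hand, the fact that $V$ satisfies $(H_2)$ is given by hypothesis and the $L^\infty$/$L^2$ bounds on $\beta$ and $B$ together with the weak derivative property from the first step complete the proof.
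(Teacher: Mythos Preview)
Your proposal is correct and follows essentially the same approach as the paper: apply Proposition \ref{lem:Lpweak-conv} to extract $\beta$ and $B$ with the required bounds and weak-derivative relation, apply Lemma \ref{lem:L-infy-conv} to obtain convergence of $\sin\beta_j$ and $\cos\beta_j$, and then compare against the varifold limit of $\ell$ on the Grassmann bundle to identify $e^{i\beta}=\ell|_V$. The only cosmetic difference is that the paper phrases the identification step by passing to the lifted Radon measures $\tilde V_j,\tilde V$ on $LGr$ and testing against $\rho\in C^\infty_0(LGr)$, whereas you test against $\varphi(x)\,\Re\ell(S)$ directly; since all the measures are supported in the closed set $LGr\subset Gr(n)$, these are equivalent.
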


\begin{proof}
Apply Proposition \ref{lem:Lpweak-conv} to the sequences $\{ b_j \}$ and $\{ B_j \}$ to conclude that there exist $b \in L^\infty(V)$ with weak derivative $B \in L^2(V)$ with $ \lVert \b \rVert_{L^{\infty}(V)} < C$ and $\lVert B \rVert_{L^{2}(V)} < C$ such that for subsequences of $\{ b_j \}$ and $\{ B_j \}$ (which we continue to denote $\{ b_j \}$ and $\{ B_j \}$) we have for any $\varphi \in C^{\infty}_0(\R^{2n})$ as $j \to \infty$

\begin{equation}
\label{equ:version1-beta}
\int \var b_j d\lVert V_j\rVert \to \int \var b d\lVert V\rVert, \;\;\;\; \int \var B_j d\lVert V_j\rVert \to \int \var B d\lVert V\rVert
\end{equation}
Apply Lemma \ref{lem:L-infy-conv} to the sequence $\{ b_j \}$  to conclude that there is a subsequence (which we continue to denote $\{ b_j \}$) such that for any $\varphi \in C^{\infty}_0(\R^{2n})$ as $j \to \infty$:
\begin{equation}
\label{equ:version2-beta}
\int \var \sin(b_j) d\lVert V_j\rVert \to \int \var \sin(b) d\lVert V\rVert, \;\;\;\; \int \var \cos(b_j) d\lVert V_j\rVert \to \int \var \cos(b) d\lVert V\rVert,
\end{equation}

We need to verify that $b$ is a lift of the lagrangian angle. To show this we lift the lagrangian integer-rectifiable varifolds $V_j$ and $V$ to lagrangian varifolds $\tilde{V_j}$ and $\tilde{V}$, i.e.~Radon measures on $LGr$. On $LGr$ the lagrangian angle $\ell$ is a global smooth function. By  assumption for each $j$ we have,
$$
\ell_{|_{V_j}} = e^{i b_j}
$$
Let $\rho \in C^{\infty}_0(LGr)$.  Then, as $j \to \infty$,
$$
\int \rho \ell d \tilde{V_j} \to \int \rho \ell  d \tilde{V}. 
$$
On the other hand, 
$$\int\rho(x,S)\ell(x,S)d \tilde{V_j}(x,S)= \int \rho(x,T_xV_j)\ell(T_xV_j)d\lVert V_j\rVert_x= \int \rho(x,T_xV_j) e^{ib_j(x)}d\lVert V_j\rVert_x$$
If we take $\rho$ to be independent of $S$, i.e.~$\rho\in C^{\infty}_0(\R^{2n})$, and apply (\ref{equ:version2-beta}), we have for any $\rho \in C^{\infty}_0(\R^{2n})$ as $j \to \infty$:
$$
\int \rho e^{i b_j} d\lVert V_j\rVert \to \int \rho e^{i b} d\lVert V\rVert,
$$
Using the uniqueness of the weak limit we conclude that $e^{i b} = \ell_{|_{V}}$ a.e.
The result follows.
\end{proof}

Combining Allard's compactness theorem \ref{thm:compact-Hp-varifolds} and Theorem \ref{thm:convergence-beta}, we obtain the following compactness result for varifolds with Maslov index zero:
\begin{thm}
	Let $V_j$ be a sequence of lagrangian varifolds satisfying $(H_2)$ with Maslov index zero.  Suppose there are $C_1,C_2,C_3,C_4<\infty$ so that each lift $\beta_j$ of the lagrangian angle of $V_j$ and its weak derivative $B_j$ satisfy:
		$$\lVert V_j\rVert\leq C_1,\;\; \lVert H_j\rVert_{L^2(V_j)}\leq C_2, \;\; \lVert \beta_j\rVert_{L^\infty(V_j)}\leq C_3, \;\; \lVert B_j\rVert_{L^2(V_j)}\leq C_4.$$
	Then there is a subsequence of the $V_j$ which converges to a lagrangian varifold $V$ satisfying $(H_2)$ with Maslov index zero, for which the lift $\beta$ of the lagrangian angle of $V$ and its weak derivative $B$ satisfy 
		$$	\lVert V\rVert\leq C_1,\;\; \lVert H\rVert_{L^2(V)}\leq C_2,\;\;
			\lVert \beta\rVert_{L^\infty(V)}\leq C_3,\;\; \lVert B\rVert_{L^2(V)}\leq C_4.$$
\end{thm}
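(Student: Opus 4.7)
The plan is to simply combine the two main compactness results already established in the paper, namely Allard's compactness theorem (Theorem \ref{thm:compact-Hp-varifolds}) and the convergence result for lifts of the lagrangian angle (Theorem \ref{thm:convergence-beta}). The statement is essentially that these two compactness results can be applied simultaneously to extract a single subsequence along which both the varifolds and the scalar data (lift $\beta$, weak derivative $B$) converge in the appropriate sense.

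First I would apply Theorem \ref{thm:compact-Hp-varifolds} to the sequence $V_j$. The uniform bounds $\lVert V_j\rVert\leq C_1$ and $\lVert H_j\rVert_{L^2(V_j)}\leq C_2$, together with the $(H_2)$ hypothesis, imply (via H\"older's inequality) a uniform bound on $\lVert \delta V_j\rVert$ on any bounded region. Hence Allard's theorem produces a subsequence, which I continue to call $V_j$, converging in the sense of Radon measures on $Gr(n)$ to an integer-rectifiable varifold $V$ of locally bounded first variation and satisfying $(H_2)$. By the lower-semicontinuity portion of Theorem \ref{thm:compactnessvarifolds} applied to mass, $\lVert V\rVert \leq C_1$, and Allard's theorem gives $\lVert H\rVert_{L^2(V)}\leq C_2$. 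Because the lagrangian Grassmannian $LGr$ is a closed subbundle of $Gr(n)$ and each $V_j$ is supported in $LGr$, the support of $V$ lies in $LGr$ as well, so $V$ is a lagrangian varifold.

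Next I would apply Theorem \ref{thm:convergence-beta} to the sequence $(V_j,\beta_j,B_j)$ with the already-extracted subsequence. The hypotheses of that theorem are precisely the uniform bounds $\lVert \beta_j\rVert_{L^\infty(V_j)}\leq C_3$ and $\lVert B_j\rVert_{L^2(V_j)}\leq C_4$, together with the varifold convergence $V_j \to V$ secured in the previous step. Its conclusion gives, after passing to a further subsequence, a lift $\beta\in L^\infty(V)$ of the $S^1$-lagrangian angle of $V$ with weak derivative $B\in L^2(V)$, and the bounds $\lVert\beta\rVert_{L^\infty(V)}\leq C_3$ and $\lVert B\rVert_{L^2(V)}\leq C_4$. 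This is exactly the assertion that $V$ has Maslov index zero in the sense of \S\ref{sec:maslovindexzero}, together with the four desired estimates.

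There is no serious obstacle here, since all of the analytic work has been absorbed into the two cited theorems; the only point requiring any care is ensuring that the single subsequence produced by Allard's theorem is the one fed into Theorem \ref{thm:convergence-beta}, so that all four bounds survive on the same limit $V$. The whole content of the statement is that the two compactness results are compatible and can be run in sequence.
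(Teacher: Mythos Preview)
Your proposal is correct and matches the paper's own approach exactly: the paper states this theorem immediately after Theorem~\ref{thm:convergence-beta} with the one-line justification ``Combining Allard's compactness theorem \ref{thm:compact-Hp-varifolds} and Theorem \ref{thm:convergence-beta}, we obtain the following compactness result,'' and your write-up simply spells out that combination in detail.
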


\begin{rem}
	In virtue of Theorem \ref{thm:harvey-lawson-varifold}, we observe that in the presence of a uniform bound on $\lVert \beta_j\rVert_{L^\infty(V_j)}$, a uniform bound on $\lVert H_j\rVert_{L^2(V_j)}$ implies a bound on $\lVert B_j\rVert_{L^2(V_j)}$ and vice versa.
\end{rem}

\begin{prop}
Let $\Phi$ be a symplectic diffeomorphism. If $V$ is a varifold satisfying $(H_2)$ with Maslov index zero, then so is $(\Phi)_\sharp(V)$.
\end{prop}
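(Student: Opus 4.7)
My plan is to verify, for $V'=\Phi_\sharp V$, the three defining properties of a varifold with Maslov index zero satisfying $(H_2)$ in turn: lagrangian integer-rectifiability, the condition $(H_2)$, and the existence of an $L^\infty$-lift of the lagrangian angle whose weak derivative is in $L^2$. Lagrangian integer-rectifiability is immediate: because $\Phi$ is a smooth diffeomorphism, $V'$ is integer-rectifiable with approximate tangent plane $\Phi_*(T_xV)$ at $\lVert V'\rVert$-a.e.~$y=\Phi(x)$, and since $\Phi^*\omega=\omega$ these planes are again lagrangian. For $(H_2)$, I would invoke the change-of-variables formula for the first variation: for $X\in C^\infty_c(N,TN)$, $\delta V'(X)$ rewrites as $\delta V$ applied to a vector field transported by $\Phi^{-1}$, modified by bounded correction factors coming from $D\Phi$ and its derivatives. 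Since $\Phi$ is $C^2$ on the (compact) support of $V$, this yields $H_{V'}\in L^2(V')$ with $\lVert H_{V'}\rVert_{L^2(V')}\leq C(\lVert\Phi\rVert_{C^2})\lVert H\rVert_{L^2(V)}$.

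The heart of the proof is the construction of a lift of the lagrangian angle on $V'$. Consider the smooth discrepancy function $h:LGr\to S^1$ defined by
$$h(x,L)=\ell(\Phi_*L)\cdot\ell(L)^{-1}.$$
In the applications of this proposition later in the paper $\Phi$ arises as a Hamiltonian diffeomorphism generated by a smooth potential $\beta_\e$; along the generating Hamiltonian isotopy $\{\Phi_t\}$ one may integrate the variation formula
$$\delta_X\beta(S)=-\tr_S DJX$$
of \S\ref{sec:variationofbeta} to produce a smooth $\R$-valued lift $\tilde h:LGr\to\R$ of $h$ (initialized at $\tilde h_0\equiv 0$). Then set
$$\beta'(y)=\beta(\Phi^{-1}(y))+\tilde h\bigl(\Phi^{-1}(y),\,T_{\Phi^{-1}(y)}V\bigr).$$
By construction $e^{i\beta'(y)}=\ell(T_yV')$ almost everywhere, and $\beta'\in L^\infty(V')$ since $\beta\in L^\infty(V)$ and $\tilde h$ is bounded on the compact image of the tangent-plane map.

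It remains to verify that $\beta'$ has weak derivative in $L^2(V')$. I would split $\beta'=\beta\circ\Phi^{-1}+c$ with $c(y)=\tilde h(\Phi^{-1}(y),T_{\Phi^{-1}(y)}V)$. For the first summand, a chain-rule argument paralleling the proof of Theorem \ref{thm:weak-derivative-Menne}, combined with the change-of-variables formula for the tangential divergence under $\Phi$, yields that $\beta\circ\Phi^{-1}$ has weak derivative essentially $\bigl((D\Phi)^{-T}B\bigr)\circ\Phi^{-1}$ plus the normal contribution $\beta H_{V'}$; both terms lie in $L^2(V')$ by the previous step and the $L^\infty$-bound on $\beta$. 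For $c$, observe that it is obtained by composing a smooth function on $LGr$ with the tangent-plane map of $V$; Theorem \ref{thm:mollificationnabla}, combined with Proposition \ref{prop:nablabeta}, shows that its tangential gradient is pointwise controlled by the $C^1$-norm of $\tilde h$ times $\lvert H_{V'}\rvert$, and so is in $L^2(V')$, while its normal component $cH_{V'}$ lies in $L^2$ since $c$ is bounded.

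The principal obstacle is the construction of the $\R$-valued lift $\tilde h$ of the $S^1$-valued discrepancy $h$: for a general symplectic diffeomorphism there is an {\em a priori} topological obstruction given by the monodromy $h_*:\pi_1(LGr)\to \pi_1(S^1)=\Z$. For the Hamiltonian diffeomorphisms actually used to build the flow in the sequel this obstruction vanishes, and the isotopy-integration sketched above produces $\tilde h$ concretely. A secondary technical nuisance is rigorously verifying the chain rule for weak derivatives under composition with a smooth diffeomorphism; I expect this to be a routine adaptation of the $C^1$-chain rule already established via mollification.
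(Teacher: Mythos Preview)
The paper states this proposition without proof; there is nothing after the statement except a \verb|\bigskip| before the next section begins. So there is no ``paper's own proof'' to compare against, and your proposal is substantially more than the authors supply.

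Your outline is sound and covers exactly the points that need checking. The observations on integer-rectifiability and preservation of the lagrangian condition are immediate, and the $(H_2)$ argument via change of variables in the first-variation formula is the standard one. Your treatment of the lift of the lagrangian angle---defining the discrepancy $h(x,L)=\ell(\Phi_*L)\ell(L)^{-1}$ and then lifting it to $\R$ along a Hamiltonian isotopy by integrating the variation formula \eqref{eqn.deltabeta3}---is the right idea and is precisely what one needs for the applications in \S\ref{sec:epsilonflows}, where $\Phi$ always arises as a time-$t$ map of a Hamiltonian flow generated by the smooth potential $\beta_\e$.

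You are also right to flag the topological obstruction for a \emph{general} symplectic diffeomorphism: the lift $\tilde h$ need not exist unless $h_*:\pi_1(LGr)\to\pi_1(S^1)$ vanishes, and the paper's bare statement does not address this. For the paper's internal purposes this is harmless, since every $\Phi$ actually used is Hamiltonian-isotopic to the identity; but as stated the proposition is arguably too general, and your caveat is well taken. The remaining ``secondary nuisance'' you mention---the chain rule for weak derivatives under composition with a smooth diffeomorphism---is indeed routine given the mollification machinery of \S\ref{sec:mollification} and the argument pattern of Theorem~\ref{thm:weak-derivative-Menne}.
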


\bigskip

\section{ {The approximate flows}}\label{sec:epsilonflows}
\label{sec:epsilon-flows}

Now let $V=V(0)$ be a varifold with Maslov index zero satisfying $(H_2)$. In this section, we will show that for each $\e>0$, the flow equation
	\begin{equation}\label{eqn:epsilonflowequation}
		\frac{d}{dt}V^\e(t)=JD(\beta(V^\e(t))_\e)
	\end{equation}
has a solution starting from initial data $V=V(0)$, where the mollification is carried out with respect to $V^\e(t)$. Each $V^\e(t)$ will be a  varifold with Maslov index zero satisfying $(H_2)$.

Throughout, $\lVert\cdot\rVert$ denotes the $C^0$ norm of a continuous function, and the supremum of the operator norm of a tensor.\\

We will produce the one-parameter family $V(t)$ of varifolds with Maslov index zero solving \ref{eqn:epsilonflowequation} by a sort of Euler's method.  We wish to define, for any mollification parameter $\e$, and any $k\in\mathbb{N}$, a sequence of varifolds with Maslov index zero. Each varifold in the sequence will be given by flowing by the hamiltonian diffeomorphism generated by $\beta_{\e}$ for time $2^{-k}$; then we compute $\beta_{\e}$ for the new varifold and repeat. Again for exposition we state all results and proofs for varifolds in ${\mathbb R}^{2n}$.\\

For now we fix $\e>0$.

\begin{defn}
	Given $p,k\in\mathbb{N}$, define the diffeomorphisms $\Psi_t^{k,p}$ and lagrangian integer-rectifiable varifolds $V^{k,p}$ by:
	\begin{equation*}
		\begin{aligned}
			V^{k,0}&=V\\
			\beta^{k,p}&=\beta(V^{k,p})\\
			\beta_{\e}^{k,p}&=(\beta^{k,p})_\e\\
			\Psi_t^{k,p}& \text{ generated by } JD\beta_{\e}^{k,p}\\
			V^{k,p+1}&=\left(\Psi_{2^{-k}}^{k,p}\right)_\sharp V^{k,p}
		\end{aligned}
	\end{equation*}
	We point out for clarity that the mollification in $\beta^{k,p}_\e$ is with respect to the varifold $V^{k,p}$. 
\end{defn}

Each $V^{k,p+1}$ is the hamiltonian-diffeomorphic image of $V^{k,p}$, hence of $V^{k,0}=V$. Therefore the $V^{k,p}$ all have Maslov index zero.

Our goal is to use the $V^{k,p}$ to approximate a one-parameter family of varifolds with Maslov index zero moving according to (\ref{eqn:epsilonflowequation}); we will use the fact that the dyadic rationals are dense in $\R$. To that end, we adopt the notation:
\begin{defn}
	Given a dyadic rational $t$, define for each $k\in\mathbb{N}$
		\begin{equation*}
			\begin{aligned}
			\Psi^k(t)&=\Psi_{2^{-k}}^{k,p-1}\circ\cdots\circ\Psi_{2^{-k}}^{k,1}\circ \Psi_{2^{-k}}^{k,0}\\
			V^k(t)&=\left(\Psi^k(t)\right)_\sharp  V=V^{k,p}
			\end{aligned}
		\end{equation*}
	where $t=p2^{-k}$.
\end{defn}

We wish to roughly estimate how the lagrangian angle $\beta$ changes along each sequence $V^{k,p}$. 

\begin{lem}\label{lem:betaepsilonexponential}
	$\lVert\beta^{k,p}\rVert_{L^\infty( V^k(t) )} \leq \left(1+C\e^{-q}2^{-k}\right)^{2^kt}\lVert\beta(V)\rVert_{L^\infty(V)}$, where $C$ and $q$ are universal constants coming from Lemma \ref{lem:derivativesofbetaepsilon}.
\end{lem}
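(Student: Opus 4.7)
The plan is to derive a single-step estimate comparing $\lVert\beta^{k,p+1}\rVert_{L^\infty(V^{k,p+1})}$ with $\lVert\beta^{k,p}\rVert_{L^\infty(V^{k,p})}$, and then iterate $p$ times.

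First I would apply the variation formula from \S\ref{sec:variationofbeta}. Each diffeomorphism $\Psi^{k,p}_t$ for $t\in[0,2^{-k}]$ is generated by the smooth symplectic vector field $X = JD\beta^{k,p}_\e$. Substituting this into $\delta_X\beta(S) = -\tr_S DJX$ and using $J^2=-\mathrm{Id}$ gives
$$\delta_X\beta(S) \;=\; \tr_S D^2\beta^{k,p}_\e.$$
Along any trajectory $s\mapsto (\Psi^{k,p}_s)_\sharp(x,S)$, the derivative of the lagrangian angle is therefore bounded pointwise in absolute value by the operator norm of the ambient Hessian $D^2\beta^{k,p}_\e$. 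Invoking Lemma \ref{lem:derivativesofbetaepsilon} at derivative order two yields
$$\lVert D^2\beta^{k,p}_\e\rVert \;\leq\; C\e^{-q}\lVert\beta^{k,p}\rVert_{L^\infty(V^{k,p})},$$
for the universal constant $C$ from that lemma and $q = n+3$.

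Next I would integrate along each hamiltonian trajectory from $s=0$ to $s=2^{-k}$. This transports $\beta^{k,p}$ to a continuous lift on $V^{k,p+1}$, which is by definition $\beta^{k,p+1}$, satisfying
$$\big|\beta^{k,p+1}(\Psi^{k,p}_{2^{-k}}(x)) - \beta^{k,p}(x)\big| \;\leq\; C\e^{-q}2^{-k}\lVert\beta^{k,p}\rVert_{L^\infty(V^{k,p})}.$$
Taking $L^\infty$ norms on both sides gives the single-step estimate
$$\lVert\beta^{k,p+1}\rVert_{L^\infty(V^{k,p+1})} \;\leq\; \bigl(1+C\e^{-q}2^{-k}\bigr)\lVert\beta^{k,p}\rVert_{L^\infty(V^{k,p})}.$$
A straightforward induction on $p$, starting from $\beta^{k,0}=\beta(V)$, together with $p=2^{k}t$, produces the claimed bound.

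The main issue to watch for is self-consistency: the Hessian estimate depends on the current $\lVert\beta^{k,p}\rVert_{L^\infty(V^{k,p})}$ rather than on $\lVert\beta(V)\rVert_{L^\infty(V)}$, which is what forces multiplicative (and therefore ultimately exponential) accumulation rather than additive. The mollification parameter $\e>0$ is fixed throughout this construction, so the cost $\e^{-q}$ of differentiating through $L_\e$ twice appears inside the exponent; this is harmless for producing the approximate flow $V^\e(t)$ at fixed $\e$, though it is exactly the term that will need more delicate control when $\e\to 0$ later.
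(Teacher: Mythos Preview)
Your proof is correct and follows essentially the same approach as the paper: apply the variation formula $\delta_X\beta=-\tr_S DJX$ with $X=JD\beta^{k,p}_\e$, bound by $\lVert D^2\beta^{k,p}_\e\rVert_{C^0}$ via Lemma~\ref{lem:derivativesofbetaepsilon} (giving $q=n+3$), integrate over $[0,2^{-k}]$ to obtain the single-step multiplicative estimate, and iterate. Your remark about why the accumulation must be multiplicative rather than additive is exactly the point.
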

\begin{proof}
	For any $(x,S)\in LGr $, we have using the first variation of $\beta$ (\ref{eqn.deltabeta3}) that
	\begin{equation}
		\begin{aligned}
			\lvert\beta^{k,p}(\Psi_{2^{-k}}^{k,p-1}(x,S))-\beta^{k,p-1}(x,S)\rvert&=\left\lvert\int_0^{2^{-k}}-\tr_{\left(\Psi^{k,p-1}_\tau\right)_\sharp S}DJJD\beta_{\e}^{k,p-1}\left(\Psi_\tau^{k,p-1}(x,S)\right)d\tau\right\rvert\\
			&\leq 2^{-k}\lVert D^2\beta_{\e}^{k,p-1}\rVert_{C^0}
		\end{aligned}
	\end{equation}
	Thus $\lVert\beta^{k,p}\rVert_{L^\infty( V^{k,p})}\leq \lVert\beta^{k,p-1}\rVert_{L^\infty( V^{k,p-1})}+2^{-k}\lVert D^2\beta_{\e}^{k,p-1}\rVert_{C^0}$. Applying Lemma \ref{lem:derivativesofbetaepsilon}, we obtain
		\begin{equation}
			\lVert\beta^{k,p}\rVert_{L^\infty( V^{k,p})}\leq \left(1+2^{-k}C\e^{-3-n}\right)\lVert\beta^{k,p-1}\rVert_{L^\infty(V)}
		\end{equation}
	Iterating this estimate gives the result.	
\end{proof}

	Fixing $t$ and noting that $(1+\frac{x}{n})^n\nearrow e^x$ for $x>0$, we obtain:
\begin{cor}\label{cor:betaexponentialtime}
	$\lVert\beta(V^k(t))\rVert_{L^\infty( V^k(t) )}\leq \exp(C\e^{-q}t)\lVert\beta(V)\rVert_{L^\infty(V)}$. 
\end{cor}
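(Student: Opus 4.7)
The plan is to derive the corollary directly from Lemma \ref{lem:betaepsilonexponential} by invoking the elementary inequality $(1+x/n)^n \leq e^x$ for $x \geq 0$ and $n > 0$.

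First, I would unwind the definitions: for a dyadic rational $t = p\,2^{-k}$, the varifold $V^k(t)$ is by definition $V^{k,p}$, and the associated lagrangian angle $\beta(V^k(t))$ coincides with $\beta^{k,p}$. So the statement of Lemma \ref{lem:betaepsilonexponential} reads
\begin{equation*}
\lVert \beta(V^k(t))\rVert_{L^\infty(V^k(t))} \leq \left(1 + C\e^{-q} 2^{-k}\right)^{2^k t}\lVert \beta(V)\rVert_{L^\infty(V)}.
\end{equation*}

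Next I would set $x = C\e^{-q} t$ (which is nonnegative) and $n = 2^k t$ (treating $k$ such that $2^k t$ is a positive integer, consistent with $t$ being dyadic). Then the exponent factor on the right becomes exactly $(1 + x/n)^n$. The standard monotonicity/limit fact that the sequence $(1+x/n)^n$ is nondecreasing in $n$ and converges to $e^x$ for $x \geq 0$ then yields
\begin{equation*}
\left(1 + C\e^{-q} 2^{-k}\right)^{2^k t} = \left(1 + \tfrac{x}{n}\right)^n \leq e^x = \exp(C\e^{-q} t).
\end{equation*}

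Combining these two estimates gives the claimed bound
\begin{equation*}
\lVert \beta(V^k(t))\rVert_{L^\infty(V^k(t))} \leq \exp(C\e^{-q} t)\lVert \beta(V)\rVert_{L^\infty(V)}.
\end{equation*}
There is no real obstacle here; the content is entirely in Lemma \ref{lem:betaepsilonexponential}, and the corollary is a cosmetic repackaging that replaces the discrete multiplicative estimate by its (larger) exponential envelope, which is the form that will behave well when we later pass $k \to \infty$ to obtain the continuous-time flow.
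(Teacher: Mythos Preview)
Your proof is correct and matches the paper's approach exactly: the paper simply notes that $(1+x/n)^n \nearrow e^x$ for $x>0$ and applies this to the bound from Lemma~\ref{lem:betaepsilonexponential}.
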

	
	In particular, $\lVert\beta(V^k(t))\rVert_{L^\infty( V^k(t) )}$ is bounded uniformly in $k$ and uniformly for $t$ in a compact interval. Moreover, 

\begin{lem}
	$\lVert D\Psi^k(t)\rVert_{C^0}$ is bounded uniformly in $k$ and uniformly for $t$ in a compact interval.
\end{lem}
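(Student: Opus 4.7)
The plan is to combine the chain rule, Gronwall's inequality applied to each factor $\Psi_{2^{-k}}^{k,p}$, and the derivative bound from Lemma \ref{lem:derivativesofbetaepsilon} together with the uniform control on $\lVert \beta^{k,p}\rVert_{L^\infty(V^{k,p})}$ provided by Corollary \ref{cor:betaexponentialtime}. Fix a compact time interval $[0,T]$ and work with $t\in [0,T]$.

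First, by the chain rule, at any point $x$,
\begin{equation*}
    D\Psi^k(t)(x) = D\Psi^{k,p-1}_{2^{-k}}(x_{p-1})\circ \cdots \circ D\Psi^{k,1}_{2^{-k}}(x_1)\circ D\Psi^{k,0}_{2^{-k}}(x_0),
\end{equation*}
where $x_0=x$ and $x_{j+1}=\Psi^{k,j}_{2^{-k}}(x_j)$, and $t=p2^{-k}$. The operator norm of the product is bounded by the product of operator norms, so it suffices to bound each $\lVert D\Psi^{k,j}_{2^{-k}}\rVert_{C^0}$.

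Next, each $\Psi^{k,j}_\tau$ is the flow of the vector field $X_j=JD\beta^{k,j}_\e$ for time $\tau\in[0,2^{-k}]$. Differentiating the flow equation yields
\begin{equation*}
    \tfrac{d}{d\tau}D\Psi^{k,j}_\tau = (DX_j\circ\Psi^{k,j}_\tau)\cdot D\Psi^{k,j}_\tau,\qquad D\Psi^{k,j}_0=\mathrm{Id}.
\end{equation*}
Since $DX_j$ is controlled by $\lVert D^2\beta^{k,j}_\e\rVert_{C^0}$, and Lemma \ref{lem:derivativesofbetaepsilon} with $k=2$ gives $\lVert D^2\beta^{k,j}_\e\rVert_{C^0}\leq C\e^{-n-3}\lVert\beta^{k,j}\rVert_{L^\infty(V^{k,j})}$, Gronwall's inequality provides
\begin{equation*}
    \lVert D\Psi^{k,j}_{2^{-k}}\rVert_{C^0}\leq \exp\!\left(2^{-k}\,C\e^{-n-3}\,\lVert\beta^{k,j}\rVert_{L^\infty(V^{k,j})}\right).
\end{equation*}
By Corollary \ref{cor:betaexponentialtime}, $\lVert\beta^{k,j}\rVert_{L^\infty(V^{k,j})}\leq \exp(C\e^{-q}T)\lVert\beta(V)\rVert_{L^\infty(V)} =: M(\e,T)$, uniformly in $k,j$ for $j2^{-k}\leq T$.

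Combining these, with $p=2^kt\leq 2^k T$ factors,
\begin{equation*}
    \lVert D\Psi^k(t)\rVert_{C^0}\leq \prod_{j=0}^{p-1}\exp\!\left(2^{-k}\,C\e^{-n-3}M(\e,T)\right) = \exp\!\left(p\cdot 2^{-k}\,C\e^{-n-3}M(\e,T)\right)\leq \exp\!\left(T\,C\e^{-n-3}M(\e,T)\right).
\end{equation*}
The right-hand side depends only on $\e$, $T$, and $\lVert\beta(V)\rVert_{L^\infty(V)}$, yielding a bound uniform in $k$ and uniform for $t\in[0,T]$. The only subtle point is that the bound degenerates as $\e\to 0$; this is acceptable here, since the main obstacle, which will arise in later sections, is to extract an $\e$-uniform limit using an entirely different mechanism (the mass-monotonicity and weak compactness from \S 4).
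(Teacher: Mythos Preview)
Your proof is correct and follows essentially the same strategy as the paper: bound each factor $\lVert D\Psi^{k,j}_{2^{-k}}\rVert$ using the $C^2$ control on $\beta^{k,j}_\e$ from Lemma~\ref{lem:derivativesofbetaepsilon} together with the uniform $L^\infty$ bound on $\beta^{k,j}$ from Corollary~\ref{cor:betaexponentialtime}, then take the product over the $p=2^k t$ factors. The only difference is in the execution of the per-factor bound: the paper Taylor-expands $\Psi^{k,s}_{2^{-k}}$ to obtain $D\Psi^{k,s}_{2^{-k}} = \Id + 2^{-k} DJD\beta^{k,s}_\e + O(2^{-2k})$ and then analyzes the product $\prod(1+2^{-k}a_s + O(2^{-2k}))$ via the limit $(1+x/n)^n\to e^x$, which requires tracking the $O(2^{-2k})$ remainder through a binomial expansion. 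Your use of Gronwall on the variational ODE $\frac{d}{d\tau}D\Psi^{k,j}_\tau = (DX_j\circ\Psi^{k,j}_\tau)D\Psi^{k,j}_\tau$ is cleaner: it yields the exponential bound directly, and the product of exponentials collapses without any error-term bookkeeping. Both arrive at the same final bound $\exp\bigl(T\cdot C(\e)\cdot M(\e,T)\bigr)$, and both correctly note that the bound degenerates as $\e\to 0$.
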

\begin{proof}
	Differentiating the Taylor expansion $\Psi_t^{k,s}(x)=x+tJD\beta_{\e}^{k,s}(x)+O(t^2)$, we have
	\begin{equation}
		D\Psi_{2^{-k}}^{k,s}=\Id+2^{-k}DJD\beta_{\e}^{k,s}+O(2^{-2k})
	\end{equation}
	and we can estimate $\lVert DJD\beta^{k,s}_\e\rVert\leq \lVert D^2\beta^{k,s}_\e\rVert +c\rVert D\beta^{k,s}_\e\rVert$ and apply Lemma \ref{lem:derivativesofbetaepsilon} and Corollary \ref{cor:betaexponentialtime} to get 
		\begin{equation}
			\lVert D\Psi_{2^{-k}}^{k,s}\rVert\leq 1+2^{-k}\left(C\e^{-q}+c\e^{-(n+\e+1)}\right)\exp(C\e^{-q}2^{-k}s)\lVert\beta\rVert+O(2^{-2k})
		\end{equation}
	Applying this estimate to each factor in $D\Psi^k=D\Psi^{k,p-1}_{2^{-k}}\cdot\cdots\cdot D\Psi^{k,0}_{2^{-k}}$ and using Lemma \ref{lem:betaepsilonexponential}, we have
		\begin{equation}
			\begin{aligned}
			\lVert D\Psi^{k}(t)\rVert\leq& \prod_{s=0}^{p-1}\left(1+2^{-k}\left(C\e^{-q}+c\e^{-(n+2)}\right)\exp(C\e^{-q}s2^{-k})\lVert\beta\rVert_{L^\infty(V)}+O(2^{-2k})\right)\\
			\leq& \prod_{s=0}^{p-1}\left(1+2^{-k}\left(C\e^{-q}+c\e^{-(n+2)}\right)\exp(C\e^{-q}t)\lVert\beta\rVert_{L^\infty(V)}+O(2^{-2k})\right)\\
			=&\left(1+2^{-k}\left(C\e^{-q}+c\e^{-(n+2)}\right)\exp(C\e^{-q}t)\lVert\beta\rVert_{L^\infty(V)}\right)^{2^kt}\\
			&+\sum_{s=1}^{2^kt}\left(\begin{smallmatrix}2^kt\\s\end{smallmatrix}\right)\left(2^{-2k}\right)^s\left(1+2^{-k}\left(C\e^{-q}+c\e^{-(n+2)}\right)\exp(C\e^{-q}t)\lVert\beta\rVert_{L^\infty(V)}\right)^{2^kt-s}
			\end{aligned}
		\end{equation}
	where the first term converges to $\exp((C\e^{-q}+c\e^{-(n+2)})\exp(C\e^{-q}t)\lVert\beta\rVert_{L^\infty(V)} t)$  and the second term goes to zero as $k\rightarrow\infty$.
\end{proof}

Similar estimates hold for $D^r\Psi^k(t)$, uniform in $k$ and $t$ in a compact interval, for any $r$. Observe that $\frac{\partial}{\partial t}\Psi_t^{k,p}=JD\beta_{\e}^{k,p}$ is uniformly bounded, independent of $k$. Thus $\Psi^k(t)$ is Lipschitz with Lipschitz constant $\e^{-(n+2)}\exp(C\e^{-q}t)$, which is uniform in $k$ and uniform in $t$ in compact intervals.

Therefore we obtain, for each dyadic rational $t=p2^{-k}$, a sequence of $V^{k,p}$ of  varifolds with Maslov index zero arising as the pushforward of $V$ under uniformly-bounded diffeomorphisms $\Psi^{k}(t)$; we can therefore extract a subsequential limit $V^\e(t)$. Since the dyadic rationals are countable, we can apply a diagonal argument so that the same indices $k$ give convergence to $V^\e(t)$ for each $t$.

Moreover, if we define at each dyadic rational $t=p2^{-k}$, $\beta_{\e}^k(t)=\beta_{\e}^{k,p}$, we have uniform similar uniform bounds allowing us to extract $\beta^{\e}(t)$, which is the Lipschitz limit in $t$ and the smooth limit spatially.\\

We would like to say that these $V^\e(t)$ can be extended into a one-parameter varifolds which satisfies the flow equation \eqref{eqn:epsilonflowequation}; in fact we will see that the $V^\e(t)$ are generated by a one-parameter family of hamiltonian diffeomorphisms.  Given an interval $[0,T]$, let $\Psi^{\e}(t)$ denote a choice of $C^\infty$ subsequential limit of the $\Psi^k(t)$ for each dyadic rational $t\in[0,T]$.

\begin{thm}
	We may extend $\Psi^\e(t)$ to all $t\in[0,T]$ so that $\Psi^\e$ is smooth in $t$. Moreover, if we define $V^{\e}(t)=\left(\Psi^{\e}(t)\right)_\sharp V$, then the $\Psi^\e(t)$ are hamiltonian, generated by  $\beta^{\e}(t)=\lim_k\beta^k_\e(t)=\beta(V^\e(t))_\e$:
		\begin{equation*}
			\frac{\partial}{\partial t}\Psi^{\e}(t)=JD(\beta(V^\e(t))_\e)
		\end{equation*}
\end{thm}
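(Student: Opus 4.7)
The plan is to extract $\Psi^\e$ as a uniform-in-time, smooth-in-space limit of the $\Psi^k$, and then identify the generator of the resulting one-parameter family. First, I would upgrade the dyadic-rational convergence $\Psi^k(t)\to\Psi^\e(t)$ to convergence on all of $[0,T]$. The uniform bound $\lVert\tfrac{\partial}{\partial t}\Psi^{k,p}_\tau\rVert_{C^0}=\lVert JD\beta_\e^{k,p}\rVert_{C^0}\leq C(\e,T)$, coming from Lemma \ref{lem:derivativesofbetaepsilon} and Corollary \ref{cor:betaexponentialtime}, combined with the uniform bounds on $D^r\Psi^k(t)$ for every $r$, shows that $\{\Psi^k\}$ is equicontinuous in $t$ and bounded in $C^r_x$. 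A diagonal argument then yields a subsequence converging to a map $\Psi^\e:[0,T]\to\operatorname{Diff}^\infty({\mathbb R}^{2n})$ which is Lipschitz in $t$ and smooth in $x$ and which agrees with the already constructed $\Psi^\e$ on the dyadic rationals. One also gets that $\beta^k_\e(t)$ is uniformly Lipschitz in $t$ (between steps the argument of the mollification changes by $O(2^{-k})$), so the limit $\beta^\e(t)$ is Lipschitz in $t$.

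Next, define $V^\e(t):=(\Psi^\e(t))_\sharp V$. Since $\Psi^k(t)\to\Psi^\e(t)$ in $C^1_{\mathrm{loc}}$, the pushforwards converge as lagrangian varifolds: $V^k(t)\to V^\e(t)$. The bounds of Corollary \ref{cor:betaexponentialtime} together with Theorem \ref{thm:harvey-lawson-varifold}, which expresses $B=-JH+\beta H$, yield uniform $L^\infty$ control of $\beta(V^k(t))$ and uniform $L^2$ control of $B(V^k(t))$. Invoking Theorem \ref{thm:convergence-beta} and diagonalizing once more, $V^\e(t)$ has Maslov index zero and $\beta(V^k(t))\to\beta(V^\e(t))$ in the weak sense described there. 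The crux is to identify the smooth limit $\beta^\e(t)=\lim_k\beta^k_\e(t)$ with the mollification $(\beta(V^\e(t)))_\e$. This reduces to passing to the limit separately in the numerator $\int\beta(V^k(t))(y)\phi_\e(x-y)\,d\lVert V^k(t)\rVert_y$ and denominator $\int\phi_\e(x-z)\,d\lVert V^k(t)\rVert_z+\e\lVert V^k(t)\rVert$: varifold convergence handles the denominator, while the weak convergence of $\beta(V^k(t))$ against the test function $\phi_\e(x-\cdot)\in C^\infty_c({\mathbb R}^{2n})$ handles the numerator. Convergence of spatial derivatives of $\beta^k_\e$ to those of $\beta^\e$ follows by differentiation under the integral using the uniform smooth bounds already established.

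With $\beta^\e(t)=(\beta(V^\e(t)))_\e$ identified, smooth in $x$ and continuous (in fact Lipschitz) in $t$, the ODE is verified by passing to the limit in the one-step Euler identity
\begin{equation*}
\Psi^k(s+2^{-k})(x)=\Psi^k(s)(x)+2^{-k}JD\beta^k_\e(s)\bigl(\Psi^k(s)(x)\bigr)+O(2^{-2k}),
\end{equation*}
which gives $\tfrac{\partial}{\partial t}\Psi^\e(t)=JD\beta^\e(t)\circ\Psi^\e(t)$ first in the distributional sense and, by continuity of the right-hand side in $t$, in the classical sense. Smoothness in $t$ then follows by bootstrapping through the ODE. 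The main obstacle will be ensuring the compatibility of the two layers of convergence — varifold convergence $V^k(t)\to V^\e(t)$ and weak convergence of the lagrangian angle lifts — under the nonlinear mollification operation, and organizing the diagonalizations so that a single subsequence yields all of the required convergences uniformly in $t$.
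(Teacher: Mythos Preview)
Your proposal is correct and follows essentially the same approach as the paper: extract $\Psi^\e$ via uniform spacetime bounds on the $\Psi^k$, identify the generator by passing to the limit in the Euler-step relation, and read off $\beta^\e(t)=\beta(V^\e(t))_\e$ from the varifold convergence $V^k(t)\to V^\e(t)$. The only stylistic differences are that the paper sums the Euler steps into an integral identity before passing to the limit (rather than working with the single-step increment and a distributional derivative), and it treats the identification $\lim_k\beta^k_\e(t)=\beta(V^\e(t))_\e$ as an immediate consequence of varifold convergence rather than routing through Theorem~\ref{thm:convergence-beta}.
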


\begin{proof}
	First observe that each $\frac{\partial^r}{\partial t^r}\Psi_t^{k,p}$ can be expressed in terms of the derivatives of $\beta_\e^{k,p}$. By Lemma \ref{lem:derivativesofbetaepsilon}, these are bounded depending on $r$, $\e$, and $\lVert \beta^{k,p}\rVert$, which by Lemma \ref{lem:betaepsilonexponential} can be bounded independently of $k$.

	 Consider $t_0<t_1$, dyadic rationals. We compute
	 \begin{equation}
	 	\begin{aligned}
	 		\Psi_\e(\Psi_\e(x,t_0),t_1-t_0)-\Psi_\e(x,t_0)=&\lim_k \Psi_\e^k(\Psi_\e^k(x,t_0),t_1-t_0)-\Psi_\e^k(x,t_0)\\
	 		=&\lim_k\sum_{\ell} \int_0^{2^{-k}}JD\beta_{\e}^{k,p+\ell}(\Psi^{k,p+\ell}(x,\tau))d\tau\\
	 		=&\lim_k\int_{t_0}^{t_1}JD\beta_{\e}^k(\Psi^k(x,\tau))d\tau
	 	\end{aligned}
	 \end{equation}
	 where $t_0=p2^{-k}$ and the sum is over all $\ell$ with $t_0\leq \ell 2^{-k}<t_1$.
	 
	 Now everything in sight that depends on $k$ converges uniformly spatially and in $t$, so we have
	  \begin{equation}\label{eqn:floweqnintegral}
	 	\begin{aligned}
	 		\Psi_\e(\Psi_\e(x,t_0),t_1-t_0)-\Psi_\e(x,t_0)=\int_{0}^{t_1-t_0}JD\beta_{\e}(\Psi_\e(x,t_0+\tau))d\tau
	 	\end{aligned}
	 \end{equation}
	This extends to all $t_0$,  $t_1$ by continuity, which in turn implies the claimed evolution of $\Psi_\e$.
	
	That $\lim_k\beta^k_\e(t)=\beta(V^\e(t))_\e$ follows from the convergence $V^k(t)\rightarrow V^\e(t)$.
	 \end{proof}

\begin{defn}
	 We define the $\e$-approximate flow by $V^{\e}(t)=\left(\Psi^{\e}(t)\right)_\sharp V$.
\end{defn}

\bigskip

\section{ Uniform sup bounds on $\beta$ along the  $\e$-flows}\label{sec:sup-bounds}

Consider the $\e$-flow of  varifolds with Maslov index zero $V^\e(t)$ constructed in Section \ref{sec:epsilon-flows} and let $\b^\e(t) \in L^\infty(V^\e(t))$ denote the single-valued lift of the $S^1$ lagrangian angle. The main theorem of this section is:

\begin{thm}
\label{thm:boundonbeta}
There is a constant $A$, depending on $\lVert \beta\rVert_{L^\infty(V(0))}$ but independent of $\e$ and $t \in [0, T]$ such that for $\e < 1$, 
$$\lVert\b^\e(t) \rVert_{L^\infty(\lVert V^\e(t)\rVert)} < A.$$
\end{thm}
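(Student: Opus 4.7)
The plan is to prove an $L^{2p}$-energy inequality for $\beta^\e(t)$ that is uniform in $\e$ and $p$, and then take $p\to\infty$ to extract the $L^\infty$ bound. The starting point is the evolution equation for the lagrangian angle along the $\e$-flow: applying the first-variation formula for $\beta$ from Section 2 to the hamiltonian vector field $X=JD\beta_\e$, one obtains
$$\frac{d}{dt}\beta^\e(t)\bigl(\Psi^\e_t(x,S)\bigr) \;=\; -\tr_{d\Psi^\e_t S}DJX \;=\; \tr_{d\Psi^\e_t S}D^2\beta_\e,$$
which is the varifold analogue of the heat equation $\partial_t\beta=\Delta\beta$ that holds for smooth lagrangian MCF and classically yields a maximum principle.

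The core computation is
$$\frac{d}{dt}\int (\beta^\e)^{2p}\,d\|V^\e\| \;=\; \int 2p(\beta^\e)^{2p-1}\tr_{V^\e}D^2\beta_\e\,d\|V^\e\| \;+\; \int (\beta^\e)^{2p}\div_{V^\e}(JD\beta_\e)\,d\|V^\e\|.$$
I would integrate by parts in both terms using the first-variation formula for $V^\e$, together with the weak-derivative power rule (Corollary \ref{powerrule}) and the identity $\nabla\beta^\e=-JH$ (Theorem \ref{thm:harvey-lawson-varifold}, Proposition \ref{prop:nablabeta}). These manipulations eliminate every occurrence of the $\e$-blowing-up quantity $D^2\beta_\e$ in favor of integrands that pair $D\beta_\e$ with $H$ and $JH$. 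By Theorem \ref{thm:mollificationnabla}, $D\beta_\e\to\nabla\beta^\e=-JH$ at points of $C^2$ rectifiability, so formally in the limit $\e\to 0$ the right-hand side collapses to the manifestly nonpositive Bochner-type expression
$$-2p(2p-1)\!\int(\beta^\e)^{2p-2}|H|^2\,d\|V^\e\| \;-\; \int(\beta^\e)^{2p}|H|^2\,d\|V^\e\|,$$
which matches the classical heat-equation calculation.

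For finite $\e$, cross terms involving $D\beta_\e-\nabla\beta^\e$ remain, but they can be estimated by Young's inequality against the $L^2$ bound on $H$, using $L^p$-boundedness of mollification (Proposition \ref{prop:bound-on-moll-op}). The result is an $\e$-uniform differential inequality for $E_{2p}(t)=\int (\beta^\e)^{2p}d\|V^\e\|$ of polynomial type in $p$, to which Gronwall applies; raising to the $1/2p$ power and sending $p\to\infty$ yields a bound $\|\beta^\e(t)\|_{L^\infty(V^\e(t))}\le \|\beta(V)\|_{L^\infty(V)}+Ct$ with $C$ independent of $\e$. The main obstacle is precisely this $\e$-uniform control of the error: Theorem \ref{thm:mollificationnabla} gives only pointwise a.e.~convergence $D\beta_\e\to\nabla\beta^\e$, whereas what is needed is $L^2$ convergence along the flow. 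This should follow by adapting the norm-plus-weak argument in the proof of Theorem \ref{thm:harvey-lawson-varifold} (where $\nabla\beta_{\eta_j}\to B^\top$ in $L^2(V)$) together with an additional uniform-in-$t$ bound on $\|H\|_{L^2(V^\e(t))}$, which propagates from the hypothesis on the initial varifold.
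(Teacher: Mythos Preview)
Your overall strategy --- establish an $L^p$ energy inequality and send $p\to\infty$ --- is also the paper's, but the way you propose to get the differential inequality has a genuine gap, and in fact the paper takes a quite different route precisely to avoid it.

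The obstacle you yourself flag is fatal as stated. To make your integration-by-parts work you need (i) $\e$-uniform $L^2$ control of the discrepancy $D\beta_\e-\nabla\beta^\e$ along the flow, and (ii) an $\e,t$-uniform bound on $\lVert H\rVert_{L^2(V^\e(t))}$. Neither is available at this point. For (i), the quantitative control of $D\beta_\e-\nabla\beta^\e$ is exactly the error term $E(x,\e)$ analyzed later (Theorem~\ref{thm:errorestimate} and Lemma~\ref{lem:dom-conv-to-E}), and those estimates both depend on $\lVert\beta\rVert_{L^\infty}$ and are established only \emph{after} Theorem~\ref{thm:boundonbeta}; invoking them here is circular. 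The norm-plus-weak argument from Theorem~\ref{thm:harvey-lawson-varifold} that you cite is for a single fixed varifold, not a family $V^\e(t)$ varying in both $\e$ and $t$, and does not give the uniformity you need. For (ii), the hypothesis $H\in L^2(V(0))$ does not propagate uniformly: the $\e$-flow moves by smooth hamiltonian diffeomorphisms, so each $V^\e(t)$ satisfies $(H_2)$, but there is no mechanism giving $\lVert H^\e(t)\rVert_{L^2}$ bounds independent of $\e$. Indeed the paper explicitly lacks such bounds (see the opening of \S9), and Section~8 is devoted to working around this.

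The paper instead treats $\partial_t\beta^\e=\div_{V^\e}D\beta^\e_\e$ as a pseudo-differential evolution equation and regularizes twice: first mollifying the varifold itself (parameter $\sigma$) to obtain a smooth operator $K_\sigma=\div_{V_\sigma}D$ on $\R^{2n}$, then adding a viscosity term $\rho\,\div_{JV_\sigma}D$ to make $K_{\sigma,\rho}$ uniformly elliptic. The G\aa rding inequality for $K_{\sigma,\rho}$, passed to the limit $\rho\to 0$, yields $(K_\sigma u,u^{p-1})_{L^2(\R^{2n})}\le c\lVert u\rVert_{L^p}^p$ with $c$ independent of $\sigma$ and $\eta$; combined with the uniform $L^p$-boundedness of the mollification operator $L_\eta$ (Proposition~\ref{prop:bound-on-moll-op}) this gives $\frac{d}{dt}\lVert u\rVert_{L^p}\le c\lVert u\rVert_{L^p}$ directly, with no appearance of $H$ or of mollification errors. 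One then lets $p\to\infty$ and $\sigma\to 0$. The point is that the $L^\infty$ bound is obtained purely from the symbol structure of the operator, bypassing any need for curvature or tilt-excess control.
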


\bigskip

A key observation in the proof of this result is
\begin{prop}
	Along each $\e$-flow, the lagrangian angle $\beta^\e(t)$ satisfies
\begin{equation} \label{eqn:evolbeta}
			\begin{cases}\frac{\partial}{\partial t}\b^\e(t)=\operatorname{div}_{V^\e(t)}D(\b^\e)_\e\\
			\b^\e(0)=\b\end{cases}
		\end{equation}
\end{prop}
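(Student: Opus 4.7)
The plan is to read off the evolution equation from the pointwise first variation formula $\delta_X\b(S)=-\tr_S DJX$ proved in Section 2.2, applied to the specific generating vector field for $\Psi^\e(t)$ produced in the preceding theorem, namely $X_t = JD(\b^\e)_\e$.

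First, I interpret the left-hand side of the evolution equation. Since $\b^\e(t)$ is defined only on $\supp V^\e(t)$, the natural meaning of $\frac{\partial}{\partial t}\b^\e(t)$ is the material derivative along the flow: for $x_0\in\supp V$, set $x(t)=\Psi^\e(t)(x_0)$ and $S(t)=d\Psi^\e(t)|_{x_0}(T_{x_0}V)$, so by construction the transported single-valued lift satisfies $\b^\e(t)(x(t))=\b(S(t))$. Therefore
\begin{equation*}
\frac{\partial}{\partial t}\b^\e(t)\Big|_{x(t)} \;=\; \frac{d}{dt}\b(S(t)) \;=\; \delta_{X_t}\b(S(t)).
\end{equation*}

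Next I substitute $X_t=JD(\b^\e)_\e$ into $\delta_X\b(S)=-\tr_S DJX$. Using $J^2=-\Id$ gives $JX_t=-D(\b^\e)_\e$, hence $DJX_t = -D^2(\b^\e)_\e$, and so
\begin{equation*}
\delta_{X_t}\b(S(t)) \;=\; \tr_{S(t)} D^2(\b^\e)_\e \;=\; \div_{V^\e(t)} D(\b^\e)_\e \quad\text{at } x(t),
\end{equation*}
the last equality being the definition $\div_V Y = \tr_{T_xV} DY$ for the ambient vector field $Y=D(\b^\e)_\e$. Combining the two displays yields the claimed evolution equation, and the initial condition $\b^\e(0)=\b$ is automatic from $V^\e(0)=V$.

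The only subtle point is that $\Psi^\e(t)$ was obtained as a subsequential limit of the discrete approximants $\Psi^k(t)$; however the regularity already established for $\Psi^\e$ in the preceding theorem---namely that $\Psi^\e(t)$ is smooth in both space and $t$ and satisfies $\partial_t\Psi^\e(t)=JD(\b^\e)_\e$ exactly---is enough to apply the first variation formula directly to the smooth limiting flow. Alternatively, one can apply the formula at each discrete step to $\b^{k,p}$, using the uniform derivative estimates on $\b^{k,p}_\e$ proved in Section 6, and pass to the limit $k\to\infty$; the uniform Lipschitz and $C^\infty$-spatial convergence of $\Psi^k(t)\to\Psi^\e(t)$ and $\b^k_\e(t)\to(\b^\e)_\e$ lets one do this termwise. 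Either route works; there is no substantial obstacle once the first variation formula and the smoothness of $\Psi^\e$ are in hand.
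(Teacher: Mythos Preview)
Your proposal is correct and is precisely the paper's approach spelled out in detail: the paper's proof is the one-liner ``apply the variation formula \eqref{eqn.deltabeta3} and the fact that $V^\e(t)$ evolves by $JD(\beta^\e)_\e$,'' and you have carried out exactly that computation, including the substitution $JX_t=-D(\b^\e)_\e$ (using $J^2=-\Id$ and that $J$ is parallel) to turn $-\tr_S DJX_t$ into $\div_{V^\e(t)}D(\b^\e)_\e$. Your extra remarks about interpreting $\partial_t\b^\e$ as a material derivative and about the regularity of $\Psi^\e$ are correct and helpful clarifications, but the paper simply takes them for granted.
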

\begin{proof}
	Apply the variation formula for $\beta$ \eqref{eqn.deltabeta3} and the fact that $V^\e(t)$ evolves by $JD(\beta^\e)_\e$.
\end{proof}

While \eqref{eqn:evolbeta} is similar to the parabolic equation $\frac{\partial}{\partial t}u=\div_{V^{\e}}Du$ --- which has a maximum principle --- because we have mollified, \eqref{eqn:evolbeta} is not a partial differentiation equation. Rather it is a pseudo-differential evolution equation. To obtain estimates we regularize this equation in two distinct ways. First we mollify the varifolds $V^\e(t)$ to produce an initial value problem on an open set in $\R^{2n}$. Next we regularize the operator $\div_V D$ to make it into a uniformly elliptic operator. The standard elliptic estimates can then be exploited to prove the theorem. This argument is similar, in spirit, to the method of vanishing viscosity.
Our treatment of the pseudo-differential evolution equation is motivated by treatment of a similar equation in \cite{ta}.

\bigskip

\begin{lem}\label{lem:unique-Linfty-solution}
	Fix $\e >0$. For any $\eta > 0$ the initial value problem
		\begin{equation}\label{IVP}
			\begin{cases}\frac{\partial}{\partial t}u=\operatorname{div}_{V^\e(t)}Du_\eta\\
			u(0)=u_0\end{cases}
		\end{equation}
	has a unique solution $u(\eta)$ in $C({\mathbb R}_+;L^\infty(V^\e(t)))$ for $u_0\in L^\infty(V^\e(t))$.
\end{lem}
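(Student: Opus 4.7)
The IVP \eqref{IVP} is most naturally interpreted as an ODE in a \emph{moving} Banach space: for each $t$, the unknown $u(\cdot,t)$ lies in $L^\infty(V^\e(t))$, and the mollification $u_\eta$ is taken with respect to $V^\e(t)$. I will reduce this to an ODE on the fixed Banach space $L^\infty(V)$ by pulling back along the Hamiltonian diffeomorphisms $\Psi^\e(t)$ constructed in Section \ref{sec:epsilon-flows}. Since $V^\e(t)=(\Psi^\e(t))_\sharp V$, pullback $(\Psi^\e(t))^*$ gives an isometric isomorphism $L^\infty(V^\e(t))\to L^\infty(V)$. Setting $\tilde u(x,t):=u(\Psi^\e(t)(x),t)$ for $x\in \supp V$, the IVP \eqref{IVP} becomes
\begin{equation*}
\partial_t\tilde u(\cdot,t)=\mathcal{L}(t)\tilde u(\cdot,t),\qquad \tilde u(\cdot,0)=u_0,
\end{equation*}
where $\mathcal{L}(t):L^\infty(V)\to L^\infty(V)$ is defined by $[\mathcal{L}(t)v](x)=\operatorname{div}_{V^\e(t)}D\hat v_\eta(\Psi^\e(t)(x))$, with $\hat v:=(\Psi^\e(t))_\sharp v\in L^\infty(V^\e(t))$ and $\hat v_\eta$ its $\eta$-mollification with respect to $V^\e(t)$.

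The essential estimate is that $\mathcal{L}(t)$ is a bounded linear operator with norm bounded uniformly in $t$. Since $\hat v_\eta$ is smooth on $\R^{2n}$ and $\operatorname{div}_{V^\e(t)}D\hat v_\eta$ is the trace of the Hessian $D^2\hat v_\eta$ along the tangent planes of $V^\e(t)$, Lemma \ref{lem:derivativesofbetaepsilon} (applied with mollification parameter $\eta$ and $k=2$) yields
\begin{equation*}
\lVert\mathcal{L}(t)v\rVert_{L^\infty(V)}\leq n\,\lVert D^2\hat v_\eta\rVert_{C^0(\R^{2n})}\leq C(n)\eta^{-n-3}\lVert v\rVert_{L^\infty(V)}
\end{equation*}
uniformly in $t\in[0,\infty)$. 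Moreover, the map $t\mapsto\mathcal{L}(t)$ is strongly continuous: for fixed $v$, the pushforward $(\Psi^\e(t))_\sharp v$, its $V^\e(t)$-mollification, the derivative $D\hat v_\eta$, the projection onto $T_\cdot V^\e(t)$, and the pullback by $\Psi^\e(t)$ all depend smoothly on $t$ because $\Psi^\e(t)$ and its derivatives do (Section \ref{sec:epsilon-flows}).

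With a uniformly bounded, strongly continuous family $\mathcal{L}(t)$ on a fixed Banach space, standard Banach-space ODE theory applies: Picard iteration on the integral equation
\begin{equation*}
\tilde u(t)=u_0+\int_0^t \mathcal{L}(s)\tilde u(s)\,ds
\end{equation*}
converges in $C([0,T];L^\infty(V))$ for every $T>0$ (the Lipschitz constant $\eta^{-n-3}C(n)$ is $t$-independent, so there is no finite-time blowup), producing a unique $\tilde u\in C^1([0,\infty);L^\infty(V))$. Setting $u(y,t):=\tilde u(\Psi^\e(t)^{-1}(y),t)$ gives the unique solution $u(\eta)\in C(\mathbb{R}_+;L^\infty(V^\e(t)))$.

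The conceptual subtlety — rather than a deep obstacle — is the moving Banach space $L^\infty(V^\e(t))$: once one commits to identifying these spaces via $\Psi^\e(t)$, the proof is a textbook linear ODE argument enabled by the uniform bound from Lemma \ref{lem:derivativesofbetaepsilon}. The mollification parameter $\eta>0$ is crucial; as $\eta\to 0$ the Lipschitz constant blows up, which is why the lemma is stated for fixed $\eta$ and the $\eta\to0$ limit must be treated separately.
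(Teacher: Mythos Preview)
Your proposal is correct and follows essentially the same approach as the paper: both reduce to a Picard/contraction argument whose key input is the uniform bound $\lVert D^2(\,\cdot\,)_\eta\rVert_{C^0}\le C(\eta)$ coming from differentiating the mollifier, and both handle the moving space $L^\infty(V^\e(t))$ via transport along $\Psi^\e(t)$. The only cosmetic differences are that the paper writes the Duhamel formula directly in the moving space (with the transport $(\Psi^\e_{t-s})^{-1}$ appearing explicitly) and uses an exponentially weighted sup norm to obtain a global contraction constant $\tfrac12$, whereas you first pull back to the fixed space $L^\infty(V)$ and invoke standard linear ODE theory.
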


\begin{proof}
	We regard the equation as an ordinary differential equation on $L^\infty(V^\e(t))$ and use the method of Picard.
	
	For any $T>0$, use ${\mathcal X}(T)$ to denote $C([0,T];L^\infty(V^\e(t)))$ equipped with the norm 
		\begin{equation}
		\label{defn-of norm}
			\lVert \psi\rVert=\sup_{t\in[0,T]}\exp(-2C(\eta)t)\lVert \psi(t)\rVert_{L^\infty(V^\e(t))}
		\end{equation}
	where $C(\eta)=\sup \lVert \phi_\eta\rVert_{C^2}$. Note that as $\eta \to 0$, $C(\eta) \to \infty$ and therefore the norm goes to zero.
	
	Now (cf.~Lemma \ref{lem:betaepsilonexponential}) define $\Gamma:{\mathcal X}(T)\rightarrow{\mathcal X}(T)$ by
		\begin{equation}
			\Gamma(\psi)(t,x)=u_0\left(\left(\Psi^\e_t\right)^{-1}(x)\right)+\int_0^t\operatorname{div}_{V^\e(s)}D\left(\psi(s)_\eta\right)\left(\left(\Psi^\e_{t-s}\right)^{-1}(x)\right)\ ds
		\end{equation}
	where $\Psi^\e_t$ is the one-parameter family of diffeomorphisms generated by $JD\beta^\e_\e$. Recall that $\Psi^\e_t$ takes the varifold $V^\e_0$ to $V^\e_t$.

	We compute
		\begin{equation}
			\begin{aligned}
				\lVert\Gamma(\psi_2)-\Gamma(\psi_1)\rVert&=\left\lVert\int_0^t\operatorname{div}_{V^\e(s)}D\left((\psi_2(s)-\psi_1(s))_\eta\right)\left(\left(\Psi^\e_{t-s}\right)^{-1}(x)\right)\ ds\right\rVert\\
				&\leq\sup_{t\in [0,T]} \exp(-2C(\eta)t) C(\eta)\int_0^t \lVert \psi_2(s)-\psi_1(s)\rVert_{L^\infty(V^\e(s))}\ ds\\
				&\leq \sup_{t\in [0,T]} \exp(-2C(\eta)t) C(\eta)\int_0^t \lVert \psi_2-\psi_1\rVert \exp(2C(\eta)s)\ ds\\
				&= \frac{1}{2}\left\lVert \psi_2-\psi_1\right\rVert \sup_{t\in [0,T]} \left(1-\exp(-2C(\eta)t)\right)\leq \frac{1}{2}\left\lVert \psi_2-\psi_1\right\rVert 
			\end{aligned}
		\end{equation}
	where the first inequality is obtained by differentiating on the mollifier $\phi_\eta$ and using the definition  $C(\eta)=\sup \lVert \phi_\eta\rVert_{C^2}$. The second inequality follows from the definition (\ref{defn-of norm}).
	Thus $\Gamma:{\mathcal X}(T)\rightarrow{\mathcal X}(T)$ is a contraction. In the standard way we conclude that the sequence $\psi_k$ defined by
		\begin{equation}
			\begin{cases}
				\psi_0(t,x)=u_0\left(\left(\Psi_t^\e\right)^{-1}(x)\right)\\
				\psi_{k+1}=\Gamma(\psi_k)
			\end{cases}
		\end{equation}
	converges to a unique solution of \eqref{IVP}, defined on $[0,T]$. Moreover none of the choices involved depended on $T$, so we can extend the solution to all $t\in[0,\infty)$.
	
\end{proof}

\bigskip

For any lagrangian varifold $V$  define the mollified lagrangian varifold $\phi_\sigma*V$ by setting, for any $A\subseteq LGr $, $$(\phi_\sigma* V)(A)=\int_A \phi_\sigma(x)dV_x(S)d\lVert V\rVert_x.$$

For each $x\in{\mathbb R}^{2n}$, define the mollified divergence operator $\operatorname{div}_{V_\sigma (x)}$ by 
	\begin{equation}
		\operatorname{div}_{V_\sigma (x)}X=\int_{LGr _x}\tr_S\left(\proj_SDX\right)d(\phi_\sigma*V)_x(S)
	\end{equation}
This operator is the integral over the entire fiber of the Grassmann bundle of the divergence of $X$ computed on each lagrangian plane $S$.\\

We can estimate the derivatives of $\operatorname{div}_{V_\sigma}X$ as follows:
\begin{lem}
	\begin{equation*}
		\lVert \operatorname{div}_{V_\sigma}X\rVert_{C^1} \leq \lVert \phi_\sigma\rVert_{C^1}\lVert X\rVert_{C^1}+\lVert X\rVert_{C^2}
	\end{equation*}
\end{lem}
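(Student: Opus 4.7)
I plan to bound the two pieces of the $C^1$ norm separately: the pointwise size $\lVert\operatorname{div}_{V_\sigma}X\rVert_{C^0}$ and the pointwise size of its first $x$-derivative $\lVert D\operatorname{div}_{V_\sigma}X\rVert_{C^0}$, and then take the maximum.

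First I would establish the pointwise bound. Since the fiber measure $(\phi_\sigma * V)_x$ is (essentially) a probability measure on $LGr_x$ and $\lvert\tr_S(\proj_S DX(x))\rvert \leq \lvert DX(x)\rvert$ holds for every lagrangian $n$-plane $S$, integrating over the fiber gives $\lvert\operatorname{div}_{V_\sigma(x)}X(x)\rvert \leq \lVert DX\rVert_{C^0} \leq \lVert X\rVert_{C^1}$, which is already dominated by $\lVert X\rVert_{C^2}$.

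For the derivative, differentiate the defining integral
\[
\operatorname{div}_{V_\sigma(x)}X=\int_{LGr_x}\tr_S(\proj_S DX(x))\,d(\phi_\sigma * V)_x(S)
\]
in the base variable $x$. Two terms appear. The first comes from differentiating the integrand $\tr_S(\proj_S DX(x))$, producing a quantity bounded pointwise by $\lvert D^2X(x)\rvert \leq \lVert X\rVert_{C^2}$, which integrates to at most $\lVert X\rVert_{C^2}$ against the probability fiber measure. The second term comes from differentiating the mollified fiber measure $(\phi_\sigma * V)_x$ in $x$. Using the convolution structure of $\phi_\sigma * V$ in the base direction, this derivative transfers onto the smooth kernel $\phi_\sigma$ itself, producing a factor bounded pointwise by $\lVert D\phi_\sigma\rVert_{C^0}$ multiplied by the supremum of the integrand $\lvert\tr_S(\proj_S DX)\rvert \leq \lVert X\rVert_{C^1}$, which is a contribution of at most $\lVert\phi_\sigma\rVert_{C^1}\lVert X\rVert_{C^1}$.

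Combining the two bounds yields the asserted inequality. The one genuine technicality is making the $x$-differentiation of $(\phi_\sigma * V)_x$ rigorous: one writes the mollified fiber measure explicitly as a base-direction convolution against the smooth compactly-supported kernel $\phi_\sigma$ and then transfers the $x$-derivative onto $\phi_\sigma$ by the standard differentiation-under-the-integral argument. Because $\phi_\sigma \in C^\infty_c$ and the varifold measure is finite on the support of $\phi_\sigma$, this step is routine, and everything else is elementary.
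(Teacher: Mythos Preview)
The paper states this lemma without proof, so there is nothing to compare your argument against. Your approach---bounding the $C^0$ piece by integrating $\lvert\tr_S\proj_S DX\rvert\le\lVert X\rVert_{C^1}$ against the fiber measure, and handling the $x$-derivative via the product rule (one term hitting $DX$ to produce $D^2X$, the other hitting the kernel $\phi_\sigma$ in the mollified measure)---is exactly the natural computation and yields the stated bound.

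One small point worth tightening: you assert that $(\phi_\sigma*V)_x$ is ``essentially a probability measure.'' In the paper's convention the fiber measures $V_x$ coming from disintegration are indeed probability measures, but the definition of $\phi_\sigma*V$ given just above the lemma is terse and it is not completely transparent that the disintegrated fiber measures of the mollified varifold inherit unit mass. For the inequality as stated (with no extra constants) you do need this, so it is worth spelling out explicitly how the mollification interacts with the base/fiber decomposition before invoking it. Otherwise your argument is complete.
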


\begin{lem}\label{lem:unique-smooth-solution}
	The initial value problem
		\begin{equation}\label{smoothIVP}
			\begin{cases}\frac{\partial}{\partial t}u(t,x)=\operatorname{div}_{V_\sigma^\e(t) (x)}Du_\eta(t,x)\\
			u(0,x)=\phi_\sigma \star u_0(x)\end{cases}
		\end{equation}
	for $u_0\in C^\infty({\mathbb R}^{2n})$ has a unique solution in $C^1({\mathbb R};C^\infty({\mathbb R}^{2n}))$.
\end{lem}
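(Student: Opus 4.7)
The plan is to imitate the Picard iteration of Lemma \ref{lem:unique-Linfty-solution}, but in the smooth category, exploiting the fact that here we have a second layer of mollification: the varifold $V^\e(t)$ itself has been mollified to $V^\e_\s(t)$. For each $t\in[0,T]$, let
$$L_tu(x)=\operatorname{div}_{V^\e_\s(t)(x)}Du_\eta(x).$$
Because $V^\e(t)=(\Psi^\e(t))_\sharp V$ is the pushforward under the $C^\infty$ one-parameter family of diffeomorphisms constructed in \S\ref{sec:epsilon-flows}, the fiber measures $\phi_\s\star V^\e(t)$ depend smoothly on $t$. Combined with the smoothing estimates for $u\mapsto Du_\eta$, this implies that for each integer $k\geq 0$ the operator $L_t\colon L^\infty(\R^{2n})\to C^k(\R^{2n})$ is bounded, with operator norm uniformly bounded in $t\in[0,T]$ by some $C_k=C_k(\s,\eta,T)$, and that $(t,u)\mapsto L_tu$ is jointly continuous from $[0,T]\times C^0(\R^{2n})$ into $C^k(\R^{2n})$.

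Next I would set up the contraction. On the Banach space $X_k=C([0,T];C^k(\R^{2n}))$ with weighted norm
$$\|\psi\|_k=\sup_{t\in[0,T]}e^{-2C_kt}\|\psi(t)\|_{C^k},$$
the integral operator
$$\Gamma(\psi)(t,x)=\phi_\s\star u_0(x)+\int_0^tL_s(\psi(s))(x)\,ds$$
is a $\tfrac12$-contraction by the identical calculation of Lemma \ref{lem:unique-Linfty-solution} (replacing $\|\cdot\|_{L^\infty}$ by $\|\cdot\|_{C^k}$). Each $X_k$ thus has a unique fixed point $u_k$. Because $X_{k+1}\hookrightarrow X_k$ and fixed points are unique in $X_k$, all the $u_k$ coincide with a single function $u\in\bigcap_k X_k=C([0,T];C^\infty(\R^{2n}))$.

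For $C^1$ regularity in $t$, I would differentiate the integral equation: $\partial_tu(t)=L_tu(t)$, which lies in $C^\infty(\R^{2n})$ and depends continuously on $t$ by the joint continuity statement above. Hence $u\in C^1([0,T];C^\infty(\R^{2n}))$. Uniqueness at any level of regularity follows from the contraction in $X_0$. Extending by concatenation to $[T,2T],[2T,3T],\ldots$ produces the unique solution on all of $\R$.

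The main technical step is the verification of the uniform bound on $\|L_t\|_{C^k\to C^k}$ and its $t$-continuity. This rests entirely on the smoothness in $t$ of the hamiltonian flow $\Psi^\e$ established in \S\ref{sec:epsilon-flows} together with the mollifier bounds from Lemma \ref{lem:derivativesofbetaepsilon}; everything else is a transcription of the Banach fixed-point argument from the previous lemma.
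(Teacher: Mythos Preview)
Your proposal is correct and follows essentially the same approach as the paper: both run a Picard/Banach fixed-point argument on exponentially weighted $C^k$-type spaces (the paper uses $C^{k,\alpha}$, you use $C^k$, a cosmetic difference), exploiting the double mollification to get the required operator bounds, and then invoke uniqueness across $k$ to conclude $C^\infty$. Your write-up is in fact slightly more explicit than the paper's about the initial datum $\phi_\s\star u_0$, the $C^1$-in-$t$ regularity, and the extension to all of $\R$.
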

\begin{proof}
	We will show that the equation has a unique solution in $C^1({\mathbb R};C^{k,\alpha}({\mathbb R}^{2n}))$ for each $k \in \mathbb{N}$, $\alpha > 0$; then uniqueness will give existence and uniqueness in $C^\infty$.
	
	For fixed $k$, we regard the equation as an ordinary differential equation on $C^{k,\alpha}({\mathbb R}^{2n})$ and use the method of Picard. For fixed $T>0$, use $\mathcal{X}(T)$ to denote $C^0([0,T];C^{k,\alpha}({\mathbb R}^{2n}))$ equipped with the norm
		\begin{equation}
			\lVert \psi\rVert=\sup_{t\in[0,T]}\exp(-2C_k(\eta,\sigma)t)\lVert \psi(t)\rVert_{C^{k,\alpha}({\mathbb R}^{2n})}
		\end{equation}
	where $C_k(\eta,\sigma)=\lVert \phi_\eta \rVert_{C^{k+2}}+\lVert \phi_\sigma \rVert_{C^k}$.

	Now define $\Gamma:\mathcal{X}(T)\rightarrow\mathcal{X}(T)$ by
		\begin{equation}
			\Gamma(\psi)(t,x)=u_0(x)+\int_0^t\operatorname{div}_{V_\sigma^\e(s)}D\left( \psi(s)_\eta \right)\ ds
		\end{equation}
	Then we compute

	\begin{equation}
			\begin{aligned}
				\lVert\Gamma(\psi_2)-\Gamma(\psi_1)\rVert&=\left\lVert\int_0^t\operatorname{div}_{V^\e(s)}D\left((\psi_2(s)-\psi_1(s))_\eta \right)\left(\left(\Psi^\e_{t-s}\right)^{-1}(x)\right)\ ds\right\rVert\\
				&\leq\sup_{t\in [0,T]} \exp(-2C_k(\eta ,\sigma)t) C_k(\eta ,\sigma)\int_0^t \lVert \psi_2(s)-\psi_1(s)\rVert_{C^k({\mathbb R}^{2n})}\ ds\\
				&\leq \sup_{t\in [0,T]} \exp(-2C_k(\eta)t) C_k(\eta)\int_0^t \lVert \psi_2-\psi_1\rVert \exp(2C_k(\eta,\sigma)s)\ ds\\
				&= \frac{1}{2}\left\lVert \psi_2-\psi_1\right\rVert \sup_{t\in [0,T]} \left(1-\exp(-2C_k(\eta,\sigma)t)\right)<\frac{1}{2}\left\lVert \psi_2-\psi_1\right\rVert 
			\end{aligned}
		\end{equation}
	so that $\Gamma$ is a contraction mapping, hence has a unique fixed point in $\mathcal{X}(T)$, which is the solution of (\ref{smoothIVP}).
	
\end{proof}

\bigskip

Consider a subset $A \subset LGr(\R^{2n})$. Then:
$$
A = \{ (x, S): x \in U \subset \R^{2n}, S \in F_x \subset \pi^{-1}(x) \}
$$
Define the set $JA$ where $J$ is an orthogonal complex structure on $\R^{2n}$ by:
$$
JA = \{ (x, JS): x \in U \subset \R^{2n}, S \in F_x \subset \pi^{-1}(x) \}
$$
Let $V$ be a varifold on $LGr(\R^{2n})$. Then $A$ is $V$-measurable if and only if $JA$ is $V$-measurable. We define the varifold $JV$ by:
$$
JV(A) = V(JA).
$$

We introduce the notation, for each $\sigma>0$, the differential operator
		\begin{equation*}
			K_\sigma = \operatorname{div}_{V_\sigma^\e(t) (x)} \circ D 
		\end{equation*}
For each 	 $\sigma, \rho >0$ we define the differential operator 
\begin{equation*}
			K_{\sigma, \rho} = \operatorname{div}_{V_\sigma^\e(t) (x)} \circ D + \rho \operatorname{div}_{JV_\sigma^\e(t) (x)} \circ D
		\end{equation*}	
The symbol of $K_\sigma$ is
$$
\sigma(K_\sigma)(\xi) = \int_{LGr _x} |\xi_S|^2 d(\phi_\sigma*V)_x(S),
$$		
where $\xi_S$ denotes the projection of $\xi \in \R^{2n}$ onto the lagrangian $n$-plane $S$.
The symbol of $K_{\sigma, \rho}$ is
$$
\sigma(K_{\sigma, \rho})(\xi) = \int_{LGr _x} |\xi_S|^2 + \rho |\xi_{JS}|^2  d(\phi_\sigma*V)_x(S).
$$
It follows that for $\rho > 0$ and  $\Omega \subset \supp (\phi_\sigma)$ open that on $\Omega$ the differential operator $K_{\sigma, \rho}$ is strictly elliptic.
The Garding inequality gives:
\begin{equation}
\label{equ:Garding-ineq}
-\big( K_{\sigma, \rho} u, u \big)_{L^2(\R^{2n})} + || u ||^2_{L^2(\R^{2n})} \geq c || u ||^2_{H^1(\Omega)},
\end{equation}		
where the constant $c=c(\rho) > 0$. The $L^2$ inner product is defined using the Lebesgue measure. Hence,
\begin{equation}
-\big( K_{\sigma, \rho} u, u \big)_{L^2(\R^{2n})} \geq c || u ||^2_{H^1(\Omega)} -  || u ||^2_{L^2(\R^{2n})}.
\end{equation}		
Therefore, for all $\rho > 0$,
\begin{equation}
- \big( K_{\sigma, \rho} u, u \big)_{L^2(\R^{2n})} \geq  -  || u ||^2_{L^2(\R^{2n})}.
\end{equation}		
Hence, letting $\rho \to 0$,
\begin{equation}
\label{equ:Garding-ineq-2}
\big( K_{\sigma} u, u \big)_{L^2(\R^{2n})} \leq  || u ||^2_{L^2(\R^{2n})}.
\end{equation}	
Apply the inequality (\ref{equ:Garding-ineq-2}) to the function $u^{\frac{p}{2}}$ where $p$ is a positive even integer. Then: 
\begin{equation}
\label{equ:Garding-ineq-3}
\big( K_{\sigma} (u^{\frac{p}{2}}), u^{\frac{p}{2}} \big)_{L^2(\R^{2n})} \leq  c || u^{\frac{p}{2}} ||^2_{L^2(\R^{2n})} = c \lVert u\rVert^p_{L^p(\R^{2n})}.
\end{equation}

\bigskip

\begin{lem}
	For any $f\in C^\infty_c({\mathbb R}^{2n})$, any $p$, we have
		\begin{equation*}
			K_\sigma f^p=pf^{p-1}K_\sigma f+p(p-1)f^{p-2}\lvert\n_\sigma f\rvert^2
		\end{equation*}
\end{lem}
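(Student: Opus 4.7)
The plan is to reduce the identity to a pointwise calculation on each lagrangian plane $S$ in the fiber and then integrate, exploiting the fact that the measure $(\phi_\sigma\star V)_x$ plays a purely passive (linear) role in the definition of $K_\sigma$.

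First I would unwind the definitions. For $f\in C^\infty_c(\mathbb{R}^{2n})$,
\begin{equation*}
K_\sigma f(x)=\int_{LGr_x}\tr_S\!\left(\proj_S D^2f(x)\right)d(\phi_\sigma\star V)_x(S).
\end{equation*}
Thus the identity to prove is an identity under the fiber integral, and it suffices to establish the corresponding \emph{pointwise} formula for the integrand: for each lagrangian $n$-plane $S$ at $x$,
\begin{equation*}
\tr_S\!\left(\proj_S D^2(f^p)(x)\right)=pf^{p-1}(x)\,\tr_S\!\left(\proj_S D^2 f(x)\right)+p(p-1)f^{p-2}(x)\,|\proj_S Df(x)|^2.
\end{equation*}

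Next I would verify this pointwise identity by the classical Leibniz/chain rule on $\mathbb{R}^{2n}$, treating $f^p$ as a smooth function. A direct computation gives $D(f^p)=pf^{p-1}Df$ and hence
\begin{equation*}
D^2(f^p)=pf^{p-1}D^2 f+p(p-1)f^{p-2}\,Df\otimes Df.
\end{equation*}
Choosing an orthonormal basis $\{e_1,\dots,e_n\}$ of $S$ and taking the trace of the restriction to $S$ yields $\tr_S(Df\otimes Df)=\sum_k(Df\cdot e_k)^2=|\proj_S Df|^2$, so the claimed pointwise identity follows.

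Finally I would integrate the pointwise identity against $d(\phi_\sigma\star V)_x(S)$. The factors $pf^{p-1}(x)$ and $p(p-1)f^{p-2}(x)$ do not depend on $S$ and pull out of the fiber integral, giving
\begin{equation*}
K_\sigma(f^p)(x)=pf^{p-1}(x)\,K_\sigma f(x)+p(p-1)f^{p-2}(x)\!\int_{LGr_x}|\proj_S Df(x)|^2\,d(\phi_\sigma\star V)_x(S),
\end{equation*}
which is the desired identity once one adopts the notation $|\nabla_\sigma f(x)|^2:=\int_{LGr_x}|\proj_S Df(x)|^2\,d(\phi_\sigma\star V)_x(S)$. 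There is no real obstacle here; the only thing to notice is that the $\phi_\sigma\star V$ fiber measure is a probability-like average over lagrangian planes and does not interact with the Leibniz rule, so the whole argument is a one-line pointwise Leibniz computation plus an integration.
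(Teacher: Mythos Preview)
Your proposal is correct and is essentially the same argument as the paper's: compute $D^2(f^p)=pf^{p-1}D^2f+p(p-1)f^{p-2}Df\otimes Df$ by the chain rule, take the $S$-trace pointwise (observing $\tr_S(Df\otimes Df)=|\proj_S Df|^2$), and integrate over the fiber measure $(\phi_\sigma*V)_x$. The paper's proof is a terser version of exactly this computation, with the notation $|\nabla_\sigma f|^2$ used implicitly for the fiber integral you wrote out.
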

\begin{proof}
	We compute:
		\begin{equation}
			\begin{aligned}
			K_\sigma f^p(x)&=\int_{LGr _x}\tr_S(D^2 f^p)d (V_\sigma)_x(S)\\
			&=\int_{LGr _x}\tr_S(pf^{p-1}(x)D^2 f+p(p-1)f^{p-2}(x)Df\otimes Df)d (V_\sigma)_x(S)\\
			&=pf^{p-1}(x)K_\sigma f(x)+p(p-1)f^{p-2}(x)\lvert\n_\sigma f\rvert^2(x)
			\end{aligned}
		\end{equation}

\end{proof}

\bigskip

Applying the lemma to (\ref{equ:Garding-ineq-3}) for $p \geq 2$ we derive:
\begin{equation}
\frac{p}{2} \left( u^{\frac{p}{2} -1} K_{\sigma}  u, u^{\frac{p}{2}} \right)_{L^2(\R^{2n})} \leq   \lVert u \rVert^p_{L^p(\R^{2n})}.
\end{equation}
Hence
\begin{equation}
\label{equ:Garding-ineq-4}
\frac{p}{2} \big(  K_{\sigma}  u, u^{p-1} \big)_{L^2(\R^{2n})} \leq   \lVert u\rVert^p_{L^p(\R^{2n})}.
\end{equation}

Recall the operator $L_\eta$ for $\eta >0$ defined by:
\begin{equation}
\label{equ:smoothing-operator}
L_\eta(u) = \phi_\eta \star u
\end{equation}
Then $L_\eta$ is a pseudo-differential operator. We can, of course, consider the  differential operator $K_\sigma$ as a pseudo-differential operator and therefore the composition $K_\sigma \circ L_\eta$ is a pseudo-differential operator. To make this precise we follow Hormander [Ho] and define the space of symbols $S^m_{\rho, \d} (\Omega)$ where $\Omega$ is an open set in $\R^{2n}$ and $0 \leq \rho, \d \leq 1$:
\begin{eqnarray*}
\label{equ:symbol-space}
S^m_{\rho, \d} (\Omega) &=&\big  \{ p \in C^\infty( \Omega \times \R^{2n}): \mbox{for any compact}   \; K \subset \Omega,   \mbox{and any multiindices} \; \a, \b  \; \\
&&\mbox{there exists a constant} \; C_{K, \a, \b} \geq 0 \; \mbox{such that for all} \; x \in K, \xi \in \R^{2n} \\ 
&& \lVert \p^\b_x \p^\a_\xi p(x, \xi) \lVert \leq C_{K, \a, \b} (1+ \lvert\xi\rvert)^{m -\rho\lvert\a\rvert + \d \lvert\b\rvert} \big \}
\end{eqnarray*}
A pseudo-differential operator  with symbol in $S^m_{\rho, \d}$ is lies in the space $OPS^m_{\rho, \d}$. Then $K_\sigma$ lies in $OPS^2_{1, 0}$ and $L_\eta$ lies in $OPS^{-\infty}$. 
By Proposition \ref{prop:bound-on-moll-op} the operators $L_\eta$ as operators on $L^p( V )$ are uniformly bounded  for $0 < \eta \leq 1$ and therefore from (\ref{equ:Garding-ineq-4}) for $0 < \eta \leq 1$ there is a constant independent of $\eta$ such that:
\begin{equation}
\label{equ:Garding-ineq-5}
\frac{p}{2} \big(  (K_{\sigma} \circ L_\eta)  u, u^{p-1} \big)_{L^2(\R^{2n})} \leq  c  \lVert u\rVert^p_{L^p(\R^{2n})}.
\end{equation}

We rewrite the  initial value problem (\ref{smoothIVP}) using this notation.
\begin{equation}\label{smoothIVP-2}
			\begin{cases}\frac{\partial}{\partial t}u(t,x)= K_\sigma \circ L_\eta u(t,x)\\
			u(0,x)=\phi_\sigma \star u_0(x)\end{cases}
		\end{equation}

\begin{prop}
The unique smooth solution $u = u(t, x)$ of (\ref{smoothIVP-2}) satisfies for every positive even integer $p$:
\begin{equation}
\frac{d}{d t} \lVert u\rVert_{L^p} \leq c \lVert u\rVert_{L^p}
\end{equation}
where the constant is independent of $p$. Hence,
\begin{equation}
\sup_{[0,T], x  \in \R^{2n}} |u(t, x)| \leq C \sup_ {x  \in \R^{2n}} |\phi_\sigma \star u_0(x)|.
\end{equation}
\end{prop}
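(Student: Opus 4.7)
The plan is a standard $L^p$ energy estimate followed by passing to the limit $p\to\infty$. Let $u \in C^1([0,T]; C^\infty(\R^{2n}))$ denote the unique smooth solution guaranteed by Lemma \ref{lem:unique-smooth-solution}, and assume throughout that $u_0$ has enough spatial decay (for instance compact support) that each $\lVert u(t,\cdot)\rVert_{L^p(\R^{2n})}$ is finite for every $t\in[0,T]$ and every $p\in[2,\infty]$, and that differentiation under the integral sign is legitimate.

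First I would multiply the evolution equation $\partial_t u = (K_\sigma\circ L_\eta)u$ by $u^{p-1}$ and integrate over $\R^{2n}$; since $p$ is a positive even integer, this gives
\begin{equation*}
\frac{1}{p}\frac{d}{dt}\lVert u(t)\rVert_{L^p}^p \;=\; \int_{\R^{2n}} u^{p-1}\,\partial_t u\,dx \;=\; \bigl((K_\sigma\circ L_\eta)u,\ u^{p-1}\bigr)_{L^2(\R^{2n})}.
\end{equation*}
Applying the Garding-type estimate (\ref{equ:Garding-ineq-5}) to the right-hand side yields $\tfrac{d}{dt}\lVert u(t)\rVert_{L^p}^p \leq 2c\lVert u(t)\rVert_{L^p}^p$, and dividing by $p\lVert u(t)\rVert_{L^p}^{p-1}$ gives
\begin{equation*}
\frac{d}{dt}\lVert u(t)\rVert_{L^p} \;\leq\; \frac{2c}{p}\lVert u(t)\rVert_{L^p} \;\leq\; c\lVert u(t)\rVert_{L^p},
\end{equation*}
the last inequality using $2c/p\leq c$ for $p\geq 2$. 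This establishes the first claim of the proposition with a constant independent of $p$.

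Next, Gronwall's inequality delivers $\lVert u(t)\rVert_{L^p}\leq e^{ct}\lVert u(0)\rVert_{L^p}\leq e^{cT}\lVert \phi_\sigma\star u_0\rVert_{L^p}$ for every even integer $p\geq 2$ and every $t\in[0,T]$. Sending $p\to\infty$, using the elementary fact that $\lVert f\rVert_{L^p}\to\lVert f\rVert_{L^\infty}$ whenever $f\in L^\infty\cap L^{p_0}$ for some finite $p_0$, promotes this to the stated $L^\infty$ bound with $C=e^{cT}$.

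The principal technical obstacle is controlling the spatial decay of $u(t,\cdot)$ in order to take the limit $p\to\infty$: the nonlocal smoothing operator $L_\eta$ does not strictly preserve compact support, so one must verify that the solutions $u(t,\cdot)$ stay in $L^p$ uniformly in $p$. This follows in practice from the compact support of $\phi_\eta$ and $\phi_\sigma$ together with the $C^1$-in-$t$, $C^\infty$-in-$x$ regularity guaranteed by Lemma \ref{lem:unique-smooth-solution}, which yields finite-speed-of-propagation-type control on the support growth; however, this verification is the main step requiring care to make fully rigorous.
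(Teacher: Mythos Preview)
Your proposal is correct and follows essentially the same approach as the paper: multiply the evolution equation by $u^{p-1}$, integrate, invoke the Garding-type estimate (\ref{equ:Garding-ineq-5}) to obtain $\tfrac{d}{dt}\lVert u\rVert_{L^p}^p\leq 2c\lVert u\rVert_{L^p}^p$, apply Gronwall, and let $p\to\infty$. The spatial-decay issue you flag at the end is something the paper's proof also leaves implicit.
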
	

\begin{proof}
For even $p \geq 2$, we compute. The inequality uses (\ref{equ:Garding-ineq-5})
		\begin{equation}
			\begin{aligned}
				\frac{d}{dt}\lVert u\rVert_{L^p}^p &=\frac{d}{dt} \left(u^{\frac{p}{2}},u^{\frac{p}{2}}\right)\\
				&=2\frac{p}{2}\left(u^{\frac{p}{2}-1}\frac{\partial u}{\partial t},u^{\frac{p}{2}}\right)\\
				&=p\left(u^{\frac{p}{2}-1}(K_\sigma \circ L_\eta) u,u^{\frac{p}{2}}\right)\\
				&=p\left((K_\sigma \circ L_\eta) u,u^{p-1}\right)\\
				&\leq 2 c \lVert u\rVert_{L^p}^p. 
			\end{aligned}
		\end{equation}
From this inequality we have:
\begin{equation}
\label{equ:Lp-space-inequality}
\frac{d}{dt}\lVert u\rVert_{L^p} \leq 2 c \lVert u\rVert_{L^p}
\end{equation}	
Applying Gronwall's inequality [E] to (\ref{equ:Lp-space-inequality}) yields for each $p$:
\begin{equation}
\lVert u(t) \rVert_{L^p} \leq C \lVert u(0) \rVert_{L^p} \leq C \lVert u(0) \rVert_{L^\infty}
\end{equation}
The result follows.
\end{proof}

\bigskip
 
 We have shown:
 
 \begin{thm}
 For each $\sigma > 0$ the unique smooth solutions of  (\ref{smoothIVP-2}) satisfy the estimate:
\begin{equation}
\label{equ:sup-bound}
 \lVert u(\eta)(x,t)\rVert_{L^\infty} \leq C  \lVert u_0(x)\rVert_{L^\infty}
\end{equation}
 where the constant is independent of $\eta$.  
 \end{thm}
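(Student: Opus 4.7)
The plan is to observe that the theorem is essentially a corollary of the preceding proposition: what remains is (i) to verify that the constant in that proposition is in fact independent of $\eta$, and (ii) to replace $\lVert \phi_\sigma \star u_0\rVert_{L^\infty}$ on the right-hand side with $\lVert u_0\rVert_{L^\infty}$.

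For (i), I would trace the constant through the proof of the proposition. The final bound came from Gronwall applied to (\ref{equ:Lp-space-inequality}), whose constant $c$ is the one appearing in (\ref{equ:Garding-ineq-5}). That constant was obtained by combining the Garding-type inequality (\ref{equ:Garding-ineq-4}) for the elliptic regularization $K_{\sigma,\rho}$ (whose constant depends on $\sigma$ but not on $\eta$) with the uniform $L^p$-boundedness of the mollifier $L_\eta$ established in Proposition \ref{prop:bound-on-moll-op}, valid for $0<\eta\leq 1$ with a constant independent of both $\eta$ and $p$. Hence $c$, and therefore the resulting $C$ appearing in the sup bound, is independent of $\eta$ on $(0,1]$. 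The $L^p\to L^\infty$ passage taken at the end of the proposition preserves this uniformity: since for fixed $\sigma$ the kernels of $K_\sigma$ and $L_\eta$ have support of size controlled by $\sigma$ and $\eta\leq 1$, the smooth solutions $u(\eta)$ remain supported in a common compact set determined by $\sigma$, $T$, and the support of $u_0$ but not by $\eta$, so $\lVert \cdot\rVert_{L^p}\to\lVert\cdot\rVert_{L^\infty}$ uniformly.

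For (ii), classical mollification on $\R^{2n}$ with respect to Lebesgue measure gives $\lVert \phi_\sigma\star u_0\rVert_{L^\infty(\R^{2n})}\leq \lVert u_0\rVert_{L^\infty(\R^{2n})}$, since $\phi_\sigma\geq 0$ with $\int \phi_\sigma=1$. Chaining this with the bound from the preceding proposition yields
\[
\lVert u(\eta)(t)\rVert_{L^\infty}\leq C\,\lVert \phi_\sigma\star u_0\rVert_{L^\infty}\leq C\,\lVert u_0\rVert_{L^\infty}
\]
with $C$ independent of $\eta$, which is the claim.

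The argument poses no serious obstacle beyond bookkeeping. The only mildly delicate point is ensuring the uniform control on the support of $u(\eta)(t)$ used to pass from $L^p$ to $L^\infty$; this is where I would be most careful, and it is handled simply by noting that the pseudo-differential operator $K_\sigma\circ L_\eta$ has kernel supported in a neighborhood of the diagonal of size bounded by $\sigma+\eta\leq \sigma+1$, so a standard iteration bounds $\supp u(\eta)(t)$ independently of $\eta\in(0,1]$.
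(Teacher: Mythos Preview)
Your proposal is correct and follows exactly the paper's approach: the theorem is stated in the paper with the preface ``We have shown:'', meaning it is taken as an immediate consequence of the preceding proposition, and the two bookkeeping points you identify --- that the constant $c$ in (\ref{equ:Garding-ineq-5}) (hence the Gronwall constant) is independent of $\eta$, and that $\lVert\phi_\sigma\star u_0\rVert_{L^\infty}\leq\lVert u_0\rVert_{L^\infty}$ --- are precisely what is needed to pass from the proposition to the theorem. Your discussion of the support is slightly more than necessary: once Gronwall yields $\lVert u(t)\rVert_{L^p}\leq e^{2ct}\lVert u(0)\rVert_{L^p}$ with $c$ independent of both $p$ and $\eta$, letting $p\to\infty$ on compactly supported functions gives $\lVert u(t)\rVert_{L^\infty}\leq e^{2ct}\lVert u(0)\rVert_{L^\infty}$ with no dependence on the size of the support, so the $\eta$-independence of $C$ follows directly from that of $c$.
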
 
 
 \bigskip

\begin{thm}
\label{thm:sup-bound-for-u}
There is a constant $C > 0$ such that for each $\e > 0$ and $\eta > 0$ the unique solution $u$ of (\ref{IVP}) satisfies the estimate:
$$
\lVert u(x,t) \rVert_{L^\infty({V^\e})} \leq C \lVert u_0(x))\rVert_{L^\infty({V^\e})}
$$
\end{thm}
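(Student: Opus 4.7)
The strategy is to obtain the bound for the $L^\infty(V^\e)$ solution of (\ref{IVP}) as a limit of the estimates for the smooth regularized solutions of (\ref{smoothIVP-2}), using the previous theorem which provides a uniform estimate independent of the regularization parameters. Fix $\e>0$ and $\eta>0$, and for each $\sigma>0$ let $u^\sigma=u^{\sigma,\eta}\in C^1([0,T];C^\infty(\R^{2n}))$ denote the unique smooth solution of (\ref{smoothIVP-2}) with initial datum $\phi_\sigma\star u_0$ produced by Lemma \ref{lem:unique-smooth-solution}. By the theorem immediately preceding, there is a constant $C$ independent of $\sigma$ (and, crucially, of $\eta$) such that
\begin{equation*}
    \lVert u^\sigma(\cdot,t)\rVert_{L^\infty(\R^{2n})}\leq C\lVert \phi_\sigma\star u_0\rVert_{L^\infty(\R^{2n})}\leq C\lVert u_0\rVert_{L^\infty(V^\e)},
\end{equation*}
using Proposition \ref{prop:bound-on-moll-op} (or a direct estimate on the mollification) for the second inequality.

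The next step is to take $\sigma\to 0$. Because the family $\{u^\sigma\}_{\sigma>0}$ is uniformly bounded in $L^\infty([0,T]\times\R^{2n})$, a subsequence $u^{\sigma_j}$ converges weak-$*$ to some $u\in L^\infty([0,T]\times\R^{2n})$. Since $V^\e(t)$ has compact support and finite mass, passing to a further subsequence we may assume the restriction of $u^{\sigma_j}$ to $\supp V^\e(t)$ converges in the weak-$*$ topology of $L^\infty([0,T];L^\infty(V^\e(t)))$ to the restriction $u|_{\supp V^\e}$. I would then pass to the limit in the weak formulation of (\ref{smoothIVP-2}): for any test function $\var\in C_c^\infty((0,T)\times\R^{2n})$,
\begin{equation*}
    \int_0^T\!\!\int u^{\sigma_j}\,\partial_t\var\,dx\,dt=-\int_0^T\!\!\int \var\,\operatorname{div}_{V_{\sigma_j}^\e(t)(x)}D(u^{\sigma_j}_\eta)\,dx\,dt.
\end{equation*}
As $\sigma_j\to 0$, the mollified varifold $\phi_{\sigma_j}*V^\e(t)$ converges as Radon measures on $LGr$ to $V^\e(t)$ itself (since $\phi_\sigma\to\delta_0$), so $\operatorname{div}_{V_{\sigma_j}^\e(t)(x)}$ passes to $\operatorname{div}_{V^\e(t)}$ against the smooth and fixed test function $D(u^{\sigma_j}_\eta)$; combined with the weak convergence of $u^{\sigma_j}$, this identifies the limit $u$ as a solution of (\ref{IVP}) with initial value $u_0$ in the weak-$*$ $L^\infty(V^\e(t))$ sense.

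Invoking the uniqueness result of Lemma \ref{lem:unique-Linfty-solution}, the limit $u$ must coincide with the $L^\infty(V^\e)$ solution of (\ref{IVP}). The estimate
\begin{equation*}
    \lVert u(t)\rVert_{L^\infty(V^\e(t))}\leq\liminf_{j\to\infty}\lVert u^{\sigma_j}(t)\rVert_{L^\infty(\R^{2n})}\leq C\lVert u_0\rVert_{L^\infty(V^\e)}
\end{equation*}
then follows from lower semicontinuity of $\lVert\cdot\rVert_{L^\infty}$ under weak-$*$ convergence. The main technical obstacle I anticipate is the justification of the weak-$*$ convergence of $\operatorname{div}_{V_{\sigma_j}^\e(t)(x)}D(u^{\sigma_j}_\eta)$ to $\operatorname{div}_{V^\e(t)}D(u_\eta)$: the mollified divergence operator $\operatorname{div}_{V_\sigma^\e(t)(x)}$ is defined pointwise for every $x\in\R^{2n}$ (not just on $\supp V^\e$), so care is needed to identify its weak limit with the intrinsic divergence on $V^\e(t)$, and to handle the simultaneous convergence of $u^{\sigma_j}$ entering both directly and through the $V^\e$-mollification $u^{\sigma_j}_\eta$. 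This can be accomplished by first testing against $\var\in C_c^\infty$ supported in neighborhoods of $\supp V^\e(t)$, integrating by parts to transfer the second derivative onto $\var$, and then using the smoothness of the kernel $\phi_\eta$ (which makes $D^2(u^{\sigma_j}_\eta)$ uniformly controlled) to extract the limit.
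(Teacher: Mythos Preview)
Your overall strategy matches the paper's: take the uniformly bounded smooth solutions $u^\sigma$ of the regularized problem, extract a subsequential limit as $\sigma\to 0$, identify the limit as a solution of (\ref{IVP}), and conclude by the uniqueness in Lemma~\ref{lem:unique-Linfty-solution}. The difference is entirely in how the limit passage is carried out.

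You test the regularized equation against Lebesgue measure $dx$, and then face the difficulty (which you correctly flag) of identifying the limit of $\operatorname{div}_{V^\e_{\sigma_j}(t)(x)}D(u^{\sigma_j}_\eta)$, an object defined on all of $\R^{2n}$, with the intrinsic operator $\operatorname{div}_{V^\e(t)}$ living only on $\supp\lVert V^\e\rVert$. Your proposed fix --- integrating by parts in the ambient space to move derivatives onto $\varphi$ --- is awkward here because $\operatorname{div}_{V^\e_\sigma(x)}$ is not an ordinary divergence: the projection weights depend on $x$ through the mollified Grassmannian measure, and differentiating those introduces $\sigma$-dependent terms that do not obviously vanish.

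The paper instead tests against the \emph{mollified varifold measure} $d\lVert V^\e_{\sigma_i}\rVert$, which converges to $d\lVert V^\e\rVert$ as $\sigma_i\to 0$ --- exactly the measure on which (\ref{IVP}) lives. Integration by parts is then the varifold first-variation formula, producing tangential gradients $\nabla_{\sigma_i}$ and mean-curvature terms $H_{\sigma_i}$. After two such integrations by parts, one cross term of the form $\int L_\eta u_i\, H_{\sigma_i}\cdot\nabla_{\sigma_i}\varphi\,d\lVert V_{\sigma_i}\rVert$ is killed in the limit by Brakke's orthogonality of $H$ to the tangent plane, and the remaining terms reassemble (again by first variation on $V^\e$) into $\int \operatorname{div}_{V^\e} D L_\eta u\,\varphi\,d\lVert V^\e\rVert$. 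This sidesteps your obstacle entirely: no ambient integration by parts is needed, and the identification with the intrinsic divergence is automatic.
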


\begin{proof}
Fix $\e >0$ and $\eta > 0$. Consider a sequence $\sigma_i \to 0$. Let $\{ u_i \}$ be solutions of (\ref{smoothIVP-2})) for $\sigma_i$. Choose a subsequence (that we will continue to denote $\{ u_i \}$) that converges in $L^\infty(\R^{2n})$ to a function $u$. Then $u$ satisfies the  bound (\ref{equ:sup-bound}). We claim that $u$ is a solution of  (\ref{IVP}). Therefore,  the unique solution of (\ref{IVP}) satisfies the bound (\ref{equ:sup-bound}).

For each $i$ the smooth function $u_i$ satisfies the equation:
$$
\frac{\p}{\p t} u_i = \div_{V_{\sigma_i}} D L_\eta u_i.
$$
Hence for any test function $\varphi$ we have:
\begin{eqnarray*}
\int \frac{\p}{\p t} u_i \; \varphi \; d\lVert V_{\sigma_i}\rVert &=& \int  \div_{V_{\sigma_i}} D L_\eta u_i \; \varphi \; d\lVert V_{\sigma_i}\rVert \\
&=& - \int  D L_\eta u_i \cdot \nabla_{\sigma_i} \; \varphi \; d\lVert V_{\sigma_i}\rVert - \int  D L_\eta u_i \cdot H_{\sigma_i} \; \varphi \; d\lVert V_{\sigma_i}\rVert \\
&=& - \int   \nabla_{\sigma_i} L_\eta u_i \cdot \nabla_{\sigma_i}  \varphi \; d\lVert V_{\sigma_i}\rVert - \int  D L_\eta u_i \cdot H_{\sigma_i} \; \varphi \; d\lVert V_{\sigma_i}\rVert \\
&=&  \int    L_\eta u_i \; \div_{V_{\sigma_i} } \; \nabla_{\sigma_i}  \varphi \; d\lVert V_{\sigma_i}\rVert +  \int  L_\eta u_i  \; H_{\sigma_i} \cdot \nabla_{\sigma_i}  \varphi \; d\lVert V_{\sigma_i}\rVert \\
&& - \int  D L_\eta u_i \cdot H_{\sigma_i} \; \varphi \; d\lVert V_{\sigma_i}\rVert \\
\end{eqnarray*}
As $\sigma_i \to 0$ by the dominated convergence theorem and Brakke's orthogonality of $H$ we have:
$$
 \int  L_\eta u_i  \; H_{\sigma_i} \cdot \nabla_{\sigma_i}  \varphi \; d\lVert V_{\sigma_i}\rVert \to 0
$$
Therefore letting $\sigma_i \to 0$ we have:
\begin{eqnarray*}
&&\int \frac{\p}{\p t} u \; \varphi \; d\lVert V\rVert \\
&=& \int    L_\eta u \; \div_{V } \nabla_V \varphi  \; d\lVert V\rVert  - \int  D L_\eta u \cdot H \; \varphi \; d\lVert V\rVert \\
&=& -\int    D L_\eta u \cdot  \nabla_V \varphi \; d\lVert V\rVert - \int  L_\eta u \;  H \cdot \nabla_V \varphi \; d\lVert V\rVert  - \int  D L_\eta u \cdot H \; \varphi \; d\lVert V\rVert \\
&=& \int   \div_{V } D L_\eta u \;  \varphi \; d\lVert V\rVert+ \int D L_\eta u \cdot H \; \varphi \; d\lVert V\rVert   - \int  D L_\eta u \cdot H \; \varphi \; d\lVert V\rVert \\
&=& \int   \div_{V } D L_\eta u \;  \varphi \; d\lVert V\rVert
\end{eqnarray*}
Since this is true for any test function $\varphi$ it follows that $u$ is a solution of (\ref{IVP}). The result follows.
\end{proof}

\begin{cor}
 There is a constant $A >0$ such that for any $\e >0 $:
 $$
 \sup_{t \in [0,T]} \sup_{ V^\e(t)} |\b^\e| < A
 $$ 
\end{cor}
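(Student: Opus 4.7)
The plan is to recognize this corollary as an essentially immediate application of the preceding Theorem \ref{thm:sup-bound-for-u} to the lagrangian angle itself. From the evolution equation \eqref{eqn:evolbeta} for $\beta^\e$, namely
\[
\frac{\partial}{\partial t}\beta^\e = \operatorname{div}_{V^\e(t)} D(\beta^\e)_\e,\qquad \beta^\e(0) = \beta,
\]
we see that $\beta^\e$ satisfies the initial value problem \eqref{IVP} in the \emph{special case} $\eta=\e$, with initial data $u_0 = \beta$. Because $V(0)$ has Maslov index zero by the standing hypothesis, $\beta\in L^\infty(V)$, so that $\lVert\beta\rVert_{L^\infty(V)}$ is a finite quantity independent of $\e$.

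First I would invoke the uniqueness statement of Lemma \ref{lem:unique-Linfty-solution}, with $\eta=\e$, to identify $\beta^\e$ as the unique element of $C([0,T];L^\infty(V^\e(t)))$ solving this IVP; then I would apply Theorem \ref{thm:sup-bound-for-u}, again taking $\eta=\e$, to conclude
\[
\lVert \beta^\e(t)\rVert_{L^\infty(V^\e(t))} \;\leq\; C\,\lVert \beta\rVert_{L^\infty(V(0))}
\]
for every $t\in[0,T]$. Setting $A = C\,\lVert \beta\rVert_{L^\infty(V(0))}$ would then give exactly the claim.

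The main obstacle is to verify that the constant $C$ issued by Theorem \ref{thm:sup-bound-for-u} is independent of $\e$, not merely of $\eta$. I would do this by tracing the chain of estimates leading to that theorem. The quantitative input is the inequality \eqref{equ:Garding-ineq-2}, $(K_\sigma u, u)_{L^2(\R^{2n})} \leq \lVert u\rVert_{L^2(\R^{2n})}^2$, whose constant $1$ is manifestly independent of $\sigma$, $\eta$, and $\e$; together with the uniform bound on the mollification operator $L_\eta$ from Proposition \ref{prop:bound-on-moll-op} (uniform in $\eta$ and $p$), this produces the $L^p$-differential inequality $\frac{d}{dt}\lVert u\rVert_{L^p}\leq c\lVert u\rVert_{L^p}$ with $c$ independent of all the parameters $\e$, $\eta$, $\sigma$, $p$. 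Gronwall's inequality then yields a multiplicative constant of the form $e^{cT}$, which survives the passages $\sigma\to 0$ (extracting the weak solution) and $p\to\infty$ (passing from $L^p$ to $L^\infty$). Taking $A = e^{cT}\,\lVert\beta\rVert_{L^\infty(V(0))}$ gives the stated corollary.
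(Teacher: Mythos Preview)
Your proposal is correct and takes essentially the same approach as the paper: the paper's proof is the single sentence ``On the $\e$-flow apply Theorem \ref{thm:sup-bound-for-u} with initial condition $u_0 = \beta$ and $\eta = \e$,'' and your argument is exactly this, fleshed out with the explicit identification of $\beta^\e$ as the unique solution via Lemma \ref{lem:unique-Linfty-solution} and a careful verification that the constant $C$ in Theorem \ref{thm:sup-bound-for-u} is indeed independent of $\e$ (which the theorem statement asserts but does not justify in detail).
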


\begin{proof}
On the $\e$-flow apply Theorem \ref{thm:sup-bound-for-u} with initial condition $u_0 = \beta$ and $\eta = \e$.
\end{proof}

\bigskip

\section{  Control on volume and $\beta$ along the  $\e$-flows}

In Section \ref{sec:epsilon-flows} we have, for each fixed $\e > 0$, produced a flow $V^\e(t)$ of Maslov index zero, integral rectifiable lagrangian varifold satisfying $(H_2)$. We would like to extract a limit flow $V(t)$ by taking a limit as $\e \to 0$ for each $t$ of the varifolds $V^\e(t)$. Moreover we wish extract a limit that inherits the properities of the $V^\e(t)$. However the estimates derived in Section  \ref{sec:epsilon-flows} blow-up as $\e \to 0$. In Section \ref{sec:sup-bounds} we showed that the lagrangian angles satisfy sup bounds. In this section we exploit the weak derivatives of the $\b^\e(t)$ to derive decay estimates for both the volumes $\lVert V^\e(t) \rVert$ and the lagrangian angles $\b^\e(t)$ along the $\e$-flows.

\bigskip

Let $V$ be a integer lagrangian varifold with Maslov index zero. Let $\b \in L^\infty(V)$ denote the lift of the lagrangian angle with weak derivative $B \in L^2(V)$.  The mollified lagrangian angle is:
\begin{equation}
			\beta_\e(x)=\frac{\int \phi_\e(x-y)\beta(y)d\lVert V\rVert_y}{\int \phi_\e(x-y)  d\lVert V\rVert_y +\e \lVert V\rVert}
\end{equation}
We have noted above that, for fixed $\e$,  $\beta_\e$ is globally well-defined and smooth.

\begin{thm}\label{thm:errorestimate}
Suppose $V$ is an integral lagrangian  $n$-varifold with Maslov index zero satisfying $(H_2)$, $n \geq 2$. Suppose that the lagrangian angle $\b \in L^\infty(V)$ has weak derivative $B \in L^2(V)$. Let $H$ denote the mean curvature on $V$. Then if $\b_\e$ is the mollified lagrangian angle:
\begin{equation}
\label{equ:final-formula}
\n_x \b_\e(x) = B_\e(x) -\b_\e(x) H_\e(x) + E(x, \e),
\end{equation}
where
$$
B_\e(x) = \frac{\int  \phi_\e(x-y)B(y) d\lVert V\rVert_y}{\int \phi_\e(x-y)  d\lVert V\rVert_y +\e \lVert V \rVert}, \;\; H_\e(x) =\frac{\int  \phi_\e(x-y)H(y) d\lVert V\rVert_y}{\int \phi_\e(x-y)  d\lVert V\rVert_y +\e \lVert V \rVert},
$$
and
\begin{equation}
\label{equ:error-estimate}
 |E(x, \e)|^2 < c_1 \e^{-2} \frac{ \int_{B(x, \e) \times Gr(n,2n)} |S -T^n_x(V)|^2 dV(\xi, S)  }{ {\int_{B(x, \e)} \phi_\e(x-y) d\lVert V\rVert_y}  }
 \end{equation}
where $c_1$ is a  constant depending  on  $||\b||_{L^\infty(V)}$. 
\end{thm}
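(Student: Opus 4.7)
The plan is to differentiate the definition of $\b_\e$ directly, apply the weak-derivative property of $\b$ together with the first-variation formula to convert tangential differentiations of $\phi_\e$ into integrals against $B$ and $H$, and collect the leftover normal contributions as the error $E$. Writing $\mathcal{D}(x) = \int \phi_\e(x-y)\,d\lVert V\rVert_y + \e\lVert V\rVert$, the quotient rule gives
$$
D_x\b_\e(x) = \frac{1}{\mathcal{D}(x)}\left[\int D_x\phi_\e(x-y)\,\b(y)\,d\lVert V\rVert_y - \b_\e(x)\int D_x\phi_\e(x-y)\,d\lVert V\rVert_y\right].
$$
At each $y$ split $D_x\phi_\e(x-y) = (D_x\phi_\e)^{T_yV} + (D_x\phi_\e)^{\perp_yV}$ into tangential and normal parts with respect to $T_yV$.

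For the tangential parts, apply Proposition \ref{divergenceformula} and the first-variation formula to the test vector field $\phi_\e(x-\cdot)\,e_i$ for each constant Euclidean direction $e_i$. Since $\div_V[\phi_\e(x-\cdot)\,e_i] = -(D_x\phi_\e)^{T_yV}\cdot e_i$, these yield
$$
\int \b(y)\,(D_x\phi_\e(x-y))^{T_yV}\,d\lVert V\rVert_y = \int \phi_\e(x-y)\,B(y)\,d\lVert V\rVert_y,
$$
$$
\int (D_x\phi_\e(x-y))^{T_yV}\,d\lVert V\rVert_y = \int \phi_\e(x-y)\,H(y)\,d\lVert V\rVert_y.
$$
Substituting these back collapses the tangential contribution to exactly $B_\e(x) - \b_\e(x)H_\e(x)$ and leaves
$$
E(x,\e) = \frac{1}{\mathcal{D}(x)}\int (\b(y)-\b_\e(x))\,(D_x\phi_\e(x-y))^{\perp_yV}\,d\lVert V\rVert_y,
$$
establishing the formula \eqref{equ:final-formula}.

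For the error estimate, I would decompose the normal piece as $(D_x\phi_\e)^{\perp_y} = (R(x)-R(y))D_x\phi_\e + (D_x\phi_\e)^{\perp_x}$, where $R(z)$ denotes orthogonal projection onto $T_zV$. The first summand is pointwise bounded by $|T_yV-T_xV|\cdot|D\phi_\e|$; combining $|\b-\b_\e(x)|\leq 2\lVert\b\rVert_{L^\infty(V)}$, the uniform bound $|D\phi_\e|\leq C\e^{-n-1}$ from Lemma \ref{lem:derivativesofbetaepsilon}, Cauchy--Schwarz, and the elementary lower bound $\mathcal{D}(x)\geq c\,\e^{-n}\lVert V\rVert(B(x,\e))$, yields a contribution of exactly the claimed form, with a constant depending only on $\lVert\b\rVert_{L^\infty(V)}$. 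The second summand satisfies $|(D_x\phi_\e)^{\perp_x}|\leq C\e^{-n-1}\,|(y-x)^{\perp_x}|/|y-x|$ pointwise.

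The main obstacle is absorbing this second summand into a tilt-excess bound, since at first glance it involves the height excess $\int|(y-x)^{\perp_x}|^2/|y-x|^2\,d\lVert V\rVert_y$ rather than the tilt excess $\int|T_yV-T_xV|^2\,d\lVert V\rVert_y$. At $\lVert V\rVert$-a.e.\ point (in particular at every point of $C^2$ rectifiability, by Theorems \ref{thm:weak-regularity-M2} and \ref{thm:Menne-tilt-excess}), the local $C^2$ structure together with the $\lVert V\rVert$-approximate differentiability of the tangent-plane map $z\mapsto R(z)$ give the identity $(y-x)^{\perp_x} = R(x)^\perp(R(y)-R(x))(y-x)$ modulo a term that is $L^2$-negligible in the tilt-excess sense; this is the standard height-excess-is-dominated-by-tilt-excess principle for rectifiable varifolds with $H\in L^2$. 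With this in hand the second summand fits into the same Cauchy--Schwarz estimate as the first, and one collects the constants to obtain the claimed bound with $c_1 = c_1(\lVert\b\rVert_{L^\infty(V)})$.
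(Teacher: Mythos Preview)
Your derivation of the identity is for the \emph{full} ambient gradient $D_x\beta_\e$, whereas the theorem is about the \emph{tangential} gradient $\nabla_x\beta_\e = R(x)D_x\beta_\e$. This mismatch is exactly what produces your troublesome ``second summand'' $(D_x\phi_\e)^{\perp_x}$; note that projecting onto $T_xV$ would kill it outright. The paper works with $\nabla_x$ from the outset and uses the elementary identity
\[
\nabla_x\phi_\e(x-y)\;=\;-\nabla_y\phi_\e(x-y)\;+\;\bigl(R(x)-R(y)\bigr)D\phi_\e(x-y),
\]
so the entire error is the tilt-difference term $(R(x)-R(y))D\phi_\e$ and no height-excess quantity ever appears. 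Your attempt to absorb the height contribution via ``$(y-x)^{\perp_x}=R(x)^\perp(R(y)-R(x))(y-x)$ modulo an $L^2$-negligible term'' is not a valid pointwise-in-$x$, scale-$\e$ inequality; the height--tilt comparison for varifolds with $H\in L^2$ is an integral estimate that carries extra terms and does not yield the precise form \eqref{equ:error-estimate}.

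There is a second, independent gap in your error bound. Your ``elementary lower bound'' $\mathcal{D}(x)\geq c\,\e^{-n}\lVert V\rVert(B(x,\e))$ goes the wrong way (since $\phi_\e\leq C\e^{-n}$ pointwise, this is an \emph{upper} bound on $\int\phi_\e$), so combining it with the crude estimate $|D\phi_\e|\leq C\e^{-n-1}$ does not produce the factor $\bigl(\int_{B(x,\e)}\phi_\e\bigr)^{-1}$ in the denominator of \eqref{equ:error-estimate}. The paper instead exploits the specific structure of the bump to get the pointwise bound $|D\phi_\e(x-y)|\leq K\e^{-1}\phi_\e(x-y)^{1/2}$; after Cauchy--Schwarz against $|R(x)-R(y)|$ this yields exactly $\e^{-1}(\int|S-T_xV|^2)^{1/2}(\int\phi_\e)^{1/2}$, and dividing by $\mathcal{D}\geq\int\phi_\e$ gives the stated estimate directly.
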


\begin{proof}
We begin by computing:
\begin{eqnarray}
\label{equ:rough-formula-nabla-beta}
&&\n_x  \b_\e(x) \\  \nonumber
&&= \frac{\int \n_x  \phi_\e(x-y)\b(y) d\lVert V\rVert_y}{\int \phi_\e(x-y)  d\lVert V\rVert_y +\e \lVert V \rVert} -\b_\e(x) \frac{\int \n_x  \phi_\e(x-y) d\lVert V\rVert_y}{\int \phi_\e(x-y)  d\lVert V\rVert_y +\e \lVert V \rVert} \\  \nonumber
&&= \frac{-\int \n_y  \phi_\e(x-y)\b(y) d\lVert V\rVert_y}{\int \phi_\e(x-y)  d\lVert V\rVert_y +\e \lVert V \rVert} + \frac{\int \big( \n_x \phi_\e(x-y) +  \n_y \phi_\e(x-y) \big)\b(y) d\lVert V\rVert_y}{\int \phi_\e(x-y)  d\lVert V\rVert_y +\e \lVert V \rVert } \\  \nonumber
&&+ \b_\e(x) \frac{\int \n_y  \phi_\e(x-y) d\lVert V\rVert_y}{\int \phi_\e(x-y)  d\lVert V\rVert_y +\e \lVert V \rVert} -\b_\e(x) \frac{\int \big( \n_x  \phi_\e(x-y) +\n_y  \phi_\e(x-y)\big)d\lVert V\rVert_y}{\int \phi_\e(x-y)  d\lVert V\rVert_y +\e \lVert V \rVert}
\end{eqnarray}

First observe:
\begin{equation}
\label{equ:compute-B}
\frac{-\int \n_y  \phi_\e(x-y)\b(y) d\lVert V\rVert_y}{\int \phi_\e(x-y)  d\lVert V\rVert_y +\e \lVert V \rVert} =  \frac{\int  \phi_\e(x-y)B(y) d\lVert V\rVert_y}{\int \phi_\e(x-y)  d\lVert V\rVert_y +\e \lVert V \rVert} = B_\e(x)
\end{equation}

Fix $x \in \R^{2n}$ and let $E$ be an arbitrary fixed vector in $\R^{2n}$. Using the first variation formula we have:
\begin{eqnarray} \nonumber
\Big(\int \n_y  \phi_\e(x-y)  d\lVert V\rVert_y \Big) \cdot E &=& \int \n_y  \phi_\e(x-y) \cdot E  \;  d\lVert V\rVert_y \\ \nonumber
 &=&  \int \div_V (\phi_\e(x-y)  E) \;  d\lVert V\rVert_y\\ \nonumber
&=&-\int \phi_\e(x-y)  H \cdot E \;  d\lVert V\rVert_y \\ 
&=&-\Big( \int \phi_\e(x-y)  H  \;  d\lVert V\rVert_y \Big) \cdot E\\ \nonumber
\end{eqnarray}
Thus,
\begin{equation}
\label{equ:compute-H}
\frac{\int \n_y  \phi_\e(x-y)   d\lVert V\rVert_y}{\int \phi_\e(x-y)  d\lVert V\rVert_y +\e \lVert V \rVert} = -\frac{ \int \phi_\e(x-y)  H d\lVert V\rVert_y}{\int \phi_\e(x-y)  d\lVert V\rVert_y +\e \lVert V \rVert} = -H_\e(x)
\end{equation}

We define:
\begin{eqnarray} \label{equ:define-E}
&&E(x, \e) \\ \nonumber
&& = \frac{\int \big( \n_x \phi_\e(x-y) +  \n_y \phi_\e(x-y) \big)\b(y) d\lVert V\rVert_y}{\int \phi_\e(x-y)  d\lVert V\rVert_y +\e  \lVert V \rVert }  -\b_\e(x) \frac{\int \big( \n_x  \phi_\e(x-y) +\n_y  \phi_\e(x-y)\big)d\lVert V\rVert_y}{\int \phi_\e(x-y)  d\lVert V\rVert_y +\e \lVert V \rVert} 
\end{eqnarray}

Using (\ref{equ:compute-B}), (\ref{equ:compute-H}) and (\ref{equ:define-E}) in (\ref{equ:rough-formula-nabla-beta}) yields (\ref{equ:final-formula}). It remains to estimate the error function $E(x, \e)$.

We consider the  $n$-plane $S \in Gr(n, 2n)$ as the orthogonal projection $\R^{2n} \to S$ and write:
\begin{eqnarray*}
&&\big| \int \big( \n_x \phi_\e(x-y) +  \n_y \phi_\e(x-y) \big)\b(y) d\lVert V\rVert_y \big| \\ 
&\leq& \big| \int_{ B(x, \e) } \big( \n_x \phi_\e(x-y) +  \n_y \phi_\e(x-y) \big)\b(y) d\lVert V\rVert_y  \big| \\
&\leq& \big|  \int_{ B(x, \e) } \big( T^n_x(V) -T^n_y(V) \big)(D \phi_\e)(x-y) \b(y) d\lVert V\rVert_y \big|
\end{eqnarray*}
Note that on $B(x, \e)$:
$$
D \exp \big( \frac{\e^2}{|x|^2 - \e^2} \big) =  \exp \big( \frac{\e^2}{|x|^2 - \e^2} \big) \big(\frac{-2 \e^2 x}{ ( |x|^2- \e^2 )^2} \big).
$$
Therefore there is a constant  $K > 0$ such that:
$$
| D \phi_\e(x-y) | \leq  (\phi_\e(x-y))^{\frac{1}{2}} \frac{K}{\e}  .
$$

Using the  H\"older inequality,
\begin{eqnarray*}
&&|\int_{ B(x, \e) }  \big( T^n_x(V) -T^n_y(V) \big)(D \phi_\e)(x-y) \b(y)  d\lVert V\rVert_y| \\ 
&\leq & \int_{ B(x, \e) }  | T^n_x(V) -T^n_y(V)| \e^{-1}  K (\phi_\e(x-y))^{\frac{1}{2}} | \b(y)|  d\lVert V\rVert_y \\
&\leq&  \e^{-1} K \Big( \int_{B(x, \e) \times Gr(n,2n)} |S -T^n_x(V)|^2 dV(\xi, S)  \Big)^{\frac{1}{2}}  \bigg(\int_{B(x, \e)} \phi_\e(x-y) d\lVert V\rVert_y \bigg)^{\frac{1}{2}} ||\b ||_{L^\infty({ V } )}  \\
\end{eqnarray*}
Thus,
\begin{eqnarray*}
&&\frac{|\int_{ B(x, \e) }  \big( T^n_x(V) -T^n_y(V) \big)(D \phi_\e)(x-y) \b(y)  d\lVert V\rVert_y}{\int_{\R^{2n}} \phi_\e(x-y)  d\lVert V\rVert_y + \e \lVert V \rVert}  \\
&& \leq  \e^{-1} K  \Big( \int_{B(x, \e) \times Gr(n,2n)} |S -T^n_x(V)|^2 dV(\xi, S)  \Big)^{\frac{1}{2}} \bigg( {\int_{B(x, \e)} \phi_\e(x-y) d\lVert V\rVert_y}  \bigg)^{-\frac{1}{2}} ||\b ||_{L^\infty({V } )} \\
&& \leq   C \e^{-1}   \Big( \int_{B(x, \e) \times Gr(n,2n)} |S -T^n_x(V)|^2 dV(\xi, S)  \Big)^{\frac{1}{2}}  \bigg( {\int_{B(x, \e)} \phi_\e(x-y) d\lVert V\rVert_y}  \bigg)^{-\frac{1}{2}},
\end{eqnarray*}
where the constant $C$  depends on   $||\b ||_{L^\infty({V } )}$.

By a similar argument:
\begin{eqnarray*}
 &&\frac{|\int \big( \n_x \phi_\e(x-y) +  \n_y \phi_\e(x-y) \big) d\lVert V\rVert_y}{\int \phi_\e(x-y)  d\lVert V\rVert_y +\e \lVert V \rVert} \\ 
 &\leq&   C \e^{-1}   \Big( \int_{B(x, \e) \times Gr(n,2n)} |S -T^n_x(V)|^2 dV(\xi, S)  \Big)^{\frac{1}{2}}\bigg( {\int_{B(x, \e)} \phi_\e(x-y) d\lVert V\rVert_y}  \bigg)^{-\frac{1}{2}} .
\end{eqnarray*}
for  a constant $C$. Since $\sup | \b_\e | \leq ||\b ||_{L^\infty({V } )}$ we conclude from (\ref{equ:define-E}):
\begin{equation}
\label{equ:estimate-E}
|E(x, \e)| \leq  2 C \e^{-1}   \Big( \int_{B(x, \e) \times Gr(n,2n)} |S -T^n_x(V)|^2 dV(\xi, S)  \Big)^{\frac{1}{2}}\bigg( {\int_{B(x, \e)} \phi_\e(x-y) d\lVert V\rVert_y}  \bigg)^{-\frac{1}{2}},
\end{equation}
where the constant $C$  depends on   $||\b ||_{L^\infty({V } )}$. 
\end{proof}

\bigskip

Let $\{ V^\e(t): t \geq 0 \}$ be an $\e$-flow. Recall that for each $t>0$, $V^\e(t)$  is an integral lagrangian  $n$-varifold with Maslov index zero satisfying $(H_2)$.  We will denote mollification using the notation:
$$
\phi_\eta \star_{V^\e} H^\e = (H^\e)_\eta
$$
Note that mollification is independent of scale in the measure so that 
$$
\phi_\eta \star_{V^\e} H^\e = \phi_\eta \star_{\l V^\e} H^\e,
$$ 
for any scalar $\l > 0$.

Set 
$$
m_\e(t) = ||V^\e(t)||,
$$
the total mass of the measure $V^\e(t)$.  Observe that since each $V^\e(t)$ is generated by a one-parameter family of diffeomorphisms, we have that $m_\e(t)>0$ for each $\e,t$.

Normalize the measure $\lVert V^\e(t)\rVert$ to the probability measure $\frac{\lVert V^\e(t)\rVert}{m_\e(t)}$. Set:
$$
\l_\e(t) = || H^\e(t) ||_{L^2(\frac{V^\e(t)}{m_\e(t)})}
$$

We will need the following lemmas to estimate the error term (\ref{equ:define-E}).

\begin{lem}
\label{lem:sequence-Allard}
Given a sequence $\{ \e_i \}$ with $\e_i \to 0$ suppose $\lim_{i \to \infty} \l_{\e_i}(t)$ exists and $\lim_{i \to \infty} \l_{\e_i}(t) > 0$, possibly $= \infty$, then the rescaled varifolds $\bar{V}^{\e_i} = \frac{V^{\e_i}}{\l_{\e_i} m_{\e_i}}$ have:
\begin{enumerate}
\item uniformily bounded mass, \\
\item satisfy $(H_2)$ and  the bound:
$$
|| {\bar H}^{\e_i} ||_{L^2(\bar{V}^{\e_i})} = 1.
$$
\end{enumerate}
Therefore there is a subsequence, that we continue to denote $\{ \e_i \}$, such that,
$$
\bar{V}^{\e_i} \to \bar{V}^0,
$$
as varifolds, $\bar{V}^0$ is a lagrangian varifold satisfying $(H_2)$.
\end{lem}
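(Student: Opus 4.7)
The plan is to verify the three claims---uniform mass bound, $(H_2)$ with uniform $L^2$ control on the mean curvature, and lagrangian weak limit---by direct computation of the rescalings, and then invoke Allard's compactness theorem \ref{thm:compact-Hp-varifolds}.

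The key observation is that the generalized mean curvature vector $H$ is invariant under a uniform rescaling of the varifold measure: if $c>0$ then $\lVert cV\rVert=c\lVert V\rVert$ and $\delta(cV)(X)=c\,\delta V(X)$ for every test field $X$, so the Radon--Nikodym derivative defining $H$ is unchanged. Consequently $\bar H^{\e_i}$ coincides $\lVert V^{\e_i}\rVert$-a.e.\ with $H^{\e_i}$. With this in hand the two quantitative bounds reduce to arithmetic. For the total mass,
\begin{equation*}
\lVert \bar V^{\e_i}\rVert(\R^{2n})=\frac{m_{\e_i}}{\lambda_{\e_i}m_{\e_i}}=\frac{1}{\lambda_{\e_i}},
\end{equation*}
which is uniformly bounded since $\lambda_{\e_i}$ is by hypothesis bounded away from zero (possibly by being infinite). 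For the mean curvature, the very definition of $\lambda_{\e_i}$ gives $\int|H^{\e_i}|^2\,d\lVert V^{\e_i}\rVert=\lambda_{\e_i}^{2} m_{\e_i}$, whence
\begin{equation*}
\lVert \bar H^{\e_i}\rVert_{L^2(\bar V^{\e_i})}^{2}=\frac{1}{\lambda_{\e_i}m_{\e_i}}\int|H^{\e_i}|^{2}\,d\lVert V^{\e_i}\rVert=\lambda_{\e_i};
\end{equation*}
tracking the rescaling factors (so that the correct normalization is $\bar V^{\e_i}=V^{\e_i}/(\lambda_{\e_i}^{2} m_{\e_i})$ in order to obtain exactly the value $1$ claimed) yields in either formulation a uniform bound. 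The property $(H_2)$ transfers from $V^{\e_i}$ to $\bar V^{\e_i}$ with the same bound because both sides of the identity $\delta V(X)=-\int H\cdot X\,d\lVert V\rVert$ scale linearly in the measure.

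With both $\lVert \bar V^{\e_i}\rVert$ and $\lVert \bar H^{\e_i}\rVert_{L^2(\bar V^{\e_i})}$ controlled uniformly in $i$, Theorem \ref{thm:compact-Hp-varifolds} produces a subsequence $\bar V^{\e_i}\to\bar V^{0}$ in the sense of Radon measures on $Gr(n,2n)$, with $\bar V^{0}$ an integer-rectifiable varifold satisfying $(H_2)$. Since each $\bar V^{\e_i}$ is supported on the closed sub-bundle $LGr\subset Gr(n,2n)$ of lagrangian planes, and closedness of the support is preserved under weak limits of Radon measures (the characteristic function of the complement is lower semicontinuous), $\bar V^{0}$ is itself a lagrangian varifold. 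The argument is essentially mechanical once the rescaling is set up; the only place requiring any care is the bookkeeping of the factor $\lambda_{\e_i}m_{\e_i}$, which is precisely what ensures Allard's hypothesis of uniformly bounded $L^2$ mean curvature is met regardless of whether $\lambda_{0}$ is finite or infinite.
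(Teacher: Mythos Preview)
Your approach is the same as the paper's---the paper's entire proof is the single sentence ``The result follows from Theorem \ref{thm:compact-Hp-varifolds}''---but you supply the arithmetic the paper omits and, in doing so, correctly catch a discrepancy in the statement: with the rescaling $\bar V=V/(\lambda m)$ one computes $\lVert\bar H\rVert_{L^2(\bar V)}^2=\lambda$, not $1$. Your suggested fix $\bar V=V/(\lambda^2 m)$ is exactly what restores the claimed equality, and this corrected normalization is consistent with how the quantities $H^\e/\lambda_\e$ against $V^\e/m_\e$ are used in the subsequent Lemmas \ref{thm:mollified-mean-curv1}--\ref{thm:mollified-mean-curv3}.

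One caution: your parenthetical that ``in either formulation a uniform bound'' results is not quite right. Under the rescaling $V/(\lambda m)$ the $L^2$ mean curvature is $\sqrt{\lambda_{\e_i}}$, which is \emph{unbounded} in the case $\lambda_{\e_i}\to\infty$ explicitly allowed by the hypothesis; only with your corrected scaling $V/(\lambda^2 m)$ do both mass ($=1/\lambda^2$) and $\lVert H\rVert_{L^2}$ ($=1$) stay uniformly bounded in that case. So the bookkeeping really does matter, and you should commit to the corrected normalization rather than hedge. A smaller technical point: after rescaling by a noninteger constant the $\bar V^{\e_i}$ are no longer integer-rectifiable, so one should invoke the general rectifiable-varifold version of Allard's compactness (or simply Radon-measure compactness on $Gr(n,2n)$ followed by the $(H_p)$ argument of Theorem \ref{thm:compact-Hp-varifolds}); this does not affect the conclusion.
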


\begin{proof}
The result follows from Theorem \ref{thm:compact-Hp-varifolds}.
\end{proof}

Observe that the error term $E(x, \e)$ is independent of the rescalings of the varifold $V^\e$.

\begin{lem}
\label{lem:dom-conv-to-E}
Given a sequence $\{ \e_i \}$ with $\e_i \to 0$ such that  $\lim_{i \to \infty} \l_{\e_i}(t)$ exists and $\lim_{i \to \infty} \l_{\e_i}(t) > 0$ (possibly $= \infty$) then there is a subsequence, which we continue to denote  $\{ \e_i \}$, such that:
$$
\lim_{ \e_i \to 0}  \int | E(x, \e_i) |^2 d \lVert{\bar{V}^{\e_i}}\rVert =0.
$$
\end{lem}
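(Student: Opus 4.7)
The plan is to combine the pointwise bound on $|E(x,\e)|^2$ from Theorem~\ref{thm:errorestimate} with Brakke-Menne tilt-excess decay (Theorem~\ref{thm:tilt-excess-B-M}) and the multiplicity limit (Lemma~\ref{lem:mollifiertomultiplicity}), then exploit the normalization by $\lambda_{\e_i}m_{\e_i}$ to obtain the claimed decay.

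First, I would handle the denominator in the bound of Theorem~\ref{thm:errorestimate}: Lemma~\ref{lem:mollifiertomultiplicity} gives $\int_{B(x,\e_i)}\phi_{\e_i}(x-y)\,d\lVert V^{\e_i}\rVert_y\to\theta(x)\geq 1$ as $\e_i\to 0$, yielding a uniform lower bound. For the numerator, the key input is Theorem~\ref{thm:tilt-excess-B-M} applied to the limit varifold $\bar V^0$ obtained via Lemma~\ref{lem:sequence-Allard}, namely
\[
r^{-1-n}\int_{B(x,r)\times Gr(n,2n)}|S-T_x\bar V^0|^2\,d\bar V^0\to 0 \quad \text{as } r\to 0
\]
for $\bar V^0$-a.e.\ $x$. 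Using the varifold convergence $\bar V^{\e_i}\to\bar V^0$ at each fixed $r$, I would extract a diagonal subsequence (still denoted $\e_i$) along which the tilt-excess in $V^{\e_i}$ at the matching scale $r=\e_i$ decays comparably. Substituting into Theorem~\ref{thm:errorestimate} then yields $|E(x,\e_i)|^2=o(\e_i^{n-1})$ pointwise $\bar V^0$-a.e., which vanishes because $n\geq 2$ (inherited from the hypotheses of Theorem~\ref{thm:errorestimate}).

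To promote pointwise vanishing to integral vanishing against $\lVert\bar V^{\e_i}\rVert$, I would establish uniform integrability. Applying Fubini to the bound of Theorem~\ref{thm:errorestimate} (after absorbing the uniform lower bound on the denominator) gives
\[
\int|E(x,\e_i)|^2\,d\lVert V^{\e_i}\rVert_x\leq C\e_i^{-2}\int\!\!\int_{B(\xi,\e_i)}|T_\xi V^{\e_i}-T_xV^{\e_i}|^2\,d\lVert V^{\e_i}\rVert_x\,d\lVert V^{\e_i}\rVert_\xi,
\]
which is bounded uniformly in $i$ via Lemma~\ref{lem:sequence-Allard} together with the uniform $L^\infty$-bound on $\beta^{\e_i}$ from Theorem~\ref{thm:boundonbeta}. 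Upon dividing by $\lambda_{\e_i}m_{\e_i}$, the cases $\lambda_\infty<\infty$ (where Vitali-type convergence for sequences of Radon measures yields the limit) and $\lambda_\infty=\infty$ (where the explosion of $\lambda_{\e_i}$ alone suffices against a uniformly bounded numerator) both yield the claim.

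The main obstacle will be the diagonal extraction: tilt-excess decay is a statement for a fixed varifold as $r\to 0$, but our estimate requires simultaneous control as both the varifold $V^{\e_i}$ and the scale $\e_i$ vary. This is resolved by combining stability of the tilt-excess integrals under varifold convergence at each fixed $r$ with the decay for the limit $\bar V^0$ from Theorem~\ref{thm:tilt-excess-B-M}, choosing the subsequence so that the varifold tilt at scale $\e_i$ in $V^{\e_i}$ is controlled by the tilt at a slightly larger scale in $\bar V^0$.
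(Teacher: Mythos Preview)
Your high-level strategy---tilt-excess decay plus a diagonal extraction along the varifold convergence $\bar V^{\e_i}\to\bar V^0$---matches the paper's, but two steps in your execution do not go through as written.

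First, the uniform-integrability step is not substantiated. Your Fubini bound
\[
\int|E(x,\e_i)|^2\,d\lVert V^{\e_i}\rVert_x\leq C\e_i^{-2}\int\!\!\int_{B(\xi,\e_i)}|T_\xi V^{\e_i}-T_xV^{\e_i}|^2\,d\lVert V^{\e_i}\rVert_x\,d\lVert V^{\e_i}\rVert_\xi
\]
already presupposes a uniform lower bound on the denominator $\int_{B(x,\e_i)}\phi_{\e_i}(x-y)\,d\lVert V^{\e_i}\rVert_y$, but Lemma~\ref{lem:mollifiertomultiplicity} is a statement for a \emph{fixed} varifold as the mollification scale goes to zero; it says nothing when both the scale and the varifold vary together. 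More seriously, even with the crude bound $|T_\xi-T_x|^2\leq 4$, the right-hand side becomes $4\e_i^{-2}\int\lVert V^{\e_i}\rVert(B(\xi,\e_i))\,d\lVert V^{\e_i}\rVert_\xi$, and neither Lemma~\ref{lem:sequence-Allard} nor Theorem~\ref{thm:boundonbeta} gives a uniform bound on this quantity. In particular, your $\lambda_\infty=\infty$ case, which relies on a uniformly bounded numerator, collapses.

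Second, the paper avoids this by a device you do not introduce: a two-parameter error $E(x,r,\e)$ where the mollification scale $r$ is decoupled from the varifold index $\e$, with $E(x,\e,\e)=E(x,\e)$. For \emph{fixed} $\e$ one gets a pointwise dominating bound $|E(x,r,\e)|^2\leq c_1\Lambda(\e)$ by comparing $r^{-n}\lVert\bar V^\e\rVert(B(x,r))$ to $\int_{B(x,r)}\phi_r(x-y)\,d\lVert\bar V^\e\rVert_y$ (both have the same limit as $r\to0$), and then ordinary dominated convergence plus Theorem~\ref{thm:tilt-excess-B-M} give $\lim_{r\to0}\int|E(x,r,\e)|^2\,d\lVert\bar V^\e\rVert=0$ on the \emph{fixed} measure $\lVert\bar V^\e\rVert$. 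Doing this also for the limit varifold $\bar V^0$ and then taking a diagonal $r_i=\e_i$ is what makes the argument close. Your proposal gestures at this diagonal (``tilt at scale $\e_i$ in $V^{\e_i}$ controlled by tilt at a slightly larger scale in $\bar V^0$'') but without the decoupling you never obtain a dominating function on any single measure, and the invocation of ``Vitali-type convergence for sequences of Radon measures'' is not a standard tool that fills this gap.
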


\begin{proof}
Introduce the function:
\begin{eqnarray} \label{equ:define-E2}
&&E(x, r,  \e) \\ \nonumber
&& = \frac{\int \big( \n_x \phi_r(x-y) +  \n_y \phi_r(x-y) \big)\b(y) d\lVert V^\e\rVert_y}{\int \phi_r(x-y)  d\lVert V^\e\rVert_y +r ||V^\e|| }  -\b_r(x) \frac{\int \big( \n_x  \phi_r(x-y) +\n_y  \phi_r(x-y)\big)d\lVert V^\e\rVert_y}{\int \phi_r(x-y)  d\lVert V^\e\rVert_y +r ||V^\e||} 
\end{eqnarray}
The function $E(x, r,  \e)$ is also independent of the rescalings of the varifold $V^\e$. Observe that:
$$
E(x, \e,  \e) = E(x, \e).
$$
The estimate (\ref{equ:error-estimate}) becomes:
\begin{equation}
\label{equ:error-estimate2}
 |E(x, r, \e)|^2 < c_1 r^{-2} \frac{ \int_{B(x, r) \times Gr(n,2n)} |S -T^n_x(V^\e)|^2 dV^\e(\xi, S)  }{ {\int_{B(x, r)} \phi_r(x-y) d\lVert V^\e_y\rVert}  }
 \end{equation}

For $r < 1$ and $n \geq 2$  we have:
\begin{eqnarray*}
r^{-2}    \int_{B(x, r) \times Gr(n,2n)} |S -T^n_x(V^\e)|^2 dV^\e(\xi, S) &\leq& r^{-2}    \int_{B(x, r) \times Gr(n,2n)} 2 ( |S|^2 + |T^n_x(V^\e)|^2 ) dV^\e(\xi, S)  \\
 &\leq&  4 r^{-2}    \int_{B(x, r)} d\lVert V^\e\rVert \\
  &\leq& 4   r^{-n}    \int_{B(x, r)} d\lVert V^\e\rVert  \\
\end{eqnarray*}
Thus,
$$
 | E(x,r, \e) |^2 \leq  c_1  \frac{r^{-n}    \int_{B(x, r)} d\bar{V}^{\e} }{\int_{B(x, r)} \phi_r(x-y) d\lVert \bar{V}^{\e}\rVert }
$$
For $0 < r \leq 1$ and $x \in \supp \lVert V^\e\rVert$, set:
$$
f(x, r, \e) = \frac{r^{-n}    \int_{B(x, r)} d\lVert{\bar V}^\e\rVert }{\int_{B(x, r)} \phi_r(x-y) d\lVert{\bar V}^\e\rVert_y}
$$

Fix $\e$. Then $f(x, r, \e)$ is continuous for $x \in \supp \lVert V^\e\rVert $ and $r  \in(0, 1]$ and for each $x \in \supp \lVert V^\e\rVert$:
$$
\lim_{r \to 0} f(x, r, \e) = 1
$$
Thus there exists a constant $\L=\L(\e)<\infty$ such that:
$$
\sup_{\supp V^\e \times (0, 1]} f(x, r, \e) <\L
$$
It follows that
\begin{equation}
\label{equ:sup-bound-E}
\sup_{\supp V^\e \times (0, 1]} | E(x, r, \e) |^2 \leq  c_1 \L.
\end{equation}
On the other hand by Theorem \ref{thm:tilt-excess-B-M} pointwise for a.e. point $x$ we have:
$$
\lim_{r \to 0}  | E(x,r, \e) |^2 = 0.
$$
Therefore by the dominated convergence theorem for each $\e > 0$:
$$
\lim_{r \to 0} \int | E(x,r, \e) |^2 d\lVert{\bar V}^\e\rVert_x =0.
$$
In particular, given a sequence $\{ r_j \}$ with $r_j \to 0$ we have for each $\e$:
$$
\lim_{r_j \to 0} \int | E(x,r_j, \e) |^2 d\lVert{\bar V}^\e\rVert_x =0.
$$

Given a sequence $\{ \e_i \}$ with $\e_i \to 0$ such that Lemma \ref{lem:sequence-Allard} applies to give $\bar{V}^{\e_i} \to \bar{V}^0$ we can use the previous reasoning to conclude:
$$
\lim_{r_j \to 0} \int | E(x,r_j, \e_i) |^2 d\lVert{\bar V}^{\e_i}\rVert_x =0,
$$
and

$$
\lim_{r_j \to 0} \int | E(x,r_j, 0) |^2 d\lVert{\bar V}^{0}\rVert_x =0,
$$
where $E(x,r_j,0)$ represents the error computed with respect to the varifold ${\bar V}^0$, that is,
\begin{align*}
&E(x,r_j,0)=\\
&\frac{\int \left(\nabla_x \phi_{r_j}(x-y)+\nabla_y\phi_{r_j}(x-y)\right)\beta(y)d\lVert{\bar V}^0\rVert_y}{\int\phi_{r_j}(x-y)d\lVert{\bar V}^0\rVert_y+r_j ||{\bar V}^0||}-\beta_{r_j}(x)\frac{\int \left(\nabla_x \phi_{r_j}(x-y)+\nabla_y\phi_{r_j}(x-y)\right)d\lVert{\bar V}^0\rVert_y}{\int\phi_{r_j}(x-y)d\lVert{\bar V}^0\rVert_y+r_j ||{\bar V}^0||}
\end{align*}
Therefore,
$$
\lim_{j \to \infty} \lim_{i \to \infty}  \int | E(x,r_j, \e_i) |^2 d{\bar V}^{\e_i}_x = \lim_{i \to \infty} \lim_{j \to \infty}  \int | E(x,r_j, \e_i) |^2 d{\bar V}^{\e_i}_x =0.
$$
Taking a diagonal sequence and setting $r_i = \e_i$ gives the result.
\end{proof}

\begin{lem}
\label{lem:sequence-Allard2}
Given a sequence $\{ \e_i \}$ with $\e_i \to 0$ suppose $\lim_{i \to \infty} \l_{\e_i}(t) =0$  then the rescaled varifolds $\hat{V}^{\e_i} = \frac{V^{\e_i}}{ m_{\e_i}}$ have:
\begin{enumerate}
\item uniformily bounded mass, \\
\item satisfy $(H_2)$ and the bound:
$$
|| {\hat H}^{\e_i} ||_{L^2(\hat{V}^{\e_i})} \leq 1.
$$
\end{enumerate}
Therefore there is a subsequence, that we continue to denote $\{ \e_i \}$, such that,
$$
\hat{V}^{\e_i} \to \hat{V}^0,
$$
as varifolds, $\hat{V}^0$ is a lagrangian varifold with vanishing mean curvature.
\end{lem}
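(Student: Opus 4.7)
The plan is to verify the two numbered claims, invoke Allard's compactness theorem to extract the subsequential varifold limit, and then upgrade the $(H_2)$ limit to one with vanishing mean curvature by exploiting the hypothesis $\l_{\e_i}(t)\to 0$. For (1), by construction $\lVert \hat V^{\e_i}\rVert = \lVert V^{\e_i}\rVert/m_{\e_i} = 1$, so the masses are uniformly bounded. For (2), the pointwise generalized mean curvature is invariant under scalar rescaling of a varifold: writing $\hat V = V/c$ for a constant $c>0$ we have $\delta \hat V(X) = c^{-1}\delta V(X) = -\int H\cdot X\, d\lVert \hat V\rVert$ with the same $H$. Hence $\hat V^{\e_i}$ still satisfies $(H_p)$ for $p=2$ with mean curvature $H^{\e_i}$, and
\[
\lVert \hat H^{\e_i}\rVert_{L^2(\hat V^{\e_i})}^2 \;=\; \int |H^{\e_i}|^2\, d\lVert \hat V^{\e_i}\rVert \;=\; \tfrac{1}{m_{\e_i}}\!\int |H^{\e_i}|^2\, d\lVert V^{\e_i}\rVert \;=\; \l_{\e_i}(t)^2.
\]
Since $\l_{\e_i}(t)\to 0$, after discarding finitely many indices we have $\l_{\e_i}(t)\le 1$, which gives the stated bound.

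With (1) and (2) in hand, Theorem \ref{thm:compact-Hp-varifolds} produces a subsequence, still indexed by $\e_i$, with $\hat V^{\e_i}\to\hat V^0$ as Radon measures on $Gr(n)$, and $\hat V^0$ an integer-rectifiable $n$-varifold of locally bounded first variation satisfying $(H_2)$. That $\hat V^0$ is lagrangian follows because each $\hat V^{\e_i}$ is supported on the closed subset $LGr\subset Gr(n)$, so its limit is also supported there.

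To see that $\hat V^0$ has vanishing generalized mean curvature, let $X$ be any smooth, compactly supported test vector field. By Hölder and the $L^2$-bound on $\hat H^{\e_i}$,
\[
|\delta \hat V^{\e_i}(X)| \;=\; \Bigl|\int \hat H^{\e_i}\cdot X\, d\lVert \hat V^{\e_i}\rVert\Bigr| \;\le\; \l_{\e_i}(t)\,\lVert X\rVert_{L^2(\hat V^{\e_i})}.
\]
Varifold convergence $\hat V^{\e_i}\to\hat V^0$ yields $\delta\hat V^{\e_i}(X)\to\delta\hat V^0(X)$ (testing $\div_SX$ against $\hat V^{\e_i}$) and $\lVert X\rVert_{L^2(\hat V^{\e_i})}\to\lVert X\rVert_{L^2(\hat V^0)}$ (testing the continuous compactly supported function $|X|^2$ against $\pi_*\hat V^{\e_i}$). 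The hypothesis $\l_{\e_i}(t)\to 0$ therefore forces $\delta\hat V^0(X)=0$ for every such $X$, so $\hat V^0$ is stationary and its generalized mean curvature vanishes $\lVert\hat V^0\rVert$-a.e.\ by Proposition \ref{prop:no-sing}.

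The main subtlety is this final passage to the limit: the mass bound from Allard compactness alone only gives $(H_2)$ for the limit, and one must separately pair the uniform control $\lVert \hat H^{\e_i}\rVert_{L^2(\hat V^{\e_i})}=\l_{\e_i}(t)\to 0$ with varifold convergence of both $\delta\hat V^{\e_i}$ and the $L^2(\hat V^{\e_i})$-norms on smooth test data. The rest of the argument (rescaling invariance of mean curvature, mass normalization, closedness of $LGr$) is routine.
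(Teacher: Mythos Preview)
Your proof is correct and follows the paper's approach exactly: the paper simply says ``Same as the proof of Lemma \ref{lem:sequence-Allard}'' (i.e., invoke Theorem \ref{thm:compact-Hp-varifolds}), and you have spelled out the details, including the passage to vanishing mean curvature via $\l_{\e_i}(t)\to 0$ that the paper leaves implicit. One minor imprecision: since $\hat V^{\e_i}=V^{\e_i}/m_{\e_i}$ is not integer-rectifiable, you should not claim $\hat V^0$ is integer-rectifiable; the lemma asserts only that $\hat V^0$ is a lagrangian varifold, and what is really being used is general varifold compactness together with the first-variation argument in the proof of Theorem \ref{thm:compact-Hp-varifolds}.
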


\begin{proof}
Same as the proof of Lemma \ref{lem:sequence-Allard}.
\end{proof}

\begin{lem}
\label{lem:dom-conv-to-E2}
Given a sequence $\{ \e_i \}$ with $\e_i \to 0$ such that  $\lim_{i \to \infty} \l_{\e_i}(t)=0$ then there is a subsequence, which we continue to denote  $\{ \e_i \}$, such that:
$$
\lim_{ \e_i \to 0}  \int | E(x, \e_i) |^2 d \lVert{\hat{V}^{\e_i}}\rVert_x =0.
$$
\end{lem}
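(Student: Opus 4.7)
The plan is to adapt the proof of Lemma \ref{lem:dom-conv-to-E} verbatim, replacing $\bar{V}^{\e_i}$ with $\hat{V}^{\e_i}$. The core observation is that the estimates on the error term $E(x, \e)$ in Theorem \ref{thm:errorestimate} and its two-parameter generalization $E(x, r, \e)$ depend only on the geometry of the varifold, not on its total mass or on the mean-curvature normalization. In particular, both $E(x, r, \e)$ and the pointwise bound \eqref{equ:error-estimate2} are invariant under multiplicative rescaling of the measure, so they apply equally well with $\hat{V}^{\e_i} = V^{\e_i}/m_{\e_i}$ in place of $\bar V^{\e_i}$.

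First I would use Lemma \ref{lem:sequence-Allard2} to pass to a subsequence (still denoted $\{\e_i\}$) so that $\hat V^{\e_i} \to \hat V^0$ in the varifold sense, where $\hat V^0$ is a lagrangian varifold satisfying $(H_2)$ with vanishing mean curvature. The $L^\infty$ bound on $\beta^{\e_i}$ from Theorem \ref{thm:boundonbeta}, combined with the resulting $L^2$ bound on $B^{\e_i}$ via Theorem \ref{thm:harvey-lawson-varifold}, permits an application of Theorem \ref{thm:convergence-beta} to produce a Maslov-index-zero lift $\beta$ on $\hat V^0$ with $\lVert \beta\rVert_{L^\infty(\hat V^0)}$ controlled uniformly. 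This ensures $E(x, r, 0)$, defined by \eqref{equ:define-E2} with $\hat V^0$ in place of $V^\e$, is meaningful and enjoys the same bound \eqref{equ:error-estimate2}.

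Next, for each fixed $\e$, the function
$$f(x, r, \e) = \frac{r^{-n}\int_{B(x,r)}d\lVert \hat V^{\e}\rVert}{\int_{B(x,r)}\phi_r(x-y)\,d\lVert \hat V^{\e}\rVert_y}$$
is continuous on $\supp \lVert\hat V^\e\rVert \times (0,1]$ and tends to $1$ as $r \to 0$, so it is bounded by some $\Lambda(\e) < \infty$, giving the uniform bound $|E(x, r, \e)|^2 \leq c_1 \Lambda$. Theorem \ref{thm:tilt-excess-B-M} gives pointwise $\lVert \hat V^\e\rVert$-a.e.\ convergence $|E(x, r, \e)|^2 \to 0$ as $r \to 0$, and dominated convergence yields
$$\lim_{r_j \to 0}\int |E(x, r_j, \e_i)|^2\, d\lVert \hat V^{\e_i}\rVert_x = 0, \qquad \lim_{r_j \to 0}\int |E(x, r_j, 0)|^2\, d\lVert \hat V^{0}\rVert_x = 0,$$
for any sequence $r_j \to 0$.

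Finally, since $\hat V^{\e_i} \to \hat V^0$ weakly, for each fixed $r_j$ we have $\int |E(x, r_j, \e_i)|^2\, d\lVert \hat V^{\e_i}\rVert_x \to \int |E(x, r_j, 0)|^2\, d\lVert \hat V^{0}\rVert_x$ as $i\to\infty$. Thus
$$\lim_{j\to\infty}\lim_{i\to\infty} \int |E(x, r_j, \e_i)|^2\, d\lVert \hat V^{\e_i}\rVert_x = \lim_{i\to\infty}\lim_{j\to\infty} \int |E(x, r_j, \e_i)|^2\, d\lVert \hat V^{\e_i}\rVert_x = 0,$$
and extracting a diagonal subsequence with $r_i = \e_i$ yields the conclusion. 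The main delicate point I expect is verifying weak continuity of the $r$-fixed integrals $\int |E(x, r, \e_i)|^2\, d\lVert\hat V^{\e_i}\rVert$ in $i$; this requires that the integrand, built from $\phi_r$-mollifications of $\beta^{\e_i}$ and $H^{\e_i}$, converges in a sufficiently strong sense, but for fixed $r$ the mollifier $\phi_r$ is smooth and compactly supported, so the convergence $\hat V^{\e_i}\rightharpoonup \hat V^0$ together with the uniform $L^\infty$ bound on $\beta^{\e_i}$ suffices.
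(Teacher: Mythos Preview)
Your proposal is correct and takes essentially the same approach as the paper, which simply states that the proof is the same as that of Lemma~\ref{lem:dom-conv-to-E}. You have in fact spelled out the adaptation in more detail than the paper does, correctly noting that the scale-invariance of $E(x,r,\e)$ makes the argument go through with $\hat V^{\e_i}$ in place of $\bar V^{\e_i}$ and invoking Lemma~\ref{lem:sequence-Allard2} in place of Lemma~\ref{lem:sequence-Allard}.
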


\begin{proof}
Same as the proof of Lemma \ref{lem:dom-conv-to-E}.
\end{proof}

Using the results and estimates of Section \ref{sec:epsilon-flows} the geometric quantities along the $\e$-flow  are Lipschitz in time on $[0,T]$ and continuous in $\e$ for $\e > 0$.
For a fixed $s \in [0,T]$ we consider three cases:
\begin{enumerate}
\item $\limsup_{\e \to 0}   \l_\e(s) > 0$, possibly $=\infty$.\\
\item $\limsup_{\e \to 0}   \l_\e(s) =0$ and $\liminf_{\e \to 0} m_\e(s) < \infty$\\
\item $\limsup_{\e \to 0}   \l_\e(s) =0$ and $\liminf_{\e \to 0} m_\e(s) = \infty$\\
\end{enumerate}

The main theorem of this section is:

\begin{thm}\label{thm:volumedecrease}
Let $\{ V^\e(t) : t \geq 0 \}$ be an $\e$-flow. For each $t > 0$, $V^\e(t)$  is an integral lagrangian  $n$-varifold with Maslov index zero satisfying $(H_2)$.  Suppose that the lagrangian angle $\b_t \in L^\infty(V^\e(t))$ has weak derivative $B_t \in L^2(V^\e(t))$. 

Fix $s \in [0,T]$. In cases (1) and (2) there is a sequence $\{ \e_i \}$ with $\e_i \to 0$ such that:
$$
\lim_{\e_i \to 0} \frac{d}{dt} \lVert V^{\e_i}(t) \rVert_{|_{t = s}} < 0
$$
In case (3) there is a sequence $\{ \e_i \}$ with $\e_i \to 0$ such that:
$$
\lim_{\e_i \to 0} \frac{d}{dt} \lVert V^{\e_i}(t) \rVert_{|_{t = s}} = 0
$$

In all cases  there is a sequence, that we continue to denote $\{ \e_i \}$, such that for each $s \in [0,T]$  and any $\sigma> 0$ there is an integer $I$ such that for $i >I$:
$$
\frac{d}{dt} \lVert V^{\e_i}(t) \rVert_{|_{t = s}} < \sigma
$$
Moreover $\sigma \to 0$ as $I \to \infty$.
 \end{thm}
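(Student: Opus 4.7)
The plan is to differentiate $\lVert V^\e(t)\rVert$ at $t=s$, isolate a principal term proportional to $-\lVert H^\e\rVert_{L^2(V^\e)}^2$, control the remainder using the error estimate of Theorem~\ref{thm:errorestimate}, and then carry out the three-case analysis via the compactness and convergence lemmas of the previous section.

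Since the $\e$-flow is driven by the smooth compactly supported ambient vector field $JD\b^\e_\e$, the first variation formula and $(H_2)$ give
$$\frac{d}{dt}\lVert V^\e(t)\rVert\Big|_{t=s}=-\int H^\e\cdot JD\b^\e_\e\,d\lVert V^\e(s)\rVert.$$
I would then substitute $D\b^\e_\e=B^\e_\e-\b^\e_\e H^\e_\e+E(\cdot,\e)$ from Theorem~\ref{thm:errorestimate} and $B^\e=-JH^\e+\b^\e H^\e$ from Theorem~\ref{thm:harvey-lawson-varifold}; using that $J$ (a constant linear operator) commutes with mollification, so $JB^\e_\e=H^\e_\e+J(\b^\e H^\e)_\e$, together with the pointwise skew-symmetry identity $H\cdot JH=0$, this regroups the rate into
$$\frac{d}{dt}\lVert V^\e(s)\rVert=-\int H^\e\cdot H^\e_\e\,d\lVert V^\e\rVert+\int JH^\e\cdot\bigl[(\b^\e H^\e)_\e-\b^\e_\e H^\e_\e\bigr]\,d\lVert V^\e\rVert-\int H^\e\cdot JE\,d\lVert V^\e\rVert.$$
The first summand is the principal term, which one shows tends to $-\lVert H^\e\rVert_{L^2(V^\e)}^2$ as the mollification scale collapses. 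The second summand is a mollification/multiplication commutator and tends to zero in $L^2(V^\e)$ by Theorem~\ref{thm:strong-conv-Lp} together with the uniform bound on $\b^\e$ supplied by Theorem~\ref{thm:boundonbeta}. The third is bounded by Cauchy--Schwarz by $\lVert H^\e\rVert_{L^2(V^\e)}\lVert E\rVert_{L^2(V^\e)}$.

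For the case analysis I would pass to a subsequence using the rescaling lemmas of the previous section. In case (1), Lemma~\ref{lem:sequence-Allard} gives a rescaled subsequence $\bar V^{\e_i}=V^{\e_i}/(\l_{\e_i}m_{\e_i})$ of bounded mass with unit $L^2$ mean curvature, and Lemma~\ref{lem:dom-conv-to-E} yields $\int |E|^2\,d\lVert\bar V^{\e_i}\rVert\to 0$; scaling back, $\lVert E\rVert^2_{L^2(V^{\e_i})}=o(\l_{\e_i}^2 m_{\e_i})$, which is of strictly lower order than the principal term $-\l_{\e_i}^2 m_{\e_i}$. Since $m_{\e_i}$ admits a positive lower bound (from monotonicity of mass along the $\e$-flow and positivity of the initial mass), the principal term is bounded above by a strictly negative constant, yielding the strict inequality in the limit. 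In cases (2) and (3), Lemma~\ref{lem:sequence-Allard2} with the rescaling $\hat V^{\e_i}=V^{\e_i}/m_{\e_i}$ together with Lemma~\ref{lem:dom-conv-to-E2} again gives vanishing error and commutator terms, and the principal term $-\l_{\e_i}^2 m_{\e_i}$ tends to $0$ from below in case (3). The uniform assertion $\frac{d}{dt}\lVert V^{\e_i}(s)\rVert<\sigma$ for $i>I$ then follows by a standard diagonalisation over a countable dense set of times $s\in[0,T]$ and threshold values $\sigma=1/k$, exploiting uniform continuity in $t$ of the rate, which comes from the bounds in Section~\ref{sec:epsilon-flows}.

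The main obstacle will be rigorously establishing the strong $L^2$ convergences $H^\e_\e\to H^\e$ and $(\b^\e H^\e)_\e-\b^\e_\e H^\e_\e\to 0$ on the evolving varifolds $V^\e$ themselves: both require the tilt-excess decay of Theorem~\ref{thm:tilt-excess-B-M} in order to interchange the ambient mollification with intrinsic averaging on $V^\e$, and both demand that the convergence be uniform along the flow in a manner compatible with the scalings used in the three cases. A secondary delicate point is the treatment of case (2), where the principal term vanishes in the limit: to match the claimed strict negativity, one must verify that a sign is retained at the critical scaling, which requires tracking the commutator term carefully rather than merely bounding its absolute value.
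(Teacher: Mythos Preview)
Your decomposition of $\frac{d}{dt}\lVert V^\e\rVert$ into principal term, commutator, and error is exactly the paper's (equation~(\ref{equ:deriv-of-volume})), and your case split and rescalings match. Two points, however, do not go through as you propose.

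First, the appeal to Theorem~\ref{thm:strong-conv-Lp} for the commutator, and the parallel claim that the principal term tends to $-\lVert H^\e\rVert_{L^2(V^\e)}^2$, both require strong $L^2$ convergence of mollifications \emph{on the varying varifolds $V^\e$ as $\e\to 0$}; Theorem~\ref{thm:strong-conv-Lp} only gives $f_\e\to f$ for a fixed $V$. The paper does not attempt this. Instead it passes to the rescaled \emph{limit} measure $V$ (or $\bar V^0$) and, via Lemmas~\ref{thm:mollified-mean-curv1}--\ref{thm:mollified-mean-curv3}, shows that $(H^{\e_i}/\l_{\e_i})_{\e_i}$ converges weakly in $L^2(V)$ to some $X$ with $\int |X|^2\,dV=1$, and that the commutator terms pair to $\int\tilde\b\,X\cdot JX\,dV=0$ and $\int\hat\b\,X\cdot JX\,dV=0$ by skewness of $J$. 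This weak-limit-plus-norm argument is the actual mechanism; your proposed strong convergence on $V^\e$ would need a separate, nontrivial justification.

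Second, your assertion that $m_{\e_i}$ admits a positive lower bound ``from monotonicity of mass along the $\e$-flow'' is circular: the $\e$-flow moves by $JD\b_\e$, not by $H$, so mass is not known to be monotone at this stage; indeed establishing (approximate) mass monotonicity is the content of this theorem. In Case~(1) the paper does not use a mass lower bound: it simply observes that $\l_{\e_i}^{-2}m_{\e_i}^{-1}\cdot\frac{d}{dt}\lVert V^{\e_i}\rVert\to -\int|X|^2\,dV<0$ with a positive prefactor, which forces the unrescaled derivative to be eventually negative. The uniform mass bound~(\ref{equ:volume-est}) is obtained only afterwards, by integrating the conclusion.

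On Case~(2): your instinct that strict negativity is delicate is correct; the paper's own argument in Case~(2) in fact yields $\lim_{\e_i\to 0}\frac{d}{dt}\lVert V^{\e_i}(t)\rVert=0$, not $<0$, so you should not try to squeeze a strict sign out of it. What matters for the sequel is the uniform statement~(\ref{equ:ln-volume-limit}), and for that the $\leq 0$ conclusion suffices.
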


The following Lemmas are essential to the proof of Theorem \ref{thm:volumedecrease}.

\begin{lem}
\label{thm:mollified-mean-curv1}
Given a sequence $\{ \e_i \}$ with $\e_i \to 0$ there is a subsequence, that we will continue to denote $\{ \e_i \}$, such that:
\begin{equation}
\label{equ:mollified-mean-curv1}
\lim_{\e_i \to 0} \int \big(\frac{H^{\e_i}}{\l_{\e_i}} \big)_{\e_i} \cdot \frac{H^{\e_i}}{\l_{\e_i}} d \big( \frac{V^{\e_i}}{m_{\e_i}} \big) = \int X \cdot X dV \neq 0,
\end{equation}
where $V$ is a Radon measure and  $X \in L^2(V)$ is vector-valued.
\end{lem}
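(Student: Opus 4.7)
The plan is to identify the integral as the self-pairing of a weak limit of $f^{\e} := H^{\e}/\l_{\e}$ against itself. Setting $\mu^{\e} := V^{\e}/m_{\e}$, so that $\mu^{\e}$ is a probability measure and $\lVert f^{\e}\rVert_{L^{2}(\mu^{\e})} = 1$ by the definition of $\l_{\e}$, the integral $\int (f^{\e})_{\e} \cdot f^{\e}\, d\mu^{\e}$ is bounded uniformly in $\e$ by Cauchy-Schwarz and the $L^{2}$-operator bound on mollification from Proposition \ref{prop:bound-on-moll-op}.

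First, pass to a subsequence along which this scalar integral converges to some $L \in \R$. Then, by weak-$*$ compactness of Radon measures, extract further subsequences so that $\mu^{\e_{i}} \rightharpoonup V$ as Radon measures, the scalar measures $\lvert f^{\e_{i}}\rvert^{2}\, \mu^{\e_{i}}$ converge to some Radon measure $\rho$, and the vector-valued measures $\nu^{\e_{i}} := f^{\e_{i}} \mu^{\e_{i}}$ (of total variation at most $1$ by Cauchy-Schwarz) converge to a vector-valued Radon measure $\nu$. Applying Lemma \ref{lem:radon-measures-p} componentwise to the Cauchy-Schwarz bound $\lvert\nu^{\e_{i}}(A)\rvert^{2} \leq \rho^{\e_{i}}(A)\,\mu^{\e_{i}}(A)$ yields an absolute continuity decomposition $\nu = X\, V$ where $X$ is a vector-valued $L^{2}(V)$-density.

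To identify $L = \int X \cdot X\, dV$, rewrite the integral as $\int (f^{\e_{i}})_{\e_{i}}\cdot d\nu^{\e_{i}}$ and argue, in the spirit of the proof of Theorem \ref{thm:convergence-beta}, that the smooth functions $(f^{\e_{i}})_{\e_{i}}$ converge to $X$ in a sense sufficient to pass the weak limit through the pairing against $\nu^{\e_{i}}$. The concentration of the compactly-supported mollifier $\phi_{\e_{i}}$ on the diagonal as $\e_{i} \to 0$ (Lemma \ref{lem:mollifiertomultiplicity}) is the key input in passing the pairing to the limit.

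The main obstacle is showing that $L \neq 0$. The concern is that $f^{\e}$ may oscillate at small scales, so that $\nu$ (and hence $X$) vanishes in the weak limit despite $\lVert f^{\e}\rVert_{L^{2}(\mu^{\e})} = 1$ throughout. To rule this out I would exploit the geometric structure: $H^{\e}$ is the mean curvature of an integer-rectifiable lagrangian varifold satisfying $(H_{2})$ with uniformly bounded lagrangian angle $\b^{\e}$ (Theorem \ref{thm:boundonbeta}), and by Theorem \ref{thm:harvey-lawson-varifold} the weak derivative $B^{\e} = -JH^{\e} + \b^{\e}H^{\e}$ is tightly coupled to $H^{\e}$ itself. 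This rigidity, combined with the approximate positivity of the symmetric part of the mollification operator on $L^{2}(\mu^{\e})$, yields a uniform positive lower bound for the self-pairing, ensuring $L > 0$.
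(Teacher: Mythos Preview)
Your overall framework---extracting weak limits of the probability measures $\mu^{\e_i}$ and of the vector measures $f^{\e_i}\mu^{\e_i}$ to produce $V$ and a density $X\in L^2(V)$---is broadly in line with what the paper does in its steps (i)--(iii), though the paper works instead with weak $L^2(V)$ limits of the \emph{mollified} functions $(f^{\e_i})_\eta$ for an auxiliary parameter $\eta>0$, obtaining intermediate limits $X_\eta$. This two-scale trick (separate $\eta$ from $\e$, pass one limit at a time, then send $\eta\to 0$ and diagonalize) is the mechanism the paper uses in steps (iv)--(vi) to justify passing the pairing $\int (f^{\e_i})_{\e_i}\cdot f^{\e_i}\,d\mu^{\e_i}$ to $\int X\cdot X\,dV$. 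Your step 5 gestures at this but does not set up the auxiliary $\eta$; without it, the identification of the limit with a self-pairing of a single density is not clearly justified.

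The real divergence is in your treatment of $L\neq 0$. You propose invoking Theorem~\ref{thm:harvey-lawson-varifold} and an ``approximate positivity of the symmetric part of the mollification operator'' to rule out oscillation. This is unnecessary and, as stated, not a workable argument: nothing in the Harvey--Lawson identity gives a lower bound on $\int (f^\e)_\e\cdot f^\e\,d\mu^\e$, and ``approximate positivity'' is not something established in the paper. The paper's step (vii) is far more direct and uses no geometric structure at all: since mollification approximates the identity in $L^2$ (Theorem~\ref{thm:strong-conv-Lp}), one has
\[
\lim_{\e_i\to 0}\big\lVert (f^{\e_i})_{\e_i}\big\rVert_{L^2(\mu^{\e_i})}^2
=\lim_{\e_i\to 0}\big\lVert f^{\e_i}\big\rVert_{L^2(\mu^{\e_i})}^2
=1,
\]
the last equality being the very definition of $\l_{\e_i}$. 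Combined with the identification from (vi), this gives $\int X\cdot X\,dV=1$. In other words, the oscillation you worry about is ruled out \emph{a posteriori}: the limit is computed to be $1$ directly from the normalization, so whatever $X$ is, it must carry unit $L^2$ mass. You should replace your step 6 with this observation.
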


\begin{proof}
\noindent  {\bf (i)} The set of Radon measures $\big\{ \frac{\lVert V^\e\rVert}{m_\e} \big\}$ is bounded and hence there is a sequence $\{ {\e_i} \}$ with ${\e_i} \to0$ such that  $\big\{ \frac{\lVert V^{\e_i}\rVert}{m_{\e_i}} \big\}$  converges weakly to a Radon measure $V$.

\medskip

\noindent  {\bf (ii)} Since $|| \frac{H^{\e}}{\l_{\e}} ||_{L^2(\frac{V^{\e}}{m_{\e}})} = 1$, it follows that for  ${\eta} > 0$ there is a constant $C(\eta)$ with $C(\eta) \to 1$ as $\eta \to 0$  such that:
$$
|| \big(\frac{H^{\e}}{\l_{\e}} \big)_{\eta} ||_{L^2(\frac{V^{\e}}{m_{\e}})} < C(\eta).
$$ 
Note that if $A$ is a set with $\lVert V^\e\rVert (A) =0$ then $\frac{H^{\e}}{\l_{\e}}(x) = 0$ for $x \in A$. That is, the vector-valued function $\frac{H^{\e}}{\l_{\e}}$ is supported on the support of the measure $\lVert V^\e\rVert$. Hence for any $\eta >0$:
$$
\int \big| \big( \frac{H^{\e}}{\l_{\e}} \big)_{\eta} {\big|}^2 dV \leq \int \big| \big( \frac{H^{\e}}{\l_{\e}} \big)_{\eta} {\big|}^2 d\big( \frac{\lVert V^{\e}\rVert}{m_{\e}} \big) < C(\eta).
$$
Therefore the vector-valued functions $ \big(\frac{H^{\e}}{\l_{\e}} \big)_{\eta} \in L^2(V)$ and satisfy $|| \big(\frac{H^{\e}}{\l_{\e}} \big)_{\eta} ||^2_{ L^2(V)} < C(\eta)$ . Hence there is a sequence  $\{ {\e_i} \}$ with $\e_i \to 0$ such that $ \big(\frac{H^{\e_i}}{\l_{\e_i}} \big)_{\eta}$  converges weakly in $L^2(V)$ to a vector-valued function $X_\eta \in L^2(V)$.

In particular, the vector-valued functions $ \big(\frac{H^{\e}}{\l_{\e}} \big)_\e$ are bounded in $L^2(V)$. Therefore, there is a sequence $\{ {\e_i} \}$ with $\e_i \to 0$ such that  $ \big(\frac{H^{\e_i}}{\l_{\e_i}} \big)_{\e_i}$  converges weakly in $L^2(V)$ to a vector-valued function $X \in L^2(V)$.

\medskip

\noindent {\bf (iii)} Combining (i) and (ii) there are sequences $\e_i$ and $\d_i$ such that for any $\eta > 0$:
$$
\lim_{{\e_i} \to 0, \; {\d_i} \to 0} \big( \frac{H^{\e_i}}{\l_{\e_i}} \big)_{\e_i} d \big( \frac{\lVert V^{\d_i}\rVert}{m_{\d_i}} \big) = X dV.
$$
and
$$
\lim_{{\e_i} \to 0, \; {\d_i} \to 0} \big( \frac{H^{\e_i}}{\l_{\e_i}} \big)_{\eta} d \big( \frac{\lVert V^{\d_i}\rVert}{m_{\d_i}} \big) = X_\eta dV.
$$
Taking a diagonal sequence we have:
$$
\lim_{{\e_i} \to 0} \big(\frac{H^{\e_i}}{\l_{\e_i}} \big)_{\e_i} d\big( \frac{\lVert V^{\e_i}\rVert}{m_{\e_i}} \big) = X dV.
$$
and
$$
\lim_{{\e_i} \to 0} \big(\frac{H^{\e_i}}{\l_{\e_i}} \big)_{\eta} d\big( \frac{\lVert V^{\e_i}\rVert}{m_{\e_i}} \big) = X_\eta dV.
$$

\medskip

\noindent {\bf (iv)} Using (ii) and (iii) above we have that there are sequences $\{ {\e_i} \}$ and $\{ {\d_i} \}$ such that:
$$
\lim_{{\e_i} \to 0,  \; {\d_i} \to 0} \big( \frac{H^{\e_i}}{\l_{\e_i}} \big)_{\e_i} \big( \frac{H^{\d_i}}{\l_{\d_i}} \big)_\eta d \big( \frac{\lVert V^{\d_i}\rVert}{m_{\d_i}} \big) = X \cdot X_\eta dV.
$$
Taking a diagonal sequence we have:
$$
\lim_{{\e_i} \to 0} \int \big( \frac{H^{\e_i}}{\l_{\e_i}} \big)_{\e_i} \big( \frac{H^{\e_i}}{\l_{\e_i}} \big)_\eta d \big( \frac{\lVert V^{\e_i}\rVert}{m_{\e_i}} \big) = \int X \cdot X_\eta dV.
$$

\medskip

\noindent {\bf (v)} From (ii) we have that $X_\eta \in L^2(V)$ and that there is a constant $C(\eta)$  with $C(\eta) \to 1$ as $\eta \to 0$ such that $|| X_\eta ||_{L^2(V)} < C(\eta)$. Hence, for small $\eta$, there is a constant $C$ independent of $\eta$ such that $|| X_\eta ||_{L^2(V)} < C$. Then there is  a sequence $\{ {\eta_i} \}$ and a vector-valued function $Y \in L^2(V)$ such that $\{ X_{\eta_i} \}$ converges weakly in $L^2(V)$ to $Y$ as ${\eta_i} \to 0$. Since $\big(\frac{H^\e}{\l_\e} \big)_\eta$ goes to 
$\frac{H^\e}{\l_\e}$ strongly in $L^2(V)$ as $\eta \to 0$, it follows that:
$$
\lim_{{\e_i} \to 0} \int \big( \frac{H^{\e_i}}{\l_{\e_i}} \big)_{\e_i}   \frac{H^{\e_i}}{\l_{\e_i}} d \big( \frac{\lVert V^{\e_i}\rVert}{m_{\e_i}} \big) = \int X \cdot Y dV.
$$

\medskip

\noindent {\bf (vi)} From (iii) there is a sequence $\{ {\d_i} \}$ such that:
$$
\lim_{{\d_i} \to 0} \big(\frac{H^{\d_i}}{\l_{\d_i}} \big)_\eta d\big( \frac{\lVert V^{\d_i}\rVert}{m_{\d_i}} \big) = X_\eta dV.
$$
Taking the diagonal sequence $\eta = \d_i$ we have that 
$$
\lim_{\eta \to 0} X_\eta = X.
$$
Hence $Y = X$. We conclude that:
$$
\lim_{{{\e_i}} \to 0} \int \big(\frac{H^{\e_i}}{\l_{\e_i}} \big)_{\e_i} \cdot \frac{H^{\e_i}}{\l_{\e_i}} d \big( \frac{V^{\e_i}}{m_{\e_i}} ) = \int X \cdot X dV
$$

\medskip

\noindent {\bf (vii)} We have:
\begin{eqnarray*}
|| X ||_{L^2(V)} & = & \lim_{ {\e_i} \to 0} || \big(\frac{H^{\e_i}}{\l_{\e_i}} \big)_{\e_i} ||^2_{L^2( \frac{V^{\e_i}}{m_{\e_i}} )} \\
& = & \lim_{ {\e_i} \to 0} ||  \frac{H^{\e_i}}{\l_{\e_i}}  ||^2_{L^2 ( \frac{V^{\e_i}}{m_{\e_i}} )} \\
& = & 1
\end{eqnarray*}
Hence, $\int X \cdot X dV \neq 0$.

This completes the proof of (\ref{equ:mollified-mean-curv1}). 
\end{proof}

\begin{lem}
\label{cor:mollified-mean-curv1}
Given a sequence $\{ \e_i \}$ with $\e_i \to 0$ there is a subsequence, that we will continue to denote $\{ \e_i \}$, such that:
$$
\liminf_{{\e_i} \to 0}  \int \big( \frac{JH^{\e_i}}{\l_{\e_i}} \big)_{\e_i} \cdot \frac{JH^{\e_i}}{\l_{\e_i}} d \big( \frac{\lVert V^{\e_i}\rVert}{m_{\e_i}} ) = \int JX \cdot JX dV \neq 0,
$$
where $J$ is the complex structure on $\R^{2n}$.
\end{lem}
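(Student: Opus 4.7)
My plan is to mirror the proof of Lemma \ref{thm:mollified-mean-curv1}, exploiting two elementary properties of the almost-complex structure $J$ on $\R^{2n}$: it is a fiberwise isometry (so $\lvert JY\rvert = \lvert Y\rvert$ for every vector $Y$), and it is parallel, hence a constant linear operator in our Euclidean setting. The second property is crucial because it implies that mollification commutes with $J$: for any vector-valued function $Y$ on $V^{\e}$,
\begin{equation*}
(JY)_{\eta}(x) = \frac{\int J Y(y)\,\phi_{\eta}(x-y)\,d\lVert V^{\e}\rVert_{y}}{\int \phi_{\eta}(x-y)\,d\lVert V^{\e}\rVert_{y}+\eta\lVert V^{\e}\rVert} = J\bigl(Y_{\eta}(x)\bigr).
\end{equation*}

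With these two observations in hand, the first step is to note that, as a consequence of isometry,
\begin{equation*}
\Bigl\lVert\tfrac{JH^{\e}}{\l_{\e}}\Bigr\rVert_{L^{2}(\frac{V^{\e}}{m_{\e}})} = \Bigl\lVert\tfrac{H^{\e}}{\l_{\e}}\Bigr\rVert_{L^{2}(\frac{V^{\e}}{m_{\e}})} = 1,
\end{equation*}
and likewise $\lVert(JH^{\e}/\l_{\e})_{\eta}\rVert_{L^{2}(V^{\e}/m_{\e})}<C(\eta)$ with $C(\eta)\to 1$. This gives exactly the boundedness needed to feed the argument of Lemma \ref{thm:mollified-mean-curv1} through verbatim. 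Next, I would run the diagonal extraction of steps (i)--(vi) of Lemma \ref{thm:mollified-mean-curv1} with $H^{\e_{i}}$ replaced throughout by $JH^{\e_{i}}$: pass to a subsequence so that $\lVert V^{\e_{i}}\rVert/m_{\e_{i}}\rightharpoonup V$, then so that $(JH^{\e_{i}}/\l_{\e_{i}})_{\e_{i}}$ converges weakly in $L^{2}(V)$ to some vector field $\tilde X$, and so that $(JH^{\e_{i}}/\l_{\e_{i}})_{\eta}\rightharpoonup \tilde X_{\eta}$ for each $\eta>0$, with $\tilde X_{\eta}\to \tilde X$ strongly in $L^{2}(V)$ as $\eta\to 0$.

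The key identification is that $\tilde X = JX$, where $X$ is the vector field produced by Lemma \ref{thm:mollified-mean-curv1}. This follows from the commutation identity $(JH^{\e}/\l_{\e})_{\eta}=J(H^{\e}/\l_{\e})_{\eta}$ together with the fact that $J$ is a bounded linear operator, which preserves weak limits: since $(H^{\e_{i}}/\l_{\e_{i}})_{\e_{i}}\rightharpoonup X$ in $L^{2}(V)$, we get $J(H^{\e_{i}}/\l_{\e_{i}})_{\e_{i}}\rightharpoonup JX$, so uniqueness of the weak limit forces $\tilde X=JX$. Repeating the argument of step (vii) then yields
\begin{equation*}
\lim_{\e_{i}\to 0}\int\Bigl(\tfrac{JH^{\e_{i}}}{\l_{\e_{i}}}\Bigr)_{\e_{i}}\!\cdot\tfrac{JH^{\e_{i}}}{\l_{\e_{i}}}\,d\Bigl(\tfrac{\lVert V^{\e_{i}}\rVert}{m_{\e_{i}}}\Bigr) = \int JX\cdot JX\,dV,
\end{equation*}
and pointwise $JX\cdot JX=\lvert X\rvert^{2}$, so the right side equals $\int X\cdot X\,dV$, which is nonzero by Lemma \ref{thm:mollified-mean-curv1}.

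There is no real obstacle here beyond bookkeeping: the only content beyond Lemma \ref{thm:mollified-mean-curv1} is the commutation of $J$ with mollification, which is trivial in the Euclidean setting. The mild subtlety to flag is that one must extract the subsequence for the $JH^{\e_{i}}$ statement as a further subsequence of the one producing $X$ in Lemma \ref{thm:mollified-mean-curv1}, so that both limits are available simultaneously; a single diagonal argument handles this cleanly. In the ambient Calabi--Yau case (not strictly needed for this lemma as stated) one would instead use parallelism of $J$ together with a local trivialization and argue that the commutator $[J,\,\mathrm{mollification}]$ is lower order and harmless in the limit.
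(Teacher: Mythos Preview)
Your proposal is correct and matches the paper's (implicit) approach: the paper states this lemma without proof immediately after Lemma~\ref{thm:mollified-mean-curv1}, treating it as a direct consequence. Your observation that $J$ is a constant isometry commuting with mollification is precisely what makes the argument of Lemma~\ref{thm:mollified-mean-curv1} carry over verbatim, and your remark that one should extract a common subsequence so that both $X$ and $JX$ are available simultaneously is a sensible bookkeeping point the paper leaves tacit.
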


\bigskip

\begin{lem}
\label{thm:mollified-mean-curv2}
Given a sequence $\{ \e_i \}$ with $\e_i \to 0$ there is a subsequence, that we will continue to denote $\{ \e_i \}$, such that:
\begin{equation}
\label{equ:mollified-mean-curv2}
\lim_{{\e_i} \to 0} \int ( \b^{\e_i})_{\e_i} \big(\frac{ H^{\e_i}}{\l_{\e_i}} \big)_{\e_i} \cdot \frac{JH^{\e_i}}{\l_{\e_i}} d \big( \frac{\lVert V^{\e_i}\rVert}{m_{\e_i}} ) = \int\tilde{\b} X \cdot JX dV = 0,
\end{equation}
where $V$ and  $X \in L^2(V)$ are the same measure and vector-valued function as in Theorem \ref{thm:mollified-mean-curv1} and $\tilde{\b} \in L^\infty(V)$.
\end{lem}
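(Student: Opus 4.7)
The plan is to reduce the claim to a pointwise algebraic identity, after arranging the limit along the lines of Lemma \ref{thm:mollified-mean-curv1}. The decisive observation is that on $\R^{2n}$, the almost-complex structure $J$ is skew-adjoint with respect to the Euclidean inner product, $\langle JY, Z\rangle = -\langle Y, JZ\rangle$, so for any vector $Y \in \R^{2n}$ one has $\langle Y, JY\rangle = -\langle Y, JY\rangle = 0$. Applied $V$-a.e. to the vector $X(x)$ produced by Lemma \ref{thm:mollified-mean-curv1}, this makes the limit integrand $\tilde{\b}\, X \cdot JX$ vanish identically, so once I establish that the left-hand side of (\ref{equ:mollified-mean-curv2}) converges to some integral of this form, both sides are automatically zero.

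To identify the limit, I would proceed in parallel with the proof of Lemma \ref{thm:mollified-mean-curv1}. First, Theorem \ref{thm:boundonbeta} supplies a uniform bound $\lVert \b^\e\rVert_{L^\infty(V^\e)} < A$ independent of $\e$. Mollification does not increase the $L^\infty$ norm, so the family $(\b^{\e_i})_{\e_i}$ is uniformly bounded in $L^\infty$, and after passing to a subsequence, it converges weakly-$*$ against the measure $V$ (from step (i) of Lemma \ref{thm:mollified-mean-curv1}) to some $\tilde{\b} \in L^\infty(V)$. Next, I would extract further subsequences as in steps (ii)--(iii) of that same proof so that $(H^{\e_i}/\l_{\e_i})_{\e_i} \rightharpoonup X$ weakly in $L^2(V)$; since $J$ is a fixed linear isometry commuting with mollification, the unmollified factor satisfies $JH^{\e_i}/\l_{\e_i} \rightharpoonup JX$ in $L^2(V)$.

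To handle the triple product, I would introduce an auxiliary mollification parameter $\eta > 0$ and study the quantity
\[
\int (\b^{\e_i})_\eta \left(\frac{H^{\e_i}}{\l_{\e_i}}\right)_\eta \cdot \frac{J H^{\e_i}}{\l_{\e_i}} \, d\!\left(\frac{\lVert V^{\e_i}\rVert}{m_{\e_i}}\right).
\]
For fixed $\eta$, the first two factors are smooth and bounded in $L^\infty$ uniformly in $\e_i$, and, along a subsequence, converge pointwise to $\tilde{\b}_\eta X_\eta$, while the third factor converges weakly in $L^2$ to $JX$; the integral therefore tends to $\int \tilde{\b}_\eta X_\eta \cdot JX\, dV$. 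Letting $\eta \to 0$, strong $L^2$ convergence $X_\eta \to X$ (as in step (v) of Lemma \ref{thm:mollified-mean-curv1}) together with $L^\infty$ weak-$*$ convergence $\tilde{\b}_\eta \to \tilde{\b}$ give $\int \tilde{\b} X \cdot JX\, dV$. A diagonal extraction with $\eta = \e_i$ then yields (\ref{equ:mollified-mean-curv2}) along the desired subsequence.

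The main obstacle lies in the triple limit: combining the weak-$*$ $L^\infty$ behavior of $\tilde{\b}$ with the weak $L^2$ behavior of both mean-curvature factors. The mollification of $\b$ and of one copy of $H/\l$ is essential, since a naive product of two weak limits would not pass to the limit. The two-scale diagonal argument from Lemma \ref{thm:mollified-mean-curv1} adapts with minor modification once one notices that the $L^\infty$ factor tags along inertly thanks to the uniform bound provided by Theorem \ref{thm:boundonbeta}; with that in hand, the pointwise vanishing $\langle X, JX\rangle \equiv 0$ closes the argument.
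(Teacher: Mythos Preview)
Your proposal is correct and follows essentially the same route as the paper: invoke the uniform $L^\infty$ bound on $(\b^\e)_\e$ from Theorem~\ref{thm:boundonbeta} to extract $\tilde\b$, reuse the weak $L^2$ limits $X,X_\eta$ from Lemma~\ref{thm:mollified-mean-curv1}, run a two-scale/diagonal argument, and finish with the skew-adjointness $\langle X,JX\rangle=0$. The only organizational difference is that the paper introduces the auxiliary $\eta$-mollification on the unmollified $JH^{\e_i}/\l_{\e_i}$ factor (so that all limits take place in $L^2(V)$), whereas you place it on the already $\e_i$-mollified factors; your phrasing ``the third factor converges weakly in $L^2$ to $JX$'' is slightly imprecise since that factor lives in $L^2(V^{\e_i})$ rather than $L^2(V)$, but this is repaired exactly by the paper's choice of where to put $\eta$.
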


\begin{proof}
The proof is similar to the proof of Lemma \ref{thm:mollified-mean-curv1} except that the smooth function $( \b^\e)_\e$ must be taken into account. Observe that $|( \b^\e)_\e|  < A $ for all $\e$ and therefore there is a sequence  ${\e_i}$  such that $\{ ( \b^{\e_i})_{\e_i} \}$ converges in $L^\infty(V)$ to $\tilde{\b} \in {L^\infty(V)}$ with $||\tilde{\b}||_{L^\infty(V)} < A$.
Using the proof of Lemma \ref{thm:mollified-mean-curv1} there is a sequence ${\e_i}$ and vector-valued functions $X, X_\eta \in L^2(V)$ such that:
$$
\big(\frac{ H^{\e_i}}{\l_{\e_i}} \big)_{\e_i}  \rightharpoonup X, 
$$
 and
$$
\big(\frac{ H^{\e_i}}{\l_{\e_i}} \big)_\eta  \rightharpoonup X_\eta ,
$$
weakly in $L^2(V)$. Therefore there are sequences $\{ \e_i \}$,  $\{ {\t_i} \}$ and $\{ {\d_i} \}$ such that:
$$
\lim_{{\e_i} \to 0,  \; {\t_i} \to 0, \; {\d_i} \to 0} \big( \frac{H^{\e_i}}{\l_{\e_i}} \big)_{\e_i} \big( \frac{JH^{\e_i}}{\l_{\e_i}} \big)_\eta (\b^{\t_i})_{\t_i} d \big( \frac{\lVert V^{\d_i}\rVert}{m_{\d_i}} \big) = \tilde{\b} X \cdot JX_\eta dV.
$$
Taking a diagonal sequence we have:
$$
\lim_{{\e_i} \to 0} \int \big( \frac{H^{\e_i}}{\l_{\e_i}} \big)_{\e_i} \big( \frac{JH^{\e_i}}{\l_{\e_i}} \big)_\eta (\b^{\e_i})_{\e_i} d \big( \frac{\lVert V^{\e_i}\rVert}{m_{\e_i}} \big) = \int \tilde{\b} X \cdot JX_\eta dV.
$$
Using $\lim_{\eta \to 0} X_\eta = X$, the result follows. 
\end{proof}

\bigskip

\begin{lem}
\label{thm:mollified-mean-curv3}
Given a sequence $\{ \e_i \}$ with $\e_i \to 0$ there is a subsequence, that we will continue to denote $\{ \e_i \}$, such that:
\begin{equation}
\label{equ:mollified-mean-curv3}
\lim_{{\e_i} \to 0} \int \big(\frac{\b^{\e_i} H^{\e_i}}{\l_{\e_i}} \big)_{\e_i} \cdot \frac{JH^{\e_i}}{\l_{\e_i}} d \big( \frac{\lVert V^{\e_i}\rVert}{m_{\e_i}} \big) = \int \hat{\b} X \cdot JX dV = 0,
\end{equation}
where $V$ and  $X \in L^2(V)$ are the same measure and vector-valued function as in Theorem \ref{thm:mollified-mean-curv1} and $\hat{\b} \in L^\infty(V)$.
\end{lem}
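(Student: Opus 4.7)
The proof follows the template of Lemma \ref{thm:mollified-mean-curv2} and differs from it only in how $\b^{\e_i}$ interacts with $H^{\e_i}$ inside the mollification. The first task is uniform bounds: by Theorem \ref{thm:boundonbeta}, $\lVert \b^{\e_i}\rVert_{L^\infty(V^{\e_i})} \leq A$ uniformly in $\e_i$. Combined with $\lVert H^{\e_i}/\l_{\e_i}\rVert_{L^2(V^{\e_i}/m_{\e_i})} = 1$, this gives $g^{\e_i} := \b^{\e_i} H^{\e_i}/\l_{\e_i}$ uniformly bounded in $L^2(V^{\e_i}/m_{\e_i})$ by $A$. Proposition \ref{prop:bound-on-moll-op} then bounds $(g^{\e_i})_\eta$ in $L^2(V^{\e_i}/m_{\e_i})$, and the reasoning of step (ii) in the proof of Lemma \ref{thm:mollified-mean-curv1} transfers the bound to $L^2(V)$ for any fixed $\eta > 0$.

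Second, I extract weak limits. Diagonalizing over a sequence $\eta_j \to 0$ and invoking weak compactness in $L^2(V)$ yields a subsequence (still denoted $\e_i$) and functions $Y, Y_\eta \in L^2(V)$ with $(g^{\e_i})_{\e_i} \rightharpoonup Y$, $(g^{\e_i})_\eta \rightharpoonup Y_\eta$, and $Y_\eta \to Y$ strongly in $L^2(V)$ as $\eta \to 0$ (the last using that mollification is close to the identity in $L^2$, cf.~Theorem \ref{thm:strong-conv-Lp}). Repeating steps (iii)--(vi) of Lemma \ref{thm:mollified-mean-curv1} verbatim with $g^{\e_i}$ in place of $H^{\e_i}/\l_{\e_i}$ and pairing against the weak $L^2(V)$ subsequential limit $JX$ of $JH^{\e_i}/\l_{\e_i}$, I obtain
$$
\lim_{\e_i \to 0} \int \left(\frac{\b^{\e_i} H^{\e_i}}{\l_{\e_i}}\right)_{\e_i} \cdot \frac{JH^{\e_i}}{\l_{\e_i}} \, d\!\left(\frac{\lVert V^{\e_i}\rVert}{m_{\e_i}}\right) = \int Y \cdot JX \, dV.
$$

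Third, I identify $Y = \hat{\b} X$ for some $\hat{\b} \in L^\infty(V)$ with $\lVert \hat{\b}\rVert_{L^\infty(V)} \leq A$. The uniform $L^\infty$ bound on $\b^{\e_i}$ extracts a weak-$*$ limit $\hat{\b} \in L^\infty(V)$; the identification amounts to a commutator estimate showing that $(\b^{\e_i} H^{\e_i}/\l_{\e_i})_{\e_i} - (\b^{\e_i})_{\e_i} (H^{\e_i}/\l_{\e_i})_{\e_i}$ tends weakly to zero in $L^2(V)$. Once this is established, pairing with the identification $(\b^{\e_i})_{\e_i} (H^{\e_i}/\l_{\e_i})_{\e_i} \rightharpoonup \hat{\b} X$ already extracted in the proof of Lemma \ref{thm:mollified-mean-curv2} gives $Y = \hat{\b} X$. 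The orthogonality $\int \hat{\b} X \cdot JX \, dV = 0$ then follows by the same mechanism invoked in Lemma \ref{thm:mollified-mean-curv2}: because each $V^{\e_i}$ is lagrangian, its support lies in the closed subbundle $LGr \subset Gr$, so the same holds for $V$; at $\lVert V\rVert$-a.e.~point of approximate tangentiality, $X$ (as a weak $L^2$ limit of mean curvature vectors of lagrangian varifolds) lies in the normal bundle to the lagrangian tangent plane, while $JX$ lies in the tangent plane, and hence $X \cdot JX = 0$ pointwise.

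The principal obstacle is the commutator estimate in the third step. The product of a weak-$*$ $L^\infty$ limit and a weak $L^2$ limit is not continuous in the joint weak topology, and the discrepancy between mollifying $\b^{\e_i} H^{\e_i}$ together versus mollifying the factors separately is a genuine one at any fixed scale. The control must come from matching the mollifier scale $\e_i$ to the regularity of both factors: writing the commutator as a double integral in $y, z \in B(x,\e_i)$ against $\phi_{\e_i}(x-y)\phi_{\e_i}(x-z)$, the integrand involves $\b^{\e_i}(y)(H^{\e_i}(y) - H^{\e_i}(z))/\l_{\e_i}$, and I expect the needed $L^2(V)$ decay to follow after Cauchy--Schwarz from the tilt-excess decay Theorem \ref{thm:tilt-excess-B-M} together with Theorem \ref{thm:harvey-lawson-varifold} to express $H$ in terms of $\n\b$ and $\b H$.
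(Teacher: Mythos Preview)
Your first two stages are fine and parallel the paper's machinery. The gap is exactly where you flag it: the commutator estimate
\[
\Big(\tfrac{\b^{\e_i}H^{\e_i}}{\l_{\e_i}}\Big)_{\e_i}-(\b^{\e_i})_{\e_i}\Big(\tfrac{H^{\e_i}}{\l_{\e_i}}\Big)_{\e_i}\rightharpoonup 0\quad\text{in }L^2(V).
\]
You do not prove this, and the route you sketch---tilt-excess decay plus Theorem~\ref{thm:harvey-lawson-varifold}---is not the right tool. The integrand of the commutator involves differences $H^{\e_i}(y)-H^{\e_i}(z)$ over $|y-z|<2\e_i$, but $H^{\e_i}$ is only in $L^2(V^{\e_i})$ with no modulus of continuity; tilt-excess decay controls oscillation of \emph{tangent planes}, not of mean curvature. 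Moreover, even granting the commutator, you would obtain $Y=\tilde\b X$ with the $\tilde\b$ of Lemma~\ref{thm:mollified-mean-curv2}, whereas the statement names a (possibly different) $\hat\b$; these need not coincide.

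The paper bypasses the commutator entirely. From $\lVert\b^{\e}\rVert_{L^\infty(V^\e)}\le A$ one has the pointwise domination
\[
\Big|\Big(\tfrac{\b^{\e}H^{\e}}{\l_{\e}}\Big)_{\e}(x)\Big|\le A\,\Big(\tfrac{|H^{\e}|}{\l_{\e}}\Big)_{\e}(x),
\]
so the vector-valued signed measures $\big(\tfrac{\b^{\e_i}H^{\e_i}}{\l_{\e_i}}\big)_{\e_i}\,d\big(\tfrac{\lVert V^{\e_i}\rVert}{m_{\e_i}}\big)$ are uniformly bounded. Pass to a subsequence so that they converge weakly to a signed measure $\tau$, while $\big(\tfrac{H^{\e_i}}{\l_{\e_i}}\big)_{\e_i}\,d\big(\tfrac{\lVert V^{\e_i}\rVert}{m_{\e_i}}\big)\to\mu=X\,dV$. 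The pointwise domination forces $\tau\ll\mu$, and one \emph{defines} $\hat\b$ to be the Radon--Nikodym derivative $d\tau/d\mu$, which is automatically in $L^\infty(V)$ with bound $A$. No commutator is needed: $\hat\b$ is not identified with any previously constructed limit of $(\b^{\e_i})_{\e_i}$. The rest of the argument (pairing against $(JH^{\e_i}/\l_{\e_i})_\eta\rightharpoonup JX_\eta$ and letting $\eta\to0$) then runs exactly as in Lemma~\ref{thm:mollified-mean-curv1}.

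One minor point: your justification of $\int\hat\b\,X\cdot JX\,dV=0$ via normal/tangent decomposition is more than is needed. Since $J$ is an orthogonal complex structure, $\langle X,JX\rangle=\langle JX,J^2X\rangle=-\langle JX,X\rangle$, so $X\cdot JX=0$ pointwise for \emph{any} vector field $X$, with no appeal to the lagrangian condition or to $X$ being normal.
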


\begin{proof}
Using Theorem \ref{thm:boundonbeta} the set $\{ \big(\frac{\b^\e H^\e}{\l_\e} \big)_\e d \big( \frac{\lVert V^\e\rVert}{m_\e} ) \}$ of Radon measures is bounded and therefore there is a sequence $\e_i$ such that $\{ \big(\frac{\b^{\e_i} H^{\e_i}}{\l_{\e_i}} \big)_{\e_i} d \big( \frac{V^{\e_i}}{m_{\e_i}} ) \}$ converges weakly to the vector-valued signed measure $\tau$. We have already shown, in the proof of Lemma \ref{thm:mollified-mean-curv1}, that there is a sequence $\{ \e_i \}$ and a vector-valued functions $X  \in L^2(V)$  such that:
$$
\big(\frac{ H^{\e_i}}{\l_{\e_i}} \big)_{\e_i}  \rightharpoonup X, 
$$
weakly in $L^2(V)$. Therefore the sequence of the Radon measures $\{ \big(\frac{ H^{\e_i}}{\l_{\e_i}} \big)_{\e_i} d \big( \frac{V^{\e_i}}{m_{\e_i}} )\big \}$ converges weakly as Radon measures to the measure $\mu = X dV$. The measure $\tau$ is absolutely continuous with respect to the measure $\mu$ and therefore there is an integrable function $\hat{\b}$ such that:
$$
\tau = \hat{\b} \mu = \hat{\b} X dV
$$
Observe that $| \big(\frac{\b^\e H^\e}{\l_\e} \big)_\e|  < A |\big(\frac{ H^\e}{\l_\e} \big)_\e|$ for all $\e$ and therefore $||\hat{\b}||_{L^\infty(V)} < A$.
Also from the proof of Lemma \ref{thm:mollified-mean-curv1} there is a a sequence $\{ \e_i \}$ and  vector-valued functions $X_\eta  \in L^2(V)$  such that:
$$
\big(\frac{ H^{\e_i}}{\l_{\e_i}} \big)_\eta  \rightharpoonup X_\eta, 
$$
weakly in $L^2(V)$. Therefore, as in the previous proofs,  we conclude that:
$$
\lim_{{\e_i} \to 0} \int \big( \frac{\b^{\e_i} H^{\e_i}}{\l_{\e_i}} \big)_{\e_i} \big( \frac{JH^{\e_i}}{\l_{\e_i}} \big)_\eta  d \big( \frac{\lVert V^{\e_i}\rVert}{m_{\e_i}} \big) = \int \hat{\b} X \cdot JX_\eta dV.
$$
Using $\lim_{\eta \to 0} X_\eta = X$, the result follows. 
\end{proof}

\bigskip

\noindent{\it Proof of Theorem \ref{thm:volumedecrease}:}

Denote the $\e$-flow beginning with $V$ by $\{ V^\e(t) \}$, where $V^\e(t)$ is the varifold along the $\e$-flow at time $t$. The flow vector field at time $t$ is $J D \b_\e$, where $\b$ is the lagrangian angle along $V^\e_t$ and $\b_\e$ is its mollification. Denote the mean curvature vector field along the $\e$-flow by $H^\e$. 

	We have by the first variation,
		\begin{eqnarray} \nonumber
\frac{d}{dt} \lVert V^\e_t \rVert  &=& -\int JD \b_\e \cdot H^\e d\lVert V^\e\rVert = \int D \b_\e \cdot JH^\e d\lVert V^\e\rVert \\  \nonumber
&=& \int \n \b_\e \cdot JH^\e d\lVert V^\e\rVert \\  \nonumber
 &=& \int\big(B_\e -\b_\e (H^\e)_\e + E(-, \e) \big) \cdot JH^\e d\lVert V^\e\rVert \\  \nonumber
\label{equ:deriv-of-volume}
 &=&-\int (JH^\e)_\e \cdot JH^\e d\lVert V^\e\rVert + \int \big( (\b H^\e)_\e -\b_\e (H^\e)_\e + E(-, \e) \big) \cdot JH^\e d\lVert V^\e\rVert \\
		\end{eqnarray}
The final equality follows using Theorem \ref{thm:harvey-lawson-varifold}. 
Clearly, if $H^\e(t) = 0$, or equivalently $\l_\e(t) =0$, then
$$
\frac{d}{dt} \lVert V^\e(t) \rVert = 0.
$$

Fix $t$. In Case (1) multiply (\ref{equ:deriv-of-volume}) by ${\l_{\e}}^{-2}(t) m_{{\e}}^{-1}(t) > 0$ to get:
\begin{eqnarray*}
&&\l_{{\e}}^{-2} m_{{\e}}^{-1} \frac{d}{dt} \lVert V^{{\e}}(t) \rVert \\
&&= -\int (\frac{JH^{{\e}}}{\l_{{\e}}})_{{\e}} \cdot \frac{JH^{{\e}}}{\l_{{\e}}} d( \frac{\lVert V^{{\e}}\rVert}{m_{{\e}}}) + \int \big( (\b \frac{JH^{{\e}}}{\l_{{\e}}})_{{\e}} -\b_{{\e}} (\frac{JH^{{\e}}}{\l_{{\e}}})_{{\e}} + \frac{E(-, {{\e}})}{\l_{{\e}}} \big) \cdot \frac{JH^{{\e}}}{\l_{{\e}}} d( \frac{\lVert V^{{\e}}\rVert}{m_{{\e}}})
\end{eqnarray*}
Choose a sequence $\{ \e_i \}$ such that  $\lim_{\e_i \to 0}  {\l_{\e_i}(t)} > 0$.
To estimate the term:
$$
\int \frac{E(-, {{\e}})}{\l_{{\e}}} ) \cdot \frac{JH^{{\e}}}{\l_{{\e}}} d( \frac{\lVert V^{{\e}}\rVert}{m_{{\e}}}),
$$
use the H{\"o}lder inequality to give:
$$
\big| \int \frac{E(-, {{\e}})}{\l_{{\e}}}  \cdot \frac{JH^{{\e}}}{\l_{{\e}}} d( \frac{\lVert V^{{\e}}\rVert}{m_{{\e}}}) \big| \leq || \frac{E(-, {{\e}})}{\l_{{\e}}} ||_{L^2( \frac{V^\e}{m_\e})} ||  \frac{H^\e}{\l_\e} ||_{L^2( \frac{V^\e}{m_\e})}
$$
Then
$$
 || \frac{E(-, {{\e_i}})}{\l_{{\e_i}}} ||^2_{L^2( \frac{V^{\e_i}}{m_{\e_i}})} = \int | E(x, \e_i) |^2 d\lVert {\bar V}^{\e_i}\rVert_x
 $$
For a suitable subsequence of $\{ \e_i \}$ this term goes to zero as ${\e_i} \to 0$ by Lemma \ref{lem:dom-conv-to-E}.
Applying Lemma \ref{cor:mollified-mean-curv1}, Lemma \ref{thm:mollified-mean-curv2} and Lemma \ref{thm:mollified-mean-curv3} and choosing a subsequence (that we continue to denote $\{ \e_i \}$) as ${\e_i} \to 0$ the right hand side goes to $-\int X \cdot X dV < 0$. Therefore since ${\l_{\e_i}}^{-2}(t) m_{\e_i}^{-1}(t) > 0$ we conclude that:
$$
\lim_{\e_i \to 0} \frac{d}{dt} \lVert V^{\e_i}(t) \rVert < 0.
$$

Fix $t$. In Case (2) we rewrite (\ref{equ:deriv-of-volume}) as
\begin{eqnarray} \label{equ:deriv-of-volume-scaled}
&&\frac{d}{dt} \ln \big( m_\e(t) \big) \\ \nonumber
&& = -\int (JH^\e)_\e \cdot JH^\e d \big( \frac{\lVert V^\e\rVert}{m_\e} \big) + \int \big( (\b H^\e)_\e -\b_\e (H^\e)_\e + E(-, \e) \big) \cdot JH^\e d \big( \frac{\lVert V^\e\rVert}{m_\e} \big)
\end{eqnarray}
Choose a sequence $\{ \e_i \}$ such that $\lim_{\e_i \to 0}  {\l_{\e_i}(t)} = 0$ (or equivalently, $\lim_{\e_i \to 0} || H^{\e_i}(t) ||^2_{L^2(\frac{V^{\e_i}(t)}{m_{\e_i}(t)})} = 0$) and such that $\lim_{\e_i \to 0}  {m_{\e_i}(t)} < \infty$
From this it follows that
$$
\lim_{\e_i \to 0} \int (JH^{\e_i})_{\e_i} \cdot JH^{\e_i} d \big( \frac{V^{\e_i}}{m_{\e_i}} \big) =0,
$$
$$
 \lim_{\e_i \to 0}  \int \big( (\b H^{\e_i})_{\e_i} -\b_{\e_i} (H^{\e_i})_{\e_i}  \big) \cdot JH^{\e_i} d \big( \frac{V^{\e_i}}{m_{\e_i}} \big) = 0.
$$ 
 The remaining term can be estimated using the Holder inequality and (\ref{lem:dom-conv-to-E2}). We have:
\begin{eqnarray*}
\big| \int  E(-, {\e_i})  \cdot JH^{\e_i} d \big( \frac{V^{\e_i}}{m_{\e_i}} \big) \big| &\leq& || E(-, {\e_i})  ||_{L^2 \big(\frac{V^{\e_i}(t)}{m_{\e_i}(t)}\big)} || H^{\e_i}  ||_{L^2 \big(\frac{V^{\e_i}(t)}{m_{\e_i}(t)}\big)} \\
&\leq & \big( \int | E(x, \e_i) |^2 d \big( \frac{\lVert V^{\e_i}(t)\rVert}{m_{\e_i}(t)}\big) \big)^{\frac{1}{2}}    {\l_{\e_i}}
\end{eqnarray*}  
This term goes to zero since both $\int | E(x, \e_i) |^2 d\hat{V}^{\e_i}$ and  ${\l_{\e_i}}$ go to zero as $\e_i \to 0$. Since $\lim_{\e_i \to 0}  {m_{\e_i}(t)} < \infty$ it follows that 
$$
\lim_{{\e_i} \to 0} \frac{d}{dt}  \lVert V^{{\e_i}}(t) \rVert =0.
$$

Fix $t$. In Case (3) choose a sequence $\{ \e_i \}$ such that $\lim_{\e_i \to 0}  {\l_{\e_i}(t)} = 0$ (or equivalently, $\lim_{\e_i \to 0} || H^{\e_i}(t) ||^2_{L^2(\frac{V^{\e_i}(t)}{m_{\e_i}(t)})} = 0$). Repeat the analysis of Case (2) to conclude:
$$
\lim_{{\e_i} \to 0} \frac{d}{dt} \ln \big( \lVert V^{{\e_i}}(t) \rVert \big)=0.
$$

Fix $t$. All three cases show that there is a sequence $\{ \e_i \}$ such that:
\begin{equation}
\label{equ:ln-volume-limit}
\lim_{{\e_i} \to 0} \frac{d}{dt} \ln \big( \lVert V^{{\e_i}}(t) \rVert \big) \leq 0.
\end{equation}
Let ${\cal S}$ be a countable dense subset of $[0,T]$. Choosing a diagonal subsequence we can conclude that there is a sequence $\{ \e_i \}$ with $\e_i \to 0$ such that for all $t \in {\cal S}$ (\ref{equ:ln-volume-limit}) holds. Using the continuity of the right hand side of (\ref{equ:deriv-of-volume-scaled}) in $\e$ and $t$ we conclude that (\ref{equ:ln-volume-limit}) holds for all $t \in [0, T]$.
Hence given $\sigma >0$ for all $t \in [0,T]$ there exists an integer $I$ such that for $i > I$ we have:
\begin{equation}
\label{equ:ln-volume-est}
 \frac{d}{dt} \ln \lVert V^{\e_i}(t) \rVert  < \sigma.
\end{equation}
Integrating (\ref{equ:ln-volume-est}) over $[0,T]$ we conclude the volume (or mass) estimate for $i > I$:
\begin{equation}
\label{equ:volume-est}
 \lVert V^{\e_i}(t) \rVert  < \exp(\sigma T) ||V(0)||
\end{equation}

In light of the volume estimate (\ref{equ:volume-est}) we can return to Case (3) above and observe that it does not occur. Hence we conclude that for all $t \in [0,T]$ there is a sequence $\{ \e_i \}$ with $\e_i \to 0$ such that:
$$
\lim_{{\e_i} \to 0} \frac{d}{dt}   \lVert V^{{\e_i}}(t) \rVert  \leq 0.
$$

\bigskip

\begin{cor}\label{cor:constructlimitflow}
For each $t \in [0, T]$ there is a sequence  $\{\e_i \}$ with $\e_i \to 0$ such that the lagrangian rectifiable varifolds $V^{\e_i}(t)$ coverge weakly as Radon measures to the  lagrangian rectifiable varifold $V(t)$ with:
$$
 \rVert V(t) \rVert = \liminf_{\e \to 0}  \rVert V^{\e}(t) \rVert.
$$
The volume  $\rVert V(t) \rVert$ is a non-increasing, lower semi-continuous function of $t$ such that for a.e. $t$:
$$
\frac{d}{dt} \lVert V(t) \rVert \leq 0.
$$
\end{cor}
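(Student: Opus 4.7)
The plan is to derive the corollary from Theorem \ref{thm:volumedecrease} together with the compactness theorem for varifolds with Maslov index zero from the end of Section \ref{sec:maslovindexzero}. Theorem \ref{thm:volumedecrease} supplies a sequence $\{\e_j\}$ with $\e_j \to 0$ such that, for any prescribed $\sigma > 0$ and $j$ sufficiently large, $\frac{d}{dt}\lVert V^{\e_j}(t)\rVert < \sigma$ uniformly for $t \in [0,T]$; integration then yields the uniform-in-$(j,t)$ mass bound $\lVert V^{\e_j}(t)\rVert \leq e^{\sigma T}\lVert V(0)\rVert$. Combined with the $L^\infty$ bound on $\b^{\e_j}$ from Theorem \ref{thm:boundonbeta} and the identity $B^{\e_j} = -JH^{\e_j} + \b^{\e_j}H^{\e_j}$ from Theorem \ref{thm:harvey-lawson-varifold} (which transfers $L^2$-control between $B^{\e_j}$ and $H^{\e_j}$), the family $\{V^{\e_j}(t)\}$ falls under the hypotheses of that compactness theorem.

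For each fixed $t \in [0,T]$, I would extract a subsequence $\{\e_i\}$ realizing $\lVert V^{\e_i}(t)\rVert \to \liminf_{\e \to 0}\lVert V^\e(t)\rVert$ and refine further via the compactness theorem to obtain a weak-$\ast$ limit $V(t)$, a lagrangian integer-rectifiable varifold with Maslov index zero satisfying $(H_2)$. Because the Hamiltonian vector fields $JD(\b^{\e_j})_{\e_j}$ generating the approximate flows are uniformly bounded on $[0,T]$, all $V^{\e_j}(t)$ share a common compact support, so weak-$\ast$ convergence of Radon measures preserves total mass and $\lVert V(t)\rVert = \liminf_{\e \to 0}\lVert V^\e(t)\rVert$ follows from the choice of subsequence. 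For the monotonicity, given $t_1 < t_2$ the integrated derivative bound reads $\lVert V^{\e_j}(t_2)\rVert \leq \lVert V^{\e_j}(t_1)\rVert + \sigma(t_2-t_1)$ for $j$ large. Selecting a subsequence realizing the liminf at $t_1$, and then a further subsequence along which $\lVert V^{\e_i}(t_2)\rVert$ converges, gives
\begin{equation*}
\lVert V(t_2)\rVert \leq \lim_i \lVert V^{\e_i}(t_2)\rVert \leq \lVert V(t_1)\rVert + \sigma(t_2-t_1);
\end{equation*}
sending $\sigma \to 0$ closes the argument. Lower semi-continuity and the a.e.\ derivative inequality $\frac{d}{dt}\lVert V(t)\rVert \leq 0$ are standard consequences of monotonicity: any non-increasing function has at most countably many jumps, and replacing $V(t)$ at each jump by the weak-$\ast$ limit of $V(s)$ as $s \searrow t$ yields a right-continuous (hence lower semi-continuous) representative whose mass still agrees with $\liminf_\e \lVert V^\e(t)\rVert$ at every $t$.

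The main obstacle is the coordination of $\e$-subsequences across different times: the compactness construction produces a subsequence whose weak-$\ast$ limit depends on $t$, while the monotonicity comparison asks for a single subsequence that controls behavior at two distinct times. The device above, of first fixing the subsequence at $t_1$ to realize the liminf and then refining to control $t_2$, sidesteps this by accepting an inequality (rather than equality) between $\lVert V(t_2)\rVert$ and $\lim_i \lVert V^{\e_i}(t_2)\rVert$; the $\sigma \to 0$ limit absorbs the gap. This keeps the entire argument within the one-variable derivative estimate of Theorem \ref{thm:volumedecrease} and avoids any global diagonalization.
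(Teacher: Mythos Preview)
Your overall architecture---mass bounds from Theorem \ref{thm:volumedecrease}, then compactness, then the monotonicity comparison via the integrated derivative estimate---matches the paper's intent, and your handling of the two-time comparison (realize the $\liminf$ at $t_1$, pass to a further subsequence at $t_2$, let $\sigma\to 0$) is correct.

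There is, however, a genuine gap. You invoke the compactness theorem for Maslov-index-zero varifolds, whose hypotheses include a uniform bound on $\lVert H^{\e_j}(t)\rVert_{L^2(V^{\e_j}(t))}$ (equivalently on $\lVert B^{\e_j}(t)\rVert_{L^2}$). No such bound is available at this point in the argument. The identity $B^{\e}=-JH^{\e}+\beta^{\e}H^{\e}$ does transfer control between $B$ and $H$ once one of them is bounded, but neither is: what Theorem \ref{thm:volumedecrease} provides is only the mass bound $\lVert V^{\e_i}(t)\rVert\le e^{\sigma T}\lVert V(0)\rVert$. Uniform mean-curvature control is precisely the content of the \emph{next} section: Theorem \ref{thm:first-bound-MC} shows that $\limsup_{\e\to 0}\lambda_\e(t)$ can only blow up on a set of times of small measure, and Theorem \ref{thm:bdd-first-variation} and Corollary \ref{cor:aezeroMaslov} then use Allard's compactness on the good times to conclude $(H_2)$ and Maslov index zero for $V(t)$ for a.e.\ $t$. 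At the stage of Corollary \ref{cor:constructlimitflow} the only compactness legitimately available is weak-$\ast$ compactness of Radon measures on $LGr$, which follows from the mass bound alone. That is enough to define $V(t)$ and to establish $\lVert V(t)\rVert=\liminf_\e\lVert V^\e(t)\rVert$ together with the monotonicity; the rectifiability and $(H_2)$ of the limit are deferred.

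A secondary issue: your compact-support argument for equality of total mass under weak-$\ast$ convergence relies on a uniform bound on the Hamiltonian velocity $JD(\beta^{\e})_{\e}$, but the available bound $\lvert D f_\e\rvert\le C\e^{-n-2}\lVert f\rVert_{L^\infty}$ from Lemma \ref{lem:derivativesofbetaepsilon} degenerates as $\e\to 0$, so in the Euclidean setting the supports of the $V^{\e}(t)$ need not lie in a fixed compact set uniformly in $\e$. In the closed Calabi--Yau setting this is of course automatic.
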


\begin{rem}
It could be the case that $\lVert V^\e(t)\rVert\rightarrow 0$ as $\e\rightarrow 0$; in this case $\lVert V(s)\rVert=0$ for all $s\in [0,T]$ with $s\geq t$.
\end{rem}

\bigskip

\begin{thm}\label{thm:betacontrol}
Under the same hypotheses as Theorem \ref{thm:volumedecrease}, for each even integer $p \geq 2$,
on a fixed interval $[0, T]$, given any $\sigma> 0$ there is an $\e_0 > 0$ such that for $\e < \e_0$:
$$
\frac{d}{dt} \lVert \beta^{\e}(t) \rVert_{L^p(V^{\e}(t))} < \sigma
$$
In particular, for any $\e < \e_0$ and any $s, t \in [0, T]$ with $t > s$:
\begin{equation}
\label{equ:beta-lipschitz}
 \frac{\lVert \beta^{\e}(t) \rVert_{L^p(V^{\e}(t))}  -   \lVert \beta^{\e}(s) \rVert_{L^p(V^{\e}(s))}} {t-s} < \sigma..
\end{equation}
Moreover $\sigma \to 0$ as $\e_0 \to 0$.
\end{thm}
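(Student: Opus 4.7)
The plan is to run the same strategy as in the proof of Theorem~\ref{thm:volumedecrease}, with $\int(\b^\e)^p\,d\lVert V^\e\rVert$ playing the role of the mass functional. The four steps are: differentiate along the $\e$-flow using the evolution equation for $\b^\e$; integrate by parts using the weak-derivative machinery of Section~\ref{sec:weak-derivative}; isolate a manifestly non-positive leading term of Dirichlet type; and show all remainders vanish as $\e\to 0$ via Theorem~\ref{thm:errorestimate} and Lemmas~\ref{thm:mollified-mean-curv1}--\ref{thm:mollified-mean-curv3}.

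First I would derive the evolution. Since $V^\e(t)=(\Psi^\e_t)_\sharp V$ and $\Psi^\e_t$ is the Hamiltonian flow of $JD\b^\e_\e$, the variation formula \eqref{eqn.deltabeta3} applied with $X=JD\b^\e_\e$ gives the material derivative $\partial_t\b^\e=\div_{V^\e}(D\b^\e_\e)$, and pulling back by $\Psi^\e_t$ yields
\begin{equation*}
\frac{d}{dt}\int(\b^\e)^p\,d\lVert V^\e\rVert = p\int(\b^\e)^{p-1}\,\div_{V^\e}(D\b^\e_\e)\,d\lVert V^\e\rVert.
\end{equation*}
Because $D\b^\e_\e$ is a smooth ambient vector field and, by Corollary~\ref{powerrule} combined with Theorem~\ref{thm:harvey-lawson-varifold}, $(\b^\e)^{p-1}\in L^\infty(V^\e)$ has weak derivative
\begin{equation*}
F_{p-1}=(p-1)(\b^\e)^{p-2}(-JH+\b^\e H)-(p-2)(\b^\e)^{p-1}H=-(p-1)(\b^\e)^{p-2}JH+(\b^\e)^{p-1}H,
\end{equation*}
Proposition~\ref{divergenceformula} yields
\begin{equation*}
\frac{d}{dt}\int(\b^\e)^p\,d\lVert V^\e\rVert=p(p-1)\int(\b^\e)^{p-2}JH\cdot D\b^\e_\e\,d\lVert V^\e\rVert - p\int(\b^\e)^{p-1}H\cdot D\b^\e_\e\,d\lVert V^\e\rVert.
\end{equation*}

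Next I would substitute the error expansion $D\b^\e_\e=-(JH)_\e+(\b^\e H)_\e-\b^\e_\e H_\e+E(\cdot,\e)$ from Theorem~\ref{thm:errorestimate} into both inner products. Since $p$ is even, $(\b^\e)^{p-2}\ge 0$, and together with the ambient orthogonality $H\cdot JH=0$ this identifies the leading contribution as the non-positive Dirichlet-like term
\begin{equation*}
-p(p-1)\int(\b^\e)^{p-2}JH\cdot(JH)_\e\,d\lVert V^\e\rVert,
\end{equation*}
whose limiting behaviour is controlled by the rescaling argument of Lemma~\ref{thm:mollified-mean-curv1}. The remaining pieces fall into three groups: (a) cross mollification defects of the form $JH\cdot[(\b^\e H)_\e-\b^\e_\e H_\e]$ and $H\cdot[(\b^\e H)_\e-\b^\e_\e H_\e]$, which vanish in the weak limit by Lemmas~\ref{thm:mollified-mean-curv2} and \ref{thm:mollified-mean-curv3} together with the uniform sup bound Theorem~\ref{thm:boundonbeta}; (b) pairings with the error $E(\cdot,\e)$, bounded by H\"older and Lemmas~\ref{lem:dom-conv-to-E}--\ref{lem:dom-conv-to-E2}; and (c) the residual pairings $H\cdot(JH)_\e$ and $H\cdot H_\e$ in the second integrand, handled exactly as the analogous terms in Theorem~\ref{thm:volumedecrease}.

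To conclude, the three-case analysis of Theorem~\ref{thm:volumedecrease} applied to the rescaled mean curvature $H^\e/\l_\e$ on $V^\e/m_\e$ gives, at each fixed $t$, $\limsup_{\e_i\to 0}\frac{d}{dt}\int(\b^\e)^p\,d\lVert V^\e\rVert\le 0$ along a subsequence. Joint continuity in $(t,\e)$ of the integrands from Section~\ref{sec:epsilon-flows}, together with a diagonal argument over a countable dense subset of $[0,T]$, upgrades this to the uniform bound by any prescribed $\sigma>0$ for all $\e<\e_0$. The passage from a bound on $\frac{d}{dt}\int(\b^\e)^p\,d\lVert V^\e\rVert$ to the stated bound on $\frac{d}{dt}\lVert\b^\e\rVert_{L^p(V^\e)}$ follows by the chain rule, using the uniform sup bound on $\b^\e$ from Theorem~\ref{thm:boundonbeta} and the mass bound from Theorem~\ref{thm:volumedecrease}; \eqref{equ:beta-lipschitz} then follows by integration in $t$.

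The principal obstacle, inherited from Theorem~\ref{thm:volumedecrease}, is the product-mollification defect $(\b^\e H)_\e-\b^\e_\e H_\e$: mollification does not distribute across products and naively this quantity is only $O(1)$. Extracting the required cancellation in the limit $\e\to 0$ requires treating $\b^\e$ as a bounded $L^\infty$ coefficient passing to a weak-$*$ limit against the weakly convergent vector-valued Radon measures associated with $(H^\e/\l_\e)_\e\,d(\lVert V^\e\rVert/m_\e)$, exactly as in the proofs of Lemmas~\ref{thm:mollified-mean-curv2} and~\ref{thm:mollified-mean-curv3}.
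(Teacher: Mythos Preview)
Your evolution formula is missing a term. Differentiating $\int(\b^\e)^p\,d\lVert V^\e\rVert$ along the $\e$-flow produces two contributions: one from the material derivative of $(\b^\e)^p$ and one from the variation of the measure $d\lVert V^\e\rVert$ itself. The Hamiltonian diffeomorphisms $\Psi^\e_t$ are symplectic on $N$ but not volume-preserving when restricted to the varifold, so pulling back does not eliminate the Jacobian. The correct identity (as in the paper) is
\[
\frac{d}{dt}\int(\b^\e)^p\,d\lVert V^\e\rVert
= p\int(\b^\e)^{p-1}\div_{V^\e}(D\b^\e_\e)\,d\lVert V^\e\rVert
+ \int(\b^\e)^p\div_{V^\e}(JD\b^\e_\e)\,d\lVert V^\e\rVert.
\]
When you integrate by parts in the second integral using the weak derivative of $(\b^\e)^p$, a term $+p\int(\b^\e)^{p-1}H\cdot D\b^\e_\e$ appears which exactly cancels your residual $-p\int(\b^\e)^{p-1}H\cdot D\b^\e_\e$, and what survives is $-\int(\b^\e)^p H\cdot JD\b^\e_\e$. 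After this cancellation both surviving terms have the common shape $\int(\b^\e)^q H\cdot J\nabla\b^\e_\e$, to which the machinery of Theorem~\ref{thm:errorestimate} and Lemmas~\ref{thm:mollified-mean-curv1}--\ref{thm:mollified-mean-curv3} applies directly.

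Without that cancellation your argument breaks: the term $-p\int(\b^\e)^{p-1}H\cdot D\b^\e_\e$ involves the pairing of the \emph{normal} vector $H$ with $D\b^\e_\e$, hence (by Brakke perpendicularity) only sees the \emph{normal} component $(D\b^\e_\e)^\perp$. But the expansion you invoke, $D\b^\e_\e=-(JH)_\e+(\b^\e H)_\e-\b^\e_\e H_\e+E$, is the content of Theorem~\ref{thm:errorestimate} for the \emph{tangential} gradient $\nabla\b^\e_\e$, not for the full ambient derivative. So your item (c) cannot be carried out as written. Once you restore the missing measure-variation term, the rest of your outline matches the paper's proof essentially verbatim.
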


\begin{proof}
	 For each $\e$-flow,
		\begin{equation}
			\begin{aligned}
			\frac{d}{dt}\int (\beta^\e)^p d{\lVert V^\e\rVert}=&-\int p (\beta^\e)^{p-1} \div JJD(\beta^\e)_\e  d{\lVert V^\e\rVert} + \int(\beta^\e)^p \div JD(\beta^\e)_\e  d{\lVert V^\e\rVert}\\
			=&p\int(\beta^\e)^{p-1}\div D(\beta^\e)_\e  d{\lVert V^\e\rVert}+\int(\beta^\e)^p\div JD(\beta^\e)_\e  d{\lVert V^\e\rVert}
			\end{aligned}
		\end{equation} 
	We now use Propositions \ref{divergenceformula} and \ref{powerrule} to trade the divergence operator for weak derivatives:
		\begin{equation}
			\begin{aligned}
			p\int(\beta^\e)^{p-1}\div D(\beta^\e)_\e  d{\lVert V^\e\rVert}&=-p\int \left[(p-1)(\beta^\e)^{p-2}B^\e -(p-2)(\beta^\e)^{p-1}H^\e \right]\cdot D(\beta^\e)_\e d{\lVert V^\e\rVert}\\
			\int(\beta^\e)^p\div JD(\beta^\e)_\e d{\lVert V^\e\rVert}&=-\int\left[p(\beta^\e)^{p-1}B^\e -(p-1)(\beta^\e)^pH^\e \right]\cdot JD(\beta^\e)_\e d{\lVert V^\e\rVert}
			\end{aligned}
		\end{equation} 
	Now we apply the Harvey-Lawson formula $B^\e =\nabla \beta^\e +\beta^\e  H^\e =-JH^\e +\beta^\e H^\e$ in each term:
		\begin{eqnarray*}
			&&p\int(\beta^\e)^{p-1}\div D(\beta^\e)_\e  d{\lVert V^\e\rVert}\\
			&&=-p\int \left[(p-1)(\beta^\e)^{p-2}\left(-JH^\e +(\beta^\e) H^\e \right)-(p-2)(\beta^\e)^{p-1}H^\e \right]\cdot D(\beta^\e)_\e  d{\lVert V^\e\rVert}\\
			&&=-p\int \left[-(p-1)(\beta^\e)^{p-2}JH^\e +(\beta^\e)^{p-1}H^\e \right]\cdot D(\beta^\e)_\e  d{\lVert V^\e\rVert}\\
			&&=-p(p-1)\int(\beta^\e)^{p-2}H^\e \cdot JD(\beta^\e)_\e  d{\lVert V^\e\rVert}-p\int(\beta^\e)^{p-1}H^\e \cdot D(\beta^\e)_\e  d{\lVert V^\e\rVert}\\
			\end{eqnarray*}
			\begin{eqnarray*}
			&&\int(\beta^\e)^p\div JD(\beta^\e)_\e  d{\lVert V^\e\rVert}\\
			&&=-\int\left[p(\beta^\e)^{p-1}\left(-JH^\e +(\beta^\e) H^\e \right)-(p-1)(\beta^\e)^pH^\e \right]\cdot JD(\beta^\e)_\e  d{\lVert V^\e\rVert}\\
			&&=-\int\left[-p(\beta^\e)^{p-1}JH^\e +(\beta^\e)^pH^\e \right]\cdot JD(\beta^\e)_\e  d{\lVert V^\e\rVert}\\
			&&=p\int (\beta^\e)^{p-1}H^\e \cdot D(\beta^\e)_\e  d{\lVert V^\e\rVert}-\int(\beta^\e)^pH^\e \cdot JD(\beta^\e)_\e  d{\lVert V^\e\rVert}
			\end{eqnarray*}
	Adding these terms together we obtain
	\begin{equation}
		\frac{d}{dt}\int(\beta^\e)^pd{\lVert V^\e\rVert}=-\int\left(p(p-1)(\beta^\e)^{p-2}+(\beta^\e)^p\right)H^\e \cdot JD(\beta^\e)_\e  d{\lVert V^\e\rVert}
	\end{equation}

	Now by Brakke's perpendicularity result, we can write $H^\e \cdot J\nabla(\beta^\e)_\e $ in this integrand, so that we obtain 
	\begin{equation}
		\frac{d}{dt}\int(\beta^\e)^pd{\lVert V^\e\rVert}=-\int\left(p(p-1)(\beta^\e)^{p-2}+(\beta^\e)^p\right)H^\e \cdot J\nabla(\beta^\e)_\e  d{\lVert V^\e\rVert}
	\end{equation}
	Both terms are of the form $\int (\beta^\e)^q H^\e \cdot J\nabla(\beta^\e)_\e  d{\lVert V^\e\rVert}$.
	
	\begin{eqnarray*}
			&&\int(\beta^\e)^qH^\e \cdot J\nabla(\beta^\e)_\e  d{\lVert V^\e\rVert}\\
			&&=\int(\beta^\e)^q H^\e \cdot J\left((B^\e)_\e  -(\beta^\e)_\e  (H^\e)_\e  +E\right)d{\lVert V^\e\rVert}\\
			&&=\int(\beta^\e)^qH^\e \cdot J\left((\nabla (\beta^\e)+(\beta^\e H^\e )_\e -(\beta^\e)_\e  (H^\e)_\e +E\right)d{\lVert V^\e\rVert}\\
			&&=\int(\beta^\e)^q H^\e \cdot (H^\e)_\e  d{\lVert V^\e\rVert}-\int(\beta^\e)^q JH^\e \cdot (\beta^\e H^\e)_\e -(\beta^\e)_\e  (H^\e)_\e+ E )d{\lVert V^\e\rVert}
	\end{eqnarray*}

Hence,
\begin{eqnarray} \nonumber
&&\frac{d}{dt}\int(\beta^\e)^pd{\lVert V^\e\rVert}\\ \nonumber
&&=-\int\left(p(p-1)(\beta^\e)^{p-2}+(\beta^\e)^p\right)H^\e \cdot (H^\e)_\e d{\lVert V^\e\rVert} \\  \nonumber
\label{equ:integral-bound-H}
&& + \int \left(p(p-1)(\beta^\e)^{p-2}+(\beta^\e)^p\right) JH^\e \cdot (\beta^\e H^\e )_\e -(\beta^\e)_\e (H^\e)_\e + E )d{\lVert V^\e\rVert}\\ 
\end{eqnarray}

Using (\ref{equ:integral-bound-H}) if $H^\e(t) = 0$ then
$$
\frac{d}{dt} \lVert \beta^\e(t) \rVert_{L^p(V^\e(t))} = 0
$$
Using the uniform sup bound on $\b^\e(t)$ and (\ref{equ:integral-bound-H}) the argument proceeds exactly as in the proof of Theorem \ref{thm:volumedecrease}.
The result follows.

\end{proof}

\begin{cor}
\label{cor:beta-lipschitz}
Under the same hypotheses as Theorem \ref{thm:betacontrol}, the conclusions hold for $\lVert \beta^{\e}(t) \rVert_{L^\infty(V^{\e}(t))}$. In particular, given any $\sigma> 0$ there is an $\e_0 > 0$ such that for $\e < \e_0$ and any $s, t \in [0, T]$ with $t > s$:
\begin{equation}
\label{equ:beta-lipschitz}
 \frac{\lVert \beta^{\e}(t) \rVert_{L^\infty(V^{\e}(t))}  -   \lVert \beta^{\e}(s) \rVert_{L^\infty(V^{\e}(s))}} {t-s} < \sigma..
\end{equation}
Moreover $\sigma \to 0$ as $\e_0 \to 0$.
\end{cor}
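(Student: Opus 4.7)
The approach is to obtain the $L^\infty$ Lipschitz estimate by passing to the limit $p \to \infty$ in Theorem \ref{thm:betacontrol}. Two uniform bounds make this limit meaningful: by Theorem \ref{thm:boundonbeta}, $\lVert \beta^\e(t)\rVert_{L^\infty(V^\e(t))} \leq A$ with $A$ independent of $\e$ and $t \in [0,T]$; by Theorem \ref{thm:volumedecrease}, for $\e$ sufficiently small one has $\lVert V^\e(t)\rVert \leq M$ uniformly in $t \in [0,T]$. Since each $V^\e(t)$ is a finite Radon measure and $\beta^\e(t) \in L^\infty$, one has the standard convergence $\lVert \beta^\e(t)\rVert_{L^p(V^\e(t))} \to \lVert \beta^\e(t)\rVert_{L^\infty(V^\e(t))}$ as $p\to\infty$; moreover $M^{-1/p}\lVert \beta^\e(t)\rVert_{L^p(V^\e(t))}$ increases monotonically in $p$ to $\lVert \beta^\e(t)\rVert_{L^\infty(V^\e(t))}$.

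Given $\sigma > 0$, I would apply Theorem \ref{thm:betacontrol} for a large even integer $p$ with tolerance parameter $\sigma/2$, obtaining $\e_0 > 0$ so that for $\e < \e_0$ and $0 \leq s < t \leq T$,
\[
\lVert \beta^\e(t)\rVert_{L^p(V^\e(t))} - \lVert \beta^\e(s)\rVert_{L^p(V^\e(s))} < \tfrac{\sigma}{2}(t-s).
\]
Taking $p \to \infty$ in this inequality with $\e$, $s$, $t$ fixed, and invoking the pointwise convergence of the $L^p$ norms to the $L^\infty$ norm, yields
\[
\lVert \beta^\e(t)\rVert_{L^\infty(V^\e(t))} - \lVert \beta^\e(s)\rVert_{L^\infty(V^\e(s))} \leq \tfrac{\sigma}{2}(t-s) < \sigma(t-s),
\]
which is the desired estimate. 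The conclusion that $\sigma \to 0$ as $\e_0 \to 0$ is inherited from the corresponding conclusion of Theorem \ref{thm:betacontrol}.

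The main obstacle is to verify that the $p \to \infty$ limit can in fact be taken at fixed $\e$; that is, to ensure that $\e_0$ in Theorem \ref{thm:betacontrol} can be chosen independently of $p$ (or at least that the estimate remains valid along an appropriate sequence $p_k \to \infty$). The proof of Theorem \ref{thm:betacontrol} derives an identity for $\frac{d}{dt}\int(\beta^\e)^p\,d\lVert V^\e\rVert$ in which the coefficients grow polynomially in $p$ like $p(p-1)(\beta^\e)^{p-2}+(\beta^\e)^p$. After dividing by $p\lVert \beta^\e\rVert_{L^p}^{p-1}$ to produce the derivative of the $L^p$ norm, and using that $\lVert \beta^\e\rVert_{L^p}$ is comparable to $\lVert \beta^\e\rVert_{L^\infty}$ for large $p$, this polynomial factor becomes a bounded multiplier depending only on $A$ and $M$. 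The error terms---involving $E$, $(\beta H)_\e - \beta_\e H_\e$, and Brakke's orthogonality of $H$---are controlled by the same analysis as in Theorem \ref{thm:volumedecrease} via Theorem \ref{thm:errorestimate}, and their smallness as $\e \to 0$ is uniform in $p$, giving the required uniform control.
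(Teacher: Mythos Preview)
Your approach is correct and is exactly what the paper intends: the corollary is stated without proof, and passing to the limit $p\to\infty$ in Theorem~\ref{thm:betacontrol} is the natural derivation. Your identification of the key technical point---that $\e_0$ must be chosen uniformly in $p$, and that this follows because the $p$-dependent prefactor $p(p-1)(\beta^\e)^{p-2}+(\beta^\e)^p$ multiplies both the main negative term and the error terms in the identity for $\tfrac{d}{dt}\int(\beta^\e)^p\,d\lVert V^\e\rVert$---is precisely what makes the argument go through.
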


\begin{cor}
\label{cor:norm-beta-decreasing}
For each $t \in [0, T]$ there is a function $\b(t) \in L^\infty(V(t))$ that is the scalar lift of the lagrangian angle on $V(t)$. The norms $\lVert \beta(t) \rVert_{L^\infty(V(t))}$ are non-increasing, lower semi-continuous functions of $t$ and for a.e. $t$
$$
\frac{d}{dt} \lVert \beta(t)\rVert_{L^\infty(V(t))} \leq 0
$$
\end{cor}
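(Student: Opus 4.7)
The statement is the $L^\infty$-norm analog of the volume conclusion of Corollary \ref{cor:constructlimitflow}, and the proof proceeds along entirely parallel lines, with Corollary \ref{cor:beta-lipschitz} playing the role of Theorem \ref{thm:volumedecrease} and the Maslov-index-zero compactness Theorem \ref{thm:convergence-beta} playing the role of Theorem \ref{thm:compact-Hp-varifolds}.

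First I would construct the lift $\beta(t)$. For each $t$ the uniform bound $\lVert \beta^\e(t)\rVert_{L^\infty(V^\e(t))}\le A$ from Theorem \ref{thm:boundonbeta}, the uniform mass bound from Corollary \ref{cor:constructlimitflow}, and the Harvey--Lawson identity $B^\e=-JH^\e+\beta^\e H^\e$ of Theorem \ref{thm:harvey-lawson-varifold} together give uniform $L^2(V^\e(t))$ bounds on $B^\e(t)$ at those $t$ where $\lVert H^\e(t)\rVert_{L^2(V^\e(t))}$ remains bounded as $\e\to 0$ (a full-measure set in $[0,T]$ by the integrated energy estimate implicit in the proof of Theorem \ref{thm:volumedecrease}). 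On such $t$, Theorem \ref{thm:convergence-beta} yields, along a subsequence $\e_i\to 0$ chosen compatibly with the one from Corollary \ref{cor:constructlimitflow} and so as to realize $\lVert \beta(t)\rVert_{L^\infty(V(t))}=\liminf_{\e\to 0}\lVert \beta^\e(t)\rVert_{L^\infty(V^\e(t))}$, a scalar lift $\beta(t)\in L^\infty(V(t))$ of the lagrangian angle on $V(t)$ with weak derivative $B(t)\in L^2(V(t))$.

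Next I would establish monotonicity. By Corollary \ref{cor:beta-lipschitz}, for any $\sigma>0$ there is $\e_0=\e_0(\sigma)$ such that for $\e<\e_0$ and $0\le s<t\le T$,
$$\lVert \beta^\e(t)\rVert_{L^\infty(V^\e(t))}\le \lVert \beta^\e(s)\rVert_{L^\infty(V^\e(s))}+\sigma(t-s).$$
Taking $\liminf_{\e\to 0}$ on both sides (with $\sigma$ fixed, so the additive term passes through unchanged) and using the liminf realization from the previous step yields
$$\lVert \beta(t)\rVert_{L^\infty(V(t))}\le \lVert \beta(s)\rVert_{L^\infty(V(s))}+\sigma(t-s),$$
and since $\sigma>0$ is arbitrary, the $L^\infty$-norm is non-increasing in $t$.

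Finally, lower semi-continuity of $t\mapsto \lVert \beta(t)\rVert_{L^\infty(V(t))}$ follows from the liminf construction and the lower semi-continuity of the $L^\infty$-norm under the weak convergence supplied by Theorem \ref{thm:convergence-beta}, mirroring exactly the proof of the corresponding statement for $\lVert V(t)\rVert$ in Corollary \ref{cor:constructlimitflow}. A non-increasing function on $[0,T]$ is differentiable almost everywhere with non-positive derivative, which gives the a.e.\ inequality $\frac{d}{dt}\lVert \beta(t)\rVert_{L^\infty(V(t))}\le 0$. The main obstacle is the diagonal-extraction bookkeeping: one must arrange a single sequence $\e_i\to 0$ along which both $V^{\e_i}(t)\to V(t)$ (consistent with Corollary \ref{cor:constructlimitflow}) and the liminf of the $L^\infty$-norms of $\beta^{\e_i}(t)$ is realized simultaneously for every $t$ in a countable dense subset of $[0,T]$, after which the uniform Lipschitz control of Corollary \ref{cor:beta-lipschitz} propagates the monotonicity and lower semi-continuity to all $t\in[0,T]$.
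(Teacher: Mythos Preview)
Your proposal is correct and takes essentially the same approach as the paper: the paper's proof simply invokes Theorem \ref{thm:convergence-beta} for the existence of the scalar lifts and Corollary \ref{cor:beta-lipschitz} for the non-increasing, lower semi-continuous, and a.e.\ derivative statements. You have filled in more of the details than the paper does---in particular the Harvey--Lawson identity to transfer $H^\e$ bounds to $B^\e$ bounds, the liminf realization, and the diagonal-extraction bookkeeping---but the architecture is identical.
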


\begin{proof}
The existence of the scalar lifts of the lagrangian angle follows from Theorem \ref{thm:convergence-beta}. The non-increasing, lower semi-continuous statement and derivative bound follow from Corollary \ref{cor:beta-lipschitz}
\end{proof}

\bigskip

\section{Extending the Flow and First Variation Control}
In this section we show how to extend the lagrangian varifold  $\{ V(t) \}$, constructed above on a compact time interval $[0,T]$, to all $t>0$. We show that for a.e. $t>0$ the lagrangian varifold $V(t)$ satisfies $(H_2)$ and has Maslov index zero. Unfortunately we do not have uniform bounds on the mean curvature of $V(t)$, however, Theorem \ref{thm:good-times-MC-bounds} and Theorem \ref{thm:H2-bounded-infinity} below are partial replacements.

\bigskip

Recall that $\l_{\e}(t) = || {H^{\e}}(t) ||_{L^2(\frac{V^{\e}(t)}{m_\e(t)})}$. Set:
$$
{\cal S}_M = \{ t \in [0,T]: \limsup_{\e \to 0} \l_{\e}(t) > M \}
$$
$$
{\cal S}_\infty = \{ t \in [0,T]: \limsup_{\e \to 0} \l_{\e}(t) = \infty \}
$$

\begin{thm}
\label{thm:first-bound-MC} Suppose we have $\inf_{[0,T]} \lVert V(t)\rVert\geq \alpha>0$. 
The one dimensional Lebesgue measure of ${\cal S}_M$, denoted $m({\cal S}_M)$ satisfies the bound:
$$
m({\cal S}_M) \leq M^{-2} \a^{-1} 2 || V(0) ||,
$$

 Consequently, 
 $$
 m({\cal S}_\infty) = 0.
 $$
\end{thm}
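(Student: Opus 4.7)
The plan is to combine an integrated-in-time bound $\int_0^T m_{\e_i}\lambda_{\e_i}^2\,dt\leq 2\lVert V(0)\rVert$, valid along the subsequence $\{\e_i\}$ from Theorem \ref{thm:volumedecrease}, with the hypothesized lower mass bound and a Chebyshev argument.

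First I would upgrade the three-case pointwise analysis in the proof of Theorem \ref{thm:volumedecrease} to a differential inequality valid along $\{\e_i\}$ uniformly in $t\in[0,T]$. From \eqref{equ:deriv-of-volume} and the Harvey-Lawson formula of Theorem \ref{thm:harvey-lawson-varifold},
\begin{equation*}
\tfrac{d}{dt}m_{\e_i}=-\int (JH^{\e_i})_{\e_i}\cdot JH^{\e_i}\,d\lVert V^{\e_i}\rVert + \int\bigl((\b H^{\e_i})_{\e_i}-\b_{\e_i}(H^{\e_i})_{\e_i}+E(-,\e_i)\bigr)\cdot JH^{\e_i}\,d\lVert V^{\e_i}\rVert.
\end{equation*}
The leading term equals $-(1+o(1))m_{\e_i}\lambda_{\e_i}^2$ by Lemma \ref{thm:mollified-mean-curv1}, while the error term is $o(1)\cdot m_{\e_i}\lambda_{\e_i}^2$ by Lemmas \ref{thm:mollified-mean-curv2}, \ref{thm:mollified-mean-curv3}, \ref{lem:dom-conv-to-E} together with the uniform $L^\infty$ bound on $\b^\e$ from Theorem \ref{thm:boundonbeta}. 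Hence for $i$ large,
\begin{equation*}
\tfrac{d}{dt}m_{\e_i}(t)\leq -\tfrac{1}{2}\, m_{\e_i}(t)\lambda_{\e_i}^2(t),
\end{equation*}
and integrating over $[0,T]$ with $m_{\e_i}(0)=\lVert V(0)\rVert$ yields $\int_0^T m_{\e_i}\lambda_{\e_i}^2\,dt\leq 2\lVert V(0)\rVert$.

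The hypothesis $\inf_t\lVert V(t)\rVert\geq \a$, combined with the lower semi-continuity $\lVert V(t)\rVert\leq\liminf_i m_{\e_i}(t)$ coming from the weak convergence $V^{\e_i}(t)\rightharpoonup V(t)$, gives $\liminf_i m_{\e_i}(t)\geq \a$ pointwise. Fatou's lemma applied to the nonnegative sequence $m_{\e_i}\lambda_{\e_i}^2$, together with $\liminf(fg)\geq(\liminf f)(\liminf g)$ for nonnegative sequences, then yields
\begin{equation*}
\a\int_0^T\liminf_i\lambda_{\e_i}^2\,dt\leq\liminf_i\int_0^T m_{\e_i}\lambda_{\e_i}^2\,dt\leq 2\lVert V(0)\rVert.
\end{equation*}
After a further diagonal extraction within $\{\e_i\}$ along which $\lambda_{\e_i}^2$ converges pointwise almost everywhere to a limit $\Lambda^2$, we have $\limsup_i\lambda_{\e_i}=\Lambda$ a.e., so ${\cal S}_M\subseteq\{t:\Lambda(t)>M\}$ up to a null set. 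Chebyshev's inequality gives
\begin{equation*}
M^2\, m({\cal S}_M)\leq\int_{{\cal S}_M}\Lambda^2\,dt\leq\frac{2\lVert V(0)\rVert}{\a},
\end{equation*}
the claimed estimate. Since ${\cal S}_\infty=\bigcap_M{\cal S}_M$ and the right-hand side above tends to zero as $M\to\infty$, $m({\cal S}_\infty)=0$.

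The main obstacle is the upgrade in the first step: the pointwise-in-$t$ analysis of Theorem \ref{thm:volumedecrease} must be refined to integrable-in-$t$ control, so that the error terms $(\b H^{\e_i})_{\e_i}-\b_{\e_i}(H^{\e_i})_{\e_i}$ and $E(-,\e_i)$ are genuinely $o(1)\cdot m_{\e_i}\lambda_{\e_i}^2$ in an $L^1_t$ sense rather than only pointwise. The uniform $L^\infty$ control on $\b^\e$ from Theorem \ref{thm:boundonbeta} together with dominated convergence arguments are essential here. A secondary concern is the extraction of a pointwise-a.e.-convergent subsequence of $\lambda_{\e_i}^2$, which requires combining the $L^1_t$ bound with equi-integrability consequences of the $L^2(V^{\e_i})$ uniform bound on $H^{\e_i}$ via a Dunford-Pettis or Vitali-type argument.
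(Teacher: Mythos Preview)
Your overall strategy---integrate a differential inequality of the form $\tfrac{d}{dt}m_{\e}\leq -\tfrac12 m_\e\lambda_\e^2$ in time, use the lower mass bound, and finish with Chebyshev---matches the paper's. The paper obtains exactly the integrated bound \eqref{equ:MC-bound} from the uniform-in-$t$ inequality \eqref{equ:unscaled-inequality}. However, there is a genuine gap in the final step of your argument, and the difference from the paper's proof is precisely what fixes it.

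You fix the subsequence $\{\e_i\}$ \emph{first} (coming from Theorem~\ref{thm:volumedecrease}), establish the integral bound along it, and then attempt to compare with ${\cal S}_M$. But ${\cal S}_M$ is defined via $\limsup_{\e\to 0}\lambda_\e(t)$ over the full family of $\e$'s, not along your chosen subsequence. After you extract $\Lambda(t)=\lim_i\lambda_{\e_i}(t)$, you only know $\Lambda(t)\leq\limsup_{\e\to 0}\lambda_\e(t)$, so the valid inclusion is $\{\Lambda>M\}\subseteq{\cal S}_M$, the \emph{reverse} of what you assert. Your Chebyshev step then bounds $m(\{\Lambda>M\})$, not $m({\cal S}_M)$. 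A $t$ could lie in ${\cal S}_M$ because $\lambda_\e(t)$ is large along some \emph{other} sequence of $\e$'s while remaining small along your $\{\e_i\}$; such $t$ are invisible to $\Lambda$.

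The paper's order is reversed, and this is essential. It first establishes, via a continuity-in-$(\e,t)$ argument, that the scaled inequality \eqref{equ:scaled-inequality}---hence \eqref{equ:unscaled-inequality} and the integrated bound \eqref{equ:inequ-on-small-set}---holds for \emph{all} $0<\e<\bar\e$, not merely along a subsequence. Only \emph{after} that does it choose a subsequence $\{\e_i\}$ adapted to ${\cal S}_M$: by a diagonal extraction over a countable dense subset ${\cal D}_M\subset{\cal S}_M$ (using that for each $t\in{\cal S}_M$ there is \emph{some} sequence with $\lambda_\e(t)>M$), and then a continuity extension to all of ${\cal S}_M$. Because the integral inequality was already valid for every small $\e$, it holds in particular for this adapted subsequence, and one can legitimately divide by $\lambda_{\e_i}({\cal S}_M)^2 m_{\e_i}({\cal S}_M)\geq M^2\alpha$ and pass to the limit. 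Your ``main obstacle'' is therefore not just upgrading pointwise control to $L^1_t$ control along $\{\e_i\}$; it is obtaining the differential inequality for \emph{all} small $\e$ so that you retain the freedom to choose the subsequence after restricting to ${\cal S}_M$.
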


\begin{proof}
First assume that for all $\e >0$ and $t \in [0,T]$ that $|| H^\e(t) ||_{L^2(V^\e(t))} \neq 0$. Using the uniform sup bound on $\b^\e$ and the behavior of mollification as $\e \to 0$ we have for each $t \in [0,T]$ that there exists an $\e(t) > 0$ such that for $0 < \e < \e(t)$:

\begin{eqnarray} \nonumber
&&\big| \int \Big( \big(\b^\e \frac{H^\e(t)}{\l_\e(t)}\big)_\e -(\b^\e)_\e \big( \frac{H^\e(t)}{\l_\e(t)}\big)_\e + E(-, \e) \Big) \cdot J\big(\frac{H^\e(t)}{\l_\e(t)} \big) d\big(\frac{\lVert V^\e(t)\rVert}{m_\e(t)}\big) \big| \\
\label{equ:scaled-inequality}
&&\leq \frac{1}{2} \int \big(J\big(\frac{H^\e(t)}{\l_\e(t)}\big)\big)_\e \cdot J\big(\frac{H^\e(t)}{\l_\e(t)} \big) d\big(\frac{\lVert V^\e(t)\rVert}{m_\e(t)}\big)
\end{eqnarray}
Using the continuity in $\e> 0$ and $t$ of $H^\e(t)$ and $\l_\e(t)$ we can find an $\overline{\e}$ such that the  inequality (\ref{equ:scaled-inequality}) holds for any $0 < \e < \overline{\e}$ and all $t \in [0,T]$. Hence for all $t \in [0,T]$ and all $0 < \e < \overline{\e}$ we have:
\begin{eqnarray} \nonumber
&&\big| \int \Big( \big(\b^\e H^\e(t) \big)_\e -(\b^\e)_\e \big( H^\e(t) \big)_\e + E(-, \e) \Big) \cdot J\big(H^\e(t) \big) d\lVert V^\e(t)\rVert \big| \\
\label{equ:unscaled-inequality}
&&\leq \frac{1}{2} \int \big(J\big(H^\e(t) \big)\big)_\e \cdot J\big(H^\e(t) \big) d\lVert V^\e(t)\rVert
\end{eqnarray}
This inequality includes the cases when $|| H^\e(t) ||_{L^2(V^\e(t))} = 0$. Hence we have for all $t \in [0,T]$ and all $0 < \e < \overline{\e}$:
$$
 \frac{1}{2} \int \big(JH^\e(t) \big)_\e \cdot JH^\e(t) d\lVert V^\e(t)\rVert \leq - \frac{d}{dt} \lVert V^{\e}(t) \rVert
$$
 Integrate to derive  for $0 < \e < \overline{\e}$:
\begin{equation}
\label{equ:MC-bound}
 \int_0^T  \frac{1}{2} \int \big(JH^\e(t) \big)_\e \cdot JH^\e(t) d\lVert V^\e(t)\rVert dt \leq - \int_0^T  \frac{d}{dt} \lVert V^{\e}(t) \rVert dt < || V^\e(0) ||.
\end{equation}
It follows that for for $0 < \e < \overline{\e}$:
\begin{equation}
\label{equ:inequ-on-small-set}
 \int_{{\cal S}_M}  \int \big(JH^\e(t) \big)_\e \cdot JH^\e(t) d\lVert V^\e(t)\rVert dt  < 2 || V^{\e}(0) ||.
\end{equation}
Set:
$$
\l_\e({{\cal S}_M}) = \inf_{t \in {{\cal S}_{M}}} \l_{\e}(t).
$$
$$
m_\e({{\cal S}_M}) = \inf_{t \in {{\cal S}_{M}}} m_{\e}(t).
$$
Then, for a countable dense subset  ${{\cal D}_M}$ of ${{\cal S}_M}$ there is a sequence $\{ \e_i \}$ with $\e_i \to 0$ such that for $t \in {{\cal D}_M}$, $\lim_{i \to \infty} \l_{\e_i}(t) > M$. From the continuity of $\l_\e(t)$ in $t$ it follows that for all $t \in {{\cal S}_M}$:
$$
\lim_{i \to \infty} \l_{\e_i}(t) > M.
$$ 
Therefore as $\e_i \to 0$ we have $\l_{\e_i}({{\cal S}_M}) \geq M$ and $m_{\e_i}({{\cal S}_M}) \geq \a$.

Consider (\ref{equ:inequ-on-small-set}) with $\e = \e_i$. Then multiply both sides by $\l_{\e_i}({{\cal S}_M})^{-2} m_{\e_i}({{\cal S}_M})^{-1}$ to get:
\begin{eqnarray} \nonumber
\label{equ:inequ-on-small-set-scaled}
&&\int_{{\cal S}_{M}} \frac{\l^2_{\e_i}(t)}{\l^2_{\e_i}({{\cal S}_M})} \frac{m_{\e_i}(t)}{m_{\e_i}({{\cal S}_M})}    \Big(  \int (\frac{JH_t^{\e_i}}{\l_{\e_i}(t)})_{\e_i} \cdot \frac{JH(t)^{\e_i}}{\l_{\e_i}(t)} d( \frac{\lVert V^{\e_i}(t)\rVert}{m_{\e_i}(t)})  \Big) dt  \\ [.3cm]
 \label{equ:inequ-on-small-set-scaled}
 && < \l_{\e_i}({{\cal S}_M})^{-2} m_{\e_i}({{\cal S}_M})^{-1} 2 || V^{\e_i}(0) ||.
\end{eqnarray}
Clearly for all $t \in {{\cal S}_M}$
$$
\frac{\l^2_{\e_i}(t)}{\l^2_{\e_i}({{\cal S}_M})} \frac{m_{\e_i}(t)}{m_{\e_i}({{\cal S}_M})} \geq 1
$$
Therefore as $\e_i \to 0$ the left hand side of (\ref{equ:inequ-on-small-set-scaled}) is greater than or equal to 
$$ 
\int_{{\cal S}_M} \int JX \cdot JX dV dt = \int_{{\cal S}_M} dt = m({{\cal S}_M}).
$$
The limit of the right hand side is bounded above by
$$
M^{-2} \a^{-1} 2 || V(0) ||.
$$
Therefore
$$
m({{\cal S}_M}) \leq M^{-2} \a^{-1} 2 || V(0) ||.
$$
The result follows.
\end{proof}

\bigskip

Set:
$$
{\cal T}_M = \{ t \in [0,T]: \limsup_{\e \to 0} \l_{\e}(t) \leq M \}
$$

\begin{thm}
\label{thm:bdd-first-variation}
For each $t \in {\cal T}_M$ the lagrangian varifold $V(t)$:
\begin{enumerate}
\item satisfies $(H_2)$. \\
\item the generalized mean curvature of $V(t)$, $H(t) \in L^2(V(t))$ satisfies the bound $$ \lVert H(t) \rVert_{L^2(V(t))} \leq M$$ \\
\item the lagrangian angle $\b(t) \in L^\infty(V(t))$ has weak derivative $B(t) \in L^2(V(t))$ and therefore $V(t)$ has Maslov index zero.
\end{enumerate}
\end{thm}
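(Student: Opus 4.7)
The plan is to pass to the limit $\e \to 0$ by combining two compactness results already established: Allard's compactness Theorem \ref{thm:compact-Hp-varifolds} for $(H_2)$-varifolds and the Maslov compactness Theorem \ref{thm:convergence-beta}. The membership $t \in \mathcal{T}_M$ is tailor-made to produce, along some subsequence, the uniform $L^2$ mean-curvature control that both theorems require.

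First I would extract a single subsequence $\e_i \to 0$ that simultaneously achieves weak Radon-measure convergence $V^{\e_i}(t) \to V(t)$ as in Corollary \ref{cor:constructlimitflow}, satisfies $\l_{\e_i}(t) \to \limsup_{\e \to 0} \l_\e(t) \le M$ by the definition of $\mathcal{T}_M$, and inherits the mass bound $\lVert V^{\e_i}(t) \rVert \le \exp(\sigma T)\lVert V(0)\rVert$ from Theorem \ref{thm:volumedecrease}; a diagonal argument makes these three conditions compatible. Since $\lVert H^{\e_i}(t) \rVert_{L^2(V^{\e_i}(t))}^2 = \l_{\e_i}(t)^2 \, m_{\e_i}(t)$, these choices give a uniform $L^2$ bound on $H^{\e_i}(t)$.

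Next I would apply Allard's compactness Theorem \ref{thm:compact-Hp-varifolds} to the sequence $V^{\e_i}(t)$: the uniform mass and uniform $L^2$ mean-curvature bounds just derived yield that the limit $V(t)$ also satisfies $(H_2)$ with $H(t) \in L^2(V(t))$, and the stated bound $\lVert H(t) \rVert_{L^2(V(t))} \le M$ follows from lower semi-continuity of $L^2$ norms under weak Radon-measure convergence. This handles parts (1) and (2). For part (3), the sup bound Theorem \ref{thm:boundonbeta} supplies $\lVert \b^{\e_i}(t) \rVert_{L^\infty(V^{\e_i}(t))} \le A$ uniformly, and the Harvey-Lawson formula of Theorem \ref{thm:harvey-lawson-varifold} applied to each $V^{\e_i}(t)$ gives $B^{\e_i}(t) = -JH^{\e_i}(t) + \b^{\e_i}(t) H^{\e_i}(t)$; since both factors on the right are controlled, $\lVert B^{\e_i}(t) \rVert_{L^2(V^{\e_i}(t))}$ is uniformly bounded. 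Invoking the Maslov compactness Theorem \ref{thm:convergence-beta} then produces the lift $\b(t) \in L^\infty(V(t))$ with weak derivative $B(t) \in L^2(V(t))$, certifying that $V(t)$ has Maslov index zero.

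The principal obstacle is bookkeeping for the subsequences: the $\limsup$ condition defining $\mathcal{T}_M$ only delivers subsequential information a priori, and Corollary \ref{cor:constructlimitflow}'s varifold convergence, Theorem \ref{thm:compact-Hp-varifolds}'s mean-curvature convergence, and Theorem \ref{thm:convergence-beta}'s lift convergence each require a further passage to a subsequence. All of these selections need to be coordinated so that the final $\e_i$ realize all four convergences simultaneously; this is routine via a diagonal argument but warrants explicit attention, especially if one wishes to deduce the claim for every $t \in \mathcal{T}_M$ rather than for a selected cofinal set of times.
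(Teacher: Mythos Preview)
Your proposal is correct and follows essentially the same route as the paper: Allard's compactness Theorem \ref{thm:compact-Hp-varifolds} for items (1)--(2), and the Maslov compactness Theorem \ref{thm:convergence-beta} for item (3). You supply more detail than the paper's terse proof --- in particular, you explicitly coordinate the subsequences and invoke the Harvey--Lawson formula (Theorem \ref{thm:harvey-lawson-varifold}) together with the sup bound (Theorem \ref{thm:boundonbeta}) to control $\lVert B^{\e_i}(t)\rVert_{L^2}$, whereas the paper simply cites Corollary \ref{cor:norm-beta-decreasing} --- but the underlying argument is the same.
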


\begin{proof}
The first two items follow from Allard's compactness theorem for varifolds  Theorem \ref{thm:compact-Hp-varifolds}. The last item follows from Theorem \ref{thm:convergence-beta} and Corollary \ref{cor:norm-beta-decreasing}.
\end{proof}

\bigskip

Note that the first and last items in Theorem \ref{thm:bdd-first-variation} are independent of the value of $M$ and that as $M \to \infty$, $m({\cal S}_M) \to 0$. Therefore:

\begin{cor}\label{cor:aezeroMaslov}
For almost every  $t \in [0, T]$ the lagrangian rectifiable varifold $V(t)$ satisfies the conclusions (1) and (3) of Theorem \ref{thm:bdd-first-variation}.
\end{cor}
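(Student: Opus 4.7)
The plan is to exploit the set-theoretic decomposition $[0,T] = \mathcal{S}_M \sqcup \mathcal{T}_M$ for each $M > 0$, together with the measure estimate for $\mathcal{S}_M$ already in hand. The key observation is that Theorem \ref{thm:bdd-first-variation} already gives conclusions (1), (2), and (3) for every $t \in \mathcal{T}_M$, no matter what $M$ is; the only obstruction to conclusions (1) and (3) holding at a particular time $t$ is that $t$ must fail to lie in any $\mathcal{T}_M$, i.e.~$\limsup_{\e\to 0}\lambda_\e(t) = \infty$. So the bad set is precisely $\bigcap_{M\in\mathbb{N}}\mathcal{S}_M$, which I will abbreviate $\mathcal{S}_\infty$.

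First I would reduce to the case where $\inf_{[0,T]}\lVert V(t)\rVert\geq \alpha>0$, since Theorem \ref{thm:first-bound-MC} requires such a lower bound. By Corollary \ref{cor:constructlimitflow}, $\lVert V(t)\rVert$ is non-increasing and lower semi-continuous, so either there exists $\alpha > 0$ with $\lVert V(t)\rVert \geq \alpha$ on all of $[0,T]$, or else there is a time $t_0\in(0,T]$ after which $\lVert V(t)\rVert = 0$; in the latter case $V(t)$ is the zero varifold on $(t_0,T]$ and trivially satisfies (1) and (3), so it suffices to prove the statement on each compact subinterval $[0,T-\delta]\subset[0,t_0)$ with $\delta > 0$, on which $\lVert V(t)\rVert$ is bounded below by a positive constant by lower semi-continuity.

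With $\alpha > 0$ in place, I would apply Theorem \ref{thm:first-bound-MC} to obtain $m(\mathcal{S}_M)\leq 2M^{-2}\alpha^{-1}\lVert V(0)\rVert$. Since $\{\mathcal{S}_M\}_{M\in\mathbb{N}}$ is a nested decreasing family, continuity of measure gives
$$
m(\mathcal{S}_\infty) = \lim_{M\to\infty} m(\mathcal{S}_M) \leq \lim_{M\to\infty} 2M^{-2}\alpha^{-1}\lVert V(0)\rVert = 0.
$$
For any $t\in [0,T]\setminus\mathcal{S}_\infty$ there is a finite $M=M(t)$ with $t\in\mathcal{T}_M$, at which point Theorem \ref{thm:bdd-first-variation} supplies conclusions (1) and (3) directly.

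In summary, this is essentially a packaging argument with no real obstacle: the heavy lifting was done in Theorem \ref{thm:first-bound-MC} (whose proof invoked the uniform sup bound on $\beta^\e$, the Allard-style compactness, and the error estimates on $E(x,\e)$) and in Theorem \ref{thm:bdd-first-variation} (which combined Allard's compactness theorem with Theorem \ref{thm:convergence-beta} and Corollary \ref{cor:norm-beta-decreasing}). The only subtle point is the reduction to a positive lower bound on mass, which requires a careful reading of the lower-semicontinuity in Corollary \ref{cor:constructlimitflow}; otherwise the argument is a one-line consequence of countable additivity. Note that conclusion (2) — the uniform $L^2$ bound $\lVert H(t)\rVert_{L^2(V(t))}\leq M$ — does \emph{not} extend to almost every $t$ with a uniform constant, since the $M$ required depends on $t$; this is why the statement asserts only (1) and (3).
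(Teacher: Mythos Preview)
Your argument is correct and follows the same route as the paper: the paper's proof is the single observation that conclusions (1) and (3) of Theorem \ref{thm:bdd-first-variation} do not depend on the value of $M$, while $m(\mathcal{S}_M)\to 0$ as $M\to\infty$ by Theorem \ref{thm:first-bound-MC}. Your proposal adds a careful reduction to the case $\inf_{[0,T]}\lVert V(t)\rVert\geq\alpha>0$ (needed to invoke Theorem \ref{thm:first-bound-MC}), which the paper leaves implicit; otherwise the arguments coincide.
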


The limit flow $\{ V(t) \}$ has the property that for almost every $t \in [0,T]$ each lagrangian varifold satisfies $(H_2)$ and has Maslov index zero. However there are no uniform bounds on $ \lVert H(t) \rVert_{L^2(V(t))}$. This failure is partial rectified by the following result:

\begin{thm}
\label{thm:good-times-MC-bounds}
Suppose  $\inf_{[0,T]} \lVert V(t)\rVert\geq \alpha$. Then:
$$
\int_0^T \lVert H(t) \rVert^2_{L^2(V(t))} dt \leq \frac{4}{\a} \lVert V(0) \rVert^2.
$$
\end{thm}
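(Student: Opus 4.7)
The plan is to pass to the $\e\to 0$ limit in the energy inequality (\ref{equ:MC-bound}) established in the proof of Theorem \ref{thm:first-bound-MC}, namely
$$
\int_0^T \tfrac{1}{2}\int (JH^\e(t))_\e \cdot JH^\e(t)\,d\lVert V^\e(t)\rVert\,dt \leq \lVert V^\e(0)\rVert,
$$
valid uniformly for all $0<\e<\overline{\e}$. Since $\lVert V^\e(0)\rVert\to \lVert V(0)\rVert$ and since $\alpha\leq\lVert V(t)\rVert\leq\lVert V(0)\rVert$ implies $2\lVert V(0)\rVert\leq\tfrac{4}{\alpha}\lVert V(0)\rVert^2$, it suffices to produce the limit bound $\int_0^T\lVert H(t)\rVert^2_{L^2(V(t))}\,dt\leq 2\lVert V(0)\rVert$.

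The key pointwise-in-time step is the lower-semicontinuity estimate
$$
\lVert H(t)\rVert^2_{L^2(V(t))} \leq \liminf_{\e\to 0}\int (JH^\e(t))_\e\cdot JH^\e(t)\,d\lVert V^\e(t)\rVert
$$
for a.e.~$t\in[0,T]$. For $t\notin{\cal S}_\infty$, the norms $\lVert H^\e(t)\rVert_{L^2(V^\e(t))}=m_\e(t)^{1/2}\l_\e(t)$ are bounded along a subsequence, so Allard's compactness (Theorem \ref{thm:compact-Hp-varifolds}) gives that $V(t)$ satisfies $(H_2)$ with $\lVert H(t)\rVert_{L^2(V(t))}\leq\liminf\lVert H^\e(t)\rVert_{L^2(V^\e(t))}$, and the vector-valued Radon measures $H^\e(t)\,d\lVert V^\e(t)\rVert$ converge weakly to $H(t)\,d\lVert V(t)\rVert$. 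A diagonal weak-convergence argument mimicking the one used in Lemmas \ref{thm:mollified-mean-curv1} and \ref{thm:mollified-mean-curv2} identifies the $\liminf$ of the mollified pairing with an $L^2$ inner product $\int X\cdot X\,d\mu$, where $X$ is the common weak limit of (a subsequence of) $JH^\e$; this quantity dominates $\lVert H(t)\rVert^2_{L^2(V(t))}$.

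Then Fatou's lemma in time --- observing that the exceptional set ${\cal S}_\infty$ has Lebesgue measure zero by Theorem \ref{thm:first-bound-MC} under the hypothesis $\inf_{[0,T]}\lVert V(t)\rVert\geq\alpha$ --- concludes:
$$
\int_0^T \lVert H(t)\rVert^2_{L^2(V(t))}\,dt \leq \liminf_{\e\to 0}\int_0^T\!\!\int (JH^\e)_\e\cdot JH^\e\,d\lVert V^\e\rVert\,dt \leq 2\lVert V(0)\rVert \leq \tfrac{4}{\alpha}\lVert V(0)\rVert^2.
$$
The main obstacle is the lower-semicontinuity step itself: because $(JH^\e)_\e$ is mollified with the asymmetric kernel involving the $\e\lVert V^\e\rVert$ denominator correction, and against the varying measure $\lVert V^\e\rVert$, one cannot simply replace $(JH^\e)_\e$ by $JH^\e$ in the integrand. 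The diagonal subsequence argument used in the proofs of Lemmas \ref{thm:mollified-mean-curv1}--\ref{thm:mollified-mean-curv3}, combined with the error control of Lemma \ref{lem:dom-conv-to-E} to absorb non-symmetric mollification contributions, is what produces the clean comparison with $\lVert H(t)\rVert^2_{L^2(V(t))}$.
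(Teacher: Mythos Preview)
Your approach is correct and reaches the stated bound, but it takes a genuinely different route from the paper. The paper does \emph{not} attempt a pointwise-in-$t$ lower-semicontinuity step for the mollified pairing $\int (JH^\e)_\e\cdot JH^\e\,d\lVert V^\e\rVert$ against $\lVert H(t)\rVert^2_{L^2(V(t))}$. Instead, it observes that by strong $L^2$ convergence of mollification (Theorem \ref{thm:strong-conv-Lp}), the normalized pairing $\int (JH^\e/\l_\e)_\e\cdot (JH^\e/\l_\e)\,d(\lVert V^\e\rVert/m_\e)$ exceeds $\tfrac12$ once $\e$ is small, uniformly in $t\in[0,T]$. This immediately yields $\tfrac12\l_\e(t)^2\le -\tfrac{2}{\alpha}\tfrac{d}{dt}\lVert V^\e(t)\rVert$, which integrates to $\int_0^T\l_\e(t)^2\,dt<\tfrac{4}{\alpha}\lVert V(0)\rVert$; multiplying by $A_\e=\sup_t m_\e(t)\to\lVert V(0)\rVert$ and invoking Allard lower semicontinuity for $\lVert H^\e\rVert_{L^2(V^\e)}$ gives the stated bound.

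What each approach buys: the paper's $>\tfrac12$ bound is elementary (just Proposition \ref{prop:bound-on-moll-op} and strong convergence) and sidesteps the entire subsequence-and-diagonal machinery of Lemmas \ref{thm:mollified-mean-curv1}--\ref{thm:mollified-mean-curv3}; the price is the extra factor $\lVert V(0)\rVert/\alpha$ from dividing by $m_\e\ge\alpha$ and later multiplying by $A_\e$. Your route instead leans on that machinery to identify the limit of the mollified pairing directly with $\lVert H(t)\rVert^2$, which if pushed through cleanly would give the sharper estimate $\int_0^T\lVert H(t)\rVert^2\,dt\le 2\lVert V(0)\rVert$ that you noticed. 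Two points you glossed over but which are needed: nonnegativity of the mollified pairing for $\e<\bar\e$ (so Fatou applies), which follows from \eqref{equ:unscaled-inequality}; and the case $\l_{\e_i}(t)\to 0$ along some subsequence, where Lemma \ref{thm:mollified-mean-curv1} does not apply but the pairing and $\lVert H(t)\rVert^2$ both vanish, so the inequality is trivial.
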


\begin{proof}
From the proof of Theorem \ref{thm:first-bound-MC}  there is an ${\bar \e} > 0$ such that for each $t \in [0,T]$ and $0 < \e < {\bar \e}$:
$$
 \int (JH^\e(t))_\e \cdot JH^\e(t) d\lVert V^\e(t)\rVert < - 2 \frac{d}{dt} \lVert V^{\e}(t) \rVert
$$
Using the notation $\l_\e(t)$ and $m_\e(t)$ as defined  above we have:
$$
m_\e(t)  (\l_\e(t))^2 \int (\frac{JH^\e(t)}{\l^\e(t)})_\e \cdot \frac{JH^\e(t)}{\l_\e(t)} d( \frac{\lVert V^\e(t)\rVert}{m_\e(t)}) < - 2 \frac{d}{dt} \lVert V^{\e}(t)t \rVert
$$
Now $m_\e(t) \geq \lVert V(t)\rVert \geq \a$. Therefore:
$$
 (\l_\e(t))^2 \int (\frac{JH^\e(t)}{\l_\e(t)})_\e \cdot \frac{JH^\e(t)}{\l_\e(t)} d( \frac{\lVert V^\e(t)\rVert}{m_\e(t)}) < - \frac{2}{\a} \frac{d}{dt} \lVert V^{\e}(t) \rVert
$$
Recall that as $\eta \to 0$, $(\frac{JH^\e(t)}{\l_\e(t)})_\eta \to \frac{JH^\e(t)}{\l_\e(t)}$ strongly in $L^2(V^{\e}(t))$. Therefore for any $t$ and $\e$, as $\eta \to 0$:
$$
\int (\frac{JH^\e (t)}{\l_\e(t)})_\eta \cdot \frac{JH^\e(t)}{\l_\e(t)} d( \frac{\lVert V^\e(t)\rVert}{m_\e(t)}) \to 1.
$$
It follows that  for any $t \in [0, T]$ and $\e \in [0, 1]$ there is an $\eta_0 > 0$ such that for $\eta < \eta_0$:
$$
\int (\frac{JH^\e(t)}{\l_\e(t)})_\eta \cdot \frac{JH^\e(t)}{\l_\e(t)} d( \frac{\lVert V^\e(t)\rVert}{m_\e(t)}) > \frac{1}{2}.
$$
In particular, there is an $\e_0 > 0$ such that for $\e < \e_0$:
$$
\int (\frac{JH^\e(t)}{\l_\e(t)})_\e \cdot \frac{JH^\e(t)}{\l_\e(t)} d( \frac{\lVert V^\e(t)\rVert}{m_\e(t)}) > \frac{1}{2}.
$$
Hence  for any $t \in [0, T]$ and $\e < \min(\e_0, {\bar \e})$:
\begin{eqnarray} \nonumber
\label{equ:MCintegral-ineq}
\frac{1}{2}  (\l^\e(t))^2 &<&  (\l_\e(t))^2 \int (\frac{JH^\e(t)}{\l_\e(t)})_\e \cdot \frac{JH^\e(t)}{\l_\e(t)} d( \frac{\lVert V^\e(t)\rVert}{m_\e(t)}) \\ 
&<& - \frac{2}{\a} \frac{d}{dt} \lVert V^{\e}(t) \rVert \\ \nonumber
\end{eqnarray}
Thus when $\e < \min( \e_0, \bar{\e})$ we can 
integrate (\ref{equ:MCintegral-ineq}) to derive:
$$
\int_0^T  (\l_\e(t))^2 dt < \frac{4}{\a} \lVert V^{\e}(0) \rVert = \frac{4}{\a} \lVert V(0) \rVert
$$
Recall that for each $\e > 0$ the masses $m_\e(t) = ||V^\e(t)||$ are bounded independent of $t \in [0,T]$ by a constant $A_\e$ and that as $\e \to 0$, $A_\e \to ||V(0)||$.
Since 
$$
\int_0^T  (\l_\e(t))^2 dt = \int_0^T  \int | H^\e(t)|^2 d( \frac{\lVert V^\e(t)\rVert}{m_\e(t)}) dt =   \int_0^T m_\e(t)^{-1} || H^\e(t) ||^2_{L^2(V^\e(t))} dt.
$$
It follows that,
$$
\int_0^T  || H^\e(t) ||^2_{L^2(V^\e(t))} dt < A_\e \int_0^T  (\l_\e(t))^2 dt   < \frac{4}{\a} \lVert V(0) \rVert
$$
Let $\{ \e_i \}$ be a sequence  such that  $A_{\e_i} \to ||V(0)||$ as $\e_i \to 0$. Then:
\begin{eqnarray*}
\int_0^T  \lVert H(t) \rVert^2_{L^2(V(t))} dt &\leq& \limsup_{\e_i \to 0} A_{\e_i} \frac{4}{\a} \lVert V(0) \rVert \\
&\leq& ||V(0)||  \frac{4}{\a} \lVert V(0) \rVert \\
&=& \frac{4}{\a} \lVert V(0) \rVert^2
\end{eqnarray*}
\end{proof}

\bigskip

\begin{thm}
Given a lagrangian integer varifold $V_0$ that satisfies $(H_2)$ with Maslov index zero  there exists a lagrangian flow $\{ V(t) \}$ with $V(0) = V_0$ that exists for all time $t >0$.
\end{thm}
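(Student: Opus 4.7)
The plan is to extend the finite-time flow produced in the previous sections by iteration. Fix any $T > 0$ and apply the construction of Sections \ref{sec:epsilon-flows}--\ref{sec:sup-bounds} with initial datum $V_0$ to obtain a one-parameter family $\{V(t)\}_{t\in[0,T]}$. By Corollary \ref{cor:aezeroMaslov}, there is a full-measure subset ${\cal G}\subset[0,T]$ of ``good'' times at which $V(t)$ satisfies $(H_2)$ and has Maslov index zero. Pick $t_1\in{\cal G}$ with $t_1\geq T-1$, so that $V(t_1)$ is an admissible initial datum for our construction. Apply the construction again on a compact interval $[t_1,t_1+T]$, starting from $V(t_1)$, obtaining a flow on that interval. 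Concatenating produces a flow on $[0,t_1+T]$.

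Iterating this procedure, one produces a sequence $0 = t_0 < t_1 < t_2 < \cdots$ of restart times with $t_{k+1}-t_k\geq T-1$, so that $t_k\to\infty$, together with a flow $\{V(t)\}_{t\in[0,\infty)}$ obtained by gluing. The key point is that at each restart, the new initial datum $V(t_k)$ inherits the hypotheses needed to run the construction: by Theorem \ref{thm:bdd-first-variation} it satisfies $(H_2)$ with $H(t_k)\in L^2(V(t_k))$, and by Theorem \ref{thm:convergence-beta} together with Corollary \ref{cor:norm-beta-decreasing} it admits a scalar lift $\beta(t_k)\in L^\infty(V(t_k))$ of the lagrangian angle with weak derivative $B(t_k)\in L^2(V(t_k))$, so $V(t_k)$ has Maslov index zero.

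The uniform bounds needed to carry the iteration out to arbitrarily large times are provided by the monotonicity results already in hand. By Corollary \ref{cor:constructlimitflow} the mass $\lVert V(t)\rVert\leq \lVert V_0\rVert$ for all $t$, and by Corollary \ref{cor:norm-beta-decreasing} the sup norm $\lVert\beta(t)\rVert_{L^\infty(V(t))}\leq \lVert\beta_0\rVert_{L^\infty(V_0)}$ for all $t$. Because the quantitative estimates used in the $\e$-flow construction (Lemmas \ref{lem:betaepsilonexponential}, Theorem \ref{thm:boundonbeta}, Theorems \ref{thm:volumedecrease} and \ref{thm:betacontrol}) depend only on these two a priori bounds and not on the mean curvature $L^2$ norm, the constants in each successive interval $[t_k,t_{k+1}]$ can be taken from a fixed pool, and the construction does not degenerate as $k\to\infty$.

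The main obstacle is that the mean curvature $\lVert H(t)\rVert_{L^2(V(t))}$ is not uniformly bounded in $t$, so one cannot hope to run the $\e$-flow from an arbitrary time slice. This is precisely why the iteration uses good times $t_k\in{\cal G}$: Theorem \ref{thm:first-bound-MC} ensures ${\cal G}$ has full measure inside any interval $[T-1,T]$ on which $\inf\lVert V\rVert>0$, and when this infimum fails the varifold has vanished identically on the tail and the constant flow $V(t)\equiv 0$ is trivially a continuation. A second subtlety is verifying that the concatenated family really is a flow in the sense used throughout the paper (i.e.~that mass and $\lVert\beta\rVert_{L^\infty}$ remain lower semi-continuous and monotone across the restart times); this follows because each piece is constructed as a varifold-limit of hamiltonian deformations and the monotonicity statements of Corollaries \ref{cor:constructlimitflow} and \ref{cor:norm-beta-decreasing} are preserved under gluing at a good time, since $V(t_k)$ appears as the $t=0$ slice of the next piece with the same mass and lagrangian-angle sup norm.
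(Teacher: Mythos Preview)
Your proposal is correct and follows essentially the same strategy as the paper: construct the flow on a finite interval, locate a good restart time near the end where $V(t)$ satisfies $(H_2)$ and has Maslov index zero, restart the construction from there, and iterate; if the mass drops to zero, extend by the zero varifold. The paper's version is more terse---it invokes Theorem \ref{thm:good-times-MC-bounds} to find a restart time in $[T/2,T]$ rather than $[T-1,T]$---but the mechanism is identical, and your additional remarks about the uniform bounds and the gluing across restart times only make the argument more explicit.
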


\begin{proof}
By the constructions of the previous sections given an interval $[0,T]$ there is a lagrangian flow $\{ V(t) \}$ with $V(0) = V_0$ on $[0,T]$. If $V(t_0) = 0$ for some $t_0 \in[0,T]$ then extend the flow by the zero varifold. 
Otherwise there is a constant $\a > 0$ such that $|| V(t) || \geq \a > 0$ for all $t \in [0,T]$. Theorem \ref{thm:good-times-MC-bounds} applies. It follows that there is a $t_0 \in [ \frac{T}{2}, T]$ such that $V(t_0)$ is a lagrangian integer varifold  with Maslov index zero satisfying $(H_2)$. Apply our construction with initial  varifold $V(t_0)$ to construct a lagrangian flow on $[ t_0, t_0 + T]$. Iterating, the result follows.
\end{proof}

\begin{thm}
\label{thm:H2-bounded-infinity}
Given any lagrangian flow $\{ V(t) \}$, if $|| V(t) || \geq \a > 0$ for $t \in [0,T_0]$ then
\begin{equation}
\int_0^{T_0} \lVert H(t) \rVert^2_{L^2(V(t))} dt \leq \frac{4}{\a} \lVert V(0) \rVert^2.
\end{equation}
In particular, if $\{ V(t) \}$ is a lagrangian flow with $|| V(t) || \geq \a > 0$ for all $t > 0$ then
\begin{equation}
\int_0^{\infty} \lVert H(t) \rVert^2_{L^2(V(t))} dt \leq \frac{4}{\a} \lVert V(0) \rVert^2.
\end{equation}
\end{thm}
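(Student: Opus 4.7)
The plan is to prove this by extracting a telescoping version of the key inequality in the proof of Theorem \ref{thm:good-times-MC-bounds} and then summing over the sub-intervals produced by the iterative flow construction of the preceding theorem.

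By the preceding extension theorem, the flow $\{V(t)\}$ on $[0,T_0]$ is built by concatenating basic flows on sub-intervals $[t_k, t_{k+1}]$, where $0=t_0 < t_1 < \cdots < t_N = T_0$ with each $t_{k+1}-t_k \leq T$, and where the flow on $[t_k,t_{k+1}]$ is produced from the initial varifold $V(t_k)$ by the $\e$-flow machinery. On each such piece the mass hypothesis $\lVert V(t)\rVert \geq \a$ is inherited, so the entire argument of Theorem \ref{thm:good-times-MC-bounds} is valid there. The central pointwise inequality established in that proof, valid for sufficiently small $\e$, is
$$\tfrac{1}{2}(\lambda^\e(t))^2 \leq -\tfrac{2}{\a}\tfrac{d}{dt} m_\e(t).$$

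Rather than bound $m_\e(t)$ by $\sup m_\e = m_\e(t_k)$ as in the published proof --- which produces $\tfrac{4}{\a}\lVert V(t_k)\rVert^2$ and fails to telescope --- I would multiply through by $m_\e(t)$, use $\lVert H^\e(t)\rVert_{L^2(V^\e(t))}^2 = m_\e(t)(\lambda^\e(t))^2$, and then apply the crude pointwise bound $m_\e(t)\leq m_\e(t_k)\leq m_\e(0)$ to obtain
$$\lVert H^\e(t)\rVert_{L^2(V^\e(t))}^2 \leq -\tfrac{4 m_\e(0)}{\a}\tfrac{d}{dt}m_\e(t).$$
Integrating over $[t_k, t_{k+1}]$ and passing to a subsequence $\e_i \to 0$, using weak convergence of the varifolds together with the lower semi-continuity of mass from Corollary \ref{cor:constructlimitflow}, gives
$$\int_{t_k}^{t_{k+1}}\lVert H(t)\rVert_{L^2(V(t))}^2\,dt \leq \tfrac{4\lVert V(0)\rVert}{\a}\bigl(\lVert V(t_k)\rVert - \lVert V(t_{k+1})\rVert\bigr).$$
Summing over $k=0,\dots,N-1$ telescopes the right hand side and yields
$$\int_0^{T_0}\lVert H(t)\rVert_{L^2(V(t))}^2\,dt \leq \tfrac{4\lVert V(0)\rVert}{\a}\bigl(\lVert V(0)\rVert - \lVert V(T_0)\rVert\bigr) \leq \tfrac{4}{\a}\lVert V(0)\rVert^2,$$
which is the first statement. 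The second statement follows by letting $T_0 \to \infty$, since the bound is independent of $T_0$ and the integrand is nonnegative, so monotone convergence applies.

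The main technical obstacle is the limiting step on each sub-interval: the right hand side involves the \emph{difference} $m_{\e_i}(t_k) - m_{\e_i}(t_{k+1})$, and it must be verified that along a \emph{single} subsequence $\e_i\to 0$ this difference tends to $\lVert V(t_k)\rVert - \lVert V(t_{k+1})\rVert$ rather than something worse. This is handled by diagonalizing over the finite partition $\{t_k\}$ so that $m_{\e_i}(t_k)\to \lVert V(t_k)\rVert$ simultaneously for every $k$; such a subsequence exists because the limit mass at each partition point is determined as $\liminf_{\e}m_\e(t_k)$, and there are only finitely many partition points to treat.
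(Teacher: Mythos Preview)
The paper states Theorem~\ref{thm:H2-bounded-infinity} without proof, presumably regarding it as an immediate consequence of Theorem~\ref{thm:good-times-MC-bounds} and the iterative extension construction. Your proposal correctly identifies that naively applying Theorem~\ref{thm:good-times-MC-bounds} on each sub-interval $[t_k,t_{k+1}]$ would yield $\tfrac{4}{\a}\lVert V(t_k)\rVert^2$ on each piece, which does not sum to the claimed bound, and you fix this by returning to the pointwise inequality $\tfrac{1}{2}(\lambda_\e(t))^2 \le -\tfrac{2}{\a}\tfrac{d}{dt}m_\e(t)$ and extracting a telescoping mass difference on the right-hand side before summing. This is the right idea and gives a valid argument.

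One correction: the pointwise bound $m_\e(t)\le m_\e(t_k)$ is not literally true along the $\e$-flow, since the mass of the approximate flow need not be monotone (cf.\ the proof of Theorem~\ref{thm:volumedecrease}, where only $\tfrac{d}{dt}\lVert V^{\e_i}(t)\rVert < \sigma$ is obtained). The clean way around this, already present in the proof of Theorem~\ref{thm:good-times-MC-bounds}, is to bound $m_\e(t)\le A_\e$ where $A_\e=\sup_{[t_k,t_{k+1}]}m_\e(t)$ satisfies $A_\e\to\lVert V(t_k)\rVert\le\lVert V(0)\rVert$ as $\e\to 0$ by the volume estimate~(\ref{equ:volume-est}). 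With that replacement your inequality reads
\[
\int_{t_k}^{t_{k+1}}\lVert H^\e(t)\rVert^2_{L^2(V^\e(t))}\,dt \le \frac{4A_\e}{\a}\bigl(m_\e(t_k)-m_\e(t_{k+1})\bigr),
\]
and passing to the limit along a subsequence with $A_{\e_i}\to\lVert V(t_k)\rVert$, $m_{\e_i}(t_{k+1})\to\lVert V(t_{k+1})\rVert$ (recalling $m_\e(t_k)=\lVert V(t_k)\rVert$ exactly, since the $\e$-flow on this sub-interval is initialized at $V(t_k)$) gives precisely your telescoping bound. The finite diagonalization over partition points is then unnecessary on the left endpoint and routine on the right.
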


We conclude:

\begin{thm}
	The flow $\{V(t)\}$ can be extended for all $t>0$ while preserving the properties:
		\begin{enumerate}
			\item Each $V(t)$ is a lagrangian integer $n$-varifold.
			\item $\lVert V(t)\rVert$ is a nonincreasing, lower semicontinuous function of $t$.
			\item For each $V(t)$ the lagrangian angle, $\beta(t)$, admits a scalar-valued lift so that $\lVert \beta(t)\rVert_{L^\infty(V(t))}$ is a nonincreasing, lower semicontinuous function of $t$.
			\item For almost every $t>0$, $V(t)$ satisfies $(H_2)$.
	                  \item For almost every $t>0$, $\beta(t)$ has a weak derivative in $B(t)\in L^2(V(t))$, i.e.~$V(t)$ has Maslov index zero.
		\end{enumerate}
\end{thm}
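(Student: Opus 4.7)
The plan is to assemble the statement directly from the results already established on a compact time interval, then iterate to produce a flow on $[0,\infty)$. The only nontrivial point is that mean-curvature control fails on a (small) exceptional set of times, and we must arrange the iteration so that each restart takes place at a time where the hypothesis $(H_2)$ together with Maslov index zero holds.

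First I would fix $V_0$ a lagrangian integer $n$-varifold satisfying $(H_2)$ with Maslov index zero, and apply the construction of Sections \ref{sec:epsilon-flows}--\ref{sec:sup-bounds} together with Corollary \ref{cor:constructlimitflow} on a compact interval $[0,T]$. This immediately yields items (1) and (2) on $[0,T]$: each $V(t)$ is a lagrangian integer $n$-rectifiable varifold (the lagrangian condition passes to limits because $LGr\subset Gr(n)$ is closed, and the integer rectifiability passes by Allard's compactness Theorem \ref{thm:compact-Hp-varifolds} applied on the subset of times where $(H_2)$ holds), and $\lVert V(t)\rVert$ is nonincreasing and lower semicontinuous in $t$. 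Item (3), the existence of a scalar lift $\beta(t)$ of the lagrangian angle with $\lVert\beta(t)\rVert_{L^\infty(V(t))}$ nonincreasing and lower semicontinuous, is exactly Corollary \ref{cor:norm-beta-decreasing}, built on Theorem \ref{thm:convergence-beta} and the uniform $L^\infty$-bound from Theorem \ref{thm:boundonbeta}. Items (4) and (5) on $[0,T]$ follow from Corollary \ref{cor:aezeroMaslov}: outside the measure-zero set $\mathcal{S}_\infty\subset[0,T]$, the varifold $V(t)$ satisfies $(H_2)$ and has Maslov index zero, with $\beta(t)$ admitting a weak derivative $B(t)\in L^2(V(t))$.

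Next I would extend the flow beyond $[0,T]$ by iteration, exactly as in the proof sketch immediately preceding the theorem. If there is some $t_0\in[0,T]$ with $\lVert V(t_0)\rVert=0$, extend by the zero varifold, which trivially satisfies (1)--(5). Otherwise there is $\alpha>0$ with $\lVert V(t)\rVert\geq\alpha$ throughout, so by Theorem \ref{thm:good-times-MC-bounds} one has
\[
\int_0^T \lVert H(t)\rVert^2_{L^2(V(t))}\,dt \leq \frac{4}{\alpha}\lVert V(0)\rVert^2,
\]
so the set of times $t\in[T/2,T]$ where simultaneously $(H_2)$ holds, the Maslov index zero condition holds (by Corollary \ref{cor:aezeroMaslov}), and $\lVert H(t)\rVert_{L^2(V(t))}<\infty$, has positive measure. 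Pick any such $t_0\in[T/2,T]$; then $V(t_0)$ satisfies the hypotheses of the construction with the additional bound $\lVert\beta(t_0)\rVert_{L^\infty(V(t_0))}\le \lVert\beta(0)\rVert_{L^\infty(V_0)}$. Apply the construction on $[t_0,t_0+T]$ and concatenate.

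The subtlety I would be most careful with is ensuring that concatenation preserves the monotonicity statements (2) and (3) across the restart time. Since both $\lVert V(t)\rVert$ and $\lVert\beta(t)\rVert_{L^\infty(V(t))}$ were already nonincreasing on $[0,T]$, and the restart at $t_0$ produces on $[t_0,t_0+T]$ a flow whose mass and $L^\infty$-norm are bounded by their values at $t_0$, the concatenated function is still nonincreasing; lower semicontinuity is preserved because each constituent piece is lower semicontinuous and the two pieces agree at $t_0$ by construction. Iterating this argument produces a flow on $[0,\infty)$ for which (1)--(3) hold on all of $[0,\infty)$, and (4)--(5) hold on the complement of a countable union of measure-zero sets, hence on a set of full measure in $[0,\infty)$. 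Finally, applying Theorem \ref{thm:H2-bounded-infinity} to this extended flow gives the global bound
\[
\int_0^\infty \lVert H(t)\rVert^2_{L^2(V(t))}\,dt \leq \frac{4}{\alpha}\lVert V(0)\rVert^2
\]
whenever $\lVert V(t)\rVert\geq\alpha$ for all $t$, which will be used elsewhere in the paper to analyze the large-time limit, but is not needed to conclude the five listed properties.
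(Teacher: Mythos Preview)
Your proposal is correct and follows essentially the same approach as the paper. The paper states this theorem as a summary (``We conclude:'') without a separate proof, relying on the immediately preceding extension theorem together with Corollaries \ref{cor:constructlimitflow}, \ref{cor:norm-beta-decreasing}, \ref{cor:aezeroMaslov}, and Theorem \ref{thm:good-times-MC-bounds}; you have assembled exactly these ingredients in the same order, with the same restart-at-a-good-time iteration, and added welcome explicitness about why concatenation preserves the monotonicity in (2)--(3) and why the exceptional sets for (4)--(5) remain null after countably many restarts.
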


\bigskip

\section{Lagrangian Currents in a Calabi-Yau manifold}

Now we will show how to adapt our construction of the hamiltonian flow of lagrangian varifolds to a flow of integral cycles in a Calabi-Yau manifold. Observe that the definitions of weak derivative and Maslov index zero are sensible in this context.

\subsection{The varifold flow on a Calabi-Yau manifold}\label{sec:manifoldcase} First consider a lagrangian varifold $V$ with $H\in L^2(V)$ and Maslov index zero in a closed Calabi-Yau manifold $(N,\omega, J)$.  We  define 
$$
\phi_\e(x,y)=
\begin{cases}
C \e^{-n} \exp \bigg( \frac{\e^2}{\dist(x,y)^2 - \e^2} \bigg) \;\; \mbox{if} \; \dist(x,y) < \e, \\
0   \;\; \mbox{if} \; \dist(x,y) \geq \e. \\
\end{cases}
$$
where $\operatorname{dist}$ is the Riemannian distance with respect to the Calabi-Yau metric on $N$. We will assume that $\e$ is always less than the injectivity radius of $N$. We mollify according to 
	\begin{equation}
		f_\e(x)=\frac{\int f(y)\phi_\e(x,y)d\lVert V\rVert y}{\int \phi_\e(x,z)+\e d\lVert V\rVert z}
	\end{equation}
Since $\epsilon$ is smaller than the injectivity radius of $N$, $\phi_\e$ and hence $f_\e$ is smooth.

If the ball $B_\e(x)$ has compact closure in a convex coordinate neighborhood $U \subset \R^{2n}$, then alternately we could use the euclidean distance $|x-y|$ between $x, y \in  B_\e(x)$. However in ${\bar U}$ the euclidean distance and the distance in the Calabi-Yau metric are equivalent. The constructions and estimates in previous sections all carry over to this mollifier up to fixed constants. 

In particular, we have the following theorem:
\begin{thm}\label{thm:varifold-flow-CY}
	Given any Lagrangian varifold $V$ with $H\in L^2(V)$ and Maslov index zero in a closed Calabi-Yau manifold $(N,\omega, J)$, there is a one-parameter family $V(t)$ of Lagrangian varifolds in $N$, defined for all $t>0$, with $V(0)=V$ and 
		\begin{enumerate}
			\item Each $V(t)$ is a lagrangian integer varifold.
			\item $\lVert V(t)\rVert$ is a nonincreasing, lower semicontinuous function of $t$.
			\item Each $V(t)$ admits a scalar lift of the lagrangian angle, $\beta(t)$, so that $\lVert \beta(t)\rVert_{L^\infty(V(t))}$ is a nonincreasing, lower semicontinuous function of $t$.
			\item For almost every $t>0$, $V(t)$ satisfies $(H_2)$.
			\item For almost every $t>0$, $\beta(t)$ has a weak derivative $B(t)\in L^2(V(t))$, i.e.~$V(t)$ is Maslov index zero.
		\end{enumerate}\end{thm}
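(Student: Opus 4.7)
The plan is to verify that each step of the construction carried out in Sections \ref{sec:mollification} through the previous section goes through in the Calabi--Yau setting with only minor, essentially notational, modifications. Because $N$ is closed we may work inside a finite atlas of normal coordinate charts whose radii are smaller than the injectivity radius; in any such chart the geodesic distance is comparable to the Euclidean distance, so the mollifier $\phi_\e(x,y)$ defined using $\dist$ is smooth, compactly supported in $B_\e(x)$, and satisfies the same derivative bounds (Lemmas \ref{lem:Dphiepsilonbounded} and \ref{lem:derivativesofbetaepsilon}) up to multiplicative constants depending only on the geometry of $N$. The mollification results of Section \ref{sec:mollification} then transfer verbatim, as do the convergence and compactness results for weak derivatives in Section \ref{sec:weak-derivative}, since those arguments only require the existence of covariant-constant frames \emph{locally} and the first variation formula, both available on any Riemannian manifold. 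The notion of Maslov index zero is already defined using the parallel section of the canonical line bundle, which is intrinsic to the Calabi--Yau structure.

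Next I would reconstruct the $\e$-approximate flows as in Section \ref{sec:epsilon-flows}: mollify $\beta$ with respect to $V$ to obtain a smooth hamiltonian potential $\beta_\e \in C^\infty(N)$, generate a one-parameter family of hamiltonian diffeomorphisms, and push the varifold forward. Because $N$ is closed, these hamiltonian vector fields integrate for all time, and the pushforwards remain lagrangian integer-rectifiable varifolds with Maslov index zero. The discrete Euler-type iteration and the uniform $C^k$ bounds on $\beta_\e$, with constants degenerating like powers of $\e^{-1}$, depend only on the local differential structure and carry over to $N$. We thus obtain the families $V^\e(t)$ and a subsequential limit $V(t)$ as before.

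The main obstacle, as in the Euclidean case, is the uniform $L^\infty$ bound on $\beta^\e(t)$ proved in Section \ref{sec:sup-bounds}, which relied on pseudo-differential calculus in $\R^{2n}$ together with the G\r{a}rding inequality for the elliptic regularization $K_{\sigma,\rho}$. On a compact manifold the H\"ormander symbol-class framework still applies via a partition of unity subordinate to a finite normal-coordinate atlas, the G\r{a}rding inequality retains its form on each chart, and the mollification operators $L_\eta$ remain uniformly bounded on $L^p(V)$ by Proposition \ref{prop:bound-on-moll-op}. Hence the $L^p$ evolution estimates and Gronwall argument carry through, yielding the same sup bound $\lVert \beta^\e(t)\rVert_{L^\infty(V^\e(t))} < A$. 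With this bound in hand, the error estimate of Theorem \ref{thm:errorestimate}---whose proof only uses the first variation formula, the tilt-excess decay of Brakke--Menne, and H\"older's inequality, all available on $N$---gives the volume and $L^p(\beta)$ decay of Theorems \ref{thm:volumedecrease} and \ref{thm:betacontrol}.

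Finally, to extend the flow to all $t>0$ I would repeat the argument of the extension section essentially verbatim. The key ingredients are: (i) the $L^2$ mean-curvature bound of Theorem \ref{thm:good-times-MC-bounds}, which is an integrated consequence of the monotonicity of mass and requires only $\inf_{[0,T]}\lVert V(t)\rVert \ge \alpha>0$; (ii) the resulting existence of good times $t_0\in[T/2,T]$ at which $V(t_0)$ satisfies $(H_2)$ and has Maslov index zero, so that the construction may be restarted from $V(t_0)$; and (iii) iteration over a sequence of such restart times. Should the mass collapse to zero at some finite $t_0$, we extend by the zero varifold for $t\ge t_0$. Collecting properties (1)--(5) from the monotonicity established in Theorems \ref{thm:volumedecrease} and \ref{thm:betacontrol}, Corollary \ref{cor:norm-beta-decreasing}, and Corollary \ref{cor:aezeroMaslov} completes the proof.
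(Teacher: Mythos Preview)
Your proposal is correct and follows essentially the same approach as the paper: the paper simply observes that with the mollifier defined via geodesic distance (and $\e$ smaller than the injectivity radius), the constructions and estimates of the preceding sections carry over to the Calabi--Yau manifold up to fixed constants, and then states the theorem. Your write-up is a considerably more detailed section-by-section verification of precisely this claim, but the underlying strategy is identical.
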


\bigskip

\subsection{The flow of lagrangian cycles}

The basic object we will use is the integer multiplicity current as described in \cite{s} and originally introduced by Federer and Fleming \cite{ff}. As described in \cite{s} the integer multiplicity currents are obtained by assigning an orientation to the approximate tangent space $T_xV$ of an integer multiplicity rectifiable varifold $V$. This allows the integration of $n$-forms along the varifold making the varifold into a current. We will always require that the associated  integer multiplicity rectifiable varifold $V$ is lagrangian and satisfies $(H_2)$.

\begin{defn} Let $T$ be an integer multiplicity current. If the rectifiable varifold $V_T$ associated to $T$ is lagrangian and satisfies $(H_2)$, we will call $T$ a {\em lagrangian integer multiplicity current} or briefly a  {\em lagrangian current}. If $\partial T=0$, we will call $T$ a {\em lagrangian integer multiplicity cycle} or briefly a  {\em lagrangian cycle}.
\end{defn}

The essential compactness result for lagrangian currents and cycles is a consequence of results of Federer-Fleming \cite{ff} and Allard \cite{a}.  Proofs can be found in \cite{s} and \cite{white}.  We recall the definition of mass and weak convergence of currents:

\begin{defn}
	Let $T$ is an $n$-current in the open set $U$.  Denote the set of $n$-forms with support in $U$ by $\L^n(U)$.  The mass of the current $T$, denoted $M(T)$ is given by:
$$
M(T) = \sup_{ |\o| \leq 1, \o \in \L^n(U)} T(\o).
$$
More generally for any open $W \subset U$ we set:
$$
M_W(T) = \sup_{ |\o| \leq 1, \o \in \L^n(U), \supp \o \subset W} T(\o).
$$
We say a sequence $\{ T_i \}$ converges weakly to a current $T$, denoted $T_i \rightharpoonup T$, if $\lim T_i(\o) = T(\o)$ for all $\o \in \L^n(U)$
\end{defn}

\begin{thm}
\label{thm:current-compactness}
Let $U \subset \R^{2n}$ be open. If $\{ L_i \}$ is a sequence of lagrangian currents (cycles) supported in $U$ with:
$$
\sup_{i \geq 1} ( M_W(L_i) + M_W(\p L_i) ) < \infty \;\;\; \mbox{for all} \; W \subset U \;\; \mbox{with} \; \bar{W} \subset U,
$$
then there is a lagrangian current (cycle) $L$ supported in $U$ and a subsequence $\{ L_{i_j} \}$ such that $L_{i_j} \rightharpoonup L$.
\end{thm}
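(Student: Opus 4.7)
The plan is to combine the Federer--Fleming compactness theorem for integer multiplicity currents with Allard's varifold compactness theorem \ref{thm:compactnessvarifolds} (or \ref{thm:compact-Hp-varifolds}) and then show that the lagrangian property is preserved under the limit.

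First I would apply Federer--Fleming: the uniform bounds $\sup_i (M_W(L_i) + M_W(\partial L_i)) < \infty$ for every $W$ with $\bar W \subset U$ yield, after passing to a subsequence, an integer multiplicity current $L$ supported in $U$ with $L_{i_j} \rightharpoonup L$ weakly. In the cycle case, weak convergence of $L_{i_j}$ together with $\partial L_{i_j} = 0$ forces $\partial L = 0$ by continuity of the boundary operator on currents. The mass bound transfers by lower semicontinuity to give $M_W(L) < \infty$.

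Next I would pass to the associated varifolds $V_{L_{i_j}}$. Since each $V_{L_i}$ satisfies $(H_2)$, it has locally bounded first variation with $\|\delta V_{L_i}\|(W) \leq \|H_i\|_{L^2(V_{L_i})} \|V_{L_i}\|(W)^{1/2}$, so (after a further subsequence, using the uniform mass bound and Theorem \ref{thm:compactnessvarifolds}) there is an integer-rectifiable varifold $V$ with locally bounded first variation and $V_{L_{i_j}} \to V$ as Radon measures on $Gr(n)$. A standard compatibility argument between the current and varifold convergences (using that a weakly convergent subsequence of currents determines the unoriented tangent plane data in the limit) identifies $V$ with $V_L$. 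Because each $V_{L_i}$ is supported in the closed subbundle $LGr \subset Gr(n)$, and weak convergence of Radon measures preserves the support containment for closed sets, $V_L$ is supported on $LGr$, i.e.\ $V_L$ is lagrangian.

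The main obstacle is then to upgrade $V_L$ from ``locally bounded first variation'' to $(H_2)$, so that $L$ qualifies as a lagrangian current in the sense of the definition. The natural way is to invoke Theorem \ref{thm:compact-Hp-varifolds}, which demands a uniform bound $\|H(V_{L_i})\|_{L^2(V_{L_i})} \leq \alpha$ independent of $i$. Under such a bound the conclusion is immediate: the limit varifold satisfies $(H_2)$ with $\|H(V_L)\|_{L^2(V_L)} \leq \alpha$, and $L$ is a lagrangian current (and, if each $L_i$ is a cycle, a lagrangian cycle). Without an explicit $L^2$ mean curvature bound in the hypothesis, one can argue that such a bound is implicit: the mass of $\partial L_i$ controls the tangential part of the first variation, and for lagrangian varifolds with Maslov index zero the Harvey--Lawson formula $B = -JH + \beta H$ of Theorem \ref{thm:harvey-lawson-varifold} ties $H$ to $B$, allowing one to extract the needed $L^2$ bound in the cases of interest in this paper. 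This is the step where care is needed; the rest of the argument is a direct application of standard compactness theorems together with the closedness of $LGr$ in $Gr(n)$.
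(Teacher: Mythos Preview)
Your approach---Federer--Fleming compactness for the currents, Allard compactness for the associated varifolds, and preservation of the lagrangian condition via closedness of $LGr$ in $Gr(n)$---is exactly what the paper does. The paper's own proof is a two-sentence citation of Simon's Theorems 27.3 and 42.7 and Allard's integral compactness theorem, together with the remark that the lagrangian condition survives weak limits; you have simply unpacked those citations.

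You are right to flag the $(H_2)$ issue, and in fact the paper's proof does not address it either. As stated, the theorem assumes only locally uniform mass bounds on $L_i$ and $\partial L_i$; these alone do not yield a uniform $L^2$ bound on the mean curvatures of the associated varifolds, so Theorem~\ref{thm:compact-Hp-varifolds} cannot be invoked, and there is no reason the limit varifold should satisfy $(H_2)$. Your proposed workaround via the Harvey--Lawson formula of Theorem~\ref{thm:harvey-lawson-varifold} does not work here: that formula requires the Maslov-index-zero hypothesis, which is not part of the statement, and even with it the formula relates $B$ to $H$ rather than bounding $H$ by data controlled in the hypotheses. The honest resolution is that every application of this theorem in the paper (the construction of $I(t)$ from the $\e$-flows, the extraction of $I(\infty)$) comes with an independently obtained uniform bound on $\lVert H_i\rVert_{L^2(V_{L_i})}$, so Theorem~\ref{thm:compact-Hp-varifolds} applies and the limit is a lagrangian current in the paper's sense. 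Read the theorem as carrying that bound as an implicit additional hypothesis.
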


\begin{proof}
The proof follows from Theorems 27.3 and 42.7 in \cite{s} and Allard's integral compactness theorem in \cite{a}. The lagrangian condition is easily seen to be preserved under weak convergence of currents.
\end{proof}

\begin{defn}\label{defn:zero-Maslov-current} An lagrangian current (cycle) with associated lagrangian varifold that has Maslov index zero will be called a {\em lagrangian Maslov index zero current (cycle)}.
\end{defn}

\bigskip

Given a lagrangian Maslov index zero cycle $I$ whose associated varifold $V$ satisfies $(H_2)$, the construction of the $\e$-flows in Section \ref{sec:epsilon-flows} can be applied to the associated lagrangian varifold to produce, for each $\e>0$, a one-parameter family of hamiltonian diffeomorphisms $\Psi^\e(t)$, hence a one-parameter family of lagrangian Maslov index zero cycles $I^\e(t)=\Psi^\e(t)_\#I$. We define, analogously to Corollary \ref{cor:constructlimitflow}, $I(t)$ to be the cycle with 
	\begin{equation}
		M(I(t))=\liminf_{\e\rightarrow 0}M(I^\e(t))
	\end{equation}
where we use the compactness theorem \ref{thm:current-compactness} to ensure that there is such a limit.

In the manifold case we have:

\begin{thm}
Let $M$ be a compact Calabi-Yau manifold. Let $\L \in H_n(M, \R)$ be a lagrangian homology class (a class that can be represented by a lagrangian cycle). Suppose that $\L$ can be represented by a lagrangian Maslov index zero cycle. Then the lagrangian cycles $\{I^\e(t)\}$ of the $\e$-flows and the lagrangian cycles $\{I_t \}$ of the limit lagrangian flow all have Maslov index zero and all represent $\L$. 
\end{thm}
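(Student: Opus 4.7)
The proof naturally splits into four claims: (i) each $I^\e(t)$ has Maslov index zero; (ii) each $I^\e(t)$ represents $\L$; (iii) each $I(t)$ has Maslov index zero (for a.e.\ $t$, as afforded by Theorem \ref{thm:varifold-flow-CY}(5)); and (iv) each $I(t)$ represents $\L$.

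Claims (i) and (ii) are direct consequences of the construction of the $\e$-flow. By definition $I^\e(t) = \Psi^\e(t)_\# I$, and $\{\Psi^\e(s) : s \in [0,t]\}$ is a smooth isotopy from the identity to $\Psi^\e(t)$ through hamiltonian (hence symplectic) diffeomorphisms of $N$. The homotopy current $S^\e(t) := \Phi^\e_\#\bigl([0,t]\times I\bigr)$, where $\Phi^\e(s,x) = \Psi^\e(s)(x)$, is a smooth rectifiable $(n+1)$-chain with $\p S^\e(t) = I^\e(t) - I$ (because $\p I = 0$), so $[I^\e(t)] = [I] = \L$. For the Maslov condition, the closing Proposition of \S\ref{sec:weak-derivative}, asserting that a symplectic pushforward of a Maslov-index-zero varifold is again Maslov index zero, gives that $V^\e(t) = \Psi^\e(t)_\# V$ (and hence $I^\e(t)$) has Maslov index zero.

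Claim (iii) is essentially built into the construction of the limit flow: by Corollary \ref{cor:constructlimitflow} and Theorem \ref{thm:varifold-flow-CY}, along the chosen sequence $\e_i \to 0$ the rectifiable varifolds $V^{\e_i}(t)$ converge weakly to $V(t)$ for each $t$, and for almost every $t$ the limit $V(t)$ satisfies $(H_2)$ and admits a weak derivative $B(t) \in L^2(V(t))$ of the $L^\infty$ lift $\b(t)$. Hence $V(t)$, and with it the associated cycle $I(t)$, has Maslov index zero for a.e.\ $t$.

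The main obstacle is claim (iv). The cycles $I^{\e_i}(t)$ converge weakly as currents to $I(t)$, each is integer rectifiable with $\p I^{\e_i}(t) = 0$, and by Theorem \ref{thm:volumedecrease} the masses $M(I^{\e_i}(t)) = \lVert V^{\e_i}(t)\rVert$ are uniformly bounded. The Federer-Fleming compactness theorem for integral currents in the compact manifold $N$ (cf.\ Theorem 31.2 of \cite{s}) then upgrades this to flat convergence along a subsequence, with flat limit necessarily equal to the weak limit $I(t)$. For $i$ large, $I^{\e_i}(t) - I(t)$ is an integral cycle of small flat norm and uniformly bounded mass, so the Federer-Fleming deformation theorem produces an integral $(n+1)$-chain $R_i$ with $\p R_i = I^{\e_i}(t) - I(t)$. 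Combined with $\p S^{\e_i}(t) = I^{\e_i}(t) - I$ from (ii), this yields $I(t) - I = \p\bigl(S^{\e_i}(t) - R_i\bigr)$, so $[I(t)] = \L$. The hard point is precisely this upgrade from weak to flat convergence: a direct approach via weak limits of the smooth homotopies $S^{\e_i}(t)$ fails because the pointwise bounds on $JD\b^\e_\e$ from Lemma \ref{lem:derivativesofbetaepsilon} blow up as $\e \to 0$, leaving no uniform mass bound on $S^{\e_i}(t)$ itself; the homotopy witnessing $[I(t)] = \L$ must be produced post hoc from Federer-Fleming rather than directly from the flow.
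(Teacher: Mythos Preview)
Your argument is correct, but for claim (iv) you work considerably harder than the paper does, and harder than the statement requires. The theorem concerns a class $\L \in H_n(M,\R)$, i.e.\ \emph{real} homology. In that setting the paper simply observes that weak convergence of currents preserves the homology class, and this is immediate: the real homology class of a cycle $T$ is determined by the pairings $T(\omega)$ with closed $n$-forms $\omega$, and weak convergence is precisely convergence of these pairings. So if each $I^{\e_i}(t)$ represents $\L$ and $I^{\e_i}(t)\rightharpoonup I(t)$, then for any closed $\omega$ one has $I(t)(\omega)=\lim_i I^{\e_i}(t)(\omega)=\langle\L,[\omega]\rangle$, whence $[I(t)]=\L$ in $H_n(M,\R)$. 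No upgrade to flat convergence, no deformation theorem, and no homotopy current $S^{\e_i}(t)$ is needed.

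Your route via Federer--Fleming compactness and the deformation theorem is the right one if one wants to preserve an \emph{integral} homology class, and indeed the later applications in the paper (e.g.\ Corollary~\ref{cor:Thomas-Yau} and Theorem~\ref{thm:homology-image}) ultimately care about integral classes; so your argument is not wasted effort, it just belongs to a slightly stronger statement than the one being proved here. One small tightening: the step ``small flat norm $\Rightarrow$ bounds an integral chain'' is not quite the deformation theorem itself but rather its consequence that integral homology classes are discrete in the flat topology on a compact manifold (equivalently, there is a positive lower bound on the flat norm of any non-null-homologous integral cycle). With that phrasing your claim (iv) goes through cleanly.
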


\begin{proof}
$I^\e(t)$ is related to  $I$ by a one-parameter family of hamiltonian diffeomorphisms, hence represents the same homology class. Moreover weak convergence of currents preserves homology class.
\end{proof}

\begin{cor}
Suppose the lagrangian homology class $\L \in H_n(M, \R)$  is non-trivial. Set 
$$
m(\L) = \inf_T M(T),  
$$
where $T$ is a cycle (not necessarily lagrangian) that represents $\L$. Then $m(\L) =\a > 0$ is a lower bound on the mass of all the varifolds  $\{V^\e(t) \}$ of the $\e$-flows and  $\{V(t) \}$ of the  limit lagrangian flow.
\end{cor}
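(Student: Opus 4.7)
The plan is to first establish that $m(\L)>0$, then use the hamiltonian nature of the $\e$-flows together with the lower semicontinuity of mass under weak convergence of currents.

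First I would show $m(\L)>0$. Because $\L$ is a non-trivial class in a compact manifold, this is standard: if one had a sequence $T_i$ of cycles representing $\L$ with $M(T_i)\to 0$, then $M(T_i)+M(\p T_i)=M(T_i)\to 0$ is uniformly bounded, so by the Federer--Fleming compactness theorem (a version of which is recalled as Theorem \ref{thm:current-compactness}) a subsequence converges weakly to a cycle $T$. By lower semicontinuity of mass, $M(T)\leq\liminf M(T_i)=0$, so $T=0$. But weak convergence preserves homology class, forcing $\L=0$, a contradiction. Therefore $m(\L)=\a>0$.

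Next, for each $\e>0$ and each $t\geq 0$, the lagrangian cycle $I^\e(t)=(\Psi^\e(t))_\sharp I$ is the pushforward of $I$ under a hamiltonian diffeomorphism, hence a smooth isotopy of $I$, and thus represents the class $\L$. Consequently $M(I^\e(t))\geq m(\L)=\a$. Since the rectifiable varifold $V^\e(t)$ associated to the integer multiplicity current $I^\e(t)$ has $\lVert V^\e(t)\rVert = M(I^\e(t))$, we conclude $\lVert V^\e(t)\rVert \geq \a$ for all $\e>0$ and $t\geq 0$.

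For the limit flow, recall that by construction $I(t)$ is the weak subsequential limit (as $\e\to 0$) of the currents $I^\e(t)$, and $V(t)$ is the associated varifold with $\lVert V(t)\rVert=\liminf_{\e\to 0}\lVert V^\e(t)\rVert$. By weak continuity of homology, $I(t)$ still represents $\L$, so $M(I(t))\geq m(\L)=\a$. Combined with the uniform bound $\lVert V^\e(t)\rVert\geq\a$ already established, we obtain $\lVert V(t)\rVert\geq\a$, as required. The only substantive point is the positivity $m(\L)>0$; the remainder is a bookkeeping consequence of the fact that hamiltonian isotopy and weak current convergence both preserve homology class.
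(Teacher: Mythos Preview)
Your proposal is correct and is precisely the argument the paper has in mind: the corollary is stated without proof because it follows immediately from the preceding theorem (all $I^\e(t)$ and $I(t)$ represent $\L$), together with the standard fact that a nonzero real homology class in a closed manifold has positive mass infimum. Your write-up simply makes these two steps explicit, and the identification $\lVert V^\e(t)\rVert = M(I^\e(t))$ and $\lVert V(t)\rVert=\liminf_{\e\to 0}\lVert V^\e(t)\rVert$ is exactly how the paper defines things.
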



\begin{cor}
	If $[I]\neq 0\in H_n(M,{\mathbb R})$, then there is a sequence of times $t_j\rightarrow \infty$ so that $I(t_j)$ converges to a stationary current, which we may take to have Maslov index zero. 	
\end{cor}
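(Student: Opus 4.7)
The plan is to combine the decreasing mass with Theorem \ref{thm:H2-bounded-infinity} to extract a good sequence of times, then apply the compactness results for lagrangian currents, varifolds satisfying $(H_2)$, and varifolds with Maslov index zero.

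First I would set $\alpha = m([I]) > 0$, which lower-bounds $M(I(t))$ for all $t \geq 0$ because $I(t)$ represents the nontrivial class $[I]$. Consequently the hypothesis of Theorem \ref{thm:H2-bounded-infinity} is satisfied on $[0, \infty)$, so
\begin{equation*}
\int_0^\infty \lVert H(t)\rVert_{L^2(V(t))}^2\, dt \leq \tfrac{4}{\alpha} M(I(0))^2 < \infty.
\end{equation*}
Since the integrand is integrable on $[0,\infty)$, there is a sequence $t_j \to \infty$ with $\lVert H(t_j)\rVert_{L^2(V(t_j))} \to 0$. By Corollary \ref{cor:aezeroMaslov}, almost every $t$ is a ``good'' time at which $V(t)$ satisfies $(H_2)$ and has Maslov index zero, so after discarding a measure-zero set we may assume each $t_j$ is such a good time.

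Next I would extract the limiting cycle. Since $M(I(t_j)) \leq M(I(0))$ and $\partial I(t_j) = 0$, Theorem \ref{thm:current-compactness} produces a subsequence (still denoted $\{t_j\}$) and a lagrangian integer cycle $I_\infty$ with $I(t_j) \rightharpoonup I_\infty$ weakly as currents; because homology is preserved under weak convergence, $[I_\infty] = [I] \neq 0$, so $I_\infty$ is nontrivial. Simultaneously, the associated varifolds $V(t_j)$ satisfy uniform bounds: mass bounded by $M(I(0))$, mean curvature bounded by $\lVert H(t_j)\rVert_{L^2(V(t_j))} \to 0$, lagrangian angle uniformly bounded in $L^\infty$ by Corollary \ref{cor:norm-beta-decreasing}, and weak derivative $B(t_j) = -JH(t_j) + \beta(t_j) H(t_j)$ uniformly bounded in $L^2$ by Theorem \ref{thm:harvey-lawson-varifold}. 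Allard's compactness theorem \ref{thm:compact-Hp-varifolds} together with Theorem \ref{thm:convergence-beta} thus gives a limit varifold $V_\infty$ satisfying $(H_2)$ with Maslov index zero, and the lower semicontinuity in Theorem \ref{thm:compact-Hp-varifolds} forces $\lVert H_\infty\rVert_{L^2(V_\infty)} \leq \liminf_j \lVert H(t_j)\rVert_{L^2(V(t_j))} = 0$, so $V_\infty$ is stationary.

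Finally I would identify the two limits. Since varifold convergence implies the underlying Radon measures converge, and the currents $I(t_j)$ converge weakly, the varifold associated to $I_\infty$ agrees with $V_\infty$ (uniqueness of the weak limit of Radon measures on the Grassmann bundle). Therefore $I_\infty$ is a lagrangian Maslov index zero cycle with vanishing generalized mean curvature, i.e.\ a stationary current, as desired. The step requiring the most care is verifying compatibility between the current-level and varifold-level limits — specifically, that one may pass to a common subsequence along which both convergences hold and along which the bounds on $\beta$ and $B$ survive into the limit — but all the required ingredients are already assembled in the compactness theorems cited above.
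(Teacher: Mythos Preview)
Your proposal is correct and follows essentially the same approach as the paper: use the integrability of $\lVert H(t)\rVert_{L^2(V(t))}^2$ from Theorem \ref{thm:H2-bounded-infinity} (available because $[I]\neq 0$ gives a uniform mass lower bound) to extract times $t_j\to\infty$ with $\lVert H(t_j)\rVert_{L^2}\to 0$, restrict to the full-measure set of good times where $(H_2)$ and Maslov index zero hold, then pass to limits using the current compactness Theorem \ref{thm:current-compactness} together with the Maslov-zero compactness Theorem \ref{thm:convergence-beta}. You are in fact more explicit than the paper about two points it leaves implicit: the identification of the current-level and varifold-level limits along a common subsequence, and the source of the $L^2$ bound on $B(t_j)$ via Theorem \ref{thm:harvey-lawson-varifold}.
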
 
\begin{proof}
	By Theorem \ref{thm:H2-bounded-infinity}, we have:
		\begin{equation}
			\int_0^\infty \lVert H\rVert^2_{L^2(V(t))} dt< \frac{4}{\a} \lVert V(0) \rVert^2.
		\end{equation}
	so there is a sequence of times $t_j\rightarrow \infty$ with $\lVert H\rVert_{L^2(V(t_j))}\rightarrow 0$. 
	
	By Theorem \ref{thm:varifold-flow-CY} we may choose the sequence $t_j$ so that each $I(t_j)$ has Maslov index zero. By Theorem \ref{thm:varifold-flow-CY} there are uniform bounds on $\beta(I(t_j))$. Since $\lVert H\rVert_{L^2(V(t_j))}\rightarrow 0$, we obtain uniform bounds on the weak derivative $B(I(t_j))$.
	
	The corresponding $I(t_j)$ have uniformly bounded mass, so by the compactness Theorems \ref{thm:convergence-beta} and \ref{thm:current-compactness}, passing to a subsequence we obtain some limit cycle $I(\infty)$ and associated Maslov index zero varifold $V(\infty)$ with $\lVert H\rVert^2_{L^2( V(\infty))}=0$.
\end{proof}

Denote the limit stationary cycle by $I(\infty)$

\bigskip

\section{The structure of $I(\infty)$}

In this section we prove:
 
	\begin{thm}\label{thm:convergence}
		$I(\infty)$ is the sum of finitely many special lagrangian integral cycles. 
	\end{thm}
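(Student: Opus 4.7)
The plan is to show that $I(\infty)$ decomposes by level sets of its lagrangian angle into a finite sum of calibrated cycles.

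First, I would verify that $V(\infty)$ is stationary and that $\beta(\infty)$ has vanishing weak derivative. By construction, $\lVert H\rVert_{L^2(V(t_j))} \to 0$ along the chosen sequence, while $\lVert V(t_j)\rVert$ and $\lVert \beta(t_j)\rVert_{L^\infty(V(t_j))}$ are uniformly bounded. Allard's compactness (Theorem \ref{thm:compact-Hp-varifolds}) together with Theorem \ref{thm:convergence-beta} gives $V(\infty)$ as a lagrangian integer varifold satisfying $(H_2)$ with $H(\infty)=0$ a.e., of Maslov index zero, with $\beta(\infty) \in L^\infty(V(\infty))$ and weak derivative $B(\infty)\in L^2(V(\infty))$. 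The Harvey--Lawson-type formula (Theorem \ref{thm:harvey-lawson-varifold}) then forces $B(\infty) = -JH(\infty) + \beta(\infty) H(\infty) = 0$, and Proposition \ref{prop:nablabeta} gives $\nabla \beta(\infty) = -JH(\infty) = 0$ at every point of $C^2$ rectifiability.

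Next, I would extract the level-set decomposition. Menne's $C^2$ rectifiability (Theorem \ref{thm:weak-regularity-M2}) covers $\supp V(\infty)$ up to a $\lVert V(\infty)\rVert$-null set by countably many $C^2$ submanifolds; on each such piece $\nabla \beta(\infty)=0$ a.e., so $\beta(\infty)$ is essentially locally constant. Hence the essential range of $\beta(\infty)$ is at most countable; call it $\{c_1, c_2, \dots\}$. For each $i$ set $I_i = I(\infty) \restrict \{\beta(\infty)= c_i\}$ with associated varifold $V_i$. The lagrangian angle is constantly $c_i$ along $V_i$, so $V_i$ is pointwise calibrated by the closed parallel form $\omega_{c_i} = \Real(e^{-ic_i}\ell)$, i.e.~$I_i(\omega_{c_i}) = M(I_i)$.

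Third, I would prove each $I_i$ is in fact a cycle. This is the main obstacle. The intuition is that the vanishing weak derivative of $\beta$ is precisely what rules out ``jump discontinuities'' across level sets, so restricting to a level set introduces no new boundary. Concretely, at every $C^2$ rectifiable point $\beta(\infty)$ has a well-defined value, so the topological supports $\supp I_i$ and $\supp I_j$ can only overlap on the measure-zero non-$C^2$-rectifiable set. Combined with $\sum_i \partial I_i = \partial I(\infty) = 0$ and an argument showing that any putative boundary $\partial I_i$ would have to be concentrated in this thin set (and hence vanish, as an $(n-1)$-current cannot be supported in an $(n-2)$-dimensional set under the Almgren regularity that comes with calibration), one obtains $\partial I_i = 0$. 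Making this rigorous will likely need a slicing or integer-rectifiable flat-chain decomposition argument tailored to the zero-weak-derivative condition. Once $\partial I_i = 0$ is established, the calibration identity yields that $I_i$ is special lagrangian with phase $c_i$.

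Finally, finiteness follows from global monotonicity. Each non-zero $I_i$ is a stationary integral rectifiable cycle in the compact Calabi--Yau $N$, so its density is at least $1$ at every support point, and the monotonicity formula in the compact Riemannian setting yields a uniform positive lower bound $M(I_i) \geq \delta(N)>0$. Since $M(I(\infty))<\infty$ and the (essentially disjoint) decomposition gives $M(I(\infty)) = \sum_i M(I_i)$, only finitely many $I_i$ can be non-zero, producing the claimed decomposition $I(\infty) = \sum_{i=1}^k I_i$ into special lagrangian cycles with (possibly different) phases $c_1, \dots, c_k$.
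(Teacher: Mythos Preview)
Your broad strategy---decompose $I(\infty)$ by the level sets of $\beta(\infty)$ and use monotonicity to bound the number of pieces---is the same as the paper's. The genuine gap is in your third step, where you try to show $\partial I_i = 0$ for $I_i = I(\infty)\restrict\{\beta(\infty)=c_i\}$.

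The difficulty is that $\beta(\infty)$ is only an $L^\infty(V(\infty))$ function, not an ambient Lipschitz function, so restricting $I(\infty)$ to its level sets gives an integer-rectifiable current whose boundary you have no a priori mass control over; in particular $\partial I_i$ need not be a flat chain, so Federer's support theorem is not available. Your proposed workaround---invoke Almgren regularity to say the singular set has codimension two, hence cannot support an $(n-1)$-current---is circular: Almgren applies to mass-minimizing integral cycles, but you only know $I_i$ is calibrated \emph{after} you know $\partial I_i=0$. And even granting Almgren for the full $V(\infty)$, you would still need to know that $\partial I_i$ is flat before the dimension argument bites.

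The paper resolves this by mollifying $\beta$ with respect to $V(\infty)$ to obtain a smooth ambient function $\beta_\eta$, and then restricting $I(\infty)$ to preimages $\beta_\eta^{-1}(\theta_1-\e,\theta_1+\e)$ of \emph{open intervals}. The boundary of such a restriction is supported in $\supp\lVert V(\infty)\rVert \cap \beta_\eta^{-1}(\theta_1\pm\e)$, and the key Lemma~\ref{lem:coarea} uses the coarea formula together with the error estimate of Theorem~\ref{thm:errorestimate} (with $H=0$, so $\nabla\beta_\eta \to 0$ in $L^1$) to show that for almost every level $s$, $\mathcal{H}^{n-1}\bigl(\supp\lVert V(\infty)\rVert\cap\beta_\eta^{-1}(s)\bigr)\to 0$ as $\eta\to 0$. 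One then chooses $\eta_k,\e_k\to 0$ along which the boundary masses vanish and passes to a limit current $L_1$ with $\partial L_1=0$ and $\beta\equiv\theta_1$ on it. Finiteness is then obtained, as you suggest, by the monotonicity formula: each $L_i$ contains an Allard-regular point and so carries at least $\omega_n R^n$ mass in a ball of radius $R$, so the iteration terminates. The mollification-plus-coarea device is precisely the ``slicing argument tailored to the zero-weak-derivative condition'' that your sketch anticipates but does not supply.
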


A smooth lagrangian submanifold is special lagrangian if and only if it is stationary (and in this case it has vanishing Maslov class); we have the following generalization of this fact to our setting.
\begin{prop}
\label{prop:sum-SL}
	Suppose $I$ is a lagrangian current with finite mass and no boundary in the ball $B_{2R}(0)$, such that the corresponding varifold $V$ is stationary and has Maslov index zero. Then there are  special Lagrangian currents, $L_1,\ldots, L_N$, with $I=L_1+\cdots+L_N$ in $B_{2R}(0)$.
\end{prop}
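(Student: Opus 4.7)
The plan is to use the Harvey--Lawson formula from Theorem \ref{thm:harvey-lawson-varifold} together with stationarity to force $\beta$ to be locally constant, and then decompose $I$ according to level sets of $\beta$.

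First, I would invoke Theorem \ref{thm:harvey-lawson-varifold} to obtain the formula $B = -JH + \beta H$ almost everywhere on $V$. Since $V$ is stationary, $H=0$ almost everywhere, so $B=0$ almost everywhere. In particular the tangential component vanishes, i.e.~$\nabla\beta = 0$ in the sense of Proposition \ref{prop:nablabeta}, at every point of $C^2$ rectifiability. Using Menne's decomposition Theorem \ref{thm:weak-regularity-M2} and Allard's regularity theorem for stationary integral varifolds, the regular part of $V$ is open and dense in $\mathrm{spt}\,V$, and on each connected component of this regular part $\beta$ is smooth and constant.

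Second, I would argue that $\beta$ takes only finitely many distinct essential values on $V$. By Allard's monotonicity formula, each point of $\mathrm{spt}\,V$ interior to $B_{2R}(0)$ has a definite lower mass density $\Theta \geq 1$, so any component of the regular part which meets a fixed compact subset of $B_{2R}(0)$ carries mass bounded below in terms of its distance to $\partial B_{2R}(0)$. Combined with $M(I)<\infty$ and an exhaustion of $B_{2R}(0)$ by compact subsets, this bounds the number of connected components of the regular part on which $\beta$ can take distinct values; collapsing components with the same value of $\beta$ yields finitely many values $c_1,\ldots, c_N$.

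Third, for each $j$ I would define $L_j = I\llcorner \{\beta = c_j\}$. Since $\beta$ is locally constant on the regular part and the singular set has $\|V\|$-measure zero, each $\{\beta = c_j\}$ is well-defined modulo nullsets and the decomposition $I = L_1 + \cdots + L_N$ holds as currents. The varifold associated to $L_j$ inherits stationarity and has lagrangian angle constantly equal to $c_j$; by the pointwise criterion $\mathrm{Re}(e^{-ic_j}\sigma)|_{T_xV_j} = d\mathrm{vol}_{T_xV_j}$, the varifold $V_j$ is calibrated by the parallel form $\mathrm{Re}(e^{-ic_j}\sigma)$, hence is special lagrangian with phase $c_j$.

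Fourth, I would verify $\partial L_j = 0$. Any boundary of $L_j$ must be supported in the $\|V\|$-nullset where distinct level sets meet, i.e.~in the singular set of $V$. Since $L_j$ is calibrated by a closed form, it is mass-minimizing in its relative homology class in $B_{2R}(0)$, so the constancy of $d(\mathrm{Re}(e^{-ic_j}\sigma))=0$ together with $\partial I = 0$ and Stokes' theorem on compactly supported deformations forces $\partial L_j = 0$. The main obstacle will be this last step, justifying cleanly that the formal restriction $I\llcorner\{\beta = c_j\}$ is an integer-rectifiable current with vanishing boundary across the singular set, rather than merely a Radon measure; this is where the calibration structure, not just the varifold decomposition, is essential.
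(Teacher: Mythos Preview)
Your overall strategy---use $H=0$ to get $\nabla\beta=0$, conclude $\beta$ is locally constant on the regular part, then decompose by level sets---is natural and close in spirit to the paper's. The genuine gap is exactly where you flag it: step four.

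The restriction $L_j = I\lrcorner\{\beta=c_j\}$ is an integer-rectifiable current, but there is no a priori reason it has locally finite boundary mass; $\beta$ is defined only $\lVert V\rVert$-a.e., not as an ambient Lipschitz function, so standard slicing theory does not apply. Your calibration argument is then circular: calibration tells you a \emph{cycle} is minimizing, but you need $\partial L_j=0$ (or at least $M(\partial L_j)<\infty$, so that boundary rectifiability is available) before calibration or Stokes can be invoked. Observing that $\partial L_j$ is supported in a $\lVert V\rVert$-nullset is not enough---a nonzero $(n-1)$-current can certainly be supported in a set of ${\cal H}^n$-measure zero, and without a mass bound you cannot upgrade this to an ${\cal H}^{n-1}$-nullset statement.

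The paper closes this gap by mollifying $\beta$ to an ambient smooth function $\beta_\eta$, setting $L_{\eta,\e}=I\lrcorner\beta_\eta^{-1}(\theta_1-\e,\theta_1+\e)$, and applying the coarea formula. Since $V$ is stationary, Theorem~\ref{thm:errorestimate} gives $\nabla\beta_\eta = B_\eta - \beta_\eta H_\eta + E$ with the first two terms zero and $\lVert E\rVert_{L^2(V)}\to 0$; hence $\int\lvert\nabla\beta_\eta\rvert\,d\lVert V\rVert\to 0$, and for good sequences $\eta_k,\e_k\to 0$ one has ${\cal H}^{n-1}\bigl(\supp\lVert V\rVert\cap\beta_{\eta_k}^{-1}(\theta_1\pm\e_k)\bigr)\to 0$. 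This bounds $M(\partial L_{\eta_k,\e_k})$ uniformly, so Federer--Fleming compactness produces a limit $L_1$ that is genuinely a cycle with $\beta\equiv\theta_1$ on its associated varifold $V_1$. Only then is $V_1$ calibrated, hence stationary, and monotonicity at a regular point $p$ with $B_R(p)\subset B_{2R}(0)$ gives $\lVert V_1\rVert(B_{2R})\ge\omega_n R^n$; iteration now terminates. Your finiteness argument in step two is also imprecise---monotonicity bounds $\lVert V\rVert(B_r(p))$, which may receive contributions from several components of the regular part, not the mass of a single component---but the paper's iterative scheme handles finiteness automatically once the boundary issue is settled.
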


Since $V$ is stationary, by Allard's regularity theorem there are an open dense set of $p\in \supp\lVert V\rVert$ such that for each $p$  there is some $r>0$ such that in $B_r(p)$, $\supp\lVert V\rVert$ is  smooth and hence special lagrangian. In particular we can select such a $p$ with $B_R(p)\subset B_{2R}(0)$. Call the phase of this special lagrangian $\theta_1$. The idea is to show that $V_1=\beta^{-1}(\theta_1)$, where $\b$ is the lagrangian angle of $V$, contains some definite positive amount of the mass of $V$. However to make sense of $\b^{-1}$ we need $\b$ to be an ambient  Lipschitz function. Therefore we mollify $\b$ to construct a smooth ambient function $\b_\eta$.

\begin{lem}\label{lem:coarea}
		For almost every $s\in{\mathbb R}$, 
			\begin{equation*}
				\limsup_{\eta\rightarrow 0}{\mathcal H}^{n-1}(\supp\lVert V\rVert\cap \beta_\eta^{-1}(s))=0
			\end{equation*}
	\end{lem}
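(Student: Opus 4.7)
The strategy is to combine the Harvey-Lawson formula, the error estimate for the mollified lagrangian angle, and the classical coarea formula on rectifiable sets. Since the hypotheses of Proposition \ref{prop:sum-SL} say $V$ is stationary with Maslov index zero, we have $H \equiv 0$ and the lift $\beta \in L^{\infty}(V)$ admits a weak derivative $B \in L^{2}(V)$. Theorem \ref{thm:harvey-lawson-varifold} then forces $B = -JH + \beta H = 0$ almost everywhere on $V$.

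With $B=0$ and $H=0$, the formula from Theorem \ref{thm:errorestimate} collapses to $D\beta_{\eta}(x) = E(x,\eta)$ on $\operatorname{supp}\lVert V\rVert$, since the mollifications $B_{\eta}$ and $H_{\eta}$ vanish identically. Consequently the tangential gradient along $V$ obeys $|\nabla^{V}\beta_{\eta}| \leq |E(\cdot,\eta)|$. I would then argue that $E(\cdot,\eta) \to 0$ in $L^{2}(V)$: the pointwise bound (\ref{equ:error-estimate}) combined with the Brakke–Menne tilt-excess estimate of Theorem \ref{thm:tilt-excess-B-M} gives $|E(x,\eta)| \to 0$ for $\lVert V\rVert$-a.e.~$x$, and the same dominated-convergence argument used in the proof of Lemma \ref{lem:dom-conv-to-E} (or Lemma \ref{lem:dom-conv-to-E2}) supplies the necessary uniform bound on compact pieces of $\operatorname{supp}\lVert V\rVert \cap B_{2R}(0)$. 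Since $V$ has finite mass in $B_{2R}(0)$, Cauchy–Schwarz then yields
$$\int_{V} |\nabla^{V}\beta_{\eta}|\, d\lVert V\rVert \;\longrightarrow\; 0.$$

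Next I would invoke the coarea formula for the ambient Lipschitz function $\beta_{\eta}$ restricted to the countably $n$-rectifiable set $\operatorname{supp}\lVert V\rVert$:
$$\int_{V} |\nabla^{V}\beta_{\eta}|\, d\lVert V\rVert \;=\; \int_{-\infty}^{\infty} {\mathcal H}^{n-1}\bigl(\operatorname{supp}\lVert V\rVert \cap \beta_{\eta}^{-1}(s)\bigr)\, ds.$$
The previous step shows that the right-hand side tends to $0$, so the nonnegative integrands $g_{\eta}(s) := {\mathcal H}^{n-1}(\operatorname{supp}\lVert V\rVert \cap \beta_{\eta}^{-1}(s))$ converge to $0$ in $L^{1}(ds)$. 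Extracting a subsequence $\eta_{j} \to 0$ with $\int g_{\eta_{j}}(s)\,ds < 2^{-j}$, the function $\sum_{j} g_{\eta_{j}}(s)$ has finite Lebesgue integral, hence is finite almost everywhere, and therefore $g_{\eta_{j}}(s) \to 0$ for a.e.~$s$. Relabeling the subsequence as $\eta \to 0$ gives the claimed vanishing of the limsup.

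The main technical obstacle is step two, namely producing the $L^{2}$ (or at least $L^{1}$) convergence of $|E(\cdot,\eta)|$ on $\operatorname{supp}\lVert V\rVert$: the naive pointwise bound from the error formula has a $\eta^{-2}$ factor, and one must use the tilt-excess decay carefully to obtain an integrable dominating function rather than merely pointwise decay. Once that is in hand, the remaining steps are essentially a direct application of the coarea formula plus the standard $L^{1}$-to-a.e.~subsequence trick.
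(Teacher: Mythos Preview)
Your proof is correct and takes essentially the same approach as the paper: stationarity kills $B_\eta$ and $\beta_\eta H_\eta$, the error term $E(\cdot,\eta)$ goes to zero in $L^2(V)$ via the tilt-excess/dominated-convergence argument, and the coarea formula converts this into the desired $\mathcal{H}^{n-1}$ statement. One minor point: the coarea formula on the rectifiable set $\operatorname{supp}\lVert V\rVert$ is stated with $d\mathcal{H}^n$ rather than $d\lVert V\rVert$, but since the integer multiplicity is $\geq 1$ you get an inequality in the right direction, and your final $L^1$-to-a.e.\ subsequence extraction is in fact more explicit than what the paper writes.
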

	\begin{proof}
		For each $\eta>0$, we may apply the coarea formula for rectifiable sets, namely
			\begin{equation}
				\int_{-\infty}^\infty{\mathcal H}^{n-1}(\supp\lVert V\rVert \cap \beta_\eta^{-1}(s))ds=\int_{\supp\lVert V\rVert}\lvert \nabla\beta_\eta\rvert d{\mathcal H}^n
			\end{equation}
		The conclusion follows from the error estimate on $\beta_\eta$:
			\begin{equation}
				\begin{aligned}
					\int_{\supp\lVert V\rVert}\lvert \nabla \beta_\eta\rvert d{\mathcal H}^n&\leq \int \lvert\nabla\beta_\eta\rvert d\lVert V\rVert\\
					&=\int\lvert B_\eta-\beta_\eta H_\eta+E \rvert d\lVert V\rVert \rightarrow 0
					\end{aligned}
			\end{equation}
		where (appealing to Theorem \ref{thm:harvey-lawson-varifold}) $B_\eta$ and $\beta_\eta H_\eta$ are both zero because $V$ is stationary. The remaining term satisfies
		\begin{equation}
		\int \lvert E \rvert d\lVert V\rVert \leq  \Big( \int \lvert E \rvert^2 d\lVert V\rVert \Big)^{\frac{1}{2}} \Big( \int d\lVert V\rVert \Big)^{\frac{1}{2}}
		 \end{equation}
		However, as $\eta \to 0$
		\begin{equation} 
		\int \lvert E \rvert^2 d\lVert V\rVert \to 0,
		  \end{equation}   
		by Lemma \ref{lem:sequence-Allard2}. The result follows.
		
	\end{proof}

\noindent{\it Proof of Proposition \ref{prop:sum-SL}}	

Consider, for each $\eta,\e>0$,
		\begin{equation}
			L_{\eta,\e}=I\lrcorner\left( \beta_\eta^{-1}(\theta_1-\e,\theta_1+\e)\cap B_{2R}\right)
		\end{equation}
	Each $L_{\eta,\e}$ an integral current whose support lies in the support of $V$.  We seek a limit of  $L_{\eta,\e}$ as $\eta,\e\rightarrow 0$ in the sense of integral currents and in the sense of varifolds. The masses of the $L_{\eta,\e}$ are dominated by the mass of $V$. Note that by hypothesis, $I$ is without boundary.  Thus the only part of $\partial L_{\eta,\e}$ in $B_{2R}$, has support in 
		\begin{equation}
			\supp\lVert V\rVert \cap \beta_\eta^{-1}(\theta_1\pm \e)\cap B_{2R}
		\end{equation}
	Now we can apply Lemma \ref{lem:coarea} to choose $\eta_k,\e_k\rightarrow 0$ for which 
		\begin{equation}
			\limsup_k {\mathcal H}^{n-1}\left(\supp \lVert V\rVert\cap\beta_{\eta_k}^{-1}(\theta_1\pm \e_k)\cap B_{2R}\right)=0
		\end{equation}
	In particular, there is a sequence of the $L_{\eta,\e}$ converging to some $L_1$, an integral lagrangian current without boundary in $B_{2R}$, with $\supp L_1\subset \supp\lVert V\rVert$. Let $V_1$ be the associated lagrangian varifold to $L_1$ then we  have $\beta=\theta_1$ $V_1$-a.e.. Thus, $V_1$ is stationary and we can  apply the monotonicity formula about $p$ with radii $r$ and $R$:
	\begin{equation}
		\frac{V_1\left(B_{2R}\right)}{R^n}\geq \frac{V_1\left(B_{R}(p)\right)}{R^n}\geq \frac{V_1\left(B_{r}(p)\right)}{r^n}=\frac{V\left(B_r(p)\right)}{r^n}\geq \omega_n.
	\end{equation}
	The last inequality follows because $p$ is a regular point of $V$. This inequality shows that $L_1$ accounts for at least $\omega_nR^n$ of the mass of $V$. Hence we can repeat the argument for some $p'\in \supp( I-L_1)$ to find $L_2$, and so on until the mass of $V$ is exhausted.

\begin{rem}
	The above proof is inspired by the proof of Proposition 5.1 of \cite{nzm}.
\end{rem}

To establish Theorem \ref{thm:convergence}, we need only note that $V(\infty)$ is stationary; then we may apply Proposition \ref{prop:sum-SL} to  $I(\infty)$ obtain the desired result.

\begin{rem}
 In the decomposition $I(\infty)=L_1+\cdots+L_N$, The phases of the special lagrangians may differ. In particular, the configuration of special lagrangians may not be minimizing.
\end{rem}

\bigskip

\section{Collapse in the euclidean case}

Classical compact mean curvature flows in Euclidean space must encounter singularities in finite time, but these singularities may not be caused by global volume collapse. Here we show that lagrangian varifold flow exhibits the same behavior, in fact collapsing to zero mass in finite time.

\begin{thm}\label{thm:collapse}
	If $\supp V$ lies in a ball of radius $R$, then $\supp V(t)$ lies in a ball of radius $\sqrt{R^2-2nt}$.
\end{thm}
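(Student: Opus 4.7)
The plan is to prove a parabolic maximum principle for $g(x,t) = |x|^2 + 2nt$, executed at the level of the $\e$-approximate flows and then passed to the limit. In the smooth LMCF setting, $g$ is a solution of the heat equation along $\Sigma_t$ since $\partial_t g = 2x\cdot H + 2n$ while $\Delta_{\Sigma_t}|x|^2 = 2n + 2x\cdot H$; hence $\max_{\Sigma_t} g$ is nonincreasing and the claim would follow from $g \leq R^2$ at $t=0$. My goal is to reproduce this in the weak varifold setting.

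For a fixed smooth nondecreasing convex $\Phi: \R\to[0,\infty)$ with $\Phi \equiv 0$ on $(-\infty,0]$, I would consider along the $\e$-approximate flow the quantity
$$I_\e(t) = \int \Phi\bigl(|x|^2 + 2nt - R^2\bigr)\, d\lVert V^\e(t)\rVert,$$
so that $I_\e(0) = 0$. Since $V^\e(t) = (\Psi^\e(t))_\sharp V$ is generated by the Hamiltonian field $JD\beta_\e$ associated to $V^\e(t)$, the first variation gives
$$\frac{d}{dt} I_\e(t) = 2n\int \Phi'(g_\e)\, d\lVert V^\e(t)\rVert + \int \operatorname{div}_{V^\e(t)}\!\bigl(\Phi(g_\e)JD\beta_\e\bigr)\, d\lVert V^\e(t)\rVert,$$
with $g_\e = |x|^2 + 2nt - R^2$. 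Expanding the divergence by the product rule and invoking Theorem \ref{thm:errorestimate} together with the Harvey--Lawson identity $\nabla\beta = -JH$ of Proposition \ref{prop:nablabeta}, I would replace $JD\beta_\e$ by $H$ modulo mollification discrepancies and the error $E$. Brakke's orthogonality $H \perp T_xV^\e(t)$ at points of $C^2$-rectifiability, combined with the pointwise identity $\operatorname{tr}_S(\operatorname{Id}) = n$ on any $n$-plane $S$, makes the leading contribution cancel the $2n\int\Phi'(g_\e)\, d\lVert V^\e\rVert$ term in just the way the smooth heat-equation computation does.

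The hard part will be controlling the error terms uniformly in $\e$, with the goal of obtaining a Gr\"onwall-type inequality $\frac{d}{dt}I_\e(t) \leq \sigma(\e) + C\lVert \Phi'\rVert_\infty I_\e(t)$ with $\sigma(\e)\to 0$. The $L^2$-bound on $H$ over $[0,T]$ from Theorem \ref{thm:good-times-MC-bounds} and the tilt-excess-based bound on $E$ from Theorem \ref{thm:errorestimate} are the tools I would use. Once this inequality is in hand, $I_\e(0) = 0$ forces $\lim_{\e\to 0} I_\e(t) = 0$ for each admissible $\Phi$, and the weak convergence of Corollary \ref{cor:constructlimitflow} promotes this to $\int \Phi(|x|^2 + 2nt - R^2)\, d\lVert V(t)\rVert = 0$. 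Letting $\Phi$ approximate the indicator of $(0,\infty)$ yields $\supp V(t) \subset \overline{B_{\sqrt{R^2 - 2nt}}(0)}$, as claimed.
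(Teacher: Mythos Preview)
Your approach is sound and reaches the same conclusion, but via a slightly different test-function choice than the paper. The paper tracks the moments $\int f^p\,d\lVert V^\e(t)\rVert$ for $f(x,t)=|x|^2+2nt$ and all even $p$, shows each is (in the limit $\e\to 0$) nonincreasing in $t$ using the identity $\div_{V^\e} x = n$ and the error analysis of \S8, and then lets $p\to\infty$ to conclude $\lVert f(t)\rVert_{L^\infty(V(t))}\le R^2$. Your strategy replaces the family $\{f^p\}$ by a single convex cutoff $\Phi(f-R^2)$ and extracts the support conclusion by letting $\Phi$ approximate $\mathbf{1}_{(0,\infty)}$; this is a legitimate and somewhat more direct route, avoiding the $L^p\to L^\infty$ passage. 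Both arguments rest on the same differential identity (in the smooth limit, $\frac{d}{dt}\int\Phi(g)\,d\mu = -\int 4\Phi''(g)|x^\top|^2\,d\mu - \int\Phi(g)|H|^2\,d\mu\le 0$) and the same control of mollification errors.

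Two small corrections. First, your transport formula should read
\[
\frac{d}{dt}I_\e(t)=\int\Bigl(2n\Phi'(g_\e)+2\Phi'(g_\e)\,x\cdot JD\beta_\e+\Phi(g_\e)\,\div_{V^\e}JD\beta_\e\Bigr)\,d\lVert V^\e(t)\rVert;
\]
writing the last two terms as $\int\div_{V^\e}(\Phi(g_\e)JD\beta_\e)$ drops the normal part $2\Phi'(g_\e)x^\perp\cdot JD\beta_\e$, which is not zero a priori. Second, the Gr\"onwall term $C\lVert\Phi'\rVert_\infty I_\e$ is not needed: once the errors are handled you obtain $\frac{d}{dt}I_\e\le\sigma(\e)$ directly, so $I_\e(t)\le t\,\sigma(\e)\to 0$ suffices.
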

\begin{proof}
	Consider the function $f(x,t)=\lvert x\rvert^2+2nt$. Compute, for each $V^\e(t)$,
		\begin{eqnarray} \nonumber
			&&\frac{d}{dt} \lVert V^\e(t)\rVert (f(t))  \\ \nonumber
			&&=\int 2x\cdot JD\beta_\e + 2n +f(x,t) \div JD\beta_\e d\lVert V^\e(t)\rVert x\\ \nonumber
			&&=\int 2x\cdot JD\beta_\e + 2n -\nabla \lvert x\rvert^2 \cdot JD\beta_\e - f(x,t)H^\e\cdot JD\beta_\e d\lVert V^\e(t)\rVert x	\\
		\end{eqnarray} 
	For almost every $t$,  as $\e\rightarrow 0$, $JD\beta_\e\rightarrow H$. For such $t$, we have
		\begin{equation}
			\begin{aligned}
			\frac{d}{dt} \lVert V^\e(t)\rVert (f(t))\rightarrow \int 2x\cdot H + 2n - f\lvert H\rvert^2 d\lVert V(t)\rVert	
			\end{aligned}
		\end{equation} 
	Now an elementary computation shows that $\div x=n$, so we have
	\begin{equation}
		  \int n\ d\lVert V(t)\rVert x=\int \div x\ d\lVert V(t)\rVert x = -\int H\cdot x\ d\lVert V(t)\rVert x	
	\end{equation}
	hence
	\begin{equation}
			\begin{aligned}
			\frac{d}{dt} \lVert V^\e(t)\rVert (f(t))\rightarrow \int - f\lvert H\rvert^2 d\lVert V(t)\rVert	\leq 0
			\end{aligned}
		\end{equation} 
	with equality holding only if $V(t)$ is stationary or the zero varifold.
	
	Thus for small enough $\e$, $\lVert V^\e(t)\rVert (f(t))$ is nonincreasing in $t$. Letting $\e\rightarrow 0$, we have $\lVert V(t)\rVert (f(t))$ nonincreasing in $t$. \\
	
	Similarly, for any $p\geq 2$, we have
		\begin{equation}
			\frac{d}{dt} \lVert V^\e(t)\rVert (f(t)^p)\rightarrow \int pf^{p-1}\left(2x\cdot H+2n\right)-f^p\lVert H\rVert^2d\lVert V(t)\rVert
		\end{equation}
	and
	\begin{equation}
		\begin{aligned}
		  - \int f^{p-1}H\cdot x\ d\lVert V(t)\rVert_x&=\int \div \left(f^{p-1} x\right)\ d\lVert V(t)\rVert_x\\
		  & = \int (p-1)f^{p-2}\nabla f\cdot x+f^{p-1}n\ d\lVert V(t)\rVert_x
		 \end{aligned}
	\end{equation}
	and $\nabla f\cdot x = 2x\cdot x^\top\geq 0$, so that
	\begin{equation}
			\frac{d}{dt} \lVert V^\e(t)\rVert (f(t)^p)\rightarrow  \int -4p(p-1)f^{p-2}x\cdot x^\top-f^p\lvert H\rvert^2d\lVert V(t)\rVert_x\leq 0
		\end{equation}
	Letting $\e \rightarrow 0$, we conclude that $\lVert f(t)\rVert_{L^p( V(t))}\leq \lVert f(0)\rVert_{L^p( V(0))}$. Letting $p\rightarrow \infty$, we conclude that $\lVert f(t)\rVert_{L^\infty(V(t))}\leq \lVert f(0)\rVert_{L^\infty(V(0))}$. \\
	
	But since $f$ is continuous, we have that for  $x\in\supp V(t)$, $$\lvert x\rvert^2+2nt=f(t)\leq \max f(t)\leq \max f(0)\leq R^2.$$
	
\end{proof}

\begin{rem}
	The proof of Theorem \ref{thm:collapse} follows that of Brakke \cite{b}. It is almost --- but not precisely --- a barrier argument. The rate of collapse $R^2-2nt$ is slower than that ($R^2-2(2n-1)t$) determined by mean curvature flow applied to the $2n-1$-sphere bounding the ball of radius $R$ which contained the support of the initial varifold.
\end{rem}

\begin{cor}
	For any initial varifold with compact support, the flow $V(t)$ becomes the zero varifold in finite time.
\end{cor}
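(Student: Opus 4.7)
The plan is to apply Theorem \ref{thm:collapse} directly, after a harmless translation to center the initial support at the origin. Since $\supp V(0)$ is compact, it is contained in some ball $B_R(0)$ after translation. Theorem \ref{thm:collapse} then guarantees that $\supp V(t) \subset B_{\sqrt{R^2 - 2nt}}(0)$ for every $t \in [0, R^2/(2n)]$. At the critical time $t^* = R^2/(2n)$ the bounding radius collapses to zero, so $\supp V(t^*) \subset \{0\}$.

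The next step is to observe that any lagrangian integer-rectifiable $n$-varifold whose support is contained in a single point is necessarily the zero varifold. Indeed, the associated Radon measure satisfies $\lVert V(t^*)\rVert = \mathcal{H}^n \,{\mbox{\rule{.6pt}{6pt}\rule{5pt}{.6pt}}~}\, \theta$ supported on $\{0\}$, which has $\mathcal{H}^n$-measure zero; hence $\lVert V(t^*)\rVert = 0$, and thus $V(t^*) = 0$.

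Finally, since by construction $\lVert V(t)\rVert$ is a nonincreasing function of $t$, we conclude $V(t) = 0$ for all $t \geq t^*$. Thus the flow reaches the zero varifold at time at most $R^2/(2n)$, which is finite.

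There is no real obstacle here; the entire argument is a one-line consequence of the quantitative collapse rate established in Theorem \ref{thm:collapse}, together with the trivial observation that a nontrivial rectifiable $n$-varifold cannot be supported on a single point. The only subtlety worth mentioning is that Theorem \ref{thm:collapse} was stated using a ball centered at the origin via the test function $f(x,t) = |x|^2 + 2nt$, but this is without loss of generality: the flow construction is equivariant under ambient translations (translations are hamiltonian symplectomorphisms generated by linear hamiltonians, and the mollification and weak-derivative apparatus are translation-covariant), so we may translate the initial varifold to arrange $\supp V(0) \subset B_R(0)$ before invoking the theorem.
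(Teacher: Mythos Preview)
Your proposal is correct and takes essentially the same approach as the paper: the corollary is stated there without proof, as an immediate consequence of Theorem \ref{thm:collapse}, and you have simply spelled out the straightforward details (support collapsing to a point at $t^*=R^2/(2n)$, a rectifiable $n$-varifold supported on a point being zero, and monotonicity of mass thereafter).
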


\begin{cor}
	Hyperplanes are barriers for the lagrangian varifold flow.
\end{cor}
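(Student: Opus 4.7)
The plan is to deduce the hyperplane barrier property directly from the ball barrier of Theorem \ref{thm:collapse}, by approximating a halfspace by an exhausting family of large balls tangent to the bounding hyperplane. First I would observe that the $\e$-flow construction of \S \ref{sec:epsilonflows} is equivariant under translations of $\R^{2n}$: translations are symplectic isometries preserving $J$ and the holomorphic volume form, they commute with Euclidean mollification, they preserve the lagrangian angle and its weak derivative, and they intertwine the hamiltonian vector fields $JD\beta_\e$. Consequently Theorem \ref{thm:collapse} generalizes verbatim: if $\supp V \subset B_R(y_0)$, then $\supp V(t)\subset B_{\sqrt{R^2-2nt}}(y_0)$ for $t\in [0,R^2/(2n))$.

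Now suppose the initial varifold $V(0)$ has compact support contained in the closed halfspace $\{x:x\cdot\nu\le c\}$ determined by a unit vector $\nu$ and constant $c$. Since $\supp V(0)$ is compact, for all sufficiently large $R$ the closed ball $\overline{B_R((c-R)\nu)}$ — the ball tangent to the hyperplane $P=\{x\cdot\nu=c\}$ at the point $c\nu$ from inside the halfspace — contains $\supp V(0)$. Applying the translation-invariant form of Theorem \ref{thm:collapse}, for every such $R$ and every $t\in [0,R^2/(2n))$,
\begin{equation*}
\supp V(t)\subset B_{\sqrt{R^2-2nt}}((c-R)\nu).
\end{equation*}
For any $y$ in this smaller ball, Cauchy-Schwarz gives
\begin{equation*}
y\cdot\nu\leq (c-R)+\sqrt{R^2-2nt}=c-\frac{2nt}{R+\sqrt{R^2-2nt}}<c.
\end{equation*}
Hence $\supp V(t)\subset\{x\cdot\nu\le c\}$ for all $t\ge 0$, and the hyperplane $P$ is a barrier. (In fact the strict inequality shows that the support is immediately pushed strictly into the open halfspace.)

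There is no real obstacle here; the only point requiring care is the translation-equivariance of the construction, which is immediate but worth stating explicitly. An alternative, more computational route would imitate the proof of Theorem \ref{thm:collapse} using the functions $f_p(x)=((x\cdot\nu-c)_+)^p$ for even $p\to\infty$ in place of $|x|^2+2nt$, exploiting $\div_V\nu=0$ and the perpendicularity of $H$; but the approximation by tangent balls is shorter and uses Theorem \ref{thm:collapse} as a black box.
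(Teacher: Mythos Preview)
Your argument is correct. The paper states this corollary without proof, treating it as immediate from Theorem \ref{thm:collapse}; your tangent-ball approximation is exactly the standard way to read such a statement, and the translation of the center (whether argued via equivariance of the $\e$-flows or by simply rerunning the proof of Theorem \ref{thm:collapse} with $f(x,t)=\lvert x-y_0\rvert^2+2nt$) is unproblematic.
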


By a similar argument one has the following result, which controls the rate of collapse.
\begin{thm}
	If the support of $V$ is disjoint from the ball of radius $R$, then the support of $V(t)$ is disjoint from the ball of radius $\sqrt{R^2-2nt}$.
\end{thm}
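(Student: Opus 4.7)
The approach is to mirror the proof of Theorem \ref{thm:collapse}, but using a test function supported \emph{inside} the candidate shrinking ball rather than everywhere on $\R^{2n}$. Fix an integer $p\geq 2$ and define
$$h(x,t)=\max\{R^2-2nt-|x|^2,\,0\}.$$
Then $h^p$ is of class $C^1$ in $(x,t)$, is compactly supported in the closed ball $\overline{B_{\sqrt{R^2-2nt}}(0)}$ for each $t$, and the disjointness hypothesis on $V$ translates directly into $\lVert V(0)\rVert(h^p(\cdot,0))=0$.

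The plan is to differentiate $\lVert V^\e(t)\rVert(h^p)$ along each $\e$-approximate flow, in direct analogy with the computation carried out in the proof of Theorem \ref{thm:collapse}. Using $\partial_t h=-2n$ and $Dh=-2x$ on $\{h>0\}$, together with the first variation formula to handle the $h^p\div_V(JD\beta_\e)$ term and then sending $\e\to 0$ so that $JD\beta_\e\to H$ (Theorem \ref{thm:mollificationnabla} and Proposition \ref{prop:nablabeta}), one is left --- after invoking the further first variation identity $\int\div_V(h^{p-1}x)\,d\lVert V\rVert = -\int h^{p-1}x\cdot H\,d\lVert V\rVert$, where $\div_V(h^{p-1}x) = -2(p-1)h^{p-2}|x^\top|^2 + nh^{p-1}$ --- with the clean inequality
$$\frac{d}{dt}\lVert V(t)\rVert(h^p) = -4p(p-1)\int h^{p-2}|x^\top|^2\,d\lVert V(t)\rVert - \int h^p|H|^2\,d\lVert V(t)\rVert \leq 0.$$
Both terms are manifestly nonpositive, exactly as in the case $f=|x|^2+2nt$ of Theorem \ref{thm:collapse}.

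Since $t\mapsto\lVert V(t)\rVert(h^p(\cdot,t))$ is nonnegative, nonincreasing, and vanishes at $t=0$, it must vanish for all $t$. Continuity of $h^p$ together with openness of $\{h>0\}=B_{\sqrt{R^2-2nt}}(0)$ then force $\supp V(t)$ to be disjoint from this open ball, which is the desired conclusion.

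The main obstacle is identical to the one encountered in Theorem \ref{thm:collapse}: rigorously justifying the passage $JD\beta_\e\to H$ inside the relevant integrals as $\e\to 0$. This is handled by the same machinery developed earlier --- the error estimate of Theorem \ref{thm:errorestimate}, the dominated convergence argument of Lemma \ref{lem:dom-conv-to-E}, and the uniform $L^\infty$ control on $\beta^\e$ from Theorem \ref{thm:boundonbeta}. The sign structure here is actually strictly more favorable than in Theorem \ref{thm:collapse}, since one needs only the vanishing of a specific nonnegative quantity rather than a supremum bound obtained by letting $p\to\infty$; no new analytical work beyond what the paper has already carried out should be required.
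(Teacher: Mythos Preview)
Your proof is correct and is precisely the kind of ``similar argument'' the paper alludes to (the paper gives no explicit proof here, only the phrase ``By a similar argument one has the following result''). Your choice of the truncated test function $h=\max\{R^2-2nt-|x|^2,0\}$ is the natural adaptation, and your computation leading to
\[
\frac{d}{dt}\lVert V(t)\rVert(h^p)=-4p(p-1)\int h^{p-2}|x^\top|^2\,d\lVert V(t)\rVert-\int h^p|H|^2\,d\lVert V(t)\rVert
\]
is the exact analogue of the $f^p$ computation in Theorem~\ref{thm:collapse}, with the pleasant simplification you note: a single value of $p$ suffices, no $p\to\infty$ limit is needed.

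One minor technical point: in the integration-by-parts step you use the test vector field $h^{p-1}x$, and for $p=2$ the coefficient $h^{p-1}=h$ is merely Lipschitz, not $C^1$, across the sphere $\{h=0\}$. This is harmless---either take $p\ge 3$ so that $h^{p-1}$ is genuinely $C^1$, or observe that the first variation formula extends to compactly supported Lipschitz vector fields by approximation---but it is worth a sentence.
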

\bigskip

\section{Conclusions}

\subsection{Topological ramifications}

Finally we state the main topological result of the paper.
\begin{thm}\label{thm:maintopological}
	Let $M$ be a closed Calabi-Yau manifold and $\Lambda \in H_n(M,{\mathbb R})$. If $\Lambda$ can be represented by a lagrangian cycle with Maslov index zero and $H\in L^2$, then $\Lambda = \alpha_1+\cdots+\alpha_k$, for some $\alpha_i\in H_n(M,{\mathbb R})$, each of which can be represented by a special lagrangian cycle.
\end{thm}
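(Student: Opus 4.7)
The plan is to apply the lagrangian cycle flow of Section 10 to a representative $I$ of $\Lambda$, extract the stationary limit $I(\infty)$, and then invoke the decomposition result already established in Theorem \ref{thm:convergence}.

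First, start with a lagrangian cycle $I$ representing $\Lambda$, whose associated varifold has Maslov index zero and $H \in L^2$. The construction in Section 10 produces a one-parameter family of lagrangian cycles $I(t)$ beginning at $I(0) = I$, defined for all $t > 0$. For each $\e > 0$, the approximating cycles $I^\e(t) = (\Psi^\e(t))_\sharp I$ are obtained by pushing forward $I$ under a one-parameter family of hamiltonian diffeomorphisms, so each $I^\e(t)$ is homologous to $I$, hence represents $\Lambda$. Because weak convergence of integral currents preserves homology classes in a closed manifold, the limits $I(t) = \liminf_{\e \to 0} I^\e(t)$ also represent $\Lambda$.

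Second, since $\Lambda \neq 0$ the mass of $I(t)$ is bounded below by $m(\Lambda) = \alpha > 0$. Theorem \ref{thm:H2-bounded-infinity} then gives
\begin{equation*}
\int_0^\infty \lVert H(t)\rVert^2_{L^2(V(t))}\,dt \leq \tfrac{4}{\alpha}\lVert V(0)\rVert^2,
\end{equation*}
so there is a sequence $t_j \to \infty$ along which $\lVert H(t_j)\rVert_{L^2(V(t_j))} \to 0$, and at which $V(t_j)$ satisfies $(H_2)$ and has Maslov index zero. The uniform $L^\infty$ bound on $\beta(t_j)$ from Corollary \ref{cor:norm-beta-decreasing}, together with the Harvey-Lawson identity of Theorem \ref{thm:harvey-lawson-varifold}, yields uniform $L^2$ bounds on the weak derivatives $B(t_j)$. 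The current compactness theorem \ref{thm:current-compactness} and the varifold compactness theorem \ref{thm:convergence-beta} then extract a subsequential limit $I(\infty)$, an integer-rectifiable lagrangian cycle whose associated varifold $V(\infty)$ is stationary, has Maslov index zero, and has $[I(\infty)] = \Lambda$.

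Third, apply Theorem \ref{thm:convergence} to $I(\infty)$: it decomposes as a finite sum $I(\infty) = L_1 + \cdots + L_k$ of special lagrangian integer cycles. Each $L_i$ has no boundary (from the proof of Proposition \ref{prop:sum-SL}, by choosing generic level values of $\beta_\eta$ via the coarea-type Lemma \ref{lem:coarea}), so each $L_i$ represents a class $\alpha_i \in H_n(M; \mathbb{Z}) \subset H_n(M; \mathbb{R})$. Taking homology classes in the decomposition yields $\Lambda = [I(\infty)] = \alpha_1 + \cdots + \alpha_k$, completing the proof.

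The main obstacle I would anticipate is verifying that the level-set construction used in Proposition \ref{prop:sum-SL}, which is stated in a euclidean ball, adapts cleanly to the global closed Calabi-Yau setting so that each $L_i$ is genuinely a cycle (not merely a current with controlled boundary). This is handled by observing that the argument of Proposition \ref{prop:sum-SL} is local in $M$, that $I(\infty)$ is globally without boundary, and that Lemma \ref{lem:coarea} allows us to choose the cutoff values $\theta_i \pm \e$ so that the boundary of each $I(\infty)\lrcorner \beta_\eta^{-1}(\theta_i - \e, \theta_i + \e)$ has arbitrarily small $(n-1)$-dimensional measure; a diagonal argument then produces the $L_i$ as cycles in $M$.
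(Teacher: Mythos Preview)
Your proposal is correct and follows essentially the same route as the paper's own proof: run the cycle flow to obtain $I(t)\in\Lambda$, extract a stationary sequential limit $I(\infty)\in\Lambda$, and apply Theorem~\ref{thm:convergence} to decompose it into special lagrangian cycles. You supply more detail than the paper does (in particular the explicit invocation of Theorem~\ref{thm:H2-bounded-infinity} and the compactness theorems, and your remark about globalising Proposition~\ref{prop:sum-SL}), but the architecture is identical.
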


\begin{proof}
	Let $I\in \Lambda$ be a cycle with Maslov index zero. Then $I(t)\in\Lambda$ exists for all time and we may extract a sequential limit $I(\infty)\in\Lambda$ which is the sum of special lagrangian cycles. 
\end{proof}

\begin{cor}
 Let $N$ be a closed Calabi-Yau manifold. 
If an integral lagrangian homology class $\a \in H_n(N; \Z)$ can be represented by an immersed lagrangian submanifold with vanising Maslov class, then $\a = \a_1 + \dots + \a_k$ where each $\a_i \in H_n(N; \Z)$ is a lagrangian homology class that can be represented by a special lagrangian current. The phases of the calibrating $n$-forms may be different for each $i=1, \dots, k$. 
 \end{cor}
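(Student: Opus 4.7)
The plan is to reduce this corollary to Theorem~\ref{thm:maintopological} by verifying that an immersed lagrangian submanifold with vanishing Maslov class satisfies the hypotheses (representability by a Maslov index zero lagrangian cycle with $H\in L^2$) and that the output decomposition lives in integral homology rather than just real homology.

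First I would produce the cycle. Given the smooth immersion $L:\Sigma\to N$ of a closed orientable $n$-manifold representing $\a$, integration of forms against $L_*[\Sigma]$ yields a lagrangian integer-multiplicity current $I$ with $\partial I=0$ whose associated varifold $V$ is an integer rectifiable lagrangian varifold. Because $\Sigma$ is closed and $L$ is smooth, the mean curvature $H$ is a bounded smooth section along the immersion, so in particular $H\in L^\infty(V)\subset L^2(V)$ and $V$ satisfies $(H_2)$. Next I would verify the varifold version of ``Maslov index zero'' in the sense of \S\ref{sec:maslovindexzero}: this is exactly Proposition~\ref{prop:immersedZM}, which gives a scalar lift $\b\in L^\infty(V)$ of the lagrangian angle with weak derivative $B=L_*(\nabla\b)+\b H\in L^2(V)$, precisely because the classical Maslov class $[d\b]$ vanishes and hence admits a smooth scalar-valued primitive on $\Sigma$.

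At this point Theorem~\ref{thm:maintopological} applies directly to $I$, producing a decomposition $\a = \a_1+\cdots+\a_k$ in $H_n(N;\R)$ where each $\a_i$ is represented by a special lagrangian cycle $L_i$ obtained from the sequential limit $I(\infty)$ via Proposition~\ref{prop:sum-SL}. The remaining point is upgrading this to integral homology. The limit $I(\infty)$ is obtained as a weak limit of integral lagrangian cycles $I(t_j)$ (each hamiltonian-isotopic to $I$, hence integer-multiplicity). By the Federer--Fleming compactness theorem (Theorem~\ref{thm:current-compactness}), the weak limit $I(\infty)$ is itself an integral cycle, and the decomposition of Proposition~\ref{prop:sum-SL} is obtained by slicing $I(\infty)$ against level sets $\b_\eta^{-1}(\theta_i\pm\e)$ using the coarea formula; such slices of integer-multiplicity currents are again integer-multiplicity. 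Therefore each $L_i$ is an integral cycle and $\a_i=[L_i]\in H_n(N;\Z)$.

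The main obstacle, and the only step that requires care beyond citing earlier results, is this last integrality argument: one must confirm that the almost-everywhere level-set slicing used in the proof of Proposition~\ref{prop:sum-SL} yields integer-multiplicity rather than merely real rectifiable pieces, and that the weak subsequential limit defining $I(\infty)$ does not introduce non-integer multiplicities. Both facts follow from the standard slicing theory of Federer for integer currents together with the compactness result Theorem~\ref{thm:current-compactness}, but it is worth stating them explicitly since the corollary, in contrast to Theorem~\ref{thm:maintopological}, is asserted at the level of integer homology. Once this is recorded, the phases of the calibrating $(n,0)$-forms for the distinct $L_i$ are exactly the distinct values $\theta_1,\ldots,\theta_k$ arising in the proof of Proposition~\ref{prop:sum-SL}, so they need not coincide, matching the statement of the corollary.
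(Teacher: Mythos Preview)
Your proposal is correct and follows exactly the paper's approach: invoke Proposition~\ref{prop:immersedZM} to see the immersion is a Maslov index zero cycle with $H\in L^2$, then apply Theorem~\ref{thm:maintopological}. Your additional paragraph on integrality (that the $L_i$ are integral cycles because slicing and Federer--Fleming compactness preserve integer multiplicity) is a worthwhile elaboration, since the paper's two-line proof takes this for granted despite Theorem~\ref{thm:maintopological} being stated over $\R$ while the corollary asserts $\a_i\in H_n(N;\Z)$.
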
 
 \begin{proof}
 By Proposition \ref{prop:immersedZM}, the immersion representing $\a$ is a cycle with Maslov index zero. Theorem \ref{thm:maintopological} applies.
 \end{proof}
 
 We also have the following result, which resolves the Thomas-Yau conjecture in the weak setting:
 \begin{thm}
 \label{cor:Thomas-Yau}
 If $\Sigma$ is an embedded lagrangian submanifold with vanishing Maslov class, then there is a mass-decreasing flow of lagrangian currents starting from $\Sigma$ and converging (in infinite time) in a sum $L_1+\cdots+L_N$ of special lagrangian cycles.
 \end{thm}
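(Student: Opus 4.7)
The plan is to package the machinery already built in the paper and apply it to the initial data $\Sigma$. First I would verify that $\Sigma$ qualifies as admissible initial data for the lagrangian cycle flow: since $\Sigma$ is a smooth embedded lagrangian submanifold, it carries a natural structure of an integer-multiplicity lagrangian cycle $I_0$ with associated varifold $V_0$. Because $\Sigma$ is smooth and closed, its classical mean curvature is bounded, so $H\in L^2(V_0)$ and $V_0$ satisfies $(H_2)$. The vanishing Maslov class hypothesis, together with Proposition \ref{prop:embeddedZM} (or \ref{prop:immersedZM}), provides a scalar-valued smooth lift $\beta$ of the lagrangian angle; this lift lies in $L^\infty(V_0)$ and, by Theorem \ref{thm:harvey-lawson-varifold} applied in the smooth case, has weak derivative $B=-JH+\beta H\in L^2(V_0)$. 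Thus $I_0$ is a lagrangian Maslov index zero cycle in the sense of Definition \ref{defn:zero-Maslov-current}.

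Next I would invoke the constructions of \S\ref{sec:manifoldcase} and the cycle version discussed immediately thereafter. For each $\varepsilon>0$, the approximate hamiltonian flow $\Psi^\varepsilon(t)$ generated by $JD\beta_\varepsilon$ pushes $I_0$ forward to a one-parameter family of lagrangian cycles $I^\varepsilon(t)=\Psi^\varepsilon(t)_{\#}I_0$, all representing the same homology class as $\Sigma$ because hamiltonian isotopies preserve homology. Passing to the $\varepsilon\to 0$ limit via Corollary \ref{cor:constructlimitflow} and Theorem \ref{thm:current-compactness} yields a one-parameter family $I(t)$ of lagrangian Maslov index zero cycles with $M(I(t))$ nonincreasing and lower semicontinuous in $t$, giving the mass-decreasing flow assertion.

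Now I would address long-time existence and convergence. If the homology class $[\Sigma]$ is non-trivial, then the mass lower bound $\alpha=m([\Sigma])>0$ supplied by the definition of mass-minimizing representatives in a non-trivial class prevents collapse; Theorem \ref{thm:H2-bounded-infinity} then gives the global bound
\begin{equation*}
\int_0^\infty \lVert H(t)\rVert^2_{L^2(V(t))}\,dt\le \frac{4}{\alpha}\lVert V(0)\rVert^2,
\end{equation*}
so there is a sequence $t_j\to\infty$ along which $\lVert H(t_j)\rVert_{L^2(V(t_j))}\to 0$. Corollary \ref{cor:norm-beta-decreasing} keeps $\lVert\beta(t_j)\rVert_{L^\infty(V(t_j))}$ uniformly bounded, and the Harvey-Lawson type identity \eqref{equ:compute-weak-derivative} then also bounds $\lVert B(t_j)\rVert_{L^2(V(t_j))}$ uniformly. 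The compactness theorems \ref{thm:compact-Hp-varifolds}, \ref{thm:convergence-beta}, and \ref{thm:current-compactness} allow me to extract a subsequential limit $I(\infty)$, a lagrangian Maslov index zero cycle whose associated varifold is stationary. If instead $[\Sigma]=0$, the trivial limit $I(\infty)=0$ (possibly reached in finite time, cf. Theorem \ref{thm:collapse} for the euclidean analogue) is itself the empty sum of special lagrangians, so this case is automatic.

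Finally I would apply Proposition \ref{prop:sum-SL} (equivalently Theorem \ref{thm:convergence}) to the stationary lagrangian Maslov index zero cycle $I(\infty)$ to decompose it as $L_1+\cdots+L_N$, a sum of special lagrangian integral cycles, completing the proof. I expect the main obstacle to be the convergence step rather than the initial setup: specifically, one must be careful that the sequential limit $I(\infty)$ inherits both the Maslov index zero property (so that Proposition \ref{prop:sum-SL} applies) and stationarity simultaneously along a \emph{common} subsequence $t_j\to\infty$, which requires combining the $L^2$-in-time decay of $H$ with the uniform $L^\infty$ control on $\beta$ and the compactness of the zero-Maslov class — all of which are in place, but must be threaded through a single diagonal argument.
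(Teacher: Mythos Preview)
Your proposal is correct and follows exactly the route the paper intends: the paper gives no separate proof of this theorem, treating it as an immediate assembly of Proposition \ref{prop:embeddedZM}, the cycle-flow construction of \S\ref{sec:manifoldcase}, the infinite-time convergence corollary preceding Theorem \ref{thm:convergence}, and Theorem \ref{thm:convergence} itself. Your handling of the null-homologous case via the dichotomy (either mass is bounded below by some $\alpha>0$, in which case Theorem \ref{thm:H2-bounded-infinity} applies, or mass tends to zero and the empty sum suffices) is a detail the paper leaves implicit.
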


Let $(N, \omega)$ be a closed Calabi-Yau manifold of complex dimension $n$ with K\"ahler form $\omega$. There are two reasonable definitions of lagrangian homology class. The one used in the introduction: A class in $H_n(N, \Z)$ is called {\it a lagrangian homology class} if it can be represented by a simplex consisting of simplices with all $n$-simplices lagrangian. The topological definition:  A class in $\a \in H_n(N, \Z)$ is called {\it a lagrangian homology class} if $\a \cap [\omega] = 0$. See \cite{w2} for results relating these two definitions. The results on lagrangian homology stated in the introduction apply with either definition. Denote the subspace of lagrangian homology classes by $LH_n(N, \Z)$. Let ${\cal S} \subset LH_n(N, \Z)$ be the subspace of the lagrangian homology that is generated by the special lagrangian cycles with any phase. 
 
 \begin{thm}
 \label{thm:homology-image}
Let $N$ be a closed Calabi-Yau manifold of complex dimension $n$. 
If an integral lagrangian homology class $\a \in H_n(N; \Z)$ can be represented by the image of a smooth map $f: M \to N$, where $M$ is a simply connected closed $n$-manifold then $\a \in {\cal S}$. In particular, the image of the Hurewicz homomorphism $\pi_n(N) \to H_n(N, \Z)$ in the lagrangian homology lies in ${\cal S}$.
 \end{thm}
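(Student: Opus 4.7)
The plan is to reduce the statement to the main topological theorem (Theorem \ref{thm:maintopological}) by upgrading $f$ to a smooth lagrangian immersion representing $\alpha$, and then verifying that the associated varifold satisfies the required hypotheses so that the lagrangian varifold flow may be applied.

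The first step is to invoke Gromov's h-principle for lagrangian immersions to produce a smooth lagrangian immersion $\tilde f \colon M \to N$ homotopic to $f$; in particular $\tilde f$ still represents $\alpha$. The formal input required by the h-principle is a fiberwise lagrangian monomorphism $TM \to f^* TN$, equivalently a complex-linear isomorphism $f^*TN \cong TM \otimes_{\R} \C$ (using the pulled-back complex structure $J$). Since $N$ is Calabi-Yau, $c_1(f^*TN)=0=c_1(TM \otimes \C)$; combined with the vanishing of $H^1(M;\Z)$ coming from simple connectedness of $M$, standard obstruction theory produces the desired bundle isomorphism in sufficiently low dimensions. This is the main obstacle: verifying the bundle-theoretic formal condition is dimension-dependent and presumably explains the restriction $n \leq 6$ in the subsequent corollary.

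Second, because $M$ is simply connected, $H^1(M;\R)=0$, so the Maslov class $[d\beta] \in H^1(M;\R)$ of $\tilde f$ vanishes automatically. By Proposition \ref{prop:immersedZM}, the varifold $V$ associated to $\tilde f$ has Maslov index zero in the sense of this paper. Since $M$ is compact and $\tilde f$ is a smooth immersion, the mean curvature is bounded, so $V$ satisfies $(H_2)$ with $H \in L^2(V)$; orienting $\tilde f$ produces an integral lagrangian cycle $I$ with $[I]=\alpha$.

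Finally, Theorem \ref{thm:maintopological} applied to $I$ yields a decomposition $\alpha = \alpha_1 + \cdots + \alpha_k$ in which each $\alpha_i$ is represented by a special lagrangian cycle, so $\alpha \in \mathcal S$. For the Hurewicz statement, every class in the image is represented by a map $f \colon S^n \to N$, and $S^n$ is simply connected for $n \geq 2$, so the first part applies directly; the formal bundle condition in the first step is especially tractable here because rank-$n$ complex vector bundles on $S^n$ are classified by $\pi_{n-1}(U(n))$.
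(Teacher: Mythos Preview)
Your approach matches the paper's: apply the Gromov--Lees h-principle to upgrade $f$ to a lagrangian immersion, use simple connectedness of $M$ to kill the Maslov class, invoke Proposition~\ref{prop:immersedZM} to obtain Maslov index zero with $H\in L^2$, and feed the resulting cycle into Theorem~\ref{thm:maintopological}. The paper adds only the cosmetic step of perturbing the immersion to have isolated double points.

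Where you diverge is in your treatment of the h-principle's formal data. The paper simply records $f^*[\omega]=0$ (since $\alpha$ is a lagrangian class) and cites \cite{l}, \cite{g} to obtain the lagrangian immersion, treating the formal tangent condition as unproblematic. Your caution about the bundle isomorphism $f^*TN \cong TM\otimes_{\R}\C$ is more scrupulous than the paper, but your inference that this obstruction is what forces the restriction $n\le 6$ in Corollary~\ref{cor:K3} is mistaken. That bound comes from Thom's work on the Steenrod problem \cite{th}: for $n\le 6$ every class in $H_n(N;\Z)$ is representable by a smooth map from a closed $n$-manifold. The corollary's proof then performs ambient surgery on the source manifold to make it simply connected, after which the present theorem is invoked with no dimensional constraint on the h-principle step.
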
 
 
 \begin{proof}
 Suppose the image of a smooth map $f: M \to N$ represents a lagrangian homology class (for either definition). Then $f^*([\omega]) =0$. Using the h-principle \cite{l} \cite{g} $f$ is homotopic to a lagrangian immersion $\ell: M \to N$. Perturbing the lagrangian immersion we can suppose it has a at worst finite number of isolated double points. Since $M$ is simply connected it follows that  $\ell(M)$  is a lagrangian cycle with Maslov index zero and $H \in L^2$. The result follows.
 \end{proof}
 
  \begin{cor}
  \label{cor:K3}
Let $N$ be a simply connected closed Calabi-Yau manifold of complex dimension $n$ with $2 \leq n \leq 6$. Then all lagrangian homology classes lie in  ${\cal S}$. 
   \end{cor}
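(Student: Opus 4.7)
By Theorem~\ref{thm:homology-image} it suffices to produce, for every lagrangian integer class $\a\in LH_n(N;\Z)$, a smooth map $f\colon M\to N$ from a simply connected closed oriented $n$-manifold $M$ with $f_*[M]=\a$. I will treat the three ranges $n=2$, $n=3$, and $n\in\{4,5,6\}$ separately, since the topological input degenerates at the boundary dimensions.

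The case $n=2$ is immediate: because $N$ is simply connected, the absolute Hurewicz theorem gives $\pi_2(N)\cong H_2(N;\Z)$, so every integer class is represented by a smooth map from the simply connected source $M=S^2$.

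For $n\in\{4,5,6\}$ I would combine Thom's representability theorem with surgery. Thom's theorem asserts that $\Omega_n^{SO}(N)\to H_n(N;\Z)$ is surjective for $n\le 6$, so I can start with some closed oriented $n$-manifold $M$ and a smooth map $f\colon M\to N$ with $f_*[M]=\a$. To trim $\pi_1(M)$ to the trivial group, I would pick a generator $g\in\pi_1(M)$, realize it by a smoothly embedded loop $\gamma\subset M$ (possible since $\dim M\ge 3$) with trivial normal bundle (automatic over $S^1$ for orientable bundles). Because $N$ is simply connected, $f|_\gamma$ is null-homotopic; after a small homotopy of $f$ one may assume $f$ is essentially constant near $\gamma$, so $f$ extends across the surgery
\[
M'=\bigl(M\setminus(S^1\times\mathrm{int}\,D^{n-1})\bigr)\cup_{S^1\times S^{n-2}}(D^2\times S^{n-2})
\]
to a smooth map $f'\colon M'\to N$. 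The surgery trace, together with the extended map, is an oriented bordism over $N$, so $f'_*[M']=\a$. Since $n-2\ge 2$ the attached $S^{n-2}$ is simply connected, and van Kampen gives $\pi_1(M')=\pi_1(M)/\langle g\rangle$. Because $\pi_1(M)$ is finitely generated, finitely many iterations produce a simply connected representative.

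For $n=3$, Perelman's theorem forces every simply connected closed $3$-manifold to be diffeomorphic to $S^3$, so the argument reduces to showing that the Hurewicz map $\pi_3(N)\to H_3(N;\Z)$ is surjective. I would analyze the Serre spectral sequence of the Postnikov fibration $N\langle 3\rangle\to N\to K(\pi_2(N),2)$, whose $2$-connected fiber satisfies $H_3(N\langle 3\rangle)\cong\pi_3(N\langle 3\rangle)=\pi_3(N)$ by Hurewicz. In total degree $3$ only $E_2^{0,3}=H_3(N\langle 3\rangle)$ and $E_2^{3,0}=H_3(K(\pi_2(N),2);\Z)$ can be nonzero, so the image of $\pi_3(N)\to H_3(N;\Z)$ is the lowest filtration piece and the cokernel injects into $H_3(K(\pi_2(N),2);\Z)$. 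For a simply connected Calabi--Yau manifold one has $\pi_2(N)=H_2(N;\Z)$ free abelian, whence $K(\pi_2(N),2)\simeq(\mathbb{CP}^\infty)^{b_2(N)}$ has vanishing odd-degree integral homology; the cokernel vanishes and $M=S^3$ suffices.

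The main obstacle will be the $n=3$ case, and specifically the torsion-freeness of $H_2(N;\Z)$ for a simply connected Calabi--Yau threefold: the $n=2$ case is immediate from Hurewicz and the $n\in\{4,5,6\}$ cases reduce to routine Thom-plus-surgery, but in dimension three no surgical reduction of $\pi_1$ is available and one must feed in genuinely integral homotopy-theoretic input in order to represent every class by a map out of $S^3$.
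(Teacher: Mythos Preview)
For $n=2$ and $n\in\{4,5,6\}$ your plan coincides with the paper's proof: Hurewicz handles $n=2$, and Thom's representability theorem followed by iterated surgery on embedded loops (using simple connectivity of $N$ to extend the map over the $2$-handle, and the surgery trace to preserve the homology class) handles $n\ge 4$.

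Where you diverge is $n=3$. The paper does \emph{not} treat $n=3$ separately: it runs the same handle-attachment construction for all $n\ge 3$, forming $\tilde M=(M\setminus U)\cup(D^2\times\partial D^{n-1})$ and extending the map via a deformation retraction of the thickened space $S$ onto $M\cup D$, then simply iterating ``until there are no non-trivial curves $\gamma$ in $M$.'' The paper does not pause over the point you raise (that $\partial D^{n-1}=S^1$ fails to be simply connected when $n=3$), and in particular it invokes no hypothesis on $\mathrm{Tors}\,H_2(N;\Z)$.

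Your alternative route for $n=3$, via surjectivity of $\pi_3(N)\to H_3(N;\Z)$ and the Serre spectral sequence of $N\langle 3\rangle\to N\to K(\pi_2,2)$, is correct \emph{provided} $H_2(N;\Z)$ is free abelian. But as you yourself flag, that torsion-freeness is the crux, and it is not forced by ``simply connected closed Calabi--Yau'': for a closed simply connected $6$-manifold Poincar\'e duality together with the universal coefficient theorem gives only $\mathrm{Tors}\,H_2\cong\mathrm{Tors}\,H_3$, which need not vanish. So your $n=3$ argument, as written, has a genuine gap that the paper's uniform surgery approach (taken at face value) sidesteps; conversely, you have correctly isolated a dimension-$3$ subtlety in the surgery step that the paper's write-up leaves implicit.
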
 
   
   \begin{proof}
In the case $n=2$ every homology class in $H_2(N, \Z)$ lies in the image of the Hurewicz homomorphism. The result then follows, in this case, directly from Theorem \ref{thm:homology-image}. 

We can assume $n \geq 3$. We will show that if the class  $\a \in H_n(N; \Z)$ can be represented by the image of a smooth map $f: M \to N$ then $\a$ can be represented by the image of a smooth map ${\Tilde f}: {\Tilde M} \to N$ where $\Tilde M$ is simply-connected. First we observe that the map $f$ can be assumed to be an immersion with, at worst, double points. If $M$ is not simply-connected there is a homotopically non-trivial curve $\g \subset M$. We can assume that $f(\g)$ is imbedded in $N$ and therefore spans an imbedded disc in $N$. We write $\imath: D \to N$ for this imbedded disc and note that $\p D = \g$ and $f = \imath$ along $\g$. In particular, $f \cup \imath: M \cup D \to N$ is continuous and represents $\a \in H_n(N, \Z)$. Let $U \subset M$ be a tubular neighborhood of $\g$. Attach a $2$-handle $D^2 \times D^{n-1}$ to $M \cup D$ by gluing $\p D^2 \times D^{n-1}$ to  $U$ with $D^2 \times {0}$ identified with $D$. Denote the resulting space by $S$. Set  $\Tilde M = M \setminus U \cup D^2 \times \p D^{n-1}$. Then $\Tilde M$ is an $n$-manifold. There is a deformation retract $r$ of $S$ onto $M \cup D$. Therefore there is a continuous map $S \to N$ given by the composition of $r$ and $f \cup \imath$. Denote the restriction of this map to $\Tilde M$ by $\Tilde f$. Then ${\Tilde f}( {\Tilde M})$ represents $\a$ in homology. The map ${\Tilde f}: {\Tilde M} \to N$ is continuous. By Whitney approximation we can approximate ${\Tilde f}$ by a smooth map and hence by an immersion.  Iterate this process until there are no non-trivial curves $\g$ in $M$. 

If $2 \leq n \leq 6$ then by R. Thom's work on the Steenrod problem \cite{th} every class in $H_n(N, \Z)$ can be represented by the image of a smooth map $M^n \to N$.  The result now follows in the  cases $3 \leq n \leq 6$ from Theorem \ref{thm:homology-image}.
 \end{proof}
   
It is not unreasonable to:

  \begin{conj}
 Let $N$ be a simply connected closed Calabi-Yau manifold of any complex dimension. Then all lagrangian homology classes lie in  ${\cal S}$.   
   \end{conj}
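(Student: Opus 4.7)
The plan is to mimic the strategy used for Corollary \ref{cor:K3} in a setting where the simplifying inputs from low-dimensional topology (Thom's resolution of the Steenrod problem, dimensional restrictions that enable surgery on loops) are unavailable. For a general lagrangian homology class $\alpha \in LH_n(N,\mathbb{Z})$, the goal is still to produce, perhaps after decomposing $\alpha$ as a sum $\alpha = \alpha_1 + \cdots + \alpha_k$, representative lagrangian cycles satisfying the hypotheses of Theorem \ref{thm:maintopological} --- that is, Maslov index zero and $H \in L^2$. The flow then produces the required special lagrangian decomposition of each $\alpha_i$.

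First, I would use the simplicial definition of lagrangian homology class to represent $\alpha$ by a cycle $\sum n_i \sigma_i$ whose top-dimensional simplices are lagrangian. Applying Gromov--Lees h-principle piece by piece, together with a partition-of-unity style argument to patch the simplices across their $(n{-}1)$-skeletons, I would attempt to replace this cycle by a (possibly disconnected) lagrangian immersion $f : M \to N$, allowing $M$ to have arbitrary topology. As in the proof of Corollary \ref{cor:K3}, surgery on loops in $M$ can be performed along embedded disks in $N$ (which exist freely for $n \geq 3$) to reduce each component of $M$ to a simply-connected representative without altering the homology class.

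The next step, and the most delicate one, is to kill the Maslov class of the resulting immersion. The Maslov class lives in $H^1(M;\mathbb{Z})$, which vanishes on simply-connected $M$. Thus in principle simple-connectivity of $M$ alone forces the Maslov class to be zero, and Proposition \ref{prop:immersedZM} then gives a Maslov-index-zero varifold with $H \in L^2$ to which Theorem \ref{thm:maintopological} applies. The catch is that, for $n \geq 7$, not every integral class in $H_n(N;\mathbb{Z})$ can be represented by a smooth map from a closed manifold at all (Thom's torsion examples), so one must either work with lagrangian chains that cannot be smoothed to manifolds, or accept a factor of $k\alpha$ for some $k > 0$ and then argue that $\mathcal{S}$ is "saturated" in the sense that $k\alpha \in \mathcal{S}$ implies $\alpha \in \mathcal{S}$ whenever $\alpha$ is lagrangian.

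The main obstacle is precisely this last point: bridging the gap between Thom-representable classes and all lagrangian classes in high dimensions. A rational statement --- that $\alpha \otimes \mathbb{Q}$ lies in $\mathcal{S} \otimes \mathbb{Q}$ --- should follow from the manifold-plus-surgery argument above applied to $k\alpha$ for appropriate $k$. Upgrading this to an integral statement requires either (i) a direct geometric smoothing procedure that turns a lagrangian simplicial chain into a lagrangian rectifiable cycle with Maslov index zero and mean curvature in $L^2$, without passing through a manifold representative, or (ii) a proof that $LH_n(N;\mathbb{Z})/\mathcal{S}$ is torsion-free. Either route appears to require substantial new input; the geometric route must grapple with singular lagrangian gluings at the codimension-one faces of the chain (where the lagrangian angle may jump in ways that obstruct the existence of a weak derivative), while the algebraic route would need a genuinely new source of special lagrangian classes beyond those produced by the flow.
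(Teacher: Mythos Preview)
The statement you are attempting to prove is labelled a \emph{Conjecture} in the paper, not a theorem or corollary: the authors explicitly do not claim a proof. Immediately before the conjecture they write ``It is not unreasonable to:'' and then state it, and immediately after they contrast it with a related problem. So there is no ``paper's own proof'' to compare against.

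Your proposal is not a proof but an honest research outline, and you correctly identify the obstruction yourself: for $n \geq 7$, Thom's work shows that not every integral class in $H_n(N;\mathbb{Z})$ is representable by a map from a closed manifold, so the manifold-plus-surgery route of Corollary~\ref{cor:K3} simply does not apply. Your two suggested workarounds --- a direct varifold-level construction of a Maslov-index-zero representative from a lagrangian simplicial chain, or a proof that $LH_n(N;\mathbb{Z})/\mathcal{S}$ is torsion-free --- are both genuine open problems, and you say as much. This is precisely why the authors left the statement as a conjecture. In short: there is no gap to point out because you have not claimed a proof; your assessment of where the difficulty lies is accurate and aligns with the paper's own stance.
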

   
   This conjecture should be contrasted with the related but quite different problem of characterizing the classes in $LH_n(N, \Z)$ that can be represented by a special lagrangian variety with fixed phase. This is a version of a special lagrangian Hodge conjecture. For this problem it is known that there are lagrangian classes in a $K3$ surface that cannot be represented a special lagrangian variety with fixed phase \cite{w1}. Though by Corollary \ref{cor:K3} all lagrangian classes in any $K3$ surface lie in ${\cal S}$ and therefore can be represented by a sum of special lagrangian varieties possibly with differing phases.

\bigskip

\subsection{Agreement with lagrangian mean curvature flow}
If $\left(\Sigma(t)\right)_{t\in[0,T)}$ is a lagrangian mean curvature flow and $\Sigma(0)$ has vanishing Maslov class, so does each $\Sigma(t)$ (see e.g.~\cite{nzm}). In this situation the lagrangian angle $\beta(t)$ of each time-slice $\Sigma(t)$ can be locally extended to an ambient smooth function (still denoted $\beta(t)$) so that $\Sigma(t)$ is generated by the hamiltonian motion $JD\beta(t)$. In this case, the GMT estimates we rely on all have improved versions coming from the fact that the $\Sigma(t)$ satisfy uniform $C^{2,\alpha}$ bounds on compact time intervals, so it can be shown that $JD\beta^\e(t)\rightarrow JD\beta(t)$ uniformly as $\e\rightarrow 0$.

In particular, we have the following
\begin{thm}
If $\left(\Sigma(t)\right)_{t\in[0,T)}$ is a lagrangian mean curvature flow with vanishing Maslov class, the lagrangian varifold flow $V(t)$ starting from the integer-rectifiable varifold corresponding to $\Sigma(0)$ is unique and $V(t)$ is the integer-rectifiable varifold corresponding to  $\Sigma(t)$ for each $t\in[0,T)$.
\end{thm}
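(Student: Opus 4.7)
The plan is to exploit the regularity of the smooth mean curvature flow $\Sigma(t)$ to upgrade the weak/measure-theoretic estimates underlying the construction of $V(t)$ to uniform estimates, thereby forcing every subsequential limit of the $\e$-flows to coincide with the varifold of $\Sigma(t)$. First, because $\Sigma(0)$ is smoothly embedded with vanishing Maslov class, classical short-time existence for lagrangian mean curvature flow provides uniform $C^{2,\alpha}$ bounds on $\Sigma(t)$ for $t\in[0,T']$ whenever $T'<T$, and $\beta(t)$ admits a smooth ambient extension in a tubular neighborhood $U_t$ of $\Sigma(t)$ with $H(t)=JD\beta(t)$ pointwise along $\Sigma(t)$. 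Hence $\Sigma(t)$ is the time-$t$ image of $\Sigma(0)$ under the one-parameter family of diffeomorphisms $\Phi(t)$ generated by the ambient vector field $JD\beta(t)$.

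The key technical step is to show that on this smooth piece, $JD\beta^\e(t)\to JD\beta(t)=H(t)$ uniformly on $U_t$, uniformly for $t\in[0,T']$. In Theorem \ref{thm:errorestimate}, the error term $E(x,\e)$ is controlled by $\e^{-2}$ times the tilt-excess integral on $B_\e(x)$; on a $C^{2,\alpha}$ submanifold the integrand $|S-T_xV|^2$ admits the pointwise bound $O(|x-y|^2)$ with constants uniform in $x\in\Sigma(t)$, so $\lVert E(\cdot,\e)\rVert_{L^\infty}\to 0$ uniformly in $t$. Similarly the mollifications $\beta^\e_\e$ and $H^\e_\e$ with respect to the smooth varifold converge uniformly to the smooth ambient extensions of $\beta(t)$ and $H(t)$, since on a $C^{2,\alpha}$ submanifold mollification with respect to the varifold measure approximates evaluation of the smooth ambient extension with uniform error. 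Applying the Harvey--Lawson formula (Theorem \ref{thm:harvey-lawson-varifold}) then yields the desired uniform convergence of the generating Hamiltonian vector fields.

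With uniform convergence of the generators of the $\e$-flow on compact subintervals, continuous dependence of solutions of ODEs on Banach manifolds of diffeomorphisms shows $\Psi^\e(t)\to\Phi(t)$ in $C^1$. Pushing forward the initial varifold gives $V^\e(t)=(\Psi^\e(t))_\sharp V(0)\to(\Phi(t))_\sharp V(0)$, the varifold of $\Sigma(t)$, in the sense of varifolds. Since every subsequence of $\e\to 0$ yields this same limit, $V(t)$ equals the varifold of $\Sigma(t)$ for every $t\in[0,T)$, establishing both the identification and uniqueness. The main obstacle is upgrading the Brakke--Menne tilt-excess estimate from a generic almost-everywhere $L^2$ decay to a uniform-in-$x$ statement valid on the $C^{2,\alpha}$ submanifolds $\Sigma(t)$; once this is handled, the ODE-stability argument for diffeomorphism flows is standard.
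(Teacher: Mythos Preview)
Your proposal is correct and takes essentially the same approach as the paper. The paper's argument, given in the paragraph preceding the theorem rather than in a formal proof, is precisely that the uniform $C^{2,\alpha}$ bounds on the smooth flow $\Sigma(t)$ upgrade the GMT estimates to uniform ones, yielding $JD\beta^\e(t)\to JD\beta(t)$ uniformly, from which the identification and uniqueness follow; your write-up simply fleshes out this sketch with the explicit tilt-excess bound on a $C^{2,\alpha}$ submanifold and the ODE-stability step.
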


Classical mean curvature flow in general is expected to encounter singularities in finite time. Our construction, on the other hand, exists for all $t>0$.  Thus we may think of the lagrangian varifold flow as  a way to extend classical lagrangian mean curvature flow past singularities, in the case of vanishing Maslov class.\\

We also have a local agreement theorem:
\begin{thm}
Suppose $U\subset N$ is an open set and $V(t)$ is a lagrangian varifold flow so that for each $t\in[\alpha,\omega]$, $V(t)|_U$ is the integer-rectifiable varifold corresponding to the intersection of some embedded lagrangian submanifold $\Sigma(t)$ with $U$. $\Sigma(t)$ satisfy mean curvature flow on $U\times[\alpha,\omega]$.
\end{thm}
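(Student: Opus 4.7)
The plan is to use the Harvey-Lawson identification $\nabla\beta = -JH$ on the smooth locus together with uniform regularity of the $\epsilon$-approximate flows to identify the limiting motion as classical mean curvature flow. On $U$, the lagrangian angle $\beta(t)$ is a classical smooth function on $\Sigma(t)$, and Proposition \ref{prop:nablabeta} gives the tangential gradient identity $\nabla\beta(t) = -JH(t)$. Since $\Sigma(t)$ is lagrangian, $J$ carries $T\Sigma(t)$ into the normal bundle $N\Sigma(t)$, so for any smooth ambient extension $\tilde\beta$ of $\beta$ the hamiltonian vector field $JD\tilde\beta$ decomposes along $\Sigma(t)$ as a normal component $J\nabla\beta = H$ (by Harvey-Lawson) plus a tangential component $J(D\tilde\beta)^\perp$. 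Thus the hamiltonian motion generated by any such $\tilde\beta$ moves $\Sigma(t)$ with normal velocity $H$, i.e., by mean curvature up to tangential reparameterization.

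Next, I would localize and use regularity of the $\epsilon$-flows to pass to the limit. Fix $(p,t_0) \in U \times (\alpha,\omega)$ and choose a spacetime cylinder $W \times (t_0-\delta,t_0+\delta)$ throughout which $\Sigma(t)$ is a smooth graph over $T_p\Sigma(t_0)$ with uniform $C^{2,\alpha}$ bounds. By the varifold convergence $V^\epsilon(t) \to V(t)$, the monotonicity formula, and Allard's regularity theorem, for all $\epsilon$ sufficiently small the support of $V^\epsilon(t)$ in $W$ is a smooth embedded lagrangian submanifold $\Sigma^\epsilon(t)$ which converges to $\Sigma(t)$ in $C^{2,\alpha}$. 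In this smooth regime the tilt-excess quantity in the error estimate \eqref{equ:error-estimate} of Theorem \ref{thm:errorestimate} decays like $O(\epsilon)$ uniformly in $W$, so the mollified lagrangian angles $\beta^\epsilon_\epsilon$ converge in $C^1(W)$ to an ambient smooth extension $\tilde\beta$ of the limit angle $\beta$.

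Passing to the limit in the flow equation $\frac{\partial}{\partial t}\Psi^\epsilon(t) = JD\beta^\epsilon_\epsilon$ from Section \ref{sec:epsilonflows}, one obtains that $\Sigma(t)$ satisfies $\frac{\partial}{\partial t}\Sigma(t) = JD\tilde\beta$ in $W$, whose normal component is $H$ by the first paragraph. Thus $\Sigma(t)$ evolves by classical lagrangian mean curvature flow on $W$; since $(p,t_0)$ was arbitrary, the conclusion extends to all of $U \times [\alpha,\omega]$.

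The main obstacle is the upgrade from weak varifold convergence on $U$ to smooth $C^{2,\alpha}$ graphical convergence of the $\epsilon$-flows. Applying Allard's regularity theorem to $V^\epsilon$ requires uniform control on $\lVert H^\epsilon\rVert_{L^2(V^\epsilon)}$ in $W$, which does not follow directly from the global mean-curvature estimate of Theorem \ref{thm:H2-bounded-infinity}; rather it must be extracted from the local smoothness of $V(t)$ in $W$, using the tilt-excess decay of Theorem \ref{thm:Menne-tilt-excess} applied uniformly in $\epsilon$ and $t$ near $(p,t_0)$. Once smooth graphical convergence is in hand, each remaining limit argument reduces to standard dominated convergence in the $C^1$ topology.
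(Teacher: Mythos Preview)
The paper does not supply a proof of this local agreement theorem; it is stated bare, immediately following the global agreement theorem, for which only a one-sentence sketch is offered (the remark that on smooth spacetime regions the GMT estimates improve to uniform $C^{2,\alpha}$ bounds, so that $JD\beta^\e(t)\to JD\beta(t)$ uniformly). Your proposal is precisely a fleshing-out of that sketch: identify the normal part of the hamiltonian velocity with $H$ via the Harvey--Lawson relation, upgrade the varifold convergence $V^\e(t)\to V(t)$ on the smooth locus to $C^{2,\alpha}$ graphical convergence via Allard, and then pass to the limit in the $\e$-flow equation $\partial_t\Psi^\e=JD\beta^\e_\e$. You have also correctly isolated the one nontrivial step the paper glosses over, namely obtaining local uniform $L^2$ control on $H^\e$ in $W$ so that Allard applies; your suggestion to extract this from the smoothness of the limit via tilt-excess decay is the natural route.
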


\bigskip

\subsection{Nonuniqueness}
Because the construction of the lagrangian flow relies on compactness theorems from geometric measure theory, the lagrangian flow is highly nonunique. In particular:

Consider a lagrangian flow $\{ V(t) : t \geq 0 \}$. Suppose at time $t=a > 0$ the lagrangian varifold $V(a)$ has Maslov index zero and satisfies $(H_2)$. Then we can construct a new lagrangian flow $\{ W(t) : t \geq 0 \}$ with $W(0) = V(a)$. We do not know if the lagrangian flows $\{ V(t) : t \geq a \}$ and $\{ W(t) : t \geq 0 \}$ coincide. Suppose that at time $t=b > a$ the lagrangian varifold $V(b)$ is stationary and therefore consists of components each with constant lagrangian angle. The varifold $V(b)$ is not, in general, a minimizer of volume. Therefore there may be a time $t=c$ such that for $t > c$ the volume of the lagrangian varifolds $W(t)$ is less than the volume of $V(b)$. 

\bigskip

\end{document}